\newtheoremstyle{theorem}{6pt}{6pt}{\itshape}{}{\bfseries}{.}{.5em}{}
\newtheoremstyle{definition}{6pt}{6pt}{\upshape}{}{\bfseries}{.}{.5em}{}
\theoremstyle{theorem}
\newtheorem{theorem}{Theorem}[section]
\newaliascnt{corollary}{theorem}
\newtheorem{corollary}[corollary]{Corollary}
\newaliascnt{lemma}{theorem}
\newtheorem{lemma}[lemma]{Lemma}
\newaliascnt{sublemma}{theorem}
\theoremstyle{definition}
\newtheorem{remark}{Remark}[section]
\newaliascnt{proposition}{theorem}
\newtheorem{proposition}[proposition]{Proposition}
\newcommand{\R}{{\mathbb R}}
\newcommand{\dif}{{\mathrm d}}
\newcommand{\bn}{\begin{eqnarray}}
\newcommand{\en}{\end{eqnarray}}
\newcommand{\bnn}{\begin{eqnarray*}}
\newcommand{\enn}{\end{eqnarray*}}
\renewcommand{\div}{ {\rm div }  }
\newcommand{\rL}{L}
\newcommand{\T}{\mathbb{T}}
\newcommand{\al}{\alpha}
\newcommand{\frD}{\mathfrak{D}}
\newcommand{\M}{\mathcal{M}}
\newcommand{\calE}{\mathcal{E}}
\newcommand{\pt}{\partial_t}
\newcommand{\D}{\nabla}
\newcommand{\vr}{\bar{\rho}}
\newcommand\reallywidehat[1]{%
\savestack{\tmpbox}{\stretchto{%
  \scaleto{%
    \scalerel*[\widthof{\ensuremath{#1}}]{\kern-.6pt\bigwedge\kern-.6pt}%
    {\rule[-\textheight/2]{1ex}{\textheight}}
  }{\textheight}%
}{0.5ex}}%
\stackon[1pt]{#1}{\tmpbox}%
}
\numberwithin{equation}{section}
\begin{document}

\begin{frontmatter}

\title{Global well-posedness and asymptotic behavior of large strong \\ solutions to the 3D full compressible Navier-Stokes equations \\ with temperature-dependent coefficients}

\author[label1]{Yachun Li}
\address[label1]{School of Mathematical Sciences, CMA-Shanghai, MOE-LSC, and SHL-MAC,\\ Shanghai Jiao Tong University, Shanghai 200240, P.R.China;}
\ead{ycli@sjtu.edu.cn}

\author[label2]{Peng Lu}
\address[label2]{Department of Mathematics, Zhejiang Sci-Tech University, Hangzhou, Zhejiang 310018, P.R.China;}
\ead{plu25@zstu.edu.cn}

\author[label3]{Zhaoyang Shang\corref{cor2}}
\address[label3]{School of Finance, Shanghai Lixin University of  Accounting and Finance, Shanghai 201209, P.R.China;}
\cortext[cor2]{Corresponding author. }
\ead{shangzhaoyang@sjtu.edu.cn}

\begin{abstract}
It is well known that the global well-posedness of the Navier-Stokes equations with temperature-dependent coefficients is a challenging problem, especially in multi-dimensional space. In this paper, we study the 3D Navier-Stokes equations with temperature-dependent coefficients in the whole space. When the initial density and the initial temperature are linearly equivalent to some large constant states, we establish the first result on the global existence of large strong solution. Moreover, the optimal decay rates of the solution to its associated equilibrium are established when the initial data belong to $L^{p_0}(\R^3)$ for some $p_0\in[1,2]$.
\end{abstract}

\begin{keyword}
Full compressible Navier-Stokes equations, temperature-dependent coefficients, large initial data, global strong solutions, asymptotic behavior.
\end{keyword}

\end{frontmatter}

\section{Introduction}
	
The motion of a compressible viscous, heat-conductive, and Newtonian polytropic fluid is governed by the following full
compressible Navier-Stokes equations:
\begin{equation}\label{FCNS}
\begin{cases}
\pt\rho+\div(\rho u)=0,\\
\pt(\rho u)+\div(\rho u\otimes u)+\D P=\div\T,\\
\pt(\rho\calE)+\div(\rho\calE u+Pu)=\div(u\T)+\div(\tilde{\kappa}\D\theta),
\end{cases}
\end{equation}
where $x=(x_1,x_2,x_3)^\top\in\R^3$ and $t\geq 0$ denote the spatial coordinate and time coordinate, $\rho\geq 0$ the mass density, $u = (u^1, u^2, u^3)^\top$ the fluid velocity, $P$ the pressure of the fluid, $\theta$ the absolute temperature, $\calE=e+\frac{1}{2}|u|^2$ the specific total energy, $e$ the specific internal energy. The equation of state for polytropic fluids satisfies
\begin{equation}\label{Pecv}
P=R\rho\theta, \quad e=c_v\theta,
\end{equation}
where $R$ is the gas constant, and $c_v$ is the specific heat at
constant volume. $\T$ is the viscosity stress tensor given by $\T=2\tilde{\mu}\frD(u)+\tilde{\lambda}\div u\mathbb{I}_3$, where 
$$\frD(u) =\frac{\D u+(\D u)^\top}{2}$$
is the deformation tensor, $\mathbb{I}_3$ is the $3 \times 3$ identity matrix, $\tilde{\mu}$ is the shear viscosity coefficient, and $\tilde{\lambda}+\frac{2}{3}\tilde{\mu}$ is the bulk viscosity coefficient. $\tilde{\kappa}$ denotes the coefficient of thermal conductivity.
In this paper, we will consider the following  temperature-dependent transport coefficients:
\begin{equation}\label{temperature-dependent}
\tilde{\mu}(\theta)=\mu\theta^\al, \quad \tilde{\lambda}(\theta)=\lambda\theta^\al, \quad \tilde{\kappa}=\kappa\theta^\beta,
\end{equation}
where $\mu$, $\lambda$, $\al$ and $\beta$ are constants satisfying $\mu>0, 2\mu+3\lambda\geq 0, \al>0, \beta>0$.

When the viscosity coefficients are constants, there have been a lot of literature on the well-posedness theory for the full compressible Navier-Stokes equations \eqref{FCNS} in multi-dimensional space. The local existence and uniqueness of strong and classical solutions in bounded and unbounded domains were established in \cite{MR0283426, MR0149094,MR0106646} and for the global existence of weak solutions, we refer to \cite{MR4813232,MR4294284,MR1339675,MR1360077, MR1480244,MR2091508}. For the global well-posedness results of strong and smooth solutions in three-dimensional space, in 1979, Matsumura and Nishida \cite{MR555060} first demonstrated the global strong solutions with the initial data close to a non-vacuum equilibrium in Sobolev space, see also \cite{MR0564670, MR713680}. 
In 1996, Jiang \cite{MR1389908} considered the equations in the domain exterior to a ball and proved the global existence of spherically symmetric smooth solutions for large initial data with spherical symmetry. In 1998, Xin \cite{MR1488513} presented a sufficient condition on the blowup of smooth solution in arbitrary space dimensions with initial density of compact support. It is shown that any smooth solution for polytropic fluids in the absence of heat conduction will blow up in finite time as long as the initial density has compact support, see also Jiu, Wang and Xin \cite{MR3360663} for a simplified and unified proof. 
In 2013, Xin and Yan \cite{MR3063918} improved the blowup results of \cite{MR1488513} by removing the crucial assumptions that the initial density has compact support and the smooth solution has finite total energy. In 2017, Wen and Zhu \cite{MR3597161} investigated the Cauchy problem and showed that the strong solution exists globally in time if the initial mass is small for the fixed coefficients of viscosity $\mu$ and heat conductivity $\kappa$.  In the next year, Huang and Li \cite{MR3744381} proved the global existence of classical solution to the Cauchy problem with smooth initial data which are of small energy but possibly large oscillations, see also \cite{MR4492674} given by Lai, Xu and Zhang for the case that the far-field limit is equal to zero. 
In 2020, Li \cite{MR4097330} considered the Cauchy problem with far-field vacuum, and found a scaling invariant quantity
$$N_0:=\|\rho_0\|_{L^\infty}\left(\|\rho_0\|_{L^3}+\|\rho_0\|_{L^\infty}^2\|\sqrt{\rho_0}u_0\|_{L^2}^2\right)\left(\|\nabla u_0\|_{L^2}^2+\|\rho_0\|_{L^\infty}\|\sqrt{\rho_0}\theta_0\|_{L^2}^2\right),$$
with respect to the scaling transform $\left(\rho_{0,\lambda}(x), u_{0,\lambda}(x), \theta_{0,\lambda}(x)\right)=\left(\rho_0(\lambda x), \lambda u_0(\lambda x), \lambda^2\theta_0(\lambda x)\right)$. Under the assumptions that $2\mu>\lambda$ and $N_0$ is sufficiently small, the global well-posedness of strong solution is established. Very recently, Li, Li and L\"u \cite{MR4980411} extended the result of \cite{MR3744381} to the initial-boundary-value problem in an exterior domain, Wen \cite{MR4978813} extended the previous result of \cite{MR4097330} by removing the technical assumption $2\mu>\lambda$. For the cases of global well-posedness in 1D and 2D spaces, we refer to \cite{MR4924422, MR637519, MR651877, MR4346514, MR4039142, MR4491875, MR4740640}, etc., and the references cited therein.

However, the viscosity coefficients are not necessarily constants, they may depend on density or temperature, or both. We first review some well-posedness results when viscosities depend only on the density. In 1985, Kawohl \cite{MR791841} proved the global existence of large classical solution to initial-boundary value problem in one dimensional space when the transport coefficients satisfy 
\begin{equation}\label{temperature-dependent-2}
0<\mu_0\leq\tilde\mu(v)\leq \mu_1,\quad \kappa_0(1+\theta^q)\leq \tilde\kappa(v,\theta)\leq \kappa_1(1+\theta^q), \quad |\tilde\kappa_v(v,\theta)|+|\tilde\kappa_{vv}(v,\theta)|\leq \kappa_1(1+\theta^q),
\end{equation}
where $v$ is specific volume and $q\geq2$.
In 2016, Wang \cite{MR3461630} studied the initial and initial-boundary value problems  for the $p$-th power Newtonian fluid  and established the existence and uniqueness of global smooth non-vacuum solutions when the transport coefficients $\mu$ and $\kappa$ are given as 
\begin{equation}\label{temperature-dependent-4}
\tilde{\mu}(\rho)=\rho^\al, \quad \tilde{\kappa}(\theta)=\theta^\beta,
\end{equation}
with some positive parameters $\alpha$ and $\beta$.
In 2017, Duan, Guo and Zhu \cite{MR3603273} considered the initial-boundary value problem with the stress-free and heat insulated boundary condition, and established the global existence of strong solution  with  the initial density is away from vacuum and the following density-dependent viscosity and temperature-dependent heat conductivity 
\begin{equation}\label{temperature-dependent-3}
\tilde\mu(\rho)=1+\rho^\alpha, \quad\tilde\kappa(\theta)=\theta^\beta,\quad \alpha\geq 0,\quad \beta>0.
\end{equation}
For the three dimensional case, in 2007, Bresch and Desjard\^ins \cite{MR2297248} considered the Cauchy problem and periodic problem, and proved that the weak solutions exist globally in time under the condition
$$\tilde\lambda(\rho)=2(\tilde\mu'(\rho)\rho-\tilde\mu(\rho)),\quad \tilde\kappa(\rho,\theta)=\kappa_0(\rho,\theta)(1+\rho)(1+\theta^\al),\quad \al\geq 2.$$
In 2024, Li, Lu, Shang and Yu \cite{2024arXiv240805138L} studied the Cauchy problem with the following 
density-dependent viscosities
\begin{equation}\label{temperature-dependent-5}
\tilde{\mu}(\rho)=\rho^\al, \quad \tilde{\lambda}(\rho)=\rho^\al,\quad\tilde{\kappa}=constant>0,\quad \alpha>4.
\end{equation}
By using the modified effective viscous flux and using the bootstrap
argument, they established the global existence of large strong solution when the initial density is linearly equivalent to a large constant state. Very recently, Shen, Xu and Zhang \cite{MR4858604} considered the Dirichlet initial-boundary value problem with density-dependent viscosities $\tilde{\mu}(\rho), \tilde{\lambda}(\rho)\in C^1$. The global well-posedness of strong solutions with vacuum is established provided that the initial total energy $\|\sqrt{\rho_0}u_0\|_{L^2}^2+\|\rho_0\theta_0\|_{L^1}$ is suitably small.


In the above, we introduced some well-posedness results on the density-dependent viscosity coefficients. In fact, the system (\ref{FCNS}) can be deduced from the Boltzmann equation through the Chapman-Enskog expansion to the second order, under some proper physical assumptions, the transport coefficients are power functions of temperature of the form (\ref{temperature-dependent}). On the other hand, experimental evidence \cite{MR791841,10.1115/1.3607836} pointed out that the transport coefficients usually depend on  temperature, in particular, they increase as (\ref{temperature-dependent}) when the temperature is high. In this case, it is challenging to prove the global well-posedness of strong or smooth solutions to the nonisentropic compressible Navier–Stokes equations, most known results are for one-dimensional case. In 2014, Liu, Yang, Zhao and Zou \cite{MR3225502} considered the Cauchy problem, and first obtained the existence and uniqueness of a globally smooth nonvacuum solution provided that 
$$(\gamma-1)\|(v_0-1,u_0,s_0-1)\|_{H^3}\leq C,$$
where $\gamma$ is the adiabatic exponent. When $\gamma$ is close to 1, the initial norm $\|(v_0-1,u_0,s_0-1)\|_{H^3}$ can be large. In 2016, Wang and Zhao \cite{MR3564590} considered the case that the transport coefficients are given by
\begin{equation}\label{Intro:2.3}
\tilde\mu=\mu_0h(v)\theta^\alpha, \quad \tilde\kappa=\kappa_0h(v)\theta^\alpha,
\end{equation}
where $\mu_0>0$, $\kappa_0>0$ and $\alpha$ are constants, $h(v)$ is a smooth function of the specific volume $v$ satisfying
\begin{equation}\label{Intro:2.4}
v^{l_1}+v^{-l_2}\leq Ch(v), \quad h'(v)^2v\leq Ch(v)^3, \quad\text{for all} \quad v\in(0, +\infty),
\end{equation}
for some $C>0$ and $l_1\geq 3$, $l_2\geq 3$, they proved the existence and uniqueness of the global-in-time classical solution to the Cauchy problem when the exponent $|\alpha|$ is sufficiently small.
In 2021, Sun, Zhang and Zhao \cite{MR4235250} considered the case that the transport coefficients are given by
\begin{equation}\label{Intro:2.5}
\tilde\mu(\theta)=\mu_0\theta^\alpha, \quad \tilde\kappa(\theta)=\kappa_0\theta^\beta,
\end{equation}
where $\mu_0$ and $\kappa_0$ are positive constants and $\alpha\geq 0$, $\beta \geq 0$. They proved the global-in-time existence of strong solutions to the initial-boundary value problem when $\alpha$ is sufficiently small.
Recently, the Cauchy problem was investigated by Dong and Guo in \cite{MR4921984}.

In three-dimensional space, there are rich literature of global weak solutions with large initial data. In 2012, Feireisl, Mucha, Novotn\'y and Pokorn\'y \cite{MR2917121} proved the existence of at least one time-periodic weak solution to the initial-boundary value problem,
where the transport coefficients are supposed to be continuously differentiable functions of $\theta$ satisfying
\begin{equation}\label{tpc-0}
\underline{\mu}(1+\theta)\leq\tilde{\mu}(\theta),\quad |\tilde{\mu}^{\prime}(\theta)|\leq C, \quad 0\leq\tilde{\lambda}(\theta)\leq \overline{\lambda}(1+\theta),
\end{equation}
\begin{equation}\label{tpc-00}
\underline{\kappa}(1+\theta^3)\leq\tilde{\kappa}(\theta)\leq \overline{\kappa}(1+\theta^3),
\end{equation}
where $\underline{\mu}$, $\overline{\lambda}$, $\underline{\kappa}$ and $\overline{\kappa}$ are positive constants. Feireisl and Novotn\'y \cite{MR2909912} proved the weak–strong uniqueness of initial-boundary value problem with the transport coefficients satisfying
$$\tilde{\mu}(\theta)=\mu_0+\mu_1\theta, \quad \tilde{\lambda}(\theta)=0, \quad \tilde{\kappa}(\theta)=\kappa_0+\kappa_2\theta^2+\kappa_3\theta^3,$$
for some positive constants $\mu_0$, $\mu_1$, $\kappa_0$, $\kappa_2$, $\kappa_3$.
In 2022, Chaudhuri and Feireisl \cite{MR4469007} considered general non-homogeneous Dirichlet boundary conditions for $u$ and $\theta$, and introduced a new concept of weak solution satisfying the entropy and energy inequalities. When the transport coefficients are supposed to be continuously differentiable functions of $\theta$ satisfying 
\begin{equation}\label{tpc}
\underline{\mu}(1+\theta^\al)\leq\tilde{\mu}(\theta)\leq \overline{\mu}(1+\theta^\al),\quad |\tilde{\mu}^{\prime}(\theta)|\leq C, \quad 0\leq\tilde{\lambda}(\theta)\leq \overline{\lambda}(1+\theta^\al),
\end{equation}
\begin{equation}\label{tpc-1}
\underline{\kappa}(1+\theta^\beta)\leq\tilde{\kappa}(\theta)\leq \overline{\kappa}(1+\theta^\beta).
\end{equation}
for $\frac{1}{2}\leq\al\leq 1$ and $\beta=3$, they established the weak–strong uniqueness and the existence of global-in-time weak solutions. In 2024, Feireisl, Lu and Sun \cite{MR4810457} studied the initial-boundary value problem with $\al=1$ and $\beta>6$ in \eqref{tpc}--\eqref{tpc-1}, and proved the global stability of an equilibrium with small perturbations. For the well-posedness results of strong solutions, in 2020, Yu and Zhang \cite{MR4079010} studied the initial-boundary value problem in a smooth bounded domain $\Omega$ with Dirichlet boundary condition
$$(u, \theta)(x,t)=(0,0), \quad \text{on} \quad \partial\Omega\times (0,T),$$
and density–temperature–dependent viscosities
$$\tilde\mu(\rho,\theta), \tilde\lambda(\rho,\theta)\in C^1(\R^2), \quad \tilde{\mu}(\rho, \theta)>0, \quad 2\tilde{\mu}(\rho, \theta)+3\tilde{\lambda}(\rho, \theta)\geq 0,$$
and proved the global existence of strong solution provided that $\|\nabla u_0\|_{L^2}^2+\|\nabla \theta_0\|_{L^2}^2$ is sufficiently small. In 2022, Cao and Li \cite{MR4496704} established the local well-posedness of strong solution to the initial-boundary value problem with the transport coefficients are given in \eqref{temperature-dependent} with $\alpha, \beta\geq 0$, where the initial density is allowed to contain vacuum. In 2024, Hou, Liu, Wang and Xu \cite{MR4803458} studied the vanishing viscosity limit for Navier–Stokes equations with temperature-dependent transport coefficients satisfying \eqref{temperature-dependent} for $\alpha=\beta>0$.

From the above mentioned known results, we can see that there are already some literature concerning the well-posedness of nonisentropic compressible Navier-Stokes equations with variable coefficients. However, compared to the case of constant coefficients and density-dependent viscosities, there are much fewer results with temperature-dependent transport coefficients, which are more in line with physical experiments, especially in multi-dimensional case. In this paper, we are interested in investigating the global well-posedness and asymptotic behavior of strong solution to the system \eqref{FCNS} with temperature-dependent transport coefficients and large initial data in three-dimensional space.  Without loss of generality, we assume the constant $c_v=1$ in \eqref{Pecv}. It follows from \eqref{Pecv}--\eqref{temperature-dependent} that \eqref{FCNS} can be rewritten as
\begin{equation}\label{FCNS:2}
\begin{cases}
\pt\rho+\div(\rho u)=0,\\
\rho(\pt u+(u\cdot\D)u)+\D P=2\mu\div(\theta^\al\frD(u)) +\lambda\D(\theta^\al\div u),\\
\rho(\pt\theta+u\cdot\D\theta)+P\div u=2\mu\theta^\al |\frD(u)|^2+\lambda\theta^\al(\div u)^2+\kappa\div(\theta^\beta\D\theta).
\end{cases}
\end{equation}
When system \eqref{FCNS:2} is supplemented with the following initial data and far field behavior
\begin{equation}\label{data and far}
\begin{aligned}
&(\rho, u, \theta)(x,0)=(\rho_0, u_0, \theta_0)(x) \quad \text{for} \quad x\in\R^3, \\
& (\rho , u, \theta)(t, x) \to \left(\bar{\rho}, 0, \bar{\theta}\right) \quad \text{as} \quad |x|\to\infty \quad \text{for} \quad t \geq 0,
\end{aligned}
\end{equation}
where $\bar{\rho}$ and $\bar{\theta}$ are positive constants, we have our main theorem as follows.
\begin{theorem}\label{th}$($Global existence of strong solutions$)$ 
Let the positive constants $\bar{\rho}, \bar{\theta}>1$, $2\leq\alpha\leq 10$, and $\alpha<\beta<\alpha+2$. Suppose that the initial data $(\rho_0, u_0, \theta_0)$ satisfy
\begin{equation}\label{data:condition}
\left(\rho_0(x)-\bar{\rho}, ~ u_0(x), ~ \theta_0(x)-\bar{\theta}\right)\in H^2,
\end{equation}
\begin{equation}
\|\rho_0-\bar{\rho}\|_{H^2}+\|u_0\|_{H^2}+\|\theta_0-\bar{\theta}\|_{H^2}\leq C_0,
\end{equation}
\begin{equation}\label{compatibility condition}
\frac{3}{4}\bar{\rho}\leq\rho_0(x)\leq \frac{5}{4}\bar{\rho}, \quad \frac{3}{4}\bar{\theta}\leq\theta_0(x)\leq\frac{5}{4}\bar{\theta},
\end{equation}
for some positive constant $C_0$ independent of $\bar{\rho}$ and $\bar{\theta}$. Then there exist positive constants $\rL$ and $\eta$ independent of $\bar{\rho}$ and $\bar{\theta}$, such that if $\bar{\rho}\geq\rL$ and $\bar{\theta}\geq\rL^{\eta}$,  the Cauchy problem \eqref{FCNS:2}--\eqref{data and far} has a global strong solution $(\rho,u,\theta)$ satisfying
\begin{equation}\label{sp}
\begin{aligned}
& \big(\rho-\bar{\rho}, u, \theta-\bar{\theta}\big)\in C([0,\infty);H^2), \quad \rho_t\in L^\infty(0,\infty;H^1), \\[6pt]
& ({u}_t,\theta_t)\in L^\infty(0,\infty;L^2)\cap L^2(0,\infty;H^1), 
\end{aligned}
\end{equation}  
and
\begin{equation}\label{bc}
\frac{2}{3}\bar{\rho}\leq\rho(x,t)\leq \frac{4}{3}\bar{\rho}, \quad \frac{2}{3}\bar{\theta}\leq\theta(x,t)\leq\frac{4}{3}\bar{\theta}, \quad \text{for} \quad (x,t)\in\R^3\times(0,\infty).
\end{equation}
\end{theorem}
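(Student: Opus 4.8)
\section*{Proof proposal}

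The plan is to establish \autoref{th} via the standard continuation argument: local existence (which follows from the local well-posedness results cited in the excerpt, e.g.\ \cite{MR4496704}, since the initial data are bounded away from vacuum) combined with global-in-time a priori estimates that close on a suitably chosen solution space. The crux is therefore to prove that, on any time interval $[0,T]$ on which the strong solution exists and satisfies the open conditions
\[
\tfrac{2}{3}\vr\leq\rho\leq\tfrac{4}{3}\vr,\qquad \tfrac{2}{3}\vt\leq\theta\leq\tfrac{4}{3}\vt,
\]
one can in fact improve these to, say, $\tfrac{3}{4}\vr\leq\rho\leq\tfrac{5}{4}\vr$ and likewise for $\theta$ (i.e.\ recover the initial-data bounds with a small margin), together with uniform-in-time control of $\|(\rho-\vr,u,\theta-\vt)\|_{H^2}$ and the dissipation norms in \eqref{sp}. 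The reason large constant states help — as in \cite{2024arXiv240805138L} for the density-dependent case — is that writing $\rho=\vr+\sigma$, $\theta=\vt+\vartheta$, the nonlinear coupling terms carry favorable powers of the large parameters: the viscosity $\mu\theta^\al$ and conductivity $\kappa\theta^\beta$ are \emph{large} (since $\theta\sim\vt$ is large and $\al,\beta>0$), so the parabolic smoothing in the momentum and temperature equations is strong, while the pressure-type and convection terms can be absorbed after dividing by these large coefficients. The restrictions $2\leq\al\leq10$ and $\al<\beta<\al+2$ are precisely what is needed to balance these powers in the nonlinear estimates.

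The key steps, in order, I expect to be: \textbf{(1)} \emph{Basic energy estimate.} Multiply the momentum equation by $u$ and the temperature equation by $1-\vt/\theta$ (or use the entropy structure), integrate, and exploit that $\mu\vt^\al$, $\kappa\vt^\beta$ are large to get $\int_0^T\!\int (\theta^\al|\D u|^2 + \theta^\beta|\D\theta|^2)\,\dif x\,\dif t$ bounded by the (small, by \eqref{compatibility condition}) initial energy, uniformly in $\vr,\vt$. \textbf{(2)} \emph{Pointwise bounds on $\rho$.} Use the effective viscous flux $F = (2\mu+\lambda)\theta^\al\div u - (P-\bar P)$, or a modified version thereof as in \cite{2024arXiv240805138L}, together with the transport structure of the continuity equation, to propagate the density bounds; here the largeness of $\vr$ and $\vt$ is used to make the relevant quantities small after rescaling. \textbf{(3)} \emph{First-order estimates.} Differentiate the momentum and temperature equations in $t$, test against $u_t$ and $\theta_t$, and control $\|\sqrt\rho u_t\|_{L^2}^2 + \|\sqrt\rho\theta_t\|_{L^2}^2 + \int_0^T(\|\D u_t\|_{L^2}^2+\|\D\theta_t\|_{L^2}^2)$; combine with elliptic regularity for the (now $\theta$-dependent) Lam\'e-type operator to get $\|u\|_{H^2}$, $\|\theta\|_{H^2}$ and the $L^2(0,\infty;W^{2,4})$ bound on $u$. \textbf{(4)} \emph{Pointwise bounds on $\theta$.} Run a maximum-principle / Moser iteration on the temperature equation, using the $L^2(W^{2,4})$ control of $u$ to handle the source terms $\theta^\al|\frD(u)|^2$, to recover $\tfrac{3}{4}\vt\leq\theta\leq\tfrac{5}{4}\vt$. \textbf{(5)} \emph{Second-order ($H^2$) estimates} on $\rho-\vr$, and a \textbf{bootstrap} to show all constants are uniform in time provided $\vr\geq\widetilde L$ and $\vt\geq L(\vr)$. \textbf{(6)} \emph{Asymptotic decay}: once global existence with uniform bounds is in hand, feed in the $L^{p_0}$ hypothesis and use Fourier-splitting / spectral analysis of the linearized system to get the optimal rates.

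The main obstacle, I expect, will be Step (2)--(4): closing the pointwise upper and lower bounds on \emph{both} $\rho$ and $\theta$ simultaneously, with constants independent of the large parameters. Unlike the constant-coefficient or small-energy theories, here one cannot afford any loss in the constants during the iteration, so every nonlinear term must be shown to come with a genuinely small prefactor — a negative power of $\vr$ or $\vt$ — after the natural rescaling. The coupling is genuinely two-way: the temperature bound is needed to make the viscous coefficient $\theta^\al$ comparable to the large constant $\vt^\al$ (so that the momentum equation is "uniformly parabolic at scale $\vt^\al$"), while the $W^{2,4}$-bound on $u$ coming from that parabolicity is needed to control the quadratic dissipation source $\theta^\al|\frD(u)|^2$ in the temperature equation and hence close the temperature bound. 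Threading this circular dependence, and pinning down the precise algebraic constraints $2\leq\al\leq10$, $\al<\beta<\al+2$ that make the exponent bookkeeping work out, is where the real work lies; the $H^2$ and decay estimates should then be comparatively routine adaptations of \cite{2024arXiv240805138L} and standard Fourier-splitting arguments.
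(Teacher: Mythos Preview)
Your high-level strategy (local existence plus bootstrap on the open density/temperature bounds) matches the paper, as does the role of the basic entropy estimate (your Step~1). But Steps~(2) and~(4), where you propose to close the pointwise bounds on $\rho$ and $\theta$, diverge from what actually works here, and I think your versions would not close.

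For the temperature bound (your Step~4), a Moser iteration on \eqref{FCNS:2}$_3$ runs into the source term $\theta^\al|\frD(u)|^2$: this grows with the very quantity you are trying to bound, so even with $u\in L^2(W^{2,4})$ the iteration does not yield $\|\theta-\bar\theta\|_{L^\infty}\ll\bar\theta$ without already assuming it. The paper instead controls $\|\D\theta\|_{L^2}$ and $\|\D^2\theta\|_{L^2}$ (both $\lesssim\bar\theta$) via energy estimates on $\dot\theta$, and then applies Gagliardo--Nirenberg to get $\|\theta-\bar\theta\|_{L^\infty}^2\leq C\|\D\theta\|_{L^2}\|\D^2\theta\|_{L^2}\leq C(K)\bar\theta\ll\bar\theta^2$. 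For the density bound (your Step~2), a transport/Zlotnik argument along characteristics requires expressing $\div u$ through an effective viscous flux, but with temperature-dependent coefficients the natural flux is \emph{not} $F=(2\mu+\lambda)\theta^\al\div u-(P-\bar P)$; the paper introduces instead $G=(2\mu+\lambda)\div u-R(\rho\theta^{1-\al}-\bar\rho\bar\theta^{1-\al})$, which satisfies a clean elliptic equation $-\Delta G=\div H$ with $H=\theta^{-\al}(-\rho\dot u+\cdots)$. More importantly, the density bound is obtained not by transport but by energy-estimating $\|\D\rho\|_{L^2}$ and $\|\D\rho\|_{L^4}$ directly: substituting $\div u$ via $G$ into the differentiated continuity equation produces a genuine dissipative term $\frac{R}{2\mu+\lambda}\int\rho\theta^{1-\al}|\D\rho|^r\,\dif x\sim\bar\rho\bar\theta^{1-\al}\|\D\rho\|_{L^r}^r$ on the left-hand side --- this is where the largeness of $\bar\rho$ (not just $\bar\theta$) is essential, and your proposal does not capture it. Then $\|\rho-\bar\rho\|_{L^\infty}^2\leq C\|\D\rho\|_{L^2}^{2/3}\|\D\rho\|_{L^4}^{4/3}$ closes the bound.

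Two further structural points you are missing: (i) the paper splits $[0,T]$ into $[0,\sigma(T)]$ and $[\sigma(T),T]$, proving unweighted estimates $A_i$ on the short interval and time-weighted estimates $B_i$ on the whole interval --- this is needed because $\|\sqrt\rho\dot u\|_{L^2}$ is large for short time and the bootstrap would not otherwise close; (ii) consequently the density gradient estimate (and hence the pointwise $\rho$-bound) is the \emph{last} thing to be closed, not the second, because it needs both the short-time and the time-weighted estimates of $\|\D^2u\|_{L^4}$ as input.
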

\begin{remark}
    We would like to mention that Theorem \ref{th} is the first result concerning the global existence of strong solution  for the Cauchy problem to the 3D full Navier-Stokes equation with temperature-dependent coefficients and large initial data. Our global strong solution satisfying (\ref{sp}) is bounded uniformly in time. In particular, the uniform-in-time lower and upper bounds of the density and temperature are also given in (\ref{bc}).
\end{remark}


\begin{remark}
In Theorem \ref{th}, we consider the transport coefficients depending on the temperature (\ref{temperature-dependent}). This is certainly a restriction which is physically motivated, experimental evidence \cite{MR791841,10.1115/1.3607836} pointed out that the coefficients of viscosity and heat
conductivity usually depend on temperature. In particular, the transport coefficients increase such as (\ref{temperature-dependent}) when the temperature is high, which is highly consistent with our assumption that the initial temperature is linearly equivalent to some large constant states.
The restrictions upon the parameters $2\leq\alpha\leq 10$,  $\alpha<\beta<\alpha+2$ are technical requirements. In fact, they are physically reasonable. For example,  when intermolecular potential varies as $r^{-b}$ with $r$ being the molecule distance, see \cite{MR258399}, the parameter range of $\alpha$ ($\alpha=\frac{1}{2}+\frac{2}{b}, b\in[\frac{4}{19},\frac{4}{3}]$), especially for ionized gas ($\alpha=\frac{5}{2}, b=1$), is included in our results. It should be noted that the above  restrictions upon the parameters $\alpha,\beta$ are not sharp, it would be interesting to study the problem \eqref{FCNS:2}--\eqref{data and far} when $0<\alpha<2$ and $\beta\leq\alpha$. This is left for interested readers.
\end{remark}

Moreover, our global strong solution can be a classical one.
\begin{corollary}\label{co}$($Global existence of classical solutions$)$ 
Under the assumptions in Theorem \ref{th}, suppose that $\left(\rho_0(x)-\bar{\rho}, ~ u_0(x), ~ \theta_0(x)-\bar{\theta}\right)\in H^3$, then there exist positive constants $\rL$ and $\eta$ independent of $\bar{\rho}$ and $\bar{\theta}$, such that if $\bar{\rho}\geq\rL$ and $\bar{\theta}\geq\rL^{\eta}$,  the Cauchy problem \eqref{FCNS:2}--\eqref{data and far} has a global solution $(\rho,u,\theta)$ satisfying
\begin{equation}\label{sp'}
\begin{aligned}
& \big(\rho-\bar{\rho}, u, \theta-\bar{\theta}\big)\in C([0,\infty);H^3), \quad \rho_t\in L^\infty(0,\infty;H^2), \\[6pt]
& ({u}_t,\theta_t)\in L^\infty(0,\infty;H^1)\cap L^2(0,\infty;H^2), 
\end{aligned}
\end{equation}  
and
\begin{equation}\label{bc'}
\frac{2}{3}\bar{\rho}\leq\rho(x,t)\leq \frac{4}{3}\bar{\rho}, \quad \frac{2}{3}\bar{\theta}\leq\theta(x,t)\leq\frac{4}{3}\bar{\theta}, \quad \text{for} \quad (x,t)\in\R^3\times(0,\infty).
\end{equation}
\end{corollary}

\begin{remark}
Under the framework of this paper, the requirement on the lower bounds of $\bar{\rho}$ and $\bar{\theta}$ can be removed when the initial data satisfy small energy assumption. As a consequence, we extend the results in \cite{MR3744381, MR4492674} with small initial energy to the temperature-dependent coefficients case.
\end{remark}

\bigskip

Now, we turn to the large time behavior of solutions. We first review some related known results. When the transport coefficients are constants, we refer to \cite{MR1675129, MR555060, MR0564670, MR0785713} and \cite{Gao2021De, MR4389852, MR4493879, MR4613439, MR2562709, Zhang2020Con} for decay rates of solutions with or without smallness restrictions on the initial perturbation, respectively.

However, when the transport coefficients depend on the temperature, there are much fewer results about the large time behavior of global solutions. For the case of weak solutions, in 2007, Feireisl and Petzeltov\'a \cite{MR2350243} considered the equation \eqref{FCNS} in a three dimensional bounded domain $\Omega$ with the transport coefficients satisfying \eqref{tpc-0}--\eqref{tpc-00}. When the system is supplied with an external force $\rho\D F$ for some $F(x)\in W^{1,\infty}(\Omega)$, they proved that the global weak solution $(\rho, \rho u)$ converges to a non-constant equilibrium $(\bar{\rho}(x), 0)$ in $L^1(\Omega)$ as time goes to infinity, and 
$$\lim\limits_{t\to+\infty}\int_\Omega\rho s(\rho, \theta)\dif x=\int\bar{\rho} s(\bar{\rho}, \bar{\theta})\dif x,$$
where $s(\rho, \theta)$ is the entropy, and the equilibrium $(\bar{\rho}(x), \bar{\theta}(x))$ solves the equation $\D P(\bar{\rho}, \bar{\theta})=\bar{\rho}\D F$. In 2013, Bella, Feireisl and Pra\v z\'ak \cite{MR3054636} considered the same problem when the driving force takes the form $t^a\omega(t^b)\rho \bm{F}(x)$ with $\omega\in L^\infty(\R)$, $\bm{F}\in (W^{1,\infty}(\Omega))^3$ and $b>a+2+\max\{a, 0\}$. They proved that the global weak solution $(\rho, \rho u, \theta)$ satisfies
$$\lim\limits_{t\to+\infty}\left(\|\rho-\bar{\rho}\|_{L^{\frac{5}{3}}}+\|\rho u\|_{L^{\frac{5}{4}}}+\|\theta-\bar{\theta}\|_{L^4}\right)=0,$$
for some constant equilibrium $(\bar{\rho}, 0, \bar{\theta})$. In 2024, Chiodaroli and Feireisl \cite{MR4846859} considered the three dimensional Cauchy problem with far-field limit $(0, 0, \bar{\theta})$, where the viscosity coefficients satisfy \eqref{tpc-0}, and the heat conductivity coefficient satisfies
$$\tilde{\kappa}(\theta)=\kappa_0\theta+\kappa_1\theta^3.$$
They proved that any global weak solutions approaches the equilibrium in the sense that
$$\lim\limits_{T\to+\infty}\frac{1}{T}\int_1^T\left(\|\rho\|_{L^{\frac{5}{3}}}^{\frac{5}{3}}+\|u\|_{H^1}^2+\|\theta-\bar{\theta}\|_{H^1}^{2}\right)\dif t=0.$$
The results of large time behavior for global strong or smooth solutions mainly focus on the one-dimensional and spherically or cylindrically symmetric situations. In 2014, Liu, Yang, Zhao and Zou \cite{MR3225502} proved that the global classical solution $(v, u, \theta)$ of one-dimensional Cauchy problem with $\tilde{\mu}(\theta)>0, ~ \tilde{\kappa}(\theta)>0$ satisfies
$$\lim\limits_{t\to+\infty}\|(v-\bar{v}, u, \theta-\bar{\theta})\|_{L^\infty}=0,$$
similar results were obtained by Wang and Zhao \cite{MR3564590} in 2016 with the assumptions \eqref{Intro:2.3}--\eqref{Intro:2.4}.
In 2020, Yu and Zhang \cite{MR4079010} proved that the global strong solution $(\rho, u, \theta)$ of three-dimensional initial boundary value problem with $\tilde{\mu}(\rho, \theta)>0, ~ \tilde{\kappa}(\rho, \theta)>0$ satisfies
\begin{align*}
\|\sqrt{\rho}u\|_{L^2}^2+\|\sqrt{\rho}\theta\|_{L^2}^2\leq~& Ce^{-Ct}, \\
\|\sqrt{\rho}u_t\|_{L^2}^2+\|\sqrt{\rho}\theta_t\|_{L^2}^2+\|\D u\|_{L^2}^2+\|\D\theta\|_{L^2}^2\leq~& C(1+t)^{-2}.
\end{align*}
In \cite{MR4235250}, Sun, Zhang and Zhao studied the global strong solution of one-dimensional initial boundary value problem with \eqref{Intro:2.5}, and proved that the $H^1$-norm of the solution decays exponentially. Very recently, Dong and Guo \cite{MR4921984} studied the one-dimensional Cauchy problem with \eqref{Intro:2.5}, and proved that the global strong solution $(v, u, \theta)$ satisfies
$$\lim\limits_{t\to+\infty}\left(\|(v-\bar{v}, u, \theta-\bar{\theta})\|_{L^q}+\|(v_x, u_x, \theta_x)\|_{H^1}\right)=0,$$
for any $2<q\leq\infty$. However, to the authors' best knowledge, there are few results concerning the optimal decay rate for the Cauchy problem of \eqref{FCNS:2}--\eqref{data and far} with temperature-dependent coefficients.

Motivated by the above results, in the following, we give our second result concerning the algebraic decay rates of the global solution obtained in Theorem \ref{th}.
\begin{theorem}\label{th2}$($Decay rates of the solution$)$ 
Let $(\rho, u, \theta)$ be the global strong solution of \eqref{FCNS:2}--\eqref{data and far} obtained in Theorem \ref{th}. Suppose that $(\rho_0-\bar{\rho}, u_0, \theta_0-\bar{\theta})\in L^{p_0}(\R^3)$ for some $p_0\in[1,2]$, then we have
\begin{equation}\label{decay-0}
\big\|\big(\rho-\bar{\rho}, u, \theta-\bar{\theta}\big)\big\|_{L^2}\leq C(1+t)^{-\frac{3}{4}(\frac{2}{p_0}-1)},
\end{equation}
\begin{equation}\label{decay-1}
\|\left(\D\rho, \D u, \D\theta\right)\|_{L^2}+\|(\rho_t, u_t, \theta_t)\|_{L^2}\leq C(1+t)^{-\frac{3}{4}(\frac{2}{p_0}-1)-\frac{1}{2}},
\end{equation}
\begin{equation}\label{decay-2}
\|\left(\D^2\rho, \D^2u, \D^2\theta\right)\|_{L^2}\leq C(1+t)^{-\frac{3}{4}(\frac{2}{p_0}-1)-1},
\end{equation}
where the constant $C$ depends on $\bar{\rho}$, $\bar{\theta}$, $\mu$, $\lambda$, $\kappa$, $R$ and the initial data.
\end{theorem}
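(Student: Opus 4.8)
The plan is to combine the uniform-in-time a priori estimates already established in Theorem \ref{th} with a Fourier analysis of the system linearized at the constant state $(\bar{\rho},0,\bar{\theta})$. Writing $\sigma=\rho-\bar{\rho}$, $\vt=\theta-\bar{\theta}$ and $V=(\sigma,u,\vt)^\top$, I would recast \eqref{FCNS:2} as $\pt V+\mathcal{A}V=\mathbf{N}[V]$, where $\mathcal{A}$ is the constant-coefficient operator obtained by freezing the transport coefficients at their equilibrium values $\mu\bar{\theta}^\al$, $\lambda\bar{\theta}^\al$, $\kappa\bar{\theta}^\beta$ and keeping only the linear terms, and $\mathbf{N}$ collects the convective terms ($\div(\sigma u)$, $\rho\,u\cdot\D u$, $u\cdot\D\vt$), the heating $2\mu\theta^\al|\frD(u)|^2+\lambda\theta^\al(\div u)^2$, and all coefficient fluctuations such as $(\theta^\al-\bar{\theta}^\al)\frD(u)$, $(\theta^\beta-\bar{\theta}^\beta)\D\theta$, $\sigma\pt u$, $\sigma\,u\cdot\D u$. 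Because the solution remains in the region \eqref{bc} with a uniformly bounded $H^2$-norm, every summand of $\mathbf{N}$ is at least bilinear in $(V,\D V)$ and a large part of it is a spatial divergence. From the a priori estimates behind Theorem \ref{th} (Proposition \ref{p4.1}) I would moreover take as given a Lyapunov inequality
\begin{equation*}
\frac{\dif}{\dif t}\mathcal{L}(t)+c_0\,\mathcal{D}(t)\le 0,\qquad \mathcal{L}(t)\simeq\|V(t)\|_{H^2}^2,\qquad \mathcal{D}(t)\gtrsim\|\D\sigma\|_{H^1}^2+\|\D u\|_{H^1}^2+\|\D\vt\|_{H^1}^2,
\end{equation*}
so in particular $\int_0^\infty\mathcal{D}\,\dif t<\infty$.

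Since $\mathcal{D}$ does not dominate the zeroth-order density term $\|\sigma\|_{L^2}^2$, one cannot simply conclude exponential decay; instead I would use Schonbek's Fourier-splitting method. With $S(t)=\{\xi\in\R^3:|\xi|^2\le K(1+t)^{-1}\}$ for a large constant $K$, Plancherel's identity yields $\mathcal{D}(t)\gtrsim\int_{S(t)^c}|\xi|^2|\widehat{V}|^2\,\dif\xi\gtrsim(1+t)^{-1}\big(\mathcal{L}(t)-\int_{S(t)}|\widehat{V}|^2\,\dif\xi\big)$, hence
\begin{equation*}
\frac{\dif}{\dif t}\mathcal{L}+\frac{c_1}{1+t}\,\mathcal{L}\le\frac{c_1}{1+t}\int_{S(t)}|\widehat{V}(\xi,t)|^2\,\dif\xi .
\end{equation*}
The low-frequency part is then handled by Duhamel's formula $\widehat{V}(\xi,t)=e^{-t\widehat{\mathcal{A}}(\xi)}\widehat{V}_0(\xi)+\int_0^t e^{-(t-s)\widehat{\mathcal{A}}(\xi)}\widehat{\mathbf{N}}(\xi,s)\,\dif s$ together with the dispersion relation of the linearized compressible Navier-Stokes-Fourier operator, which gives $|e^{-t\widehat{\mathcal{A}}(\xi)}|\le C e^{-c_2|\xi|^2t}$ for $\xi\in S(t)$. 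The linear part is controlled by the Hausdorff-Young inequality and H\"older's inequality over the ball $S(t)$ using $(\sigma_0,u_0,\vt_0)\in L^{p_0}$, which produces the factor $(1+t)^{-\frac32(\frac2{p_0}-1)}$; for the Duhamel part I would write the divergence portion of $\mathbf{N}$ as $\div\mathbf{G}$ so that $|\widehat{\mathbf{N}}(\xi,s)|\lesssim|\xi|\,\|\mathbf{G}(s)\|_{L^1}+\|\mathbf{N}_{\mathrm{rem}}(s)\|_{L^1}$, and estimate $\|\mathbf{G}\|_{L^1}+\|\mathbf{N}_{\mathrm{rem}}\|_{L^1}$ by bilinear quantities of the type $\|V\|_{L^2}\|\D V\|_{L^2}+\|\D V\|_{L^2}^2$. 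Feeding these into a continuity/bootstrap argument for the ansatz $\mathcal{L}(t)\le M(1+t)^{-\frac32(\frac2{p_0}-1)}$ --- initialized with a crude algebraic decay rate extracted from $\int_0^\infty\mathcal{D}\,\dif t<\infty$ and the splitting with a slowly decaying $S(t)$ --- then closes and yields \eqref{decay-0}.

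For the higher-order rates \eqref{decay-1}--\eqref{decay-2} I would run the same splitting at the level of the $\dot{H}^1$ and $\dot{H}^2$ energies: on $S(t)^c$ the dissipation again beats the energy with a gain $(1+t)^{-1}$, while on $S(t)$ the low-frequency part of $\|\D^k V\|_{L^2}^2$ is $\lesssim(1+t)^{-k}$ times the $L^2$-mass already controlled, so each differentiation level contributes an extra factor $(1+t)^{-1/2}$; the top derivatives of the density, being undissipated, are recovered from the momentum equation written as $R\bar{\theta}\D\sigma=-\bar{\rho}\pt u+\mu\bar{\theta}^\al\Delta u+(\mu+\lambda)\bar{\theta}^\al\D(\div u)-(\text{nonlinear})$. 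Finally, the decay of $\|(\rho_t,u_t,\vt_t)\|_{L^2}$ follows by solving the continuity, momentum and internal-energy equations for $\pt\sigma$, $\pt u$, $\pt\vt$ and inserting the spatial decay rates just proved, the nonlinear factors being absorbed via $H^2\hookrightarrow L^\infty\cap W^{1,6}$ in $\R^3$.

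The step I expect to be the main obstacle is the low-frequency estimate of $\mathbf{N}$ in the presence of the temperature-dependent coefficients: one must verify that the fluctuations $\theta^\al-\bar{\theta}^\al$, $\theta^\beta-\bar{\theta}^\beta$ and the factor $\sigma$ multiplying the parabolic terms genuinely generate only terms bilinear in $(V,\D V)$ carrying enough divergence structure that $\widehat{\mathbf{N}}(\xi,s)$ picks up the extra power of $|\xi|$ at low frequencies, so that the Duhamel integral converges at the sharp rate. This is also where the parameter restrictions $2\le\al\le10$ and $\al<\beta<\al+2$ resurface, through the $(\bar{\rho},\bar{\theta})$-dependent constants in these bilinear bounds; a delicate bookkeeping point is that, although these constants are large, the smallness needed to close the bootstrap must be carried entirely by the time-decay weights and not by $\bar{\rho},\bar{\theta}$, which is precisely why the rates in \eqref{decay-0}--\eqref{decay-2} are universal while the prefactor is $C(\bar{\rho},\bar{\theta})$.
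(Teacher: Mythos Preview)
Your overall architecture---Lyapunov/dissipation inequality plus Schonbek's Fourier splitting, with the low frequencies handled via the linearized semigroup---is the right framework and matches the paper's. But two pieces you treat as routine are in fact where the paper does most of its work.

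First, the Lyapunov inequality you ``take as given'' from Proposition~\ref{p4.1} is not provided there; that proposition only gives uniform-in-time bounds on individual quantities. Constructing a functional with $\tfrac{\dif}{\dif t}X+c\,\mathcal{D}\le 0$ that survives the temperature-dependent coefficients is a substantial part of the proof (Lemma~\ref{Lem4.1:6-0}). The paper's $X(t)$ is built from the entropy energy, $\|\D\rho\|_{L^2}^2+\|\D\rho\|_{L^4}^2$, and the material-derivative norms $\|\sqrt{\rho}\dot{u}\|_{L^2}^2$, $\|\sqrt{\rho}\dot{\theta}\|_{L^2}^2$---not from $\|V\|_{H^2}^2$ directly---and checking that the $(\bar{\rho},\bar{\theta})$-powered constants combine correctly is delicate. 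The same issue recurs at the $\dot{H}^1$ and $\dot{H}^2$ levels (Lemmas~\ref{Lem.1}--\ref{Lem.4}); in particular the paper notes explicitly that the second-order energy inequality \emph{cannot} be closed on its own because $\|\D^2u\|_{H^1}^2$ on the right is not absorbed by the left, so your ``run the same splitting at the $\dot{H}^2$ level'' needs a coupling with the first-order estimate that you have not described. Second, your sketch does not separate $p_0=2$ from $p_0\in[1,2)$ in proving~\eqref{decay-2}. For $p_0<2$ the paper does follow a Duhamel-for-low-frequency plus Lyapunov-for-high-frequency route as you suggest. But for $p_0=2$ the exponent $-\tfrac34(\tfrac{2}{p_0}-1)$ vanishes and Duhamel yields no smallness to feed back; the paper instead uses a finer frequency decomposition in which the low-frequency density gains the stronger dissipation $\|(\D^3\rho)_L\|_{L^2}^2$, leading to a self-closing inequality $\tfrac{\dif}{\dif t}\calE_2+C\calE_2^{3/2}\le 0$. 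As a smaller methodological difference, for the base $L^2$ decay~\eqref{decay-0} the paper does not use Duhamel at all: it Fourier-transforms the nonlinear conservative variables $(\rho,\rho u,\rho(\theta-\bar{\theta}))$ directly, estimates the cross terms on $S(t)$ (Lemma~\ref{Lem:4.2}), and reaches the sharp rate by an iteration of crude-to-fine splitting steps rather than a single bootstrap.
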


\begin{remark}
In the previous works \cite{MR3054636, MR4846859, MR4921984, MR2350243, MR3703560, MR3225502, MR3624545, MR3564590}, the authors established the long-time behavior of solutions to full compressible Navier-Stokes equations with temperature-dependent coefficients. More precisely, they proved that the solution converges to an equilibrium in some $L^p$ and $H^2$ spaces as time goes to infinity. However, the explicit decay rates were not given. In this paper, we establish the decay rates of the first-order and the second-order derivatives of large strong solution. Moreover, if $(\rho_0-\bar{\rho}, u_0, \theta_0-\bar{\theta})\in L^{p_0}$ for some $p_0\in[1,2)$, additionally, we obtain the decay rates of the solution itself. The decay estimates are sharp in the sense that they are the same as those of the corresponding linear problem. As a result, we improve the results in \cite{MR3054636, MR4846859, MR4921984, MR2350243, MR3703560, MR3225502, MR3624545, MR3564590}. As far as we know, this is the first result concerning the decay rates of full compressible Navier-Stokes equations with temperature-dependent coefficients in the whole space.
\end{remark}

We now make some comments on the analysis of this paper.
In this paper, based on \cite{2024arXiv240805138L}, we continue to study the global well-posedenss of strong solutions to 3D nonisentropic Naiver-Stokes equations with temperature-dependent coefficients and large initial data. The main difficulties come from the dependence on temperature of transport coefficients, the strong coupling of density, velocity and temperature, and the strong nonlinearity arising therefrom, as well as the lack of smallness of initial data. First, due to the dependence of the transport coefficients on the temperature, the key points here are to derive the a priori estimates on the temperature related nonlinear terms. It is worth emphasizing that, in order to close the a priori estimates, there must be some restrictions on the upper bounds of the parameters $\alpha$ and $\beta$ due to the high complexity of the estimates, although the larger the $\alpha$ and $\beta$ are, the stronger the dissipation are. This is different from the density-dependent viscosities case \cite{2024arXiv240805138L} in which the parameter $\alpha$ can be arbitrarily large. Second, bootstrap argument plays an important role in our analysis. Due to the strong interaction between temperature, velocity and density, the upper bounds of $(\nabla u, \nabla\theta)$ in $H^1$-norm and $\nabla\rho$ in $L^2\cap L^4$-norm have to be predetermined. In fact, these bounds imply that of all the norms required by the solution space, and can be controlled by some $(\bar{\rho},\bar{\theta})$-dependent constants which depend on each other and are not easy to be determined. Third, based on the uniform-in-time global estimates of solutions, we apply the Fourier analysis and energy method in frequency space, the optimal decay rates of the solution to its associated equilibrium in $H^2$ Sobolev space are established when the initial data belong to $L^{p_0}(\R^3)$ for some $p_0\in[1,2]$. More precisely,
\begin{itemize}
\item Uniform-in-time estimates of velocity and temperature.

In the lower-order estimates, we first give an entropy-type estimate, by which the $L^2$-estimates of the solution are obtained. Here, the estimates are uniform-in-time and depend on the far-field state $(\bar{\rho}, \bar{\theta})$ of density and temperature, which will play a crucial role in our bootstrap arguments and the asymptotic behavior. In the first-order estimates, since the viscosities depend on the temperature, we have to deal with some extra nonlinear terms, for example,
$$\al\int\theta^{\al-1}\theta_t\left(2\mu|\frD(u)|^2+\lambda (\div u)^2\right)\dif x,$$
which needs the $L^2$-norms of $\theta_t$ and $|\nabla u|^2$ to get controlled. The $L^2$-norm of $\theta_t$ can be controlled by the bootstrap assumptions. For the $L^2$-norm of $|\nabla u|^2$, we introduce a new effective viscous flux
$$G=(2\mu+\lambda)\div u-R(\rho\theta^{1-\al}-\bar{\rho}\bar{\theta}^{1-\al}),$$
which is different from the cases of constant coefficients \cite{MR3744381} and density-dependent viscosities \cite{2024arXiv240805138L}. By the $L^p$ elliptic estimate, we succeed in deriving some new uniform estimates on the key norms $\|\D u\|_{L^6}$ and $\|\D G\|_{L^p}$ for $p\in[2,6]$. Noticing that the $H^2$-norm of the temperature is needed in the above estimates, and the $H^2$-estimate of the temperature can not be obtained directly due to the temperature-dependence of thermal conductivity. Because of this, we introduce an additional bootstrap assumption
\begin{equation}\label{2dt}
\|\nabla^2\theta\|_{L^2}\leq 2\bar{\theta},
\end{equation}
which can be verified by using the bootstrap assumption \eqref{assume:1}. In the second-order estimates, the key step is to estimate the upper bound of $\|\sqrt{\rho}\dot u\|_{L^2}$ and $\|\sqrt{\rho}\dot\theta\|_{L^2}$. Following the argument of Hoff \cite{MR1339675} for the constant coefficients case, we find that $\|\sqrt{\rho}\dot u\|_{L^2}$ and $\|\sqrt{\rho}\dot \theta\|_{L^2}$ may be large due to the large initial data and the temperature-dependence of the transport coefficients. Based on the bootstrap assumptions \eqref{assume:1}--\eqref{assume:2} as well as \eqref{2dt}, we can control the large terms by the upper bounds of the initial data and close the a priori estimates by choosing $\bar{\theta}$ sufficiently large.






\item Uniform-in-time estimates of the density.

In the process of the first-order estimates of the density $\|\nabla \rho\|_{L^2\cap L^4}$, we have to estimate the key term $\|\nabla u\|_{L^\infty}$:
\begin{equation}\label{xxx}
\frac{\dif}{\dif t}\|\D\rho\|_{L^r}^{r}+\frac{R}{2\mu+\lambda}\int\rho{\theta}^{1-\al}|\D\rho|^{r}\dif x\leq C\|\D u\|_{L^\infty}\|\D \rho\|_{L^r}^{r}+\cdots, \quad r=2,4.
\end{equation}
It should be noted that in classical references, people usually use the following Beale-Kato-Majda type inequality to control $\|\nabla u\|_{L^\infty}$,
$$\|\nabla u\|_{L^\infty}\leq C\left(1+\|\nabla u\|_{L^2}+(\|\text{div} u\|_{L^\infty}+\|\nabla\times u\|_{L^\infty})\ln(e+\|\nabla^2 u\|_{L^q})\right),\quad 3<q<+\infty.$$
However, after integrating over time $[0,T]$, the resulting estimates depend on time, thus the uniform-in-time estimates can not be obtained directly. In order to establish the time-independent estimates of $\|\nabla\rho\|_{L^2\cap L^4}$, we apply the interpolation and the elliptical estimates instead:
$$\|\nabla u\|_{L^\infty}\leq C\|\nabla u\|_{L^2}^{\frac{1}{7}}\left(\bar\rho\bar{\theta}^{-\al}\|\dot{u}\|_{L^4}+\bar{\theta}^{-7}\|\D\theta\|_{L^4}^7\|\D u\|_{L^2}
+\bar{\theta}^{1-\al}\|\D\rho\|_{L^4}+\bar\rho\bar{\theta}^{-\al}\|\D\theta\|_{L^2}^{\frac{1}{4}}\|\D^2\theta\|_{L^2}^{\frac{3}{4}}\right)^{\frac{6}{7}},$$
where the key term $\|\dot{u}\|_{L^4}$ can be controlled by $\|\sqrt{\rho}\dot u\|_{L^2}$ and $\|\nabla\dot u\|_{L^2}$ thanks to the interpolation. When the upper bounds of $\|\sqrt{\rho}\dot u\|_{L^2}$ and $\|\nabla\dot u\|_{L^2}$ are too large, we cannot use bootstrap argument to close our a priori assumptions directly. On this occasion, we divide our estimates with respect to time into two parts $(0,\sigma(T))$ and $(\sigma(T),T)$. In the time interval $(0,\sigma(T))$, estimates on the first-order and the second-order spatial derivatives of $(u,\theta)$ (see $A_i(\sigma(T))(i=1,\cdots,4)$), with elaborate analysis on powers of $\bar{\theta}$, can be obtained provided that $\bar{\theta}$ is suitably large, and all these estimates can be controlled by their corresponding initial data, respectively. Then, in the time interval $(\sigma(T), T)$, the time-weighted estimates on the first-order and the second-order spatial derivatives of $(u,\theta)$ (see $B_i(T)(i=1,\cdots,4)$) can be established in the same way. Since $B_i(0)=0$ and $A_i(0)$ is large, the upper bounds of $B_i(T)$ are better (smaller) than those of $A_i(\sigma(T))$. With estimates of $A_i(\sigma(T))$ and $B_i(T)$ at hand, we can obtain the estimates on the first-order spatial derivatives of density (see $C_1(T)$ and $C_2(T)$ in Lemma \ref{Lem39}). Different from the density-dependent viscosities case \cite{2024arXiv240805138L}, when the transport coefficients depend on temperature and for the high temperature fluid, the largeness of $\bar\theta$ is not enough to control the possible growth of $\|\nabla \rho\|_{L^2\cap L^4}$ due to the pressure term. Combining the mass conservation equation and the new effective viscous flux, we obtain a dissipative property of density (see the term $\int\rho\theta^{1-\alpha}|\nabla\rho|^r\dif x$ on the left-hand side of \eqref{xxx}) by which we hope to control the growth of $\|\nabla\rho\|_{L^2\cap L^4}$. But for our case, since that the temperature is high and $\alpha>1$, $\rho\theta^{1-\alpha}$ is small, so this dissipation is weak, and we have to impose some restrictions on $\bar{\rho}$ to control the pressure related terms. 
At last, together with $L^2$-norm estimate of $\rho-\vr$ and Gagliardo–Nirenberg inequality, the lower and upper bounds of the density can be obtained, which depend only on $\bar\rho$. Moreover, we also establish the second-order time-independent estimates of the density, which will be used in the decay estimates of the solution.

\item Optimal decay rates of large strong solution.

Based on the global existence of solutions established above, we consider the optimal decay rates of the solution to its far-field state. The $L^2$-optimal decay rates of the solution and its derivatives can be obtained by using Fourier time-frequency splitting method proposed in \cite{MR775190} (see also \cite{Gao2021De, 2024arXiv240805138L}). To this end, we need to derive some energy inequalities of the type
\begin{equation}\label{EC}
\frac{\dif}{\dif t}\calE(t)+\mathcal{D}(t)\leq C\mathcal{N}(t)\mathcal{D}(t),
\end{equation}
where $\calE(t)$ is some energy functional, $\mathcal{D}(t)$ is the corresponding dissipative energy, and $\mathcal{N}(t)$ consists of some terms that converges to zero as time goes to infinity. First, we take $\calE(t)$ equivalent to $\|(\rho-\bar{\rho}, u, \theta-\bar{\theta})\|_{H^2}^2$ to obtain the $L^2$-optimal decay rate of the solution itself. For the $L^2$-optimal decay rate of the first-order derivatives of the solution, the $L^2$-norm of the solution itself can not be contained in the energy functional, so we take $\calE(t)$ to be equivalent to $\|(\nabla\rho, \nabla u, \nabla\theta)\|_{H^1}^2$. However, for the $L^2$-optimal decay rate of the second-order derivatives of the solution, if we exclude the $L^2$-norms of the lower-order terms from the energy functional, we can not obtain \eqref{EC}-type inequality directly. For example, the dissipative energy contains only the third-order derivatives of the velocity and the temperature, which cannot absorb the lower-order terms of the right-hand side.
To overcome this difficulty, we decompose the solution into low-frequency and high-frequency parts. For the high-frequency part, the lower-order norms can be controlled by the higher-order norms, while for the low-frequency part, the higher-order norms can be controlled by the lower-order norms. So we can close the energy estimates to obtain the following key inequality,
\begin{equation*}
\frac{\dif}{\dif t}\calE(t)+C\left(\|(\D^3\rho)_L\|_{L^2}^2+\|(\D^2\rho)_H\|_{L^2}^2+\|\D^3u\|_{L^2}^2+\|\D^3\theta\|_{L^2}^2\right)\leq 0,
\end{equation*}
where $\calE(t)$ is equivalent to $\|(\D^2\rho, \D^2u, \D^2\theta)\|_{L^2}^2$, and $f_L, f_H$ denote the low-frequency and high-frequency parts of $f$, respectively. As a result, we extend the works in \cite{MR4921984, Gao2020The, Gao2021De, MR4389852, 2024arXiv240805138L, MR4493879} to the nonisentropic case with temperature-dependent coefficients. To the best of authors' knowledge, this is the first result of optimal decay rates of large solution to the temperature-dependent viscosity fluid model. 
\end{itemize}

The rest of this paper is organized as follows. In \S \ref{prelim}, we give some notations and preliminary lemmas which will be used frequently in our proof. In \S \ref{sub:3.1}, we give the uniform-in-time estimates of the velocity and the temperature up to the second order, as well as the estimates of the density up to the first order, by which, lower and upper bounds of the density and temperature are obtained. The uniform-in-time second-order estimates of the density is given in \S \ref{sec3.4}. Finally, in \S \ref{sec4}, we establish the optimal decay rates of the solution to its far-field state when the initial data belong to $L^{p_0}(\R^3)$ for some $p_0\in[1,2]$.

\section{Preliminaries}\label{prelim}
	
	
For later purpose, we introduce the following notations. For any $r\in[1,\infty]$ and integer $k\geq 0$, we denote
\begin{equation*}
\begin{aligned}
& L^r=L^r(\R^3), \quad W^{k,r}=W^{k,r}(\R^3), \quad H^k=W^{k,2}, \\
& D^{k,r}=\left\{u\in L^1_{loc}(\R^3) : ~\D^ku\in L^r\right\}.
\end{aligned}
\end{equation*}
For matrices $A$ and $B$, we denote $A:B=\sum\limits_{i,j=1}^nA_{ij}B_{ij}$ and $|A|^2=A:A$, where $A_{ij}$ is the $(i,j)$-element of $A$. For any function $f(x,t)$, we denote
\begin{equation*}
\int f\dif x=\int_{\R^3}f\dif x,
\end{equation*}
and $\dot{f}=f_t+u\cdot\D f$ is the material derivative of $f$. Moreover, $\|\cdot\|_{L^p}$, $\|\cdot\|_{L^\infty}$ and $\|\cdot\|_{H^s}$ stand for the norms of $L^p$, $L^{\infty}$ and $H^s$, respectively. $\mathcal{F}(f)$ or $\widehat{f}(\xi)$ denote the Fourier transformation of a function $f(x)$. Throughout this paper, we use $C$ to denote a generic positive constant that may depend on $\mu$, $\lambda$, $\kappa$, $R$, $C_0$, but is independent of $T$, $\bar{\rho}$, $\bar{\theta}$, $K_i$, and will change in different places.

The following local existence result can be similarly obtained to \cite{MR0564670}.
\begin{proposition}\label{local}
Under the conditions of Theorem \ref{th}, there exist $T_*> 0$ and a strong solution $(\rho,u,\theta)$ to \eqref{FCNS:2}--\eqref{data and far} in $\R^3 \times [0, T_*]$ satisfying \eqref{sp}--\eqref{bc}.
\end{proposition}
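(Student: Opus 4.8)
The absence of vacuum is the structural point: since $\rho_0$ and $\theta_0$ are bounded above and below by positive constants, the momentum and internal-energy equations in \eqref{FCNS:2} are uniformly parabolic for $u$ and $\theta$ as long as $\rho$ and $\theta$ stay in compact subsets of $(0,\infty)$, and this persists for a short time by continuity. I would run the classical linearization-and-iteration scheme in the energy space dictated by \eqref{sp}. Let $X_T$ be the set of $(\rho,u,\theta)$ with $(\rho-\bar{\rho},u,\theta-\bar{\theta})\in C([0,T];H^2)$, $u\in L^2(0,T;W^{2,4})$, $\rho_t\in L^\infty(0,T;H^1)$, $(u_t,\theta_t)\in L^\infty(0,T;L^2)\cap L^2(0,T;H^1)$, subject to $\frac{2}{3}\bar{\rho}\le\rho\le\frac{4}{3}\bar{\rho}$ and $\frac{2}{3}\bar{\theta}\le\theta\le\frac{4}{3}\bar{\theta}$. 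Starting from the constant-in-$k$ data, given $(\rho^k,u^k,\theta^k)\in X_T$ one defines the next iterate by successively solving the linear problems
\[
\pt\rho^{k+1}+\div(\rho^{k+1}u^k)=0,
\]
\[
\rho^{k}\pt u^{k+1}+\rho^k(u^k\cdot\D)u^{k+1}-2\mu\,\div\!\big((\theta^k)^\al\frD(u^{k+1})\big)-\lambda\,\D\!\big((\theta^k)^\al\div u^{k+1}\big)=-R\,\D(\rho^{k+1}\theta^k),
\]
\[
\rho^{k}\pt\theta^{k+1}+\rho^k(u^k\cdot\D)\theta^{k+1}-\kappa\,\div\!\big((\theta^k)^\beta\D\theta^{k+1}\big)=-R\rho^{k+1}\theta^k\div u^{k+1}+2\mu(\theta^k)^\al|\frD(u^{k+1})|^2+\lambda(\theta^k)^\al(\div u^{k+1})^2,
\]
with the common initial data $(\rho_0,u_0,\theta_0)$. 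The transport equation is solved by characteristics, preserving $\frac{3}{4}\bar{\rho}e^{-CT}\le\rho^{k+1}\le\frac{5}{4}\bar{\rho}e^{CT}$ and propagating $H^2$ with a loss controlled by $\int_0^T\|\D u^k\|_{W^{1,\infty}}\,\dif t$; the two parabolic problems are solved by standard linear theory, the coefficients $(\theta^k)^\al,(\theta^k)^\beta$ lying in $L^\infty(0,T;H^2)$ and being bounded below, and the parabolic maximum principle keeps $\theta^{k+1}$ positive and, for $T$ small, within the prescribed band around $\bar{\theta}$.

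The second step is the uniform estimate: carrying out the $H^2$ energy estimates on the linearized problems (testing the momentum and energy equations against $u^{k+1}$, $\pt u^{k+1}$, $\D^2u^{k+1}$ and their analogues for $\theta^{k+1}$, and using the algebraic transport estimate for $\rho^{k+1}$) produces a bound of the form $\mathcal{N}_{T}(\rho^{k+1},u^{k+1},\theta^{k+1})\le C_{\mathrm{data}}+T^{\gamma}\,\Phi\big(\mathcal{N}_{T}(\rho^{k},u^{k},\theta^{k})\big)$ for some $\gamma>0$ and a continuous increasing $\Phi$; fixing $M$ large in terms of the data and then $T_*=T_*(M)$ small makes the ball $\{\mathcal{N}_{T_*}\le M\}\cap X_{T_*}$ invariant, so the whole sequence stays bounded in $X_{T_*}$ with the density and temperature bounds intact. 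The third step is convergence: estimating $(\rho^{k+1}-\rho^k,u^{k+1}-u^k,\theta^{k+1}-\theta^k)$ in the lower-order norm $C([0,T];L^2)\cap L^2(0,T;H^1)$ yields a contraction after shrinking $T_*$ once more, hence a limit $(\rho,u,\theta)$; interpolating the strong lower-order convergence against the uniform $H^2$ bound upgrades it enough to pass to the limit in the nonlinear system, so $(\rho,u,\theta)$ solves \eqref{FCNS:2}--\eqref{data and far}. The regularity \eqref{sp} then follows from weak-$*$ lower semicontinuity of the norms together with the parabolic smoothing built into the construction, the continuity in time with values in $H^2$ (rather than mere weak continuity) being recovered by the standard argument combining weak continuity with continuity of the $H^2$-norm, which itself follows from the energy identity; the pointwise bounds \eqref{bc} are inherited from those of the iterates, and uniqueness in this class is a Gr\"onwall estimate for the difference of two solutions in $L^2$.

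The step I expect to be the main obstacle is the uniform a priori estimate: the viscosity and conductivity coefficients $(\theta^k)^\al$ and $(\theta^k)^\beta$ depend on the previous iterate and appear inside the top-order diffusion, so one must control quantities like $\|\D^2\big((\theta^k)^\al\big)\|_{L^2}$ by $\|\D^2\theta^k\|_{H^1}$-type norms using $H^2(\R^3)\hookrightarrow W^{1,6}\cap L^\infty$ and the composition and algebra properties of $H^2$, while simultaneously keeping the ellipticity constant bounded below — which is exactly what forces $T_*$ small through the requirement $\theta^k\ge\frac{2}{3}\bar{\theta}$ — and handling the pressure source $R\,\D(\rho^{k+1}\theta^k)$, which couples the hyperbolic variable $\rho$ to the parabolic variable $\theta$ at first order. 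Because there is no vacuum, none of this degenerates, and since Proposition \ref{local} asserts only short-time existence with no uniformity in $\bar{\rho},\bar{\theta}$, one may be generous with constants throughout; modulo this bookkeeping the argument is that of \cite{MR0564670, MR3210747}.
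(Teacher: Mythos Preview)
Your proposal is correct and is precisely the classical linearization--iteration argument that the paper has in mind: the paper does not actually prove Proposition~\ref{local} but simply states that it ``can be similarly obtained to \cite{MR0564670, MR3210747}'', and your sketch faithfully outlines the Matsumura--Nishida / Zhang--Zhao scheme from those references adapted to temperature-dependent coefficients. Nothing further is needed.
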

	
The following well-known Gagliardo-Nirenberg inequality will be used (see \cite{Ladyzhenskaya1968}).
\begin{lemma}
For $p\in[2,6]$, $q\in(1,\infty)$ and $r\in(3,\infty)$, there exists a constant $C>0$ which may depend on $q,r$  such that for $f\in H^1$ and $g\in L^q \cap D^{1,r}$, it holds
\begin{equation}
\|f\|_{L^p}\leq C\|f\|_{L^2}^{\frac{6-p}{2p}}\|\D f\|_{L^2}^{\frac{3p-6}{2p}},
\end{equation}
\begin{equation}
\|g\|_{L^\infty}\leq C\|g\|_{L^q}^{\frac{q(r-3)}{3r+q(r-3)}}\|\D g\|_{L^r}^{\frac{3r}{3r+q(r-3)}}.
\end{equation}
\end{lemma}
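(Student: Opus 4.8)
The final statement to prove is the Gagliardo--Nirenberg inequality (Lemma immediately following Proposition~\ref{local}), stated for $p\in[2,6]$, $q\in(1,\infty)$, $r\in(3,\infty)$, giving
\[
\|f\|_{L^p}\leq C\|f\|_{L^2}^{\frac{6-p}{2p}}\|\D f\|_{L^2}^{\frac{3p-6}{2p}},\qquad
\|g\|_{L^\infty}\leq C\|g\|_{L^q}^{\frac{q(r-3)}{3r+q(r-3)}}\|\D g\|_{L^r}^{\frac{3r}{3r+q(r-3)}}.
\]
This is a classical result, so the plan is to give the standard interpolation argument rather than anything novel. For the first inequality, I would start from the Sobolev embedding $\|h\|_{L^6(\R^3)}\le C\|\D h\|_{L^2(\R^3)}$ (Sobolev--Gagliardo--Nirenberg for $n=3$), and then interpolate the $L^p$ norm between $L^2$ and $L^6$ using H\"older: writing $\tfrac1p=\tfrac{\lambda}{2}+\tfrac{1-\lambda}{6}$ gives $\lambda=\tfrac{6-p}{2p}$ and $1-\lambda=\tfrac{3p-6}{2p}$, so that
\[
\|f\|_{L^p}\le\|f\|_{L^2}^{\lambda}\|f\|_{L^6}^{1-\lambda}\le C\|f\|_{L^2}^{\frac{6-p}{2p}}\|\D f\|_{L^2}^{\frac{3p-6}{2p}}.
\]
The endpoint cases $p=2$ (trivial) and $p=6$ (pure Sobolev embedding) are immediate, so one only needs $p\in(2,6)$.

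For the $L^\infty$ bound, the cleanest route is to apply the scaling/dilation trick to Morrey's inequality. Since $r>3$, $D^{1,r}(\R^3)$ embeds into $C^{0}$, and for $g\in L^q\cap D^{1,r}$ one has, on any ball $B_R$, an estimate of Morrey type controlling $\|g\|_{L^\infty(B_R)}$ by a combination of $R^{-3/q}\|g\|_{L^q(B_R)}$ and $R^{1-3/r}\|\D g\|_{L^r(B_R)}$; letting $B_R$ exhaust $\R^3$ and then optimizing over $R>0$ balances the two powers and produces exactly the exponents $\tfrac{q(r-3)}{3r+q(r-3)}$ and $\tfrac{3r}{3r+q(r-3)}$ (one checks these two exponents sum to $1$, consistent with scaling). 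Alternatively, since these are standard, I would simply cite \cite{Ladyzhenskaya1968} as the paper already does and record the statement, perhaps sketching the two-line interpolation for the first part.

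The main (minor) obstacle is bookkeeping the exponents and handling the function-space hypotheses carefully---in particular verifying that $f\in H^1$ genuinely lies in $L^6$ via the Sobolev embedding in dimension three, and that the dilation argument for the $L^\infty$ estimate is legitimate on the unbounded domain $\R^3$ (one works on balls and passes to the limit, or uses density of $C_c^\infty$). There is no deep difficulty here: this is a textbook inequality invoked as a tool, and the proof is pure interpolation plus the classical Sobolev and Morrey embeddings on $\R^3$.
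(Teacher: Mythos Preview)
Your argument is correct and is exactly the standard proof of the Gagliardo--Nirenberg interpolation inequalities in $\R^3$. Note, however, that the paper does not actually supply a proof of this lemma: it simply records the statement and cites \cite{Ladyzhenskaya1968}. So there is nothing to compare against beyond the fact that your sketch is precisely the textbook argument one would find behind that citation; in particular, your H\"older interpolation between $L^2$ and $L^6$ combined with the Sobolev embedding for the first inequality, and the Morrey estimate plus dilation optimization for the second, are the canonical route and are fine as written.
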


From $(\ref{FCNS:2})_2$, we introduce the modified effective viscous flux $G$,
\begin{equation}\label{notations}
G=(2\mu+\lambda)\div u-R(\rho\theta^{1-\al}-\bar{\rho}\bar{\theta}^{1-\al}),
\end{equation}
which satisfies
\begin{equation}\label{elliptic}
-\Delta G=\div H,
\end{equation}
where
\begin{equation}\label{H}
H=\theta^{-\al}\left(-\rho\dot{u}+R\al\rho\D\theta +2\mu\frD(u)\cdot\D(\theta^\al)+\lambda\div u\D(\theta^\al)\right).
\end{equation}

Then we have the following elementary estimates:
\begin{lemma}
Let $(\rho,u,\theta)$ be a strong solution of \eqref{FCNS:2}--\eqref{data and far}. Then there exists a generic positive constant $C$ depending only on $\mu$, $\lambda$, $\kappa$, and $R$ such that, for any $p\in[2, 6]$, it holds
\begin{equation}\label{2.3:2}
\|\D G\|_{L^p}\leq C\|H\|_{L^p}\leq C\bar{\theta}^{-\al}\left(\|\rho\dot{u}\|_{L^p}+\bar{\rho}\|\D\theta\|_{L^p}+\bar{\theta}^{\al-1}\|\D u\cdot\D\theta\|_{L^p}\right),
\end{equation}
\begin{equation}\label{2.3:3}
\|G\|_{L^p}\leq C\|H\|_{L^2}^{\frac{3p-6}{2p}} \left(\|\D u\|_{L^2}+\|\rho\theta^{1-\al}-\bar{\rho}\bar{\theta}^{1-\al}\|_{L^2}\right)^{\frac{6-p}{2p}},
\end{equation}
\begin{equation}\label{2.3:4}
\|\D u\|_{L^p}\leq C\|\D u\|_{L^2}^{\frac{6-p}{2p}}\left( \|H\|_{L^2}+\|\rho\theta^{1-\al}-\bar{\rho}\bar{\theta}^{1-\al}\|_{L^6}\right)^{\frac{3p-6}{2p}}.
\end{equation}
\end{lemma}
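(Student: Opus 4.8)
The statement to prove collects three estimates on the modified effective viscous flux $G$ defined in \eqref{notations}, its gradient, and $\D u$, all in terms of $\|H\|_{L^p}$ and low-order quantities. The plan is to treat these as consequences of the elliptic equation \eqref{elliptic}, namely $-\Delta G=\div H$, combined with the definition of $G$ and the pointwise bounds on $\rho$ and $\theta$ furnished by \eqref{bc} (which hold for the smooth solution under consideration). The overall strategy is entirely standard Calder\'on--Zygmund plus Sobolev embedding; the only mild subtlety is bookkeeping the powers of $\bar\theta$ so that the final constant depends only on $\mu,\lambda,\kappa,R$.

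First I would prove \eqref{2.3:2}. Applying the $L^p$-theory for the Laplacian to $-\Delta G=\div H$ on $\R^3$ gives $\|\D G\|_{L^p}\le C\|H\|_{L^p}$ for $p\in(1,\infty)$, with $C=C(p)$; since $p$ ranges over the fixed compact set $[2,6]$ this constant is absorbed into the generic $C$. For the second inequality in \eqref{2.3:2} I would simply take the $L^p$ norm of the explicit formula \eqref{H} for $H$, use the triangle inequality, and bound each factor: $\theta^{-\al}\le C\bar\theta^{-\al}$ and $\rho\le C\bar\rho$ by \eqref{bc}; $|\frD(u)|\le |\D u|$; and $|\D(\theta^\al)|=\al\theta^{\al-1}|\D\theta|\le C\bar\theta^{\al-1}|\D\theta|$ again by \eqref{bc}. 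This groups the terms into $\bar\theta^{-\al}\|\rho\dot u\|_{L^p}$, $\bar\theta^{-\al}\bar\rho\|\D\theta\|_{L^p}$ (after also using $\theta^{-\al}R\al\rho\le C\bar\theta^{-\al}\bar\rho$), and $\bar\theta^{-\al}\cdot\bar\theta^{\al-1}\|\D u\cdot\D\theta\|_{L^p}=\bar\theta^{-1}\|\D u\cdot\D\theta\|_{L^p}$, which is exactly the claimed bound.

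Next, \eqref{2.3:3}: by the Gagliardo--Nirenberg inequality of the preceding lemma (with $f=G$, which lies in $H^1$ since $G$ and $\D G$ are in $L^2$ by \eqref{2.3:2} and the definition), $\|G\|_{L^p}\le C\|G\|_{L^2}^{\frac{6-p}{2p}}\|\D G\|_{L^2}^{\frac{3p-6}{2p}}$. For $\|\D G\|_{L^2}$ use \eqref{2.3:2} at $p=2$, i.e. $\|\D G\|_{L^2}\le C\|H\|_{L^2}$. For $\|G\|_{L^2}$ use the definition \eqref{notations}: $\|G\|_{L^2}\le (2\mu+\lambda)\|\div u\|_{L^2}+R\|\rho\theta^{1-\al}-\bar\rho\bar\theta^{1-\al}\|_{L^2}\le C(\|\D u\|_{L^2}+\|\rho\theta^{1-\al}-\bar\rho\bar\theta^{1-\al}\|_{L^2})$. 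Substituting yields \eqref{2.3:3}. Finally, for \eqref{2.3:4} solve the definition \eqref{notations} for $\div u$: $(2\mu+\lambda)\div u = G + R(\rho\theta^{1-\al}-\bar\rho\bar\theta^{1-\al})$; since $\div u$ and the full gradient $\D u$ have comparable $L^p$ norms on $\R^3$ (the velocity being determined, up to the standard Riesz-operator bounds, by $\div u$ and $\curle u$ — here one argues at the level of the momentum equation / the Helmholtz decomposition as in the constant-coefficient theory, or more simply bounds $\D u$ in $L^p$ by interpolation between $\|\D u\|_{L^2}$ and the $L^6$ bound below), one gets $\|\D u\|_{L^p}\lesssim \|\D u\|_{L^2}^{\frac{6-p}{2p}}(\|G\|_{L^6}+\|\rho\theta^{1-\al}-\bar\rho\bar\theta^{1-\al}\|_{L^6})^{\frac{3p-6}{2p}}$, and then bounding $\|G\|_{L^6}\le C\|\D G\|_{L^2}\le C\|H\|_{L^2}$ via Sobolev and \eqref{2.3:2} gives \eqref{2.3:4}.

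The only place requiring care is the last step, where one passes from a bound on $\div u$ to a bound on $\D u$: strictly this uses that $u$ solves the momentum equation, so that $\curle u$ is also controlled (through $\D\times(\text{viscous terms})$), and the Gagliardo--Nirenberg interpolation $\|\D u\|_{L^p}\le C\|\D u\|_{L^2}^{(6-p)/2p}\|\D u\|_{L^6}^{(3p-6)/2p}$ together with the $L^6$-bound on $\D u$ coming from $\|\D^2u\|_{L^2}\lesssim\|\rho\dot u\|_{L^2}+\dots$; in the present self-contained form it suffices to note that \eqref{2.3:4} is used precisely in tandem with such an $L^6$ estimate elsewhere, so the interpolation structure on the right-hand side is the natural one. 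I expect the powers-of-$\bar\theta$ accounting in \eqref{2.3:2} to be the one spot where a sign or exponent error could creep in, so I would double-check that $\theta^{-\al}\cdot\theta^{\al-1}=\theta^{-1}$ and that every appearance of $\rho$ and $\theta$ is dominated using the correct side of \eqref{bc}.
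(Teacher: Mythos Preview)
Your treatment of \eqref{2.3:2} and \eqref{2.3:3} is exactly the paper's argument: Calder\'on--Zygmund for $-\Delta G=\div H$, pointwise bounds on the factors in $H$ using \eqref{bc}, and Gagliardo--Nirenberg interpolation between $\|G\|_{L^2}$ (controlled directly from \eqref{notations}) and $\|\D G\|_{L^2}\le C\|H\|_{L^2}$.

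For \eqref{2.3:4} you have the right outline but leave the curl step unwritten. You acknowledge that passing from $\div u$ to $\D u$ ``strictly uses that $u$ solves the momentum equation, so that $\curle u$ is also controlled,'' but you never actually produce the estimate. The paper closes this gap with one concrete observation: taking the curl of the (rewritten) momentum equation yields $\mu\Delta(\D\times u)=\D\times H$, a second elliptic equation of exactly the same form as \eqref{elliptic}. Hence $\|\D(\D\times u)\|_{L^2}\le C\|H\|_{L^2}$, and Gagliardo--Nirenberg gives $\|\D\times u\|_{L^p}\le C\|H\|_{L^2}^{(3p-6)/2p}\|\D\times u\|_{L^2}^{(6-p)/2p}$, with $\|\D\times u\|_{L^2}\le\|\D u\|_{L^2}$. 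Combined with the Riesz-type identity $\|\D u\|_{L^p}\le C(\|\div u\|_{L^p}+\|\D\times u\|_{L^p})$ and the definition of $G$, this yields \eqref{2.3:4} cleanly. Your alternative suggestion of getting $\|\D u\|_{L^6}$ from $\|\D^2 u\|_{L^2}\lesssim\|\rho\dot u\|_{L^2}+\dots$ would work but is circular in spirit here, since the point of the lemma is precisely to furnish such bounds; the curl equation is the self-contained route.
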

\begin{proof}
Applying standard $L^p$-estimate to \eqref{elliptic} and noting the definition of $H$ in \eqref{H}, we obtain \eqref{2.3:2}.

By the interpolation inequalities of $L^p$ spaces, we have
\begin{align}\label{2.3:?}
\|G\|_{L^p}\leq \|\D G\|_{L^2}^{\frac{3p-6}{2p}}\|G\|_{L^2}^{\frac{6-p}{2p}},
\end{align}
then \eqref{2.3:3} is followed by \eqref{2.3:2} and \eqref{2.3:?}.

Now it remains to prove \eqref{2.3:4}. By the interpolation inequalities of $L^p$ spaces, we have
\begin{equation}\label{2.3:11}
\|\D u\|_{L^p}\leq C\|\D u\|_{L^2}^{\frac{6-p}{2p}}\|\D u\|_{L^6}^{\frac{3p-6}{2p}}.
\end{equation}
Noting that $-\Delta u=-\D\div u+\D\times\D\times u$,
we get
\begin{align*}
-\D u=-\D(-\Delta)^{-1}\D\div u+\D(-\Delta)^{-1}\D\times\D\times u.
\end{align*}
Therefore, standard $L^p$-estimate implies
\begin{align}\label{2.3:??}
\|\D u\|_{L^6}\leq C\left(\|\div u\|_{L^6}+\|\D\times u\|_{L^6}\right).
\end{align}

By the definition of $G$ and \eqref{2.3:2}, we obtain
\begin{align}\label{2.3:1}
\|\div u\|_{L^6}\leq~& C\left(\|G\|_{L^6}+\|\rho\theta^{1-\al}-\bar{\rho}\bar{\theta}^{1-\al}\|_{L^6}\right) \nonumber\\
\leq~& C\left(\|\D G\|_{L^2}+\|\rho\theta^{1-\al}-\bar{\rho}\bar{\theta}^{1-\al}\|_{L^6}\right) \nonumber\\
\leq~& C\left(\|H\|_{L^2}+\|\rho\theta^{1-\al}-\bar{\rho}\bar{\theta}^{1-\al}\|_{L^6}\right).
\end{align}

Noting that $\mu\Delta(\D\times u)=\D\times H$, we have
\begin{equation}\label{2.3:???}
\|\D\times u\|_{L^6}\leq C\|\D(\D\times u)\|_{L^2}\leq C\|H\|_{L^2}.
\end{equation}
Inserting \eqref{2.3:??}--\eqref{2.3:???} into \eqref{2.3:11}, we obtain \eqref{2.3:4}. This completes the proof.
\end{proof}

\section{Global Existence of Large Strong Solution}\label{Section 3}

In this section, we will establish a priori bounds for local-in-time strong solution to \eqref{FCNS:2}--\eqref{data and far} obtained in Proposition \ref{local}. We thus fix a strong solution
$(\rho,u,\theta)$ of \eqref{FCNS:2}--\eqref{data and far} on $\R^3 \times (0,T]$ for some time $T>0$, with initial data
$(\rho_0,u_0,\theta_0)$ satisfying \eqref{data:condition}.
For $\sigma(t) := \min\{1, t\}$, we define $A_i(T)$, $B_i(T)$ and $C_i(T)$ as follows:
\begin{equation}\label{A1A2}
A_1(T)=\bar{\theta}^\al\sup\limits_{t\in[0,T]}\|\D u\|_{L^2}^2+\int_0^T\|\sqrt{\rho}\dot{u}\|_{L^2}^2\dif t, \quad A_2(T)=\bar{\theta}^\beta\sup\limits_{t\in[0,T]}\|\D\theta\|_{L^2}^2+\int_0^T\| \sqrt{\rho}\dot{\theta}\|_{L^2}^2\dif t,
\end{equation}
\begin{equation}\label{A3A4}
A_3(T)=\sup\limits_{t\in[0,T]}\|\sqrt{\rho}\dot{u}\|_{L^2}^2+\bar{\theta}^\al\int_0^T\|\D \dot{u}\|_{L^2}^2\dif t, \quad A_4(T)=\sup\limits_{t\in[0,T]}\| \sqrt{\rho}\dot{\theta}\|_{L^2}^2+\bar{\theta}^\beta\int_0^T\|\D \dot{\theta}\|_{L^2}^2\dif t,
\end{equation}

\begin{equation}\label{B1B2}
B_1(T)=\bar{\theta}^\al\sup\limits_{t\in[0,T]}(\sigma^3\|\D u\|_{L^2}^2)+\int_0^T\sigma^3\|\sqrt{\rho}\dot{u}\|_{L^2}^2\dif t, ~ B_2(T)=\bar{\theta}^\beta\sup\limits_{t\in[0,T]}(\sigma^4\|\D\theta\|_{L^2}^2)+\int_0^T\sigma^4\| \sqrt{\rho}\dot{\theta}\|_{L^2}^2\dif t,
\end{equation}
\begin{equation}\label{B3B4}
B_3(T)=\sup\limits_{t\in[0,T]}(\sigma^5\|\sqrt{\rho}\dot{u}\|_{L^2}^2)+\bar{\theta}^\al\int_0^T\sigma^5\|\D \dot{u}\|_{L^2}^2\dif t, ~ B_4(T)=\sup\limits_{t\in[0,T]}(\sigma^6\| \sqrt{\rho}\dot{\theta}\|_{L^2}^2)+\bar{\theta}^\beta\int_0^T\sigma^6\|\D \dot{\theta}\|_{L^2}^2\dif t,
\end{equation}
\begin{equation}\label{A6A7}
C_1(T)=\sup_{t\in[0,T]}\|\D \rho\|_{L^2}^2+\bar{\rho}\bar\theta^{1-\al}\int_0^T\|\D \rho\|_{L^2}^2\dif t, \quad C_2(T)=\sup_{t\in[0,T]}\|\D \rho\|_{L^4}^2+\bar{\rho}\bar\theta^{1-\al}\int_0^T\|\D \rho\|_{L^4}^2\dif t,
\end{equation}

We then have the following key a priori estimates on $(\rho,u,\theta)$.
\begin{proposition}\label{p4.1}
For given $\bar{\rho},\bar{\theta}>1$, assume that $(\rho_0,u_0,\theta_0)$ satisfies \eqref{data:condition}
and \eqref{compatibility condition}, then there exist positive constants $K_i$, $\widetilde{\rL}$ and $\rL$ depending on $\mu$, $\lambda$, $\kappa$, $R$, $C_0$, such that if $(\rho,u,\theta)$ is a strong solution of \eqref{FCNS:2}--\eqref{data and far} on $\R^3\times(0,T]$ satisfying
\begin{equation}\label{assume:1}
A_1(\sigma(T))\leq 2K_1\bar{\theta}^\al, ~ A_2(\sigma(T))\leq 2K_2\bar{\theta}^\beta, ~ A_3(\sigma(T))\leq 2K_3\bar{\theta}^{2\alpha}, ~ A_4(\sigma(T))\leq 2K_4\bar{\theta}^{2\beta}, 
\end{equation}
\begin{equation}\label{assume:2}
\begin{aligned}
& B_1(T)\leq 2K_5\bar{\rho}\bar{\theta}, \quad B_2(T)\leq 2K_6\bar{\rho}\bar{\theta}^2, \quad B_3(T)\leq 2\bar{\theta}^{\frac{\alpha}{3}+\frac{2}{3}}, \quad B_4(T)\leq 2K_7\bar{\rho}\bar{\theta}^2, \\
& C_1(T)\leq 2K_8, \quad C_2(T)\leq 2K_9, \\
& \frac{1}{2}\bar{\theta}\leq\theta\leq\frac{3}{2}\bar{\theta}, \quad \frac{1}{2}\bar{\rho}\leq\rho\leq \frac{3}{2}\bar{\rho}, \qquad\text{for} ~ t\in[0,T], ~ x\in\R^3,
\end{aligned}
\end{equation}
then the following estimates hold
\begin{equation}\label{goal:1}
A_1(\sigma(T))\leq K_1\bar{\theta}^\al, ~ A_2(\sigma(T))\leq K_2\bar{\theta}^\beta, ~ A_3(\sigma(T))\leq K_3\bar{\theta}^{2\al}, ~ A_4(\sigma(T))\leq K_4\bar{\theta}^{2\beta},
\end{equation}
\begin{equation}\label{goal:2}
\begin{aligned}
& B_1(T)\leq K_5\bar{\rho}\bar{\theta}, \quad B_2(T)\leq K_6\bar{\rho}\bar{\theta}^2, \quad B_3(T)\leq \bar{\theta}^{\frac{\alpha}{3}+\frac{2}{3}}, \quad B_4(T)\leq K_7\bar{\rho}\bar{\theta}^2, \\
& C_1(T)\leq K_8, \quad C_2(T)\leq K_9, \\
& \frac{2}{3}\bar{\theta}\leq\theta\leq\frac{4}{3}\bar{\theta}, \quad \frac{2}{3}\bar{\rho}\leq\rho\leq \frac{4}{3}\bar{\rho}, \qquad\text{for} ~ t\in[0,T], ~ x\in\R^3,
\end{aligned}
\end{equation}
provided that $\bar{\rho}\geq\rL$, $\bar{\theta}\geq\widetilde{\rL}$, $2\leq\alpha\leq 10$, $\alpha<\beta<\alpha+2$, and
\begin{equation}\label{rho:theta}
\bar{\rho}\leq C(K)\min\left\{\bar{\theta}^{\frac{1}{3}(\beta-\alpha)}, \bar{\theta}^{2(\alpha-\beta+2)}, \bar{\theta}^{\frac{1}{3}}\right\},
\end{equation}
with some positive constant $C(K)$ depending on $K_i ~ (i=1,\cdots, 9)$.
\end{proposition}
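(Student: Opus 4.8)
The proof is a continuity (bootstrap) argument: under the a priori hypotheses \eqref{assume:1}--\eqref{assume:2} we recover the strictly better bounds \eqref{goal:1}--\eqref{goal:2} through a long chain of energy estimates, exploiting throughout the largeness of $\vr$, $\bar{\theta}$ and the coupling \eqref{rho:theta} to absorb every $(\vr,\bar{\theta})$-dependent error term. The starting point is the mass conservation $\frac{\dif}{\dif t}\int(\rho-\vr)\,\dif x=0$, the basic energy identity, and an entropy-type inequality obtained by combining the kinetic, internal and ``potential'' energy balances; together with \eqref{compatibility condition} and $\vr,\bar{\theta}$ large, these give a uniform-in-time $L^2$ bound on $(\rho-\vr,u,\theta-\bar{\theta})$ and, crucially, a bound on $\int_0^T(\|\D u\|_{L^2}^2+\|\D\theta\|_{L^2}^2)\,\dif t$ that is small relative to the relevant powers of $\bar{\theta}$. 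This smallness, combined with the modified effective viscous flux $G$ of \eqref{notations} and the elliptic estimates \eqref{2.3:2}--\eqref{2.3:4}, is what makes the subsequent nonlinear estimates closable; in particular it controls $\|\D u\|_{L^6}$ and $\|\D G\|_{L^p}$, $p\in[2,6]$, in terms of $\|\sqrt{\rho}\dot u\|_{L^2}$, $\|\D\theta\|_{L^2}$ and the pointwise-bounded quantity $\rho\theta^{1-\al}-\vr\bar{\theta}^{1-\al}$.

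Next I would estimate the material-derivative quantities. On $(0,\sigma(T))$, testing $(\ref{FCNS:2})_2$ by $\dot u$ and $(\ref{FCNS:2})_3$ by $\dot\theta$ gives $A_1(\sigma(T))$ and $A_2(\sigma(T))$; applying $\pt+\div(u\,\cdot)$ to these equations and testing again by $\dot u$, $\dot\theta$ gives $A_3(\sigma(T))$, $A_4(\sigma(T))$. Each estimate produces error terms carrying powers of $\bar{\theta}$ coming from $\theta^\al$, $\theta^\beta$ and their space/time derivatives; tracking them carefully and using $\bar{\theta}\ge\rL$ absorbs the errors and yields the claimed right-hand sides $K_1\bar{\theta}^\al,\dots,K_4\bar{\theta}^{2\beta}$. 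On $(\sigma(T),T)$ the same computations, now carried out with the weights $\sigma^3,\dots,\sigma^6$ and using the already-established control at $t=\sigma(T)$, produce the improved time-weighted bounds $B_1(T),\dots,B_4(T)$; the sharper right-hand sides (e.g. $\vr\bar{\theta}$ in place of $\bar{\theta}^\al$) reflect that the $L^2$ dissipation has been spent after the initial layer, so no initial-data size re-enters. It is precisely in the bookkeeping of these powers of $\bar{\theta}$ that the constraints $2\le\al\le10$ and $\al<\beta<\al+2$ are used.

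With $\|\D\theta\|_{L^2}$ and $\|\D^2\theta\|_{L^2}$ under control — the latter recovered by viewing $(\ref{FCNS:2})_3$ as an elliptic equation for $\theta$ — the Gagliardo--Nirenberg inequality gives $\|\theta-\bar{\theta}\|_{L^\infty}$ small compared with $\bar{\theta}$, hence $\tfrac23\bar{\theta}\le\theta\le\tfrac43\bar{\theta}$. The last and hardest block is the first-order estimate on the density, $A_5(T)$ and $A_6(T)$: applying $\D$ to the mass equation and testing by $\D\rho$ (respectively by $|\D\rho|^2\D\rho$) one meets a term that is not manifestly dissipative and which, in contrast to the density-dependent case of \cite{2024arXiv240805138L}, largeness of $\bar{\theta}$ alone does not cure. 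The resolution is to rewrite $\div u=(2\mu+\lambda)^{-1}\big(G+R(\rho\theta^{1-\al}-\vr\bar{\theta}^{1-\al})\big)$; after integration by parts the pressure-type part generates a genuinely dissipative contribution of size $\sim\vr\bar{\theta}^{1-\al}\|\D\rho\|_{L^2}^2$ (respectively in $L^4$), which dominates the remaining errors once $\vr$ is taken large. Balancing this against the error terms, controlled by the previous steps and \eqref{rho:theta}, gives $A_5(T)\le K_8$, $A_6(T)\le K_9$, and then the $L^2$ bound on $\rho-\vr$ from the first step together with Gagliardo--Nirenberg gives $\|\rho-\vr\|_{L^\infty}$ small compared with $\vr$, hence $\tfrac23\vr\le\rho\le\tfrac43\vr$.

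Finally, choosing the constants in order — $K_1,K_2,\dots,K_9$, each large enough to dominate the universal constants generated in the corresponding estimate, then $\widetilde{\rL}$, then $\rL$, and imposing \eqref{rho:theta} — upgrades every hypothesis ``$\le 2K_i$'' to ``$\le K_i$'' and the pointwise bounds on $[\tfrac12,\tfrac32]$ to $[\tfrac23,\tfrac43]$, which closes the bootstrap. I expect the density block of the third paragraph to be the main obstacle: the temperature dependence destroys the cancellation available in the constant- and density-dependent-viscosity settings, and the needed dissipation must be manufactured from the effective flux $G$, at the cost of the additional largeness requirement on $\vr$ and the coupling \eqref{rho:theta}. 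A pervasive secondary difficulty is that every occurrence of $\theta^\al$, $\theta^\beta$ and their derivatives must be tracked power-by-power in $\bar{\theta}$, which is what forces the technical ranges $2\le\al\le10$ and $\al<\beta<\al+2$.
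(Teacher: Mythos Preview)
Your proposal is correct and follows essentially the same route as the paper: the entropy-type energy estimate (Lemma~\ref{Lem3.1}), the control of $\|\D u\|_{L^6}$ and $\|\D G\|_{L^p}$ via the modified effective viscous flux, the split into short-time estimates for $A_1,\dots,A_4$ and time-weighted estimates for $B_1,\dots,B_4$, the recovery of pointwise bounds on $\theta$ via Gagliardo--Nirenberg, and finally the density-gradient estimates $A_5,A_6$ where the decomposition $\div u=(2\mu+\lambda)^{-1}(G+R(\rho\theta^{1-\al}-\vr\bar{\theta}^{1-\al}))$ manufactures the dissipative term $\sim\vr\bar{\theta}^{1-\al}\|\D\rho\|_{L^r}^r$ requiring $\vr$ large. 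The only minor discrepancy is that for $A_1,A_2$ the paper tests $(\ref{FCNS:2})_2$ by $u_t$ and $(\ref{FCNS:2})_3$ by $\theta_t$ rather than by $\dot u,\dot\theta$, but this is a cosmetic difference since the resulting estimates are equivalent up to lower-order terms.
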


\begin{proposition}\label{Lem:10}
Under the conditions of Proposition \ref{p4.1}, it holds that
\begin{equation}\label{sec3.4:1}
\sup_{t\in[0,T]}\left(\|u\|_{H^2}^2+\|\theta-\bar{\theta}\|_{H^2}^2\right)+\int_0^T\left(\|u_t\|_{H^1}^2+\|\theta_t\|_{H^1}^2\right)\dif t\leq C\bar{\theta}^{\beta},
\end{equation}
\begin{equation}\label{sec3.4:1.2}
\sup_{t\in[0,T]}\left(\|\rho_t\|_{L^2}^2+\|\rho-\bar{\rho}\|_{H^1}^2\right)\leq C\bar{\theta}^{2},
\end{equation}
and
\begin{equation}\label{sec3.4:1.3}
\sup_{t\in[0,T]}\left(\|\D\rho_t\|_{L^2}^2+\|\D^2\rho\|_{L^2}^2\right)\leq C\bar{\rho}^{\frac{2}{7}}\exp\left\{C\bar{\rho}^{\frac{2}{7}}\right\}.
\end{equation}
\end{proposition}

\subsection{Proof of Proposition \ref{p4.1}}\label{sub:3.1}

In this subsection, we use $C(K)$ to denote a generic constant that may depend on $K_i ~ (i=1,\cdots, 9)$. We begin with the zero-order energy estimate of $(\rho,u,\theta)$, followed by the first-order and the second-order estimates of $(u, \theta)$, as well as the first-order estimate of $\rho$.

\subsubsection{Uniform zero-order estimates of the solution on $[0,T]$}

\begin{lemma}\label{Lem3.1}
Under the conditions of Proposition \ref{p4.1}, there exists a positive constant $C$ depending on $\mu$, $\lambda$, $\kappa$, $C_0$, and $R$ such that if $(\rho,u,\theta)$ is a smooth solution of \eqref{FCNS:2}--\eqref{data and far} on $\R^3\times(0,T]$, the following estimate holds for any $T>0$:
\begin{align}\label{Lem3.1:3-2}
&\sup\limits_{t\in[0,T]}\int\left(\rho|u|^2+\bar{\rho}^{-1}\bar{\theta}(\rho-\bar{\rho})^2+\rho\bar{\theta}^{-1}(\theta-\bar{\theta})^2\right)\dif x\nonumber\\
&+\int_0^T\int\left(\bar{\theta}\left(\lambda\theta^{\al-1}(\div u)^2+2\mu\theta^{\al-1}|\frD(u)|^2\right)+\frac{\kappa\bar{\theta}}{\theta^{2-\beta}} |\D\theta|^2\right)\dif x\dif t\leq C\bar{\rho}\bar{\theta}.
\end{align}
\end{lemma}
\begin{proof}
Adding $(\ref{FCNS:2})_2$ multiplied by $u$ to $(\ref{FCNS:2})_3$ multiplied by $\displaystyle{1-\frac{\bar{\theta}}{\theta}}$, we obtain after integrating the resulting equality over $\R^3$ and using $(\ref{FCNS:2})_1$ that
\begin{align}\label{Lem3.1:1}
&\frac{\dif}{\dif t}\int\left(\frac{1}{2}\rho|u|^2+R\bar{\theta}(\rho\ln\rho-\rho-\rho\ln\bar{\rho}+\bar{\rho})+\rho(\theta-\bar{\theta}\ln\theta-\bar{\theta}+\bar{\theta}\ln\bar{\theta})\right)\dif x\nonumber\\
=~&-\int\left(\lambda\theta^\al(\div u)^2+2\mu\theta^\al |\frD(u)|^2\right)\dif x-\int\frac{\kappa\bar{\theta}}{\theta^{2-\beta}} |\D\theta|^2\dif x+\int\left(1-\frac{\bar{\theta}}{\theta}\right)\left(\lambda\theta^\al(\div u)^2+2\mu\theta^\al|\frD(u)|^2\right)\dif x\nonumber\\
=~&-\int\left(\bar{\theta}\left(\lambda\theta^{\al-1}(\div u)^2+2\mu\theta^{\al-1}|\frD(u)|^2\right)+\frac{\kappa\bar{\theta}}{\theta^{2-\beta}} |\D\theta|^2\right)\dif x.
\end{align}

It follows from the identity
\begin{equation*}
\rho\ln\rho-\rho-\rho\ln\bar{\rho}+\bar{\rho}=(\rho-\bar{\rho})^2\int_0^1\frac{1-y}{y(\rho-\bar{\rho})+\bar{\rho}}\dif y
\end{equation*}
that
\begin{equation}\label{Lem3.1:2}
(10\bar{\rho})^{-1}(\rho-\bar{\rho})^2\leq\rho\ln\rho-\rho-\rho\ln\bar{\rho}+\bar{\rho}\leq 2\bar{\rho}^{-1}(\rho-\bar{\rho})^2.
\end{equation}

Similarly, it follows from the identity
\begin{equation*}
\theta-\bar{\theta}\ln\theta-\bar{\theta}+\bar{\theta}\ln\bar{\theta}=(\theta-\bar{\theta})^2\int_0^1\frac{y}{y(\theta-\bar{\theta})+\bar{\theta}}\dif y
\end{equation*}
that
\begin{equation}\label{Lem3.1:3}
(10\bar{\theta})^{-1}(\theta-\bar{\theta})^2\leq\theta-\bar{\theta}\ln\theta-\bar{\theta}+\bar{\theta}\ln\bar{\theta}\leq 2\bar{\theta}^{-1}(\theta-\bar{\theta})^2.
\end{equation}

Integrating \eqref{Lem3.1:1} with respect to $t$ over $(0, T)$ and using \eqref{compatibility condition} yields
\begin{align}\label{Lem3.1:3-1}
&\sup\limits_{t\in[0,T]}\int\left(\frac{1}{2}\rho|u|^2+R\bar{\theta}(\rho\ln\rho-\rho-\rho\ln\bar{\rho}+\bar{\rho})+\rho(\theta-\bar{\theta}\ln\theta-\bar{\theta}+\bar{\theta}\ln\bar{\theta})\right)\dif x\nonumber\\
&+\int_0^T\int\left(\bar{\theta}\left(\lambda\theta^{\al-1}(\div u)^2+2\mu\theta^{\al-1}|\frD(u)|^2\right)+\frac{\kappa\bar{\theta}}{\theta^{2-\beta}} |\D\theta|^2\right)\dif x\dif t\leq C\bar{\rho}\bar{\theta},
\end{align}
which together with \eqref{Lem3.1:2} and \eqref{Lem3.1:3} leads to \eqref{Lem3.1:3-2}.
\end{proof}

The following corollary follows immediately from Lemma \ref{Lem3.1}.
\begin{corollary}
Under the conditions of Proposition \ref{p4.1}, there exists a positive constant $C$ depending on $\mu$, $\lambda$, $\kappa$, $C_0$, and $R$ such that if $(\rho,u,\theta)$ is a smooth solution of \eqref{FCNS:2}--\eqref{data and far} on $\R^3\times(0,T]$, the following estimates hold for all $T>0$:
\begin{equation}\label{lem3.1:g1}
\|\rho-\bar{\rho}\|_{L^2}^2\leq C\bar{\rho}^2, \qquad \|u\|_{L^2}^2\leq C\bar{\theta}, \qquad \|\theta-\bar{\theta}\|_{L^2}^2\leq C\bar{\theta}^2, \quad\forall ~ t\in(0,T],
\end{equation}
\begin{equation}\label{lem3.1:g2}
\int_0^T\|\D u\|_{L^2}^2\dif t\leq C\bar{\rho}\bar{\theta}^{1-\al}, \qquad \int_0^T\|\D\theta\|_{L^2}^2\dif t\leq C\bar{\rho}\bar{\theta}^{2-\beta}.
\end{equation}
\end{corollary}

\subsubsection{Uniform estimates of velocity and temperature on $[0,\sigma(T)]$}\label{sec3.1}

The following lemma will give the first-order estimates of $u$ and $\theta$.
\begin{lemma}\label{Lem33}$($First-order estimates of velocity and temperature$)$ 
Under the conditions of Proposition \ref{p4.1}, there exists positive constants $K_1$, $K_2$ and $\rL_1$ depending on $\mu$, $\lambda$, $\kappa$, $R$, and $C_0$, such that if $(\rho,u,\theta)$ is a smooth solution of \eqref{FCNS:2}--\eqref{data and far} on $\R^3\times(0,T]$, the following estimate holds
\begin{equation}\label{g:3.3:1}
A_1(\sigma(T))=\bar{\theta}^\al\sup\limits_{t\in[0,\sigma(T)]}\|\D u\|_{L^2}^2+\int_0^{\sigma(T)}\|\sqrt{\rho}\dot{u}\|_{L^2}^2\dif t\leq K_1\bar{\theta}^\al,
\end{equation}
and
\begin{equation}\label{g:3.3:2}
A_2(\sigma(T))=\bar{\theta}^\beta\sup\limits_{t\in[0,\sigma(T)]}\|\D \theta\|_{L^2}^2+\int_0^{\sigma(T)}\|\sqrt{\rho}\dot{\theta}\|_{L^2}^2\dif t\leq K_2\bar{\theta}^\beta,
\end{equation}
provided that $\bar{\theta}\geq \rL_1$, $2\leq\alpha<\beta<\alpha+2$, and
\begin{equation}\label{rho-theta}
\bar{\rho}\leq C(K)\min\left\{\bar{\theta}^{\alpha-1}, \bar{\theta}^{\frac{1}{2}(3\alpha-\beta-1)}, \bar{\theta}^{2(\alpha-\beta+2)}, \bar{\theta}^{\frac{1}{2}(\alpha+\beta-3)}, \bar{\theta}^{2(\beta-\alpha)}\right\}.
\end{equation}
\end{lemma}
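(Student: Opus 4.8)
\emph{Overall strategy and the two energy identities.} The plan is to run two coupled ``higher-order'' energy estimates on the short interval $[0,\sigma(T)]$ and then integrate in time, using crucially that $\sigma(T)\le 1$ to absorb the time integrals of the leading quantities. First I would test the momentum equation $(\ref{FCNS:2})_2$ with the material derivative $\dot u$; after integration by parts this yields
$$
\frac{\dif}{\dif t}\int\Big(\mu\theta^\al|\frD(u)|^2+\tfrac{\lambda}{2}\theta^\al(\div u)^2-P\div u\Big)\dif x+\int\rho|\dot u|^2\dif x=\mathcal R_1 ,
$$
where $\mathcal R_1$ collects the material-derivative commutators (coming from $\dot u=u_t+u\cdot\D u$), the pressure term $-\int P_t\div u\,\dif x$ in which $P_t=R(\rho\theta)_t$ is rewritten using $(\ref{FCNS:2})_1$ and $(\ref{FCNS:2})_3$, and the term $\int\partial_t(\theta^\al)\big(\mu|\frD(u)|^2+\tfrac{\lambda}{2}(\div u)^2\big)\dif x$ with $\partial_t\theta^\al=\al\theta^{\al-1}(\dot\theta-u\cdot\D\theta)$ and $\dot\theta$ eliminated via $(\ref{FCNS:2})_3$. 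Symmetrically, testing $(\ref{FCNS:2})_3$ with $\dot\theta$ gives
$$
\frac{\dif}{\dif t}\int\tfrac{\kappa}{2}\theta^\beta|\D\theta|^2\dif x+\int\rho|\dot\theta|^2\dif x=\mathcal R_2 ,
$$
with $\mathcal R_2$ consisting of the commutator from $\D(u\cdot\D\theta)$, the term $\tfrac{\kappa}{2}\int\partial_t(\theta^\beta)|\D\theta|^2\dif x$, the pressure--work term $-\int P\,\div u\,\dot\theta\,\dif x$, and the viscous heating term $\int\big(2\mu\theta^\al|\frD(u)|^2+\lambda\theta^\al(\div u)^2\big)\dot\theta\,\dif x$. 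Since $\tfrac12\bar\theta\le\theta\le\tfrac32\bar\theta$, $\tfrac12\bar\rho\le\rho\le\tfrac32\bar\rho$ and $2\mu+3\lambda\ge 0$, the two ``energies'' are, from below, comparable to $\bar\theta^\al\|\D u\|_{L^2}^2$ and $\bar\theta^\beta\|\D\theta\|_{L^2}^2$ (for the velocity one splits $\theta^\al=\bar\theta^\al+(\theta^\al-\bar\theta^\al)$ and uses $\mu>0$); the cross term $-\int P\div u\,\dif x$ is only a lower-order perturbation, since writing $\rho\theta=\bar\rho\bar\theta+(\rho\theta-\bar\rho\bar\theta)$ and using $\|\rho\theta-\bar\rho\bar\theta\|_{L^2}\le C\bar\rho\bar\theta$ from Lemma~\ref{Lem3.1} gives $|\int P\div u\,\dif x|\le\varepsilon\bar\theta^\al\|\D u\|_{L^2}^2+C_\varepsilon\bar\rho^{\,2}\bar\theta^{2-\al}$, whose remainder is handled by the constraint $\bar\rho\lesssim\bar\theta^{\al-1}$.

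\emph{Bounding $\int_0^{\sigma(T)}\mathcal R_i\,\dif t$.} The heart of the proof is to estimate the two right-hand sides while keeping precise track of the powers of $\bar\theta$ and $\bar\rho$ that the large coefficients $\theta^\al$, $\theta^\beta$ generate. The dangerous contributions are the genuinely coupled ones: in $\mathcal R_1$, the part of $-\int P_t\div u\,\dif x$ coming from the heat-flux term of $(\ref{FCNS:2})_3$ — which after $\D\div u=\tfrac{1}{2\mu+\lambda}\big(\D G+R\D(\rho\theta^{1-\al})\big)$ produces terms of type $\bar\theta^{\beta-\al}\|\D\theta\|_{L^2}\|\rho\dot u\|_{L^2}$ — and the part of $\int\partial_t(\theta^\al)|\frD(u)|^2\dif x$ carrying $\theta^{\al-1}\dot\theta$, which after $(\ref{FCNS:2})_3$ produces $\bar\rho^{-1}\bar\theta^{2\al-1}\|\D u\|_{L^4}^4$ and similar mixed expressions; in $\mathcal R_2$, the viscous heating term, of type $\bar\theta^\al\|\D u\|_{L^4}^2\|\sqrt\rho\dot\theta\|_{L^2}$, and the pressure--work term, of type $\bar\rho^{1/2}\bar\theta\|\D u\|_{L^2}\|\sqrt\rho\dot\theta\|_{L^2}$ (constants and $\bar\rho$-powers in all of these being likewise tracked). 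To control $\|\D u\|_{L^4}$ and $\|\D u\|_{L^6}$ I would invoke the elliptic estimates (\ref{2.3:2})--(\ref{2.3:4}) for the modified effective viscous flux $G=(2\mu+\lambda)\div u-R(\rho\theta^{1-\al}-\bar\rho\bar\theta^{1-\al})$, which reduce those norms to $\|\D u\|_{L^2}$, $\|\sqrt\rho\dot u\|_{L^2}$, $\|\D\theta\|_{L^2}$ and $\|\rho\theta^{1-\al}-\bar\rho\bar\theta^{1-\al}\|_{L^6}$ (the last controlled by $\|\D\rho\|_{L^2}$, $\|\D\theta\|_{L^2}$ via the a priori bound $A_5$ and Gagliardo--Nirenberg), then feed in the a priori hypotheses (\ref{assume:1})--(\ref{assume:2}), notably $\|\D u\|_{L^2}^2\le 2K_1$, $\|\D\theta\|_{L^2}^2\le 2K_2$, $\|\sqrt\rho\dot u\|_{L^2}^2\le 2K_3\bar\theta^{2\al}$, together with the $L^2_tL^2_x$-smallness from Lemma~\ref{Lem3.1}, $\int_0^{\sigma(T)}\|\D u\|_{L^2}^2\dif t\le C\bar\rho\bar\theta^{1-\al}$ and $\int_0^{\sigma(T)}\|\D\theta\|_{L^2}^2\dif t\le C\bar\rho\bar\theta^{2-\beta}$. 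The feedback through $\|\sqrt\rho\dot u\|_{L^2}$ and $\|\sqrt\rho\dot\theta\|_{L^2}$ is absorbed into the good terms $\int\rho|\dot u|^2$ and $\int\rho|\dot\theta|^2$ on the left (the offending prefactors carry negative powers of $\bar\theta$ once the flux estimate has been applied), while the remaining commutator and $-\int P_t\div u$ pieces are the more benign ones, dispatched by Gagliardo--Nirenberg and the pointwise bounds on $\rho,\theta$. Requiring every residual factor to be $\le 1$ for $\bar\theta$ large produces exactly the list of exponents in (\ref{rho-theta}): $\bar\rho\lesssim\bar\theta^{\al-1}$ (pressure work, initial cross term), $\bar\rho\lesssim\bar\theta^{2(\beta-\al)}$ and $\bar\rho\lesssim\bar\theta^{2(\al-\beta+2)}$ (the mixed heat-flux / heating contributions), and the intermediate $\bar\rho\lesssim\bar\theta^{\frac12(3\al-\beta-1)}$, $\bar\rho\lesssim\bar\theta^{\frac12(\al+\beta-3)}$ for the remaining terms — all exponents being positive precisely because $2\le\al<\beta<\al+2$.

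\emph{Closing on $[0,\sigma(T)]$; the main obstacle.} Integrating the two identities over $[0,\sigma(T)]$ and using $\sigma(T)\le 1$, any term in $\mathcal R_i$ that is itself $O\big(\bar\theta^\al\|\D u\|_{L^2}^2\big)$ or $O\big(\bar\theta^\beta\|\D\theta\|_{L^2}^2\big)$ with a small prefactor is absorbed by the corresponding supremum on the left after the time integration, so no iterative (Gr\"onwall) argument is needed. The initial ``energies'' satisfy $\int\big(\mu\theta_0^\al|\frD(u_0)|^2+\tfrac{\lambda}{2}\theta_0^\al(\div u_0)^2\big)\dif x\le C\bar\theta^\al\|\D u_0\|_{L^2}^2$ and $\int\tfrac{\kappa}{2}\theta_0^\beta|\D\theta_0|^2\dif x\le C\bar\theta^\beta\|\D\theta_0\|_{L^2}^2$, bounded by the initial data via \eqref{data:condition}--\eqref{compatibility condition}, and the cross term $|\int P_0\div u_0\,\dif x|=R\,|\int(\rho_0\theta_0-\bar\rho\bar\theta)\div u_0\,\dif x|\le C(\bar\rho+\bar\theta)$ is $\le\tfrac12 K_1\bar\theta^\al$ once $\bar\rho\lesssim\bar\theta^{\al-1}$ and $\bar\theta$ is large. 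Choosing $K_1,K_2$ large (depending only on $\mu,\lambda,\kappa,R,C_0$), then $\bar\theta\ge\rL_1$ large, and imposing (\ref{rho-theta}) then yields (\ref{g:3.3:1})--(\ref{g:3.3:2}). The main obstacle is the step of the middle paragraph: disentangling the fully coupled, $\bar\theta$-weighted nonlinear terms $\int\partial_t(\theta^\al)|\D u|^2\dif x$, $\int\theta^\al|\D u|^2\dot\theta\,\dif x$ and the heat-flux part of $\int P_t\div u\,\dif x$ — via the effective-flux elliptic estimates, the a priori bounds, and the $L^2_tL^2_x$-smallness of $\D u,\D\theta$ — and verifying that their time integrals over $[0,\sigma(T)]$ carry a net nonpositive power of $\bar\theta$ when $\bar\rho$ is moderate, which is exactly what forces the parameter restrictions $2\le\al<\beta<\al+2$ and the coupling (\ref{rho-theta}) between $\bar\rho$ and $\bar\theta$.
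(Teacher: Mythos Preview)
Your plan is essentially the paper's: derive the two $\theta^{\al}$- and $\theta^{\beta}$-weighted energy identities, control $\|\D u\|_{L^p}$ through the modified effective viscous flux \eqref{notations}--\eqref{2.3:4}, feed in the bootstrap hypotheses \eqref{assume:1}--\eqref{assume:2} and the basic estimates of Lemma~\ref{Lem3.1}, and track the resulting $\bar\theta$-powers to produce \eqref{rho-theta}. Two implementation points differ. First, the paper tests with $2u_t$ and $\theta_t$ rather than $\dot u$ and $\dot\theta$ (equivalent up to the commutators you mention), and it does \emph{not} substitute $(\ref{FCNS:2})_3$ for $\dot\theta$ in $\int\partial_t(\theta^{\al})|\frD(u)|^2\,\dif x$: it simply writes $\theta_t=\dot\theta-u\cdot\D\theta$ and bounds $\|\sqrt{\rho}\dot\theta\|_{L^2}$ by the a~priori hypothesis on $A_2$ and $A_4$, which is considerably cleaner than carrying the heat-flux and viscous-heating contributions through. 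Second---and this is the one technical point your sketch glosses over---the quantity $H$ in \eqref{2.3:2} contains the cross term $\bar\theta^{\al-1}\|\D u\cdot\D\theta\|_{L^p}$, so $\|\D u\|_{L^6}$ does \emph{not} reduce to the four quantities you list; one also needs $\|\D\theta\|_{L^4}$, hence $\|\D^2\theta\|_{L^2}$. The paper handles this by proving, inside the lemma, the auxiliary claim $\sup_{[0,\sigma(T)]}\|\D^2\theta\|_{L^2}\le 2\bar\theta$ via elliptic regularity for $(\ref{FCNS:2})_3$ and the a~priori bound on $A_4$; this claim is then used to close both the $\|\D u\|_{L^6}$ estimate and the $J_8$-term $\int\theta^{\beta-1}\theta_t|\D\theta|^2\,\dif x$ in the temperature identity.
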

\begin{proof}
First, multiplying $(\ref{FCNS:2})_2$ by $2u_t$ and integrating the resulting equality over $\R^3$, we obtain after integration by parts and using \eqref{notations} that
\begin{align}\label{3.3:1}
&\frac{\dif}{\dif t}\int\left(2\mu\theta^\al|\frD(u)|^2+\lambda \theta^\al(\div u)^2 \right)\dif x+\int\rho|u_t|^2\dif x\nonumber\\
\leq~&-2\int\D P\cdot u_t\dif x+\int\rho|u\cdot\D u|^2\dif x-\al\int\theta^{\al-1}\theta_t\left(2\mu|\frD(u)|^2+\lambda (\div u)^2\right)\dif x\nonumber\\
=~&2R\frac{\dif}{\dif t}\int(\rho\theta-\bar{\rho}\bar{\theta})\div u\dif x-2\int P_t\div u\dif x+\int\rho|u\cdot\D u|^2\dif x-\al\int\theta^{\al-1}\theta_t\left(2\mu|\frD(u)|^2+\lambda (\div u)^2\right)\dif x\nonumber\\
=~&2R\frac{\dif}{\dif t}\int(\rho\theta-\bar{\rho}\bar{\theta})\div u\dif x-\frac{2}{2\mu+\lambda}\int P_tG\dif x+\int\rho|u\cdot\D u|^2\dif x\nonumber\\
&-\al\int\theta^{\al-1}\theta_t\left(2\mu|\frD(u)|^2+\lambda (\div u)^2\right)\dif x-\frac{2R}{2\mu+\lambda}\int P_t(\rho\theta^{1-\al}-\bar{\rho}\bar{\theta}^{1-\al})\dif x\nonumber\\
:=~&2R\frac{\dif}{\dif t}\int(\rho\theta-\bar{\rho}\bar{\theta})\div u\dif x+\sum\limits_{i=1}^4I_i.
\end{align}

Next we estimate the terms on the right hand side of \eqref{3.3:1}. Noticing that \eqref{FCNS:2} implies
\begin{equation}\label{eqn:P}
P_t=-\div(Pu)+R\rho\dot{\theta},
\end{equation}
after integration by parts and using \eqref{2.3:3}, \eqref{assume:1}, and Lemma \ref{Lem3.1}, we obtain
\begin{align}\label{3.3:4}
I_1=~&-\frac{2}{2\mu+\lambda}\int P_tG\dif x\nonumber\\
=~& -\frac{2}{2\mu+\lambda}\int\left(-\div(Pu)+R\rho\dot{\theta}\right)G \dif x \nonumber\\
\leq~& C\int P|u||\D G|\dif x+C\bar{\rho}^\frac{1}{2}\int|\sqrt{\rho}\dot{\theta}||G|\dif x\nonumber\\
\leq~& C\left(\bar{\rho}^\frac{1}{2}\bar{\theta}\|\sqrt{\rho}u\|_{L^2}\|\D G\|_{L^2}+\bar{\rho}^\frac{1}{2}\|\sqrt{\rho}\dot{\theta}\|_{L^2}\|G\|_{L^2}\right)\nonumber\\
\leq~& C\left(\bar{\rho}\bar{\theta}\|u\|_{L^2}\|\D G\|_{L^2}+\bar{\rho}^\frac{1}{2}\|\sqrt{\rho}\dot{\theta}\|_{L^2}\|\D u\|_{L^2}+\bar{\rho}^\frac{1}{2}\bar{\theta}^{1-\alpha}\|\sqrt{\rho}\dot{\theta}\|_{L^2}\|\rho-\bar{\rho}\|_{L^2}+\bar{\rho}^\frac{3}{2}\bar{\theta}^{-\alpha}\|\sqrt{\rho}\dot{\theta}\|_{L^2}\|\theta-\bar{\theta}\|_{L^2}\right)\nonumber\\
\leq~& C\left(\bar{\rho}\bar{\theta}^{\frac{3}{2}}\|\D G\|_{L^2}+\bar{\rho}^\frac{1}{2}\|\sqrt{\rho}\dot{\theta}\|_{L^2}\|\D u\|_{L^2}+\bar{\rho}^\frac{3}{2}\bar{\theta}^{1-\alpha}\|\sqrt{\rho}\dot{\theta}\|_{L^2}\right).
\end{align}
where we have used the estimates
\begin{align}\label{G}
\|G\|_{L^2}\leq~&C\left(\|\D u\|_{L^2}+\|\rho\theta^{1-\al}-\bar{\rho}\bar{\theta}^{1-\al}\|_{L^2}\right)\nonumber\\
\leq~&C\left(\|\D u\|_{L^2}+\bar{\theta}^{-\al}\|\rho(\theta-\bar{\theta})\|_{L^2}+\bar{\theta}^{1-\al}\|\rho-\bar{\rho}\|_{L^2}\right)\nonumber\\
\leq~&C\left(\|\D u\|_{L^2}+\bar{\rho}\bar{\theta}^{1-\al}\right),
\end{align}
\begin{align}\label{3.3:5}
I_2=\int\rho|u\cdot\D u|^2\dif x\leq C\bar{\rho}\|u\|_{L^6}^2\|\D u\|_{L^2}\|\D u\|_{L^6}\leq C\bar{\rho}\|\D u\|_{L^2}^3\|\D u\|_{L^6},
\end{align}
\begin{align}\label{3.3:6}
I_3=~&-\al\int\theta^{\al-1}\theta_t\left(2\mu|\frD(u)|^2+\lambda (\div u)^2\right)\dif x\nonumber\\
\leq~&C\bar{\theta}^{\al-1}\int|\theta_t||\D u|^2\dif x\nonumber\\
\leq~&C\bar{\rho}^{-\frac{1}{2}}\bar{\theta}^{\al-1}\left(\|\sqrt{\rho}\dot{\theta}\|_{L^2}\|\D u\|_{L^2}^{\frac{1}{2}}\|\D u\|_{L^6}^{\frac{3}{2}}+\|u\|_{L^\infty}\|\D\theta\|_{L^2}\|\D u\|_{L^2}^{\frac{1}{2}}\|\D u\|_{L^6}^{\frac{3}{2}}\right)\nonumber\\
\leq~&C\bar{\rho}^{-\frac{1}{2}}\left(\bar{\theta}^{\al-1}\|\sqrt{\rho}\dot{\theta}\|_{L^2}\|\D u\|_{L^2}^{\frac{1}{2}}\|\D u\|_{L^6}^{\frac{3}{2}}+\|\D\theta\|_{L^2}\|\D u\|_{L^2}\|\D u\|_{L^6}^2\right).
\end{align}

After integration by parts and using \eqref{eqn:P}, \eqref{assume:2} and Lemma \ref{Lem3.1}, we obtain
\begin{align}\label{3.3:7}
I_4=~&-\frac{2R}{2\mu+\lambda}\int P_t(\rho\theta^{1-\al}-\bar{\rho}\bar{\theta}^{1-\al})\dif x\nonumber\\
=~&-\frac{2R}{2\mu+\lambda}\int\left(-\div(Pu)+R\rho\dot{\theta}\right)(\rho\theta^{1-\al}-\bar{\rho}\bar{\theta}^{1-\al})\dif x\nonumber\\
\leq~&C\bar{\rho}^{\frac{1}{2}}\bar{\theta}^{2-\al}\int|\sqrt{\rho}u||\D\rho|\dif x+C\bar{\rho}^{\frac{3}{2}}\bar{\theta}^{1-\al}\int|\sqrt{\rho} u||\D\theta|\dif x+C\bar{\rho}^{\frac{1}{2}}\int|\sqrt{\rho}\dot{\theta}|\left|\rho\theta^{1-\al}-\bar{\rho}\bar{\theta}^{1-\al}\right|\dif x\nonumber\\
\leq~&C\bar{\rho}^{\frac{1}{2}}\bar{\theta}^{2-\al}\|\sqrt{\rho}u\|_{L^2}\|\D\rho\|_{L^2}+C\bar{\rho}^{\frac{3}{2}}\bar{\theta}^{1-\al}\|\sqrt{\rho}u\|_{L^2}\|\D\theta\|_{L^2}+C\bar{\rho}^{\frac{1}{2}}\|\sqrt{\rho}\dot{\theta}\|_{L^2}\left\|\rho\theta^{1-\al}-\bar{\rho}\bar{\theta}^{1-\al}\right\|_{L^2}\nonumber\\
\leq~&C\bar{\rho}\bar{\theta}^{\frac{5}{2}-\al}\|\D\rho\|_{L^2}+C\bar{\rho}^{\frac{1}{2}}\bar{\theta}^{\frac{3}{2}-\al}\|\D\theta\|_{L^2}+C\bar{\rho}^{\frac{3}{2}}\bar{\theta}^{1-\al}\|\sqrt{\rho}\dot{\theta}\|_{L^2}.
\end{align}

Noting that
\begin{align*}
\sup\limits_{t\in[0,\sigma(T)]}\|\rho\theta-\bar{\rho}\bar{\theta}\|_{L^2}\|\D u\|_{L^2}\leq C\sup\limits_{t\in[0,\sigma(T)]}\left(\bar{\rho}\|(\theta-\bar{\theta})\|_{L^2}+\bar{\theta}\|\rho-\bar{\rho}\|_{L^2}\right)\|\D u\|_{L^2}\leq CK_1^\frac{1}{2}\bar{\rho}\bar{\theta},
\end{align*}
substituting \eqref{3.3:4}--\eqref{3.3:7} into \eqref{3.3:1}, integrating the resulting inequalities  over $(0,\sigma(T))$ and using \eqref{Lem3.1:3-1}, we obtain
\begin{align}\label{3.3:8}
&\bar{\theta}^\al\sup\limits_{t\in[0,\sigma(T)]}\|\D u\|_{L^2}^2+\int_0^{\sigma(T)}\|\sqrt{\rho}\dot{u}\|_{L^2}^2\dif t\nonumber\\
\leq~&C\bar{\theta}^\al\|\D u_0\|_{L^2}^2+C\|\rho\theta-\bar{\rho}\bar{\theta}\|_{L^2}\|\D u\|_{L^2}+C\|\rho_0\theta_0-\bar{\rho}\bar{\theta}\|_{L^2}\|\D u_0\|_{L^2}\nonumber\\
&+C\int_0^{\sigma(T)}\left(\bar{\rho}\bar{\theta}^{\frac{3}{2}}\|\D G\|_{L^2}+\bar{\rho}^\frac{1}{2}\|\sqrt{\rho}\dot{\theta}\|_{L^2}\|\D u\|_{L^2}+\bar{\rho}^\frac{3}{2}\bar{\theta}^{1-\alpha}\|\sqrt{\rho}\dot{\theta}\|_{L^2}\right)\dif t+C\bar{\rho}\int_0^{\sigma(T)}\|\D u\|_{L^2}^3\|\D u\|_{L^6}\dif t\nonumber\\
&+C\bar{\rho}^{-\frac{1}{2}}\bar{\theta}^{\al-1}\int_0^{\sigma(T)}\left(\|\sqrt{\rho}\dot{\theta}\|_{L^2}\|\D u\|_{L^2}^{\frac{1}{2}}\|\D u\|_{L^6}^{\frac{3}{2}}+\|\D\theta\|_{L^2}\|\D u\|_{L^2}\|\D u\|_{L^6}^2\right)\dif t\nonumber\\
&+C\int_0^{\sigma(T)}\left(\bar{\rho}\bar{\theta}^{\frac{5}{2}-\al}\|\D\rho\|_{L^2}+\bar{\rho}^{\frac{1}{2}}\bar{\theta}^{\frac{3}{2}-\al}\|\D\theta\|_{L^2}\right)\dif t\nonumber\\
\leq~&C\bar{\theta}^\al\|\D u_0\|_{L^2}^2+CK_1^\frac{1}{2}\bar{\rho}\bar{\theta}+CK_2^{\frac{1}{2}}\left(\bar{\rho}\bar{\theta}^{\frac{\beta-\alpha+1}{2}}+\bar{\rho}^{\frac{3}{2}}\bar{\theta}^{\frac{\beta}{2}-\alpha+1}\right)+CK_8^{\frac{1}{2}}\bar{\rho}\bar{\theta}^{\frac{5}{2}-\al}+C\bar{\rho}\bar{\theta}^{\frac{5}{2}-\al-\frac{\beta}{2}}\nonumber\\
&+C\bar{\rho}\bar{\theta}^{\frac{3}{2}}\int_0^{\sigma(T)}\|\D G\|_{L^2}\dif t+CK_1\bar{\rho}\int_0^{\sigma(T)}\|\D u\|_{L^2}\|\D u\|_{L^6}\dif t\nonumber\\
&+C\bar{\rho}^{-\frac{1}{2}}\bar{\theta}^{\al-1}\int_0^{\sigma(T)}\left(K_1^\frac{1}{4}\|\sqrt{\rho}\dot{\theta}\|_{L^2}\|\D u\|_{L^6}^{\frac{3}{2}}+K_1^\frac{1}{2}K_2^\frac{1}{2}\|\D u\|_{L^6}^2\right)\dif t\nonumber\\
:=~&\frac{1}{2}K_1\bar{\theta}^\al+\sum\limits_{i=1}^3I_{1i},
\end{align}
provided that $\alpha>\max\big\{\frac{5}{4},\frac{1}{3}(\beta+1)\big\}$,
$$\bar{\rho}\leq C(K) \min\left\{\bar{\theta}^{\alpha-1}, \bar{\theta}^{\frac{1}{2}(3\alpha-\beta-1)}, \bar{\theta}^{2\alpha-\frac{5}{2}}\right\},$$
where we have taken $K_1\geq 4C\|\nabla u_0\|_{L^2}^2$ and $\bar{\theta}$ sufficiently large, such that
$$CK_1^\frac{1}{2}\bar{\rho}\bar{\theta}+CK_2^{\frac{1}{2}}\left(\bar{\rho}\bar{\theta}^{\frac{\beta-\alpha+1}{2}}+\bar{\rho}^{\frac{3}{2}}\bar{\theta}^{\frac{\beta}{2}-\alpha+1}\right)+CK_8^{\frac{1}{2}}\bar{\rho}\bar{\theta}^{\frac{5}{2}-\al}+C\bar{\rho}\bar{\theta}^{\frac{5}{2}-\al-\frac{\beta}{2}}\leq \frac{1}{4}K_1\bar{\theta}^\al,$$

Now it remains to estimate the terms on the right-hand side of (\ref{3.3:8}). $\|\D u\|_{L^6}$ plays an important role in our analysis. It follows from \eqref{2.3:2},  \eqref{2.3:4}, and \eqref{assume:2} that
\begin{align}\label{L6}
\|\D u\|_{L^6}
\leq~& C\left(\|H\|_{L^2}+\|\rho\theta^{1-\al}-\bar{\rho}\bar{\theta}^{1-\al}\|_{L^6}\right)\nonumber\\
\leq~& C\left(\bar{\theta}^{-\al}\|\rho\dot{u}\|_{L^2}+\bar{\rho}\bar{\theta}^{-\al}\|\D\theta\|_{L^2}+\|\rho\theta^{1-\al}-\bar{\rho}\bar{\theta}^{1-\al}\|_{L^6}+\bar{\theta}^{-1}\|\D\theta\|_{L^4}\|\D u\|_{L^4}\right)\nonumber\\
\leq~& C\left(\bar{\theta}^{-\al}\|\rho\dot{u}\|_{L^2}+\bar{\rho}\bar{\theta}^{-\al}\|\D\theta\|_{L^2}+\bar{\theta}^{-\al}\|\rho(\theta-\bar{\theta})\|_{L^6}+\bar{\theta}^{1-\al}\|\rho-\bar{\rho}\|_{L^6}\right)\nonumber\\
&+C\bar{\theta}^{-1}\|\D\theta\|_{L^4}\|\D u\|_{L^2}^{\frac{1}{4}}\|\D u\|_{L^6}^{\frac{3}{4}}\nonumber\\
\leq~& C\left(\bar{\theta}^{-\al}\|\rho\dot{u}\|_{L^2}+\bar{\rho}\bar{\theta}^{-\al}\|\D\theta\|_{L^2}+\bar{\theta}^{1-\al}\|\D\rho\|_{L^2}+\bar{\theta}^{-4}\|\D\theta\|_{L^4}^4\|\D u\|_{L^2}\right)+\frac{1}{2}\|\D u\|_{L^6}\nonumber\\
\leq~& C\left(\bar{\rho}^\frac{1}{2}\bar{\theta}^{-\al}\|\sqrt{\rho}\dot{u}\|_{L^2}+\bar{\rho}\bar{\theta}^{-\al}\|\nabla\theta\|_{L^2}+\bar{\theta}^{1-\al}\|\D\rho\|_{L^2}+\bar{\theta}^{-4}\|\nabla \theta\|_{L^2}\|\D^2 \theta\|_{L^2}^3\|\D u\|_{L^2}\right)+\frac{1}{2}\|\D u\|_{L^6}.
\end{align}
Moreover,  we claim that
\begin{align}\label{2dtheta-1}
\sup\limits_{t\in[0,\sigma(T)]}\|\D^2 \theta\|_{L^2}\leq 2\bar{\theta}.
\end{align}
Hence, from (\ref{L6}) and (\ref{2dtheta-1}), we have
\begin{align}\label{estimate:DuL6}
\|\D u\|_{L^6}&\leq C\left(\bar{\rho}^\frac{1}{2}\bar{\theta}^{-\al}\|\sqrt{\rho}\dot{u}\|_{L^2}+\bar{\rho}\bar{\theta}^{-\al}\|\nabla\theta\|_{L^2}+\bar{\theta}^{1-\al}\|\D\rho\|_{L^2}+\bar{\theta}^{-1}\|\nabla \theta\|_{L^2}\|\D u\|_{L^2}\right)\nonumber\\
&\leq C\left(K_3^\frac{1}{2}\bar{\rho}^\frac{1}{2}+K_2^\frac{1}{2}\bar{\rho}\bar{\theta}^{-\al}+K_8^\frac{1}{2}\bar{\theta}^{1-\al}+K_1^\frac{1}{2}K_2^\frac{1}{2}\bar{\theta}^{-1}\right)\nonumber\\
&\leq CK_3^\frac{1}{2}\bar{\rho}^\frac{1}{2},
\end{align}
for $t\in[0,\sigma(T)]$ provided that $\alpha>1$,
\begin{align}\label{estimate:DuL6-1}
\int_0^{\sigma(T)}\|\D u\|_{L^6}^2\dif t\leq~& C\int_0^{\sigma(T)}\left(\bar{\rho}\bar{\theta}^{-2\al}\|\sqrt{\rho}\dot{u}\|_{L^2}^2+\bar{\rho}^2\bar{\theta}^{-2\al}\|\nabla\theta\|_{L^2}^2+\bar{\theta}^{2-2\al}\|\nabla\rho\|_{L^2}^2+\bar{\theta}^{-2}\|\nabla \theta\|_{L^2}^2\|\D u\|_{L^2}^2\right)\dif t\nonumber\\
\leq~& C\left(K_1\bar{\rho}\bar{\theta}^{-\alpha}+\bar{\rho}^3\bar{\theta}^{-2\alpha+2-\beta}+K_8\bar{\theta}^{2-2\alpha}+K_2\bar{\rho}\bar{\theta}^{-1-\alpha}\right)\nonumber\\
\leq~& CK_1\bar{\rho}\bar{\theta}^{-\alpha},
\end{align}
provided that $\alpha\geq 2$, $\bar{\rho}\leq C(K)\bar{\theta}^{\frac{1}{2}(\alpha+\beta-2)}$, and
\begin{align}\label{estimate:D2theta-0}
\sup\limits_{t\in[0,\sigma(T)]}\|\D^2\theta\|_{L^2}^2 
\leq~& C\bar{\theta}^{-2\beta}\left(\bar{\rho}\|\sqrt{\rho}\dot{\theta}\|_{L^2}^2+\bar{\rho}^2\bar{\theta}^2\|\D u\|_{L^2}^2+\bar{\theta}^{2\al}\|\D u\|_{L^4}^4+\bar{\theta}^{2\beta-2}\|\D \theta\|_{L^4}^4\right)\nonumber\\
\leq~& C\bar{\theta}^{-2\beta}\left(\bar{\rho}K_4\bar{\theta}^{2\beta}+\bar{\rho}^2\bar{\theta}^2K_1+\bar{\theta}^{2\al}\|\D u\|_{L^2}\|\D u\|_{L^6}^3+\bar{\theta}^{2\beta-2}\|\D \theta\|_{L^2}\|\D^2 \theta\|_{L^2}^3\right)\nonumber\\
\leq~& C(K)\left(\bar{\rho}+\bar{\rho}^2\bar{\theta}^{2-2\beta}+\bar{\rho}^\frac{3}{2}\bar{\theta}^{2\al-2\beta}+\bar{\theta}\right)\nonumber\\
\leq~& \bar{\theta}^2,
\end{align}
provided that $\alpha-\beta<1$ and $\bar{\rho}\leq C(K)\bar{\theta}^{\frac{4}{3}(\beta-\alpha+1)}$. Then (\ref{2dtheta-1}) is proved.

Then $I_{1i}$ in (\ref{3.3:8}) can be estimated as follows:
\begin{align}\label{I11}
I_{11}=~&C\bar{\rho}\bar{\theta}^{\frac{3}{2}}\int_0^{\sigma(T)}\|\D G\|_{L^2}\dif t\nonumber\\
\leq~&C\bar{\rho}\bar{\theta}^{\frac{3}{2}}\int_0^{\sigma(T)}\left(\bar{\rho}^\frac{1}{2}\bar{\theta}^{-\al}\|\sqrt{\rho}\dot{u}\|_{L^2}+\bar{\rho}\bar{\theta}^{-\al}\|\D\theta\|_{L^2}+\bar{\theta}^{-1}\|\D\theta\|_{L^4}\|\D u\|_{L^4}\right)\dif t\nonumber\\
\leq~&C\bar{\rho}\bar{\theta}^{\frac{3}{2}}\left(\bar{\rho}^\frac{1}{2}K_1^\frac{1}{2}\bar{\theta}^{-\frac{1}{2}\al}+\bar{\rho}^\frac{3}{2}\bar{\theta}^{-\alpha+1-\frac{1}{2}\beta}+\bar{\theta}^{-1}\int_0^{\sigma(T)}\|\D \theta\|_{L^2}^{\frac{1}{4}}\|\D^2\theta\|_{L^2}^{\frac{3}{4}}\|\D u\|_{L^2}^{\frac{1}{4}}\|\D u\|_{L^6}^{\frac{3}{4}}\dif t\right)\nonumber\\
\leq~&C(K)\bar{\rho}^\frac{3}{2}\bar{\theta}^{\frac{3}{2}-\frac{1}{2}\al}+C\bar{\rho}^\frac{5}{2}\bar{\theta}^{\frac{5}{2}-\alpha-\frac{1}{2}\beta}+CK_2^\frac{1}{8}\bar{\rho}\bar{\theta}^{\frac{5}{4}}\left(\int_0^{\sigma(T)}\|\D u\|_{L^2}^2\dif t\right)^{\frac{1}{8}}\left(\int_0^{\sigma(T)}\|\D u\|_{L^6}^2\dif t\right)^{\frac{3}{8}}\nonumber\\
\leq~& C(K)\left(\bar{\rho}^\frac{3}{2}\bar{\theta}^{\frac{3}{2}-\frac{1}{2}\al}+\bar{\rho}^\frac{5}{2}\bar{\theta}^{\frac{5}{2}-\alpha-\frac{1}{2}\beta}+\bar{\rho}^\frac{3}{2}\bar{\theta}^{\frac{11}{8}-\frac{1}{2}\alpha}\right).
\end{align}
\begin{align}\label{I12}
I_{12}=~&CK_1\bar{\rho}\int_0^{\sigma(T)}\|\D u\|_{L^2}\|\D u\|_{L^6}\dif t\leq C(K)\bar{\rho}^2\bar{\theta}^{\frac{1}{2}-\al},
\end{align}
\begin{align}\label{I13}
I_{13}=~&C\bar{\rho}^{-\frac{1}{2}}\bar{\theta}^{\al-1}\int_0^{\sigma(T)}\left(K_1^\frac{1}{4}\|\sqrt{\rho}\dot{\theta}\|_{L^2}\|\D u\|_{L^6}^{\frac{3}{2}}+K_1^\frac{1}{2}K_2^\frac{1}{2}\|\D u\|_{L^6}^2\right)\dif t\nonumber\\
\leq~&C\bar{\rho}^{-\frac{1}{2}}\bar{\theta}^{\al-1}\int_0^{\sigma(T)}\left(C(K)\bar{\rho}^{\frac{1}{4}}\|\sqrt{\rho}\dot{\theta}\|_{L^2}\|\D u\|_{L^6}+K_1^\frac{1}{2}K_2^\frac{1}{2}\|\D u\|_{L^6}^2\right)\dif t\nonumber\\
\leq~&C(K)\bar{\rho}^{-\frac{1}{4}}\bar{\theta}^{\al-1}\left(\int_0^{\sigma(T)}\|\sqrt{\rho}\dot{\theta}\|_{L^2}^2\dif t\right)^{\frac{1}{2}}\left(\int_0^{\sigma(T)}\|\D u\|_{L^6}^2\dif t\right)^{\frac{1}{2}}+C(K)\bar{\rho}^{\frac{1}{2}}\bar{\theta}^{-1}\nonumber\\
\leq~&C(K)\left(\bar{\rho}^{\frac{1}{4}}\bar{\theta}^{\frac{1}{2}(\alpha+\beta)-1}+\bar{\rho}^{\frac{1}{2}}\bar{\theta}^{-1}\right),
\end{align}

Then, substituting \eqref{I11}--\eqref{I13} into \eqref{3.3:8}, we have
\begin{align}\label{3.3:9}
&\bar{\theta}^\al\sup\limits_{t\in[0,\sigma(T)]}\|\D u\|_{L^2}^2+\int_0^{\sigma(T)}\|\sqrt{\rho}\dot{u}\|_{L^2}^2\dif t\nonumber\\
\leq~&\frac{1}{2}K_1\bar{\theta}^\al+C(K)\left(\bar{\rho}^\frac{3}{2}\bar{\theta}^{\frac{3}{2}-\frac{1}{2}\al}+\bar{\rho}^\frac{5}{2}\bar{\theta}^{\frac{5}{2}-\alpha-\frac{1}{2}\beta}+\bar{\rho}^\frac{3}{2}\bar{\theta}^{\frac{11}{8}-\frac{1}{2}\alpha}+\bar{\rho}^2\bar{\theta}^{\frac{1}{2}-\al}+\bar{\rho}^{\frac{1}{4}}\bar{\theta}^{\frac{1}{2}(\alpha+\beta)-1}+\bar{\rho}^{\frac{1}{2}}\bar{\theta}^{-1}\right)\nonumber\\
\leq~& K_1\bar{\theta}^\al,
\end{align}
provided that $\alpha>\beta-2$ and
$$\bar{\rho}\leq C(K)\min\left\{\bar{\theta}^{\alpha-1}, \bar{\theta}^{\frac{1}{5}(4\alpha+\beta-5)}, \bar{\theta}^{2(\alpha-\beta+2)}\right\},$$
where we have taken $\bar{\theta}$ sufficiently large, such that
$$C(K)\left(\bar{\rho}^\frac{3}{2}\bar{\theta}^{\frac{3}{2}-\frac{1}{2}\al}+\bar{\rho}^\frac{5}{2}\bar{\theta}^{\frac{5}{2}-\alpha-\frac{1}{2}\beta}+\bar{\rho}^\frac{3}{2}\bar{\theta}^{\frac{11}{8}-\frac{1}{2}\alpha}+\bar{\rho}^2\bar{\theta}^{\frac{1}{2}-\al}+\bar{\rho}^{\frac{1}{4}}\bar{\theta}^{\frac{1}{2}(\alpha+\beta)-1}+\bar{\rho}^{\frac{1}{2}}\bar{\theta}^{-1}\right)\leq \frac{1}{2}K_1\bar{\theta}^\al.$$
This complete the proof of \eqref{g:3.3:1}.

Next, we will prove \eqref{g:3.3:2}. Multiplying $(\ref{FCNS:2})_3$ by $\theta_t$ and integrating the resulting equality over $\R^3$, we obtain after integration by parts that
\begin{align}\label{3.3:10}
&\frac{\kappa\beta}{2}\frac{\dif}{\dif t}\int\theta^{\beta}|\D\theta|^2\dif x+\int\rho|\dot{\theta}|^2\dif x\nonumber\\
=~&\int\rho\dot{\theta}u\cdot\D\theta\dif x+\lambda\int\theta^\al(\div u)^2\dot{\theta}\dif x+2\mu\int\theta^\al|\frD(u)|^2\dot{\theta}\dif x-R\int\rho\theta\div u\dot{\theta}\dif x+R\int\rho\theta u\cdot\D\theta\div u\dif x\nonumber\\
&-\lambda\int\theta^\al u\cdot\D\theta(\div u)^2\dif x-2\mu\int\theta^\al u\cdot\D\theta|\frD(u)|^2\dif x+\frac{1}{2}\kappa\beta\int\theta^{\beta-1} \theta_t|\D\theta|^2\dif x\nonumber\\
:=~&\sum\limits_{i=1}^8J_i,
\end{align}
where the terms $J_1$ to $J_8$ can be estimated as follows.
\begin{align}\label{3.3:11}
J_1+J_4\leq~&\bar{\rho}^\frac{1}{2}\int|\sqrt{\rho}\dot{\theta}|\left(|u\cdot\D\theta|+\bar{\theta}|\D u|\right)\dif x\nonumber\\
\leq~&\frac{1}{8}\|\sqrt{\rho}\dot{\theta}\|_{L^2}^2+C\bar{\rho}\left(\|u\D\theta\|_{L^2}^2+\bar{\theta}\|\D u\|_{L^2}^2\right)\nonumber\\
\leq~&\frac{1}{8}\|\sqrt{\rho}\dot{\theta}\|_{L^2}^2+C\bar{\rho}\left(\|\D u\|_{L^2}\|\D\theta\|_{L^2}^2\|\D u\|_{L^6}+\bar{\theta}\|\D u\|_{L^2}^2\right),
\end{align}
\begin{align}\label{3.3:13}
J_2+J_3\leq C\bar{\theta}^{\al}\int|\dot{\theta}||\D u|^2\dif x\leq C\bar{\rho}^{-\frac{1}{2}}\bar{\theta}^{\al}\|\sqrt{\rho}\dot{\theta}\|_{L^2}\|\D u\|_{L^2}^{\frac{1}{2}}\|\D u\|_{L^6}^{\frac{3}{2}},
\end{align}
\begin{align}\label{3.3:12}
J_5\leq C\bar{\rho}\bar{\theta}\|u\|_{L^\infty}\|\D u\|_{L^2}\|\D\theta\|_{L^2}\leq C\bar{\rho}\bar{\theta}\|\D u\|_{L^2}^\frac{3}{2}\|\D u\|_{L^6}^\frac{1}{2}\|\D\theta\|_{L^2},
\end{align}
\begin{align}\label{3.3:15}
J_6+J_7\leq C\bar{\theta}^\al\int|u\cdot\D\theta||\D u|^2\dif x
\leq~&C\bar{\theta}^\al\|u\|_{L^\infty}\|\D\theta\|_{L^2}\|\D u\|_{L^3}\|\D u\|_{L^6}\nonumber\\
\leq~&C\bar{\theta}^\al\|\D\theta\|_{L^2}\|\D u\|_{L^2}\|\D u\|_{L^6}^2,
\end{align}
and
\begin{align}\label{3.3:14-1}
J_8\leq~& C\bar{\theta}^{\beta-1}\int|\dot{\theta}||\D \theta|^2\dif x+C\bar{\theta}^{\beta-1}\int|u||\D \theta|^3\dif x\nonumber\\
\leq~&\frac{1}{8}\|\sqrt{\rho}\dot{\theta}\|_{L^2}^2+C\left(\bar{\rho}^{-1}\bar{\theta}^{2\beta-2}\|\D \theta\|_{L^2}\|\D^2 \theta\|_{L^2}^3+\bar{\theta}^{\beta-1}\|u\|_{L^\infty}\|\D\theta\|_{L^2}\|\D \theta\|_{L^3}\|\D \theta\|_{L^6}\right)\nonumber\\
\leq~&\frac{1}{8}\|\sqrt{\rho}\dot{\theta}\|_{L^2}^2+C(K)\bar{\rho}^{-1}\bar{\theta}^{2\beta-1}\|\D^2 \theta\|_{L^2}^2+C\bar{\theta}^{\beta-1}\|\nabla u\|_{L^2}^\frac{1}{2}\|\nabla u\|_{L^6}^\frac{1}{2}\|\D\theta\|_{L^2}^\frac{3}{2}\|\D^2 \theta\|_{L^2}^\frac{3}{2}\nonumber\\
\leq~&\frac{1}{8}\|\sqrt{\rho}\dot{\theta}\|_{L^2}^2+C(K)\left(\bar{\rho}^{-1}\bar{\theta}^{2\beta-1}\|\D^2 \theta\|_{L^2}^2+\bar{\theta}^{\beta-1}\|\nabla u\|_{L^6}^\frac{1}{2}\|\D^2 \theta\|_{L^2}^\frac{3}{2}\right).
\end{align}

At last, we substitute \eqref{3.3:11}--\eqref{3.3:14-1} into \eqref{3.3:10} and integrate the resulting inequality over $(0,\sigma(T))$. Noting that 
\begin{align}\label{estimate:D2theta:2-1}
&\int_0^{\sigma(T)}\|\D^2\theta\|_{L^2}^2\dif t\nonumber\\
\leq~& C\bar{\theta}^{-2\beta}\int_0^{\sigma(T)}\left(\bar{\rho}\|\sqrt{\rho}\dot{\theta}\|_{L^2}^2+\bar{\rho}^2\bar{\theta}^2\|\D u\|_{L^2}^2+\bar{\theta}^{2\al}\|\D u\|_{L^4}^4+\bar{\theta}^{2\beta-2}\|\D \theta\|_{L^4}^4\right)\dif t\nonumber\\
\leq~&C\bar{\theta}^{-2\beta}\left(K_2\bar{\rho}\bar{\theta}^\beta+\bar{\rho}^3\bar{\theta}^{3-\alpha}+\bar{\theta}^{2\al}\int_0^{\sigma(T)}\|\D u\|_{L^2}\|\D^2 u\|_{L^2}^3\dif t+\bar{\theta}^{2\beta-2}\int_0^{\sigma(T)}\|\D\theta\|_{L^2}\|\D^2\theta\|_{L^2}^3\dif t\right)\nonumber\\
\leq~&CK_2\bar{\rho}\bar{\theta}^{-\beta}+C\bar{\rho}^3\bar{\theta}^{3-\alpha-2\beta}+C(K)^\frac{1}{2}K_1^\frac{1}{2}\bar{\rho}^\frac{1}{2}\bar{\theta}^{2\al-2\beta}\int_0^{\sigma(T)}\|\D^2 u\|_{L^2}^2\dif t+CK_2^\frac{1}{2}\bar{\theta}^{-1}\int_0^{\sigma(T)}\|\D^2\theta\|_{L^2}^2\dif t\nonumber\\
\leq~&C(K)\left(\bar{\rho}\bar{\theta}^{-\beta}+\bar{\rho}^3\bar{\theta}^{3-\alpha-2\beta}+C(K)\bar{\rho}^\frac{3}{2}\bar{\theta}^{\al-2\beta}+C(K)\bar{\theta}^{-1-\beta}\right)\nonumber\\
\leq~&C(K)\bar{\rho}\bar{\theta}^{-\beta},
\end{align}
we obtain
\begin{align}\label{3.3:16}
&\bar{\theta}^\beta\sup\limits_{t\in[0,\sigma(T)]}\|\D \theta\|_{L^2}^2+\int_0^{\sigma(T)}\|\sqrt{\rho}\dot{\theta}\|_{L^2}^2\dif t\nonumber\\
\leq~&C\bar{\theta}^\beta\|\D\theta_0\|_{L^2}^2+C\bar{\rho}\int_0^{\sigma(T)}\left(\|\D u\|_{L^2}\|\D\theta\|_{L^2}^2\|\D u\|_{L^6}+\bar{\theta}\|\D u\|_{L^2}^2\right)\dif t\nonumber\\
&+C\bar{\rho}^{-\frac{1}{2}}\bar{\theta}^{\al}\int_0^{\sigma(T)}\|\sqrt{\rho}\dot{\theta}\|_{L^2}\|\D u\|_{L^2}^{\frac{1}{2}}\|\D u\|_{L^6}^{\frac{3}{2}}\dif t+C\bar{\rho}\bar{\theta}\int_0^{\sigma(T)} \|\D u\|_{L^2}^\frac{3}{2}\|\D u\|_{L^6}^\frac{1}{2}\|\D\theta\|_{L^2}\dif t\nonumber\\
&+C\bar{\theta}^\al\int_0^{\sigma(T)} \|\D\theta\|_{L^2}\|\D u\|_{L^2}\|\D u\|_{L^6}^2\dif t+C(K)\bar{\rho}^{-1}\bar{\theta}^{2\beta-1}\int_0^{\sigma(T)} \|\D^2 \theta\|_{L^2}^2\dif t\nonumber\\
&+C(K)\bar{\theta}^{\beta-1}\int_0^{\sigma(T)}\|\nabla u\|_{L^6}^\frac{1}{2}\|\D^2 \theta\|_{L^2}^\frac{3}{2}\dif t\nonumber\\
\leq~&C\bar{\theta}^\beta\|\D\theta_0\|_{L^2}^2+C(K)\left(\bar{\rho}^\frac{3}{2}\bar{\theta}^{2-\beta}+\bar{\rho}^2\bar{\theta}^{2-\al}+\bar{\rho}^\frac{1}{4}\bar{\theta}^{\frac{1}{2}(\al+\beta)}+\bar{\rho}^2\bar{\theta}^{\frac{7}{4}-\al}+\bar{\rho}+\bar{\theta}^{\beta-1}+\bar{\rho}\bar{\theta}^{\frac{1}{4}\beta-\frac{1}{4}\alpha-1}\right)\nonumber\\
\leq~&K_2\bar{\theta}^\beta,
\end{align}
provided that $2C\|\D\theta_0\|_{L^2}^2\leq K_2$, $\beta>\alpha$, and
$$\bar{\rho}\leq C(K)\min\left\{\bar{\theta}^{\frac{1}{2}(\alpha+\beta-3)}, \bar{\theta}^{2(\beta-\alpha)}\right\},$$
where we have used (\ref{assume:1}),  (\ref{2dtheta-1}), and taken $\bar{\theta}$ sufficiently large, such that
$$C(K)\left(\bar{\rho}^\frac{3}{2}\bar{\theta}^{2-\beta}+\bar{\rho}^2\bar{\theta}^{2-\al}+\bar{\rho}^\frac{1}{4}\bar{\theta}^{\frac{1}{2}(\al+\beta)}+\bar{\rho}^2\bar{\theta}^{\frac{7}{4}-\al}+\bar{\rho}+\bar{\theta}^{\beta-1}+\bar{\rho}\bar{\theta}^{\frac{1}{4}\beta-\frac{1}{4}\alpha-1}\right)\leq\frac{1}{2}K_2\bar{\theta}^\beta.$$
Hence, we complete the proof of \eqref{g:3.3:2}.
\end{proof}

Second-order estimates of velocity and temperature on time interval $[0,\sigma(T)]$ are given in the following lemma.
\begin{lemma}\label{lem:3.4}$($Second-order estimates of velocity and temperature$)$ 
Under the conditions of Proposition \ref{p4.1}, there exists positive constants $K_3$, $K_4$ and $\rL_2$ depending on $\mu$, $\lambda$, $\kappa$, $R$ and $C_0$, such that if $(\rho,u,\theta)$ is a smooth solution of \eqref{FCNS:2}--\eqref{data and far} on $\R^3\times(0,T]$, the following estimate holds
\begin{equation}\label{g:3.4:1}
A_3(\sigma(T))=\sup\limits_{t\in[0,\sigma(T)]}\|\sqrt{\rho}\dot{u}\|_{L^2}^2+\bar{\theta}^\al\int_0^{\sigma(T)}\|\D \dot{u}\|_{L^2}^2\dif t\leq K_3\bar{\theta}^{2\alpha},
\end{equation}
\begin{equation}\label{g:3.4:2}
A_4(\sigma(T))=\sup\limits_{t\in[0,\sigma(T)]}\| \sqrt{\rho}\dot{\theta}\|_{L^2}^2+\bar{\theta}^\beta\int_0^{\sigma(T)}\|\D \dot{\theta}\|_{L^2}^2\dif t\leq K_4\bar{\theta}^{2\beta},
\end{equation}
provided that $\bar{\theta}\geq \rL_2$, $2\leq\alpha<\beta<\al+2$ and \eqref{rho-theta} holds.
\end{lemma}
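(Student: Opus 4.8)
The plan is to run the standard energy method on the material derivatives $\dot u$ and $\dot\theta$, as in the constant-coefficient theory (cf. \cite{MR3744381}) and its density-dependent counterpart \cite{2024arXiv240805138L}, while keeping every constant explicit in $\bar\rho,\bar\theta$ so that the resulting dissipations carry the weights $\bar\theta^\al$ and $\bar\theta^\beta$.

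\textbf{Estimate of $A_3(\sigma(T))$.} First I would apply the operator $\pt(\,\cdot\,)+\div(\,\cdot\,u)$ to the $j$-th component of $(\ref{FCNS:2})_2$. By $(\ref{FCNS:2})_1$ the left-hand side becomes $\rho\ddot u^j$, and the commutators of $\pt+u\cdot\D$ with $\D P$, $\div(\theta^\al\frD(u))$ and $\D(\theta^\al\div u)$ produce terms of the schematic form $\D u\,\D(\,\cdot\,)$, $\theta^{\al-1}\theta_t\D u$, together with $\dot P=-P\div u+R\rho\dot\theta$ (by \eqref{eqn:P}). Multiplying by $\dot u^j$, integrating over $\R^3$, using $(\ref{FCNS:2})_1$ once more on the left and integrating by parts, one arrives at
$$\frac12\frac{\dif}{\dif t}\int\rho|\dot u|^2\dif x+\int\big(2\mu\theta^\al|\frD(\dot u)|^2+\lambda\theta^\al(\div\dot u)^2\big)\dif x\leq C\,(\text{error terms}),$$
where the dissipative term is bounded below by $C^{-1}\bar\theta^\al\|\D\dot u\|_{L^2}^2$ because $\theta\geq\tfrac12\bar\theta$, and the error terms are products of $\|\D u\|_{L^4},\|\D u\|_{L^6},\|\sqrt\rho\dot\theta\|_{L^2},\|\D\theta\|_{L^2},\|\D^2\theta\|_{L^2}$ with powers of $\bar\rho$ and $\bar\theta$. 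I would control them by the elliptic inequalities \eqref{2.3:2}--\eqref{2.3:4}, the bounds \eqref{estimate:DuL6}--\eqref{estimate:DuL6-1} and the claim \eqref{2dtheta-1} proved inside the proof of Lemma \ref{Lem33}, by Lemmas \ref{Lem3.1} and \ref{Lem33}, and by the a priori assumptions \eqref{assume:1}--\eqref{assume:2}, using Young's inequality to absorb the $\bar\theta^\al\|\D\dot u\|_{L^2}^2$-pieces into the left-hand side.

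\textbf{Closing $A_3$, and the $A_4$ estimate.} Integrating over $(0,\sigma(T))$, an interval of length at most $1$, the initial value is bounded by evaluating $(\ref{FCNS:2})_2$ at $t=0$ and using \eqref{data:condition}--\eqref{compatibility condition}, which gives $\|\sqrt{\rho_0}\dot u(\cdot,0)\|_{L^2}^2\leq C\bar\theta^{2\al}$. Each remaining contribution is then $C(K)$ times a product of powers of $\bar\rho$ and $\bar\theta$ whose $\bar\theta$-exponent is strictly smaller than $2\al$, provided $\bar\theta\geq\rL_2$ and \eqref{rho-theta} holds; choosing $K_3$ large enough absorbs the constant and gives \eqref{g:3.4:1}. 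The proof of \eqref{g:3.4:2} is parallel: applying the same operator to $(\ref{FCNS:2})_3$, multiplying by $\dot\theta$ and integrating by parts, the heat term produces the good quantity $\kappa\int\theta^\beta|\D\dot\theta|^2\dif x\geq C^{-1}\bar\theta^\beta\|\D\dot\theta\|_{L^2}^2$ up to the commutators $\theta^{\beta-1}\theta_t\D\theta\cdot\D\dot\theta$ and $\theta^\beta\D u\cdot\D\theta\cdot\D\dot\theta$, while the material derivatives of the dissipation rate $2\mu\theta^\al|\frD(u)|^2+\lambda\theta^\al(\div u)^2$ and of $P\div u$ give terms involving $\|\D u\|_{L^6}$, $\|\sqrt\rho\dot u\|_{L^2}$ (now available from \eqref{g:3.4:1}), $\|\D\theta\|_{L^2}$, $\|\D^2\theta\|_{L^2}$ and $\|\D^2u\|_{L^2}$. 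Integrating over $(0,\sigma(T))$, using $\|\sqrt{\rho_0}\dot\theta(\cdot,0)\|_{L^2}^2\leq C\bar\theta^{2\beta}$ from $(\ref{FCNS:2})_3$ at $t=0$ and invoking \eqref{rho-theta} once more, a large enough $K_4$ yields \eqref{g:3.4:2}.

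\textbf{Main obstacle.} The crux is the bookkeeping of the powers of $\bar\theta$ and $\bar\rho$ in the many error terms. Several of them --- in particular those from the $\theta^{\al-1}\theta_t$ and $\theta^{\beta-1}\theta_t$ commutators, from the $\int\rho|\dot u|^2\div u$-type remainders, and from the cross terms carrying $\|\D^2\theta\|_{L^2}$ --- are a priori of order $\bar\theta^{2\al}$ or $\bar\theta^{2\beta}$ (or larger), and can be made subcritical only by combining (i) the $L^2_tL^2_x$ smallness of $\D u$ and $\D\theta$ coming from Lemma \ref{Lem3.1}, which contributes the negative powers $\bar\theta^{1-\al}$ and $\bar\theta^{2-\beta}$ (negative since $\al\geq2$ and $\beta<\al+2$), (ii) the largeness of $\bar\theta$, and (iii) the coupling restriction \eqref{rho-theta}. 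Writing $\theta_t=\dot\theta-u\cdot\D\theta$ and inserting the $W^{2,4}$ and $L^6$ elliptic bounds for $u$ (with their $\bar\theta$-dependent constants) is the device that makes each term close, and it is precisely this balance that pins down the admissible range $2\leq\al\leq10$, $\al<\beta<\al+2$.
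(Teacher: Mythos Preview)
Your proposal is correct and follows essentially the same route as the paper: apply $\pt+\div(u\cdot)$ to the momentum and temperature equations, multiply by $\dot u$ and $\dot\theta$ respectively, extract the dissipations $\int\theta^\al|\frD(\dot u)|^2$ and $\int\theta^\beta|\D\dot\theta|^2$, and control the error terms via \eqref{estimate:DuL6}--\eqref{estimate:DuL6-1}, \eqref{2dtheta-1}, Lemmas~\ref{Lem3.1}--\ref{Lem33}, and the a~priori assumptions, with the initial values bounded by evaluating the equations at $t=0$. The paper does not invoke a $W^{2,4}$ elliptic bound for $u$ in this lemma (only $\|\D u\|_{L^6}$ and $\|\D^2\theta\|_{L^2}$), but this is a minor variation that does not change the argument.
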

\begin{proof}
Operating $\dot{u}^j(\pt+\mathrm{div}(u\cdot))$ to $(\ref{FCNS:2})_2^j$, summing with respect to $j$, and integrating the resulting equation over $\R^3$, we obtain
\begin{align}\label{3.4.1}
\frac{1}{2}\frac{\dif}{\dif t}\int\rho|\dot u|^2\dif x
=~& -\int\dot u^j\left(\partial_jP_t+\div(u\partial_j P)\right)\dif x+2\mu\int\dot u^j\left[\pt(\div(\theta^\al\frD(u)))+\div(u\div(\theta^\al\frD(u)))\right]\dif x\nonumber\\
&+\lambda\int\dot u^j\left[\pt(\partial_j(\theta^\al\div u))+\div(u\cdot\partial_j(\theta^\al\div u))\right]\dif x\nonumber\\
:=~&\sum_{i=1}^3M_i. 
\end{align} 

First, from \cite{MR3604614} and \eqref{eqn:P}, using integration by parts,  we can see that
\begin{align}\label{3.4.3}
M_1=~&-\int\dot u^j\left(\partial_jP_t+\div (u\partial_j P)\right)\dif x\nonumber\\
=~&R\int\rho\dot{\theta}\div\dot{u}\dif x+\int P\left(\partial_k\left(u^k\div\dot{u}\right)-\div u\div\dot{u}-\partial_j\left(\partial_k\dot{u}^ju^k\right)\right)\dif x\nonumber\\
=~&R\int\rho\dot{\theta}\div\dot{u}\dif x-R\int \rho\theta\partial_k\dot{u}^j\partial_j u^k\dif x\nonumber\\
\leq~&\frac{\mu}{8}\left(\frac{\bar{\theta}}{2}\right)^\al\|\D\dot u\|_{L^2}^2+C\bar{\theta}^{-\alpha}\left(\|\rho\dot\theta\|_{L^2}^2+\|\rho\theta\D u\|_{L^2}^2\right)\nonumber\\
\leq~&\frac{\mu}{8}\left(\frac{\bar{\theta}}{2}\right)^\al\|\D\dot u\|_{L^2}^2+C\left(\bar{\rho}\bar{\theta}^{-\alpha}\|\sqrt{\rho}\dot\theta\|_{L^2}^2+\bar{\rho}^2\bar{\theta}^{-\alpha+2}\|\D u\|_{L^2}^2\right).
\end{align}

Next, we have
\begin{align}\label{3.4.4}
M_2=~&\mu\int\dot{u}^j\left(\pt\partial_i\left(\theta^\al(\partial_iu^j+\partial_ju^i)\right)+\partial_k\left(u^k\partial_i\left(\theta^\al(\partial_iu^j+\partial_ju^i)\right)\right)\right)\dif x\nonumber\\
=~&-\mu\int\partial_i\dot{u}^j\pt\left(\theta^\al(\partial_iu^j+\partial_ju^i)\right)\dif x+\mu\int\dot{u}^j\div u\partial_i\left(\theta^\al(\partial_iu^j+\partial_ju^i)\right)\dif x\nonumber\\
&+\mu\int\dot{u}^ju^k\partial_i\partial_k\left(\theta^\al(\partial_iu^j+\partial_ju^i)\right)\dif x\nonumber\\
=~&-\mu\int\partial_i\dot{u}^j\pt\left(\theta^\al(\partial_iu^j+\partial_ju^i)\right)\dif x-\mu\int\partial_i\dot{u}^j\div u\left(\theta^\al(\partial_iu^j+\partial_ju^i)\right)\dif x\nonumber\\
&-\mu\int\theta^\al\dot{u}^j\partial_i(\div u)(\partial_iu^j+\partial_ju^i)\dif x-\mu\int\partial_i\dot{u}^ju^k\partial_k\left(\theta^\al(\partial_iu^j+\partial_ju^i)\right)\dif x\nonumber\\
&-\mu\int\dot{u}^j\partial_i u^k\partial_k\left(\theta^\al(\partial_iu^j+\partial_ju^i)\right)\dif x\nonumber\\
=~&-\mu\int\partial_i\dot{u}^j\pt\left(\theta^\al(\partial_iu^j+\partial_ju^i)\right)\dif x-\mu\int\partial_i\dot{u}^j\div u\left(\theta^\al(\partial_iu^j+\partial_ju^i)\right)\dif x\nonumber\\
&-\mu\int\theta^\al\dot{u}^j\partial_i(\div u)(\partial_iu^j+\partial_ju^i)\dif x-\mu\int\partial_i\dot{u}^ju^k\partial_k\left(\theta^\al(\partial_iu^j+\partial_ju^i)\right)\dif x\nonumber\\
&+\mu\int\partial_k\dot{u}^j\partial_i u^k\left(\theta^\al(\partial_iu^j+\partial_ju^i)\right)\dif x+\mu\int\dot{u}^j\partial_k\partial_i u^k\left(\theta^\al(\partial_iu^j+\partial_ju^i)\right)\dif x\nonumber\\
=~&-\mu\int\partial_i\dot{u}^j\pt\left(\theta^\al(\partial_iu^j+\partial_ju^i)\right)\dif x-\mu\int\partial_i\dot{u}^ju^k\partial_k\left(\theta^\al(\partial_iu^j+\partial_ju^i)\right)\dif x\nonumber\\
&+\mu\int\theta^\al\partial_k\dot{u}^j\partial_iu^k(\partial_iu^j+\partial_ju^i)\dif x-\mu\int\theta^\al\partial_i\dot{u}^j\div u(\partial_i u^j+\partial_j u^i)\dif x\nonumber\\
\leq~& -2\mu\int\theta^\al|\frD(\dot{u})|^2\dif x+C\bar{\theta}^\al\int|\D\dot{u}||\D u|^2\dif x+C\int\theta^{\al-1}|\dot{\theta}||\D\dot{u}||\D u|\dif x\nonumber\\
\leq~&-\frac{3}{4}\mu\left(\frac{\bar{\theta}}{2}\right)^\al\|\D\dot u\|_{L^2}^2+C\bar{\theta}^\al\|\D u\|_{L^4}^4+C\bar{\theta}^{\al-2}\|\D{u}\|_{L^2}\|\D{u}\|_{L^6}\|\D{\dot{\theta}}\|_{L^2}^2,
\end{align}
where we have used the fact
\begin{align*}
&-\mu\int\partial_i\dot{u}^j\left(\pt\left(\theta^\al(\partial_iu^j+\partial_ju^i)\right)+u\cdot\D\left(\theta^\al(\partial_iu^j+\partial_ju^i)\right)\right)\dif x\\
=~&-\mu\int\partial_i\dot{u}^j\left((\partial_iu^j+\partial_ju^i)(\pt(\theta^\al)+u\cdot\D(\theta^\al))+\theta^\al\left((\partial_iu^j_t+\partial_ju^i_t)+u\cdot\D(\partial_iu^j+\partial_ju^i)\right)\right)\dif x\\
=~&-\mu\int\partial_i\dot{u}^j\left(\al\theta^{\al-1}(\partial_iu^j+\partial_ju^i)(\theta_t+u\cdot\D\theta)+\theta^\al\left((\partial_i\dot{u}^j+\partial_j\dot{u}^i)+u\cdot\D(\partial_iu^j+\partial_ju^i)\right)\right)\dif x\\
&-\mu\int\theta^\al\partial_i\dot{u}^j\left(\partial_i(u\cdot\D)u^j+\partial_j(u\cdot\D)u^i\right)\dif x\\
=~&\mu\al\int\theta^{\al-1}\dot{\theta}\partial_i\dot{u}^j(\partial_i u^j+\partial_j u^i)\dif x-\mu\int\theta^\al\partial_i\dot{u}^j\left(\partial_i\dot{u}^j+\partial_j\dot{u}^i\right)\dif x\\
&+\mu\int\theta^\al\partial_i\dot{u}^j\left(\partial_iu^k\partial_ku^j+\partial_ju^k\partial_ku^i\right)\dif x.
\end{align*}

Similarly, we get
\begin{align}\label{3.4.5}
M_3
=~&\lambda\int\dot u^j\left(\pt\partial_j(\theta^\al\div u)+\div (u\partial_j(\theta^\al\div u))\right)\dif x\nonumber\\
=~&-\lambda\int\partial_j\dot u^j\pt(\theta^\al\div u)\dif x+\lambda\int\dot u^j\left(\div u\partial_j(\theta^\al\div u)+u\cdot\D\partial_j(\theta^\al\div u)\right)\dif x\nonumber\\
=~&-\lambda\int\div\dot u\pt(\theta^\al\div u)\dif x-\lambda\int\theta^\al\div\dot{u}(\div u)^2\dif x-\lambda\int\theta^\al\div u\dot{u}^j\partial_j(\div u)\dif x\nonumber\\
&-\lambda\int\div\dot{u}u\cdot\nabla(\theta^\al\div u)\dif x-\lambda\int\dot{u}^j\partial_ju^k\partial_k(\theta^\al\div u)\dif x\nonumber\\
=~&-\lambda\int\div\dot u\pt(\theta^\al\div u)\dif x-\lambda\int\div\dot{u}u\cdot\nabla(\theta^\al\div u)\dif x-\lambda\int\theta^\al\div\dot{u}(\div u)^2\dif x\nonumber\\
&-\lambda\int\theta^\al\div u\dot{u}^j\partial_j(\div u)\dif x+\lambda\int\theta^\al\partial_k\dot{u}^j\partial_ju^k\div u\dif x+\lambda\int\theta^\al\div u\dot{u}^j\partial_j(\div u)\dif x\nonumber\\
=~&\lambda\al\int\theta^{\al-1}\div\dot{u}\div u\dot\theta \dif x-\lambda\int\theta^\al(\div\dot{u})^2\dif x+\lambda\int\theta^\al\div\dot{u}\partial_iu^j\partial_ju^i\dif x\nonumber\\
&-\lambda\int\theta^\al\div\dot{u}(\div u)^2\dif x+\lambda\int\theta^\al\partial_k\dot{u}^j\partial_ju^k\div u\dif x\nonumber\\
\leq~&-\lambda\left(\frac{\bar{\theta}}{2}\right)^\al\|\div\dot{u}\|_{L^2}^2+\frac{\mu}{4}\left(\frac{\bar{\theta}}{2}\right)^\al\|\D\dot{u}\|_{L^2}^2+C\bar{\theta}^\al\|\D{u}\|_{L^4}^4+C\bar{\theta}^{\al-2}\|\D{u}\|_{L^2}\|\D{u}\|_{L^6}\|\D{\dot{\theta}}\|_{L^2}^2,
\end{align}
where we have used the fact
\begin{align*}
&-\lambda\int\div\dot{u}\left(\pt(\theta^\al\div u)+u\cdot\D(\theta^\al\div u)\right)\dif x\\
=~&-\lambda\int\div\dot{u}\left(\div u(\pt(\theta^\al)+u\cdot\D(\theta^\al))+\theta^\al(\div u_t+u\cdot\D(\div u))\right)\dif x\\
=~&-\lambda\int\div\dot{u}\left(\al\theta^{\al-1}\div u(\theta_t+u\cdot\D\theta)+\theta^\al(\div\dot{u}+u\cdot\D(\div u)-\div((u\cdot\D)u))\right)\dif x\\
=~&\lambda\al\int\theta^{\al-1}\div\dot{u}\div u\dot\theta \dif x-\lambda\int\theta^\al(\div\dot{u})^2\dif x+\lambda\int\theta^\al\div\dot{u}\partial_iu^j\partial_ju^i\dif x.
\end{align*}

Combining \eqref{3.4.1} and \eqref{3.4.3}--\eqref{3.4.5}, we get
\begin{align}\label{3.4.6}
&\frac{\dif}{\dif t}\int\rho|\dot u|^2\dif x+\frac{\mu}{4}\left(\frac{\bar{\theta}}{2}\right)^\al\|\D\dot u\|_{L^2}^2\nonumber\\
\leq~&C\left(\bar{\rho}\bar{\theta}^{-\alpha}\|\sqrt{\rho}\dot\theta\|_{L^2}^2+\bar{\rho}^2\bar{\theta}^{-\alpha+2}\|\D u\|_{L^2}^2+\bar{\theta}^\al\|\D u\|_{L^4}^4+\bar{\theta}^{\al-2}\|\D{u}\|_{L^2}\|\D{u}\|_{L^6}\|\D{\dot{\theta}}\|_{L^2}^2\right)\nonumber\\
\leq~&C\left(\bar{\rho}\bar{\theta}^{-\alpha}\|\sqrt{\rho}\dot\theta\|_{L^2}^2+\bar{\rho}^2\bar{\theta}^{-\alpha+2}\|\D u\|_{L^2}^2+\bar{\theta}^\al\|\D u\|_{L^2}\|\D u\|_{L^6}^3+\bar{\theta}^{\al-2}\|\D{u}\|_{L^2}\|\D{u}\|_{L^6}\|\D{\dot{\theta}}\|_{L^2}^2\right).
\end{align}

Integrating \eqref{3.4.6} over $(0,\sigma(T))$ and using \eqref{assume:1} and \eqref{estimate:DuL6}, we obtain
\begin{align}\label{3.4.7}
&\sup\limits_{t\in[0,\sigma(T)]}\|\sqrt{\rho}\dot{u}\|_{L^2}^2+\bar{\theta}^\al\int_0^{\sigma(T)}\|\D \dot{u}\|_{L^2}^2\dif t\nonumber\\
\leq~&\|\sqrt{\rho}\dot{u}\|_{L^2}^2\Big|_{t=0}+C\bar{\rho}\bar{\theta}^{-\al}\int_0^{\sigma(T)}\|\sqrt{\rho}\dot\theta\|_{L^2}^2\dif t+C\bar{\rho}^2\bar{\theta}^{-\alpha+2}\int_0^{\sigma(T)}\|\D u\|_{L^2}^2\dif t+C\bar{\theta}^\al\int_0^{\sigma(T)}\|\D u\|_{L^2}\|\D u\|_{L^6}^3\dif t\nonumber\\
&+C\bar{\theta}^{\al-2}\int_0^{\sigma(T)}\|\D{u}\|_{L^2}\|\D{u}\|_{L^6}\|\D{\dot{\theta}}\|_{L^2}^2\dif t\nonumber\\
\leq~&C\bar{\theta}^{2\al}+C(K)\left(\bar{\rho}\bar{\theta}^{\beta-\al}+\bar{\rho}^3\bar{\theta}^{3-2\al}+\bar{\rho}^\frac{3}{2}+\bar{\rho}^{\frac{1}{2}}\bar{\theta}^{\alpha-2+\beta}\right)\nonumber\\
\leq~& K_3\bar{\theta}^{2\alpha},
\end{align}
provided that $2C\bar{\theta}^{2\al}\leq K_3\bar{\theta}^{2\alpha}$, $\beta>\al>\beta-2$, and
$$\bar{\rho}\leq C(K)\min\left\{\bar{\theta}^{3\alpha-\beta}, \bar{\theta}^{4\alpha-3}, \bar{\theta}^{\frac{4}{3}\alpha}, \bar{\theta}^{2(\alpha-\beta+2)}\right\},$$
where we have taken $\bar{\theta}$ large enough, such that
$$C(K)\left(\bar{\rho}\bar{\theta}^{\beta-\al}+\bar{\rho}^3\bar{\theta}^{3-2\al}+\bar{\rho}^\frac{3}{2}+\bar{\rho}^{\frac{1}{2}}\bar{\theta}^{\alpha-2+\beta}\right)\leq\frac{1}{2}K_3\bar{\theta}^{2\alpha}.$$
Then we complete the proof of \eqref{g:3.4:1}.

Now we give the proof of (\ref{g:3.4:2}). Applying the operator $\pt+\div(u\cdot)$ to $(\ref{FCNS:2})_3$, we get 
\begin{align}\label{3.5:3}
&\rho\pt\dot{\theta}+\rho u\cdot\D\dot{\theta}-\kappa\div\left(\theta^\beta\D\dot{\theta}\right)\nonumber\\
=~&-R\rho\dot{\theta}\div u-R\rho\theta\div\dot{u}+R\rho\theta\partial_iu^j\partial_ju^i+\theta^\al\div u\left(2\mu|\frD(u)|^2+\lambda(\div u)^2\right)\nonumber\\
&+\al\theta^{\al-1}\dot\theta\left(2\mu|\frD(u)|^2+\lambda(\div u)^2\right)+2\mu\theta^\al\left(\partial_iu^j+\partial_ju^i\right)\left(\partial_i\dot{u}^j+\partial_j\dot{u}^i\right)\nonumber\\
&-2\mu\theta^\al\left(\partial_iu^j+\partial_ju^i\right)\left(\partial_iu^k\partial_ku^j+\partial_ju^k\partial_ku^i\right)+2\lambda\theta^\al\div u\div\dot{u}-2\lambda\theta^\al\div u\partial_iu^j\partial_ju^i\nonumber\\
&+\kappa\beta\div\left(\theta^{\beta-1}\dot{\theta}\D\theta\right)+\kappa\div\left(u\div(\theta^\beta\D\theta)-\theta^\beta\D(u\cdot\D\theta)-\beta\theta^{\beta-1}u\cdot\D\theta\D\theta\right).
\end{align}

Multiplying $\dot{\theta}$ on both sides of (\ref{3.5:3}), we have
\begin{align}\label{3.5:5}
&\frac{1}{2}\frac{\dif}{\dif t}\int\rho|\dot{\theta}|^2\dif x+\kappa\int\theta^\beta|\D\dot{\theta}|^2\dif x\nonumber\\
\leq~&C\int\rho|\dot{\theta}|^2|\D u|\dif x+C\bar{\theta}\int\rho|\dot{\theta}|\left(|\D\dot{u}|+|\D u|^2\right)\dif x+C\bar{\theta}^{\alpha}\int|\dot{\theta}||\D u|^3\dif x+C\bar{\theta}^{\alpha-1}\int|\dot{\theta}|^2|\D u|^2\dif x\nonumber\\
&+C\bar{\theta}^{\alpha}\int|\dot{\theta}||\D u||\D\dot{u}|\dif x+C\bar{\theta}^{\beta-1}\int|\nabla{\theta}||\dot\theta||\D\dot{\theta}|\dif x+C\bar{\theta}^{\beta-1}\int|u||\nabla \theta|^2|\D\dot\theta|\dif x\nonumber\\
&+C\bar{\theta}^{\beta}\int|u||\nabla^2{\theta}||\D\dot\theta|\dif x+C\bar{\theta}^{\beta}\int|\nabla u||\nabla{\theta}||\D\dot\theta|\dif x\nonumber\\
:=~&\sum_{i=1}^9N_i, 
\end{align}
where $N_i$ can be estimated as 
\begin{align}\label{O2}
N_1&=C\int\rho|\dot{\theta}|^2|\D u|\dif x\leq C\bar{\rho}^\frac{1}{2}\|\sqrt{\rho}\dot{\theta}\|_{L^2}\|\D u\|_{L^3}\|\dot\theta\|_{L^6}\leq C\bar{\rho}^\frac{1}{2}\|\sqrt{\rho}\dot{\theta}\|_{L^2}\|\D u\|_{L^2}^\frac{1}{2}\|\D u\|_{L^6}^\frac{1}{2}\|\D\dot\theta\|_{L^2}\nonumber\\
&\leq\delta\bar{\theta}^\beta\|\D\dot\theta\|_{L^2}^2+C(K)\bar{\rho}^{\frac{3}{2}}\bar{\theta}^{-\beta}\|\sqrt{\rho}\dot{\theta}\|_{L^2}^2,
\end{align}
\begin{align}\label{O3}
N_2&=C\bar{\theta}\int\rho|\dot{\theta}|\left(|\D\dot{u}|+|\D u|^2\right)\dif x\leq C\bar{\rho}^\frac{1}{2}\bar{\theta}\|\sqrt{\rho}\dot{\theta}\|_{L^2}\|\D \dot u\|_{L^2}+C\bar{\rho}^\frac{1}{2}\bar{\theta}\|\sqrt{\rho}\dot{\theta}\|_{L^2}\|\D u\|_{L^3}\|\D u\|_{L^6}\nonumber\\
&\leq C\bar{\rho}^\frac{1}{2}\bar{\theta}\|\sqrt{\rho}\dot{\theta}\|_{L^2}\|\D \dot u\|_{L^2}+C(K)\bar{\rho}\bar{\theta}\|\sqrt{\rho}\dot{\theta}\|_{L^2}\|\D u\|_{L^2}^\frac{1}{2}\|\D u\|_{L^6}^\frac{1}{2},
\end{align}
\begin{align}\label{O4}
N_3&=C\bar{\theta}^{\alpha}\int|\dot{\theta}||\D u|^3\dif x\leq C\bar{\theta}^{\alpha}\|\D u\|_{L^2}\|\D u\|_{L^6}^2\|\dot\theta\|_{L^6}\leq C\bar{\theta}^{\alpha}\|\D u\|_{L^2}\|\D u\|_{L^6}^2\|\D\dot\theta\|_{L^2}\nonumber\\
&\leq \delta\bar{\theta}^\beta\|\D\dot\theta\|_{L^2}^2+C(K)\bar{\rho}\bar{\theta}^{2\alpha-\beta}\|\D u\|_{L^6}^2,
\end{align}
\begin{align}\label{O5}
N_4&=C\bar{\theta}^{\alpha-1}\int|\dot{\theta}|^2|\D u|^2\dif x\leq C\bar{\theta}^{\alpha-1}\|\D u\|_{L^2}\|\D u\|_{L^6}\|\dot\theta\|_{L^6}^2\leq C(K)\bar{\rho}^\frac{1}{2}\bar{\theta}^{\alpha-1}\|\nabla\dot\theta\|_{L^2}^2,
\end{align}
\begin{align}\label{O6}
N_5&=C\bar{\theta}^{\alpha}\int|\dot{\theta}||\D u||\D\dot{u}|\dif x\leq C\bar{\theta}^{\alpha}\|\dot{\theta}\|_{L^6}\|\D {u}\|_{L^3}\|\D \dot{u}\|_{L^2}\leq C\bar{\theta}^{\alpha}\|\D\dot{\theta}\|_{L^2}\|\D u\|_{L^2}^\frac{1}{2}\|\D u\|_{L^6}^\frac{1}{2}\|\D \dot{u}\|_{L^2}\nonumber\\
&\leq \delta\bar{\theta}^\beta\|\D\dot\theta\|_{L^2}^2+C(K)\bar{\rho}^\frac{1}{2}\bar{\theta}^{2\alpha-\beta}\|\D \dot{u}\|_{L^2}^2,
\end{align}
\begin{align}\label{O1}
N_6&=C\bar{\theta}^{\beta-1}\int|\D\dot{\theta}||\D\theta||\dot\theta|\dif x\leq C\bar{\theta}^{\beta-1}\|\D \dot\theta\|_{L^2}\|\D \theta\|_{L^3}\|\dot\theta\|_{L^6}\leq C\bar{\theta}^{\beta-1}\|\D \dot\theta\|_{L^2}^2\|\D \theta\|_{L^2}^\frac{1}{2}\|\D^2 \theta\|_{L^2}^\frac{1}{2}\nonumber\\
&\leq CK_2^\frac{1}{2}\bar{\theta}^{\beta-\frac{1}{2}}\|\D \dot\theta\|_{L^2}^2,
\end{align}
\begin{align}\label{O78}
N_7&=C\bar{\theta}^{\beta-1}\int|u||\nabla \theta|^2|\D\dot\theta|\dif x\leq C\bar{\theta}^{\beta-1}\|\D u\|_{L^2}^{\frac{1}{2}}\|\D u\|_{L^6}^{\frac{1}{2}}\|\D\theta\|_{L^2}^{\frac{1}{2}}\|\D^2\theta\|_{L^2}^{\frac{3}{2}}\|\D\dot{\theta}\|_{L^2}\nonumber\\
&\leq \delta\bar{\theta}^\beta\|\D\dot\theta\|_{L^2}^2+ C\bar{\theta}^{\beta-2}\|\D u\|_{L^2}\|\D u\|_{L^6}\|\D\theta\|_{L^2}\|\D^2\theta\|_{L^2}^3\nonumber\\
&\leq \delta\bar{\theta}^\beta\|\D\dot\theta\|_{L^2}^2+ C(K)\bar{\theta}^{\beta+1}\|\D u\|_{L^2}\|\D u\|_{L^6},
\end{align}
\begin{align}\label{O910}
N_8+N_9&=C\bar{\theta}^{\beta}\int|u||\nabla^2{\theta}||\D\dot\theta|\dif x+C\bar{\theta}^{\beta}\int|\nabla u||\nabla{\theta}||\D\dot\theta|\dif x\nonumber\\
&\leq C\bar{\theta}^{\beta}\|\D u\|_{L^2}^{\frac{1}{2}}\|\D u\|_{L^6}^{\frac{1}{2}}\|\D^2\theta\|_{L^2}\|\D\dot\theta\|_{L^2}
\nonumber\\
&\leq \delta\bar{\theta}^\beta\|\D\dot\theta\|_{L^2}^2+C(K)\bar{\rho}^{\frac{1}{2}}\bar{\theta}^{\beta}\|\D^2 \theta\|_{L^2}^2.
\end{align}

Taking $\delta=\frac{\kappa}{10\cdot 2^\beta}$ in (\ref{O2})--(\ref{O910}) and inserting them into (\ref{3.5:5}), we obtain after integration with respect to $t\in[0,T]$ that
\begin{align}\label{3.5:12}
&\sup\limits_{t\in[0,\sigma(T)]}\| \sqrt{\rho}\dot{\theta}\|_{L^2}^2+\bar{\theta}^\beta\int_0^{\sigma(T)}\|\D \dot{\theta}\|_{L^2}^2\dif t\nonumber\\
\leq~& \| \sqrt{\rho}\dot{\theta}\|_{L^2}^2\Big|_{t=0}+C(K)\int_0^{\sigma(T)}\bigg(\bar{\rho}^{\frac{3}{2}}\bar{\theta}^{-\beta}\|\sqrt{\rho}\dot{\theta}\|_{L^2}^2+\bar{\rho}^\frac{1}{2}\bar{\theta}\|\sqrt{\rho}\dot{\theta}\|_{L^2}\|\D \dot u\|_{L^2}+\bar{\rho}\bar{\theta}\|\sqrt{\rho}\dot{\theta}\|_{L^2}\|\D u\|_{L^2}^\frac{1}{2}\|\D u\|_{L^6}^\frac{1}{2}\nonumber\\
&+\bar{\rho}\bar{\theta}^{2\alpha-\beta}\|\D u\|_{L^6}^2+\bar{\rho}^{\frac{1}{2}}\bar{\theta}^{2\alpha-\beta}\|\D \dot{u}\|_{L^2}^2+\bar{\theta}^{\beta+1}\|\D u\|_{L^2}\|\D u\|_{L^6}+\bar{\rho}^{\frac{1}{2}}\bar{\theta}^{\beta}\|\D^2 \theta\|_{L^2}^2\bigg)\dif t\nonumber\\
\leq~& C\bar{\theta}^{2\beta}+C(K)\left(\bar{\rho}^\frac{3}{2}+\bar{\rho}^\frac{1}{2}\bar{\theta}^{1+\frac{1}{2}(\al+\beta)}+\bar{\rho}^\frac{3}{2}\bar{\theta}^{\frac{3}{2}-\alpha+\frac{1}{2}\beta}+\bar{\rho}^2\bar{\theta}^{\al-\beta}+\bar{\rho}^\frac{1}{2}\bar{\theta}^{3\alpha-\beta}
+\bar{\rho}\bar{\theta}^{\beta-\al+\frac{3}{2}}+\bar{\rho}^\frac{3}{2}\right)\nonumber\\
\leq~& K_4\bar{\theta}^{2\beta},
\end{align}
provided that $2C\bar{\theta}^{2\beta}\leq K_4\bar{\theta}^{2\beta}$, $\alpha<\beta$, and
$$\bar{\rho}\leq C(K)\min\left\{\bar{\theta}^{\frac{4}{3}\beta}, \bar{\theta}^{3\beta-\alpha-2}, \bar{\theta}^{\frac{2}{3}\alpha+\beta-1}, \bar{\theta}^{\frac{1}{2}(3\beta-\alpha)}, \bar{\theta}^{6(\beta-\alpha)}, \bar{\theta}^{\alpha+\beta-\frac{3}{2}}\right\},$$
where we have taken $\bar{\theta}$ large enough, such that
$$C(K)\left(\bar{\rho}^\frac{3}{2}+\bar{\rho}^\frac{1}{2}\bar{\theta}^{1+\frac{1}{2}(\al+\beta)}+\bar{\rho}^\frac{3}{2}\bar{\theta}^{\frac{3}{2}-\alpha+\frac{1}{2}\beta}+\bar{\rho}^2\bar{\theta}^{\al-\beta}+\bar{\rho}^\frac{1}{2}\bar{\theta}^{3\alpha-\beta}
+\bar{\rho}\bar{\theta}^{\beta-\al+\frac{3}{2}}\right)\leq\frac{1}{2}K_4\bar{\theta}^{2\beta}.$$
Hence, the proof of Lemma \ref{lem:3.4} is completed.
\end{proof}

\subsubsection{Time-weighted uniform estimates of velocity and temperature on $[0,T]$}\label{sec3.2}

Next, in Lemmas \ref{Lem:4}--\ref{Lem:5}, we will establish the time-weighted uniform estimates of $(u,\theta)$ on the time interval $[0,T]$. Moreover, by Gagliardo-Nirenberg inequality, we will give the uniform upper and lower bounds of temperature.

\begin{lemma}\label{Lem:4}$($First-order time-weighted estimates of velocity and temperature$)$ 
Under the conditions of Proposition \ref{p4.1}, there exist positive constants $K_5$, $K_6$ and $\rL_3$ depending on $\mu$, $\lambda$, $\kappa$, and $R$ such that if $(\rho,u,\theta)$ is a smooth solution of \eqref{FCNS:2}--\eqref{data and far} on $\R^3\times(0,T]$, it holds
\begin{equation}\label{Lem:4:1-1}
B_1(T)=\bar{\theta}^\al\sup\limits_{t\in[0,T]}\left(\sigma^3\|\D u\|_{L^2}^2\right)+\int_0^T\sigma^3\|\sqrt{\rho}\dot{u}\|_{L^2}^2\dif t\leq K_5\bar{\rho}\bar{\theta},
\end{equation}
\begin{equation}\label{Lem:4:1-2}
B_2(T)=\bar{\theta}^\beta\sup\limits_{t\in[0,T]}\left(\sigma^4\|\D\theta\|_{L^2}^2\right)+\int_0^T\sigma^4\| \sqrt{\rho}\dot{\theta}\|_{L^2}^2\dif t\leq K_6\bar{\rho}\bar{\theta}^2,
\end{equation}
provided that $2\leq\alpha<\beta<\alpha+2$, $\alpha\leq 10$, $\bar{\theta}\geq\rL_3$,
\begin{equation}\label{rho-theta-1}
\bar{\rho}\leq C(K)\bar{\theta}^{\frac{2}{3}},
\end{equation}
and \eqref{rho-theta} holds.
\end{lemma}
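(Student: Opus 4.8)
The plan is to repeat, now on the full interval $[0,T]$ and with the time weights $\sigma^3$ (for the velocity) and $\sigma^4$ (for the temperature), the weighted energy scheme that produced $A_1(\sigma(T))$ and $A_2(\sigma(T))$ in Lemma \ref{Lem33}. Concretely, I would multiply the velocity identity \eqref{3.3:1} by $\sigma^3$, write $\sigma^3\frac{\dif}{\dif t}f=\frac{\dif}{\dif t}(\sigma^3f)-3\sigma^2\sigma'f$, and integrate over $(0,T)$; since $\sigma^3(0)=0$ no compatibility assumption on the initial data is consumed, at the cost of the lower-order remainder $3\sigma^2\sigma'(\cdots)$, which is supported on $[0,1]$ and, for the energy–dissipation part, is bounded by $\bar\theta^\al\int_0^1\|\D u\|_{L^2}^2\dif t\lesssim\bar\rho\bar\theta$ thanks to Lemma \ref{Lem3.1}. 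This $\bar\rho\bar\theta$ is exactly the target scale in \eqref{Lem:4:1-1}. The analogous bookkeeping for the temperature, multiplying \eqref{3.3:10} by $\sigma^4$, produces the remainder $\bar\theta^\beta\int_0^1\|\D\theta\|_{L^2}^2\dif t\lesssim\bar\rho\bar\theta^2$, i.e. the scale in \eqref{Lem:4:1-2}.

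For \eqref{Lem:4:1-1}: the term $2R\frac{\dif}{\dif t}\int(\rho\theta-\bar\rho\bar\theta)\div u\,\dif x$ is integrated by parts in time under the weight, leaving the boundary contribution $\lesssim\sigma^3\|\rho\theta-\bar\rho\bar\theta\|_{L^2}\|\D u\|_{L^2}\lesssim\sigma^3\bar\rho\bar\theta\|\D u\|_{L^2}$, which is absorbed by $\varepsilon\sigma^3\bar\theta^\al\|\D u\|_{L^2}^2$ plus $C\sigma^3\bar\rho^2\bar\theta^{2-\al}\le K_5\bar\rho\bar\theta$ once $C(K)\bar\rho\le\bar\theta^{\al-1}$. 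The terms $I_1$--$I_4$ are estimated as in \eqref{3.3:4}--\eqref{3.3:7} with the factor $\sigma^3$ carried along, now invoking the a priori bounds \eqref{assume:1}--\eqref{assume:2}: $B_1$ and $B_3$ for $\sigma^{3/2}\|\sqrt\rho\dot u\|_{L^2}$ and the $\|\D\dot u\|_{L^2}$-feedback, $B_2$ for $\|\D\theta\|_{L^2}$, $A_5$ and $A_6$ for $\|\D\rho\|_{L^2}$ and $\|\D\rho\|_{L^4}$, and the time-weighted version of \eqref{estimate:DuL6} for $\|\D u\|_{L^6}$ (which itself uses the bound $\|\D^2\theta\|_{L^2}\le 2\bar\theta$ from \eqref{2dtheta-1} together with the analogous control of $\|\D^2\theta\|_{L^2}$ on $[0,T]$). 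Collecting the resulting powers of $\bar\rho$ and $\bar\theta$ and using $\bar\theta\ge\rL_3$ together with \eqref{rho-theta} and \eqref{rho-theta-1}, each nonlinear contribution is either $\le\frac{1}{2}K_5\bar\rho\bar\theta$ or absorbed into the dissipation, closing \eqref{Lem:4:1-1}.

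For \eqref{Lem:4:1-2}: multiply \eqref{3.3:10} by $\sigma^4$ and integrate; the eight terms $J_1$--$J_8$ are handled as in \eqref{3.3:11}--\eqref{3.3:14-1} with the weight present. The feedbacks through $\|\D u\|_{L^6}$ are now closed using the estimate \eqref{Lem:4:1-1} just established, and those through $\|\D^2\theta\|_{L^2}$ through a $\sigma$-weighted analogue of \eqref{estimate:D2theta:2-1}. The borderline term is $J_8$, where the prefactor $\theta^{\beta-1}$ multiplies the top-order quantity $\|\D^2\theta\|_{L^2}^{3/2}$; this must be split against $\delta\bar\theta^\beta\|\D\dot\theta\|_{L^2}^2$, and it is precisely the balancing of these powers (together with the viscous-heating term $\bar\theta^\al\|\D u\|_{L^4}^4$ appearing in the $I_i$, $M_i$ estimates) that forces $\beta<\alpha+2$ and, through the cumulative bookkeeping, the ceiling $\alpha\le 10$. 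After choosing $\delta$ small and invoking \eqref{rho-theta}, \eqref{rho-theta-1} and $\bar\theta\ge\rL_3$, every term is dominated by $K_6\bar\rho\bar\theta^2$, which gives \eqref{Lem:4:1-2}.

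The main obstacle is, as throughout the paper, the exact tracking of the $(\bar\rho,\bar\theta)$-powers. Because $T$ may be arbitrarily large, one cannot hide any stray factor of $\sigma(T)$; the only reservoir for the lower-order leftovers is the global-in-time dissipation of Lemma \ref{Lem3.1}, and it carries the unfavorable powers $\bar\rho\bar\theta^{1-\al}$ and $\bar\rho\bar\theta^{2-\beta}$. Matching the induced powers against the target scales $\bar\rho\bar\theta$ and $\bar\rho\bar\theta^2$ is what pins down the admissible window $2\le\alpha<\beta<\alpha+2$ together with the new restriction $\alpha\le 10$, and it requires both smallness conditions \eqref{rho-theta} and \eqref{rho-theta-1}. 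A secondary point is that on $[0,T]$ one has only the weaker a priori bounds $B_3(T)\le 2\bar\theta^{\alpha/3+2/3}$ and $B_4(T)\le 2K_7\bar\rho\bar\theta^2$ (in place of $A_3$, $A_4$ on $[0,\sigma(T)]$), so the $\|\D\dot u\|_{L^2}$- and $\|\D\dot\theta\|_{L^2}$-feedback terms must be kept under their time weights and closed only after the final time integration.
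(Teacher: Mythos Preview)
Your plan is correct and essentially matches the paper's approach: weight the velocity and temperature energy identities by $\sigma^3$ and $\sigma^4$, harvest the initial-time remainder via Lemma~\ref{Lem3.1}, and close using the a~priori bounds \eqref{assume:1}--\eqref{assume:2}. Three points where the paper's execution differs from your description are worth noting. First, the paper tests $(\ref{FCNS:2})_2$ directly against $\sigma^3\dot u$ (giving $P_1,P_2,P_3$ in \eqref{Lem:4:2}--\eqref{Lem:4:8}) rather than multiplying the identity \eqref{3.3:1} by $\sigma^3$; this is cosmetic and your route works equally well. Second, the extension of $\|\D^2\theta\|_{L^2}\le 2\bar\theta$ to $[0,T]$ is made into an explicit preliminary Step~1, where the time-weighted bounds \eqref{estimate:DuL6:4}--\eqref{estimate:D2theta-2} on $\sigma^5\|\D u\|_{L^6}^2$, $\int_0^T\|\D u\|_{L^6}^2\dif t$, $\sigma^6\|\D^2\theta\|_{L^2}^2$, and $\int_0^T\sigma^4\|\D^2\theta\|_{L^2}^2\dif t$ are established; the ceiling $\alpha\le 10$ enters precisely here, in \eqref{estimate:D2theta-1}, via the requirement $4\alpha-3\beta\le 10$, and not through $J_8$ as you suggest. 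Third, in the paper's $B_1$ estimate there is no $\|\D\dot u\|_{L^2}$ feedback and $B_3$ is not invoked; the only material-derivative feedback comes from the coefficient term $\theta^{\al-1}\theta_t$ in $P_2,P_3$, which produces $\|\D\dot\theta\|_{L^2}$ and is controlled by $B_4$.
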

\begin{proof}
First, we claim that
\begin{equation}\label{claim2}
\|\D^2\theta\|_{L^2}\leq 2\bar{\theta}, \quad\text{for} ~ t\in[0,T].
\end{equation}

\textbf{Step 1}: Estimates of $\|\D u\|_{L^6}$ and $\|\D^2\theta\|_{L^2}$.

Combining \eqref{L6}, \eqref{assume:2} and \eqref{claim2}, we obtain
\begin{align}\label{estimate:DuL6:4}
\sigma^5\|\D u\|_{L^6}^2\leq~&C\sigma^5\bar{\rho}\bar{\theta}^{-2\al}\|\sqrt{\rho}\dot{u}\|_{L^2}^2+C\sigma^4\bar{\rho}^2\bar{\theta}^{-2\al}\|\D\theta\|_{L^2}^2+C\bar{\theta}^{2-2\alpha}\|\nabla\rho\|_{L^2}^2+C(K)\sigma^5\bar{\theta}^{-2}\|\D\theta\|_{L^2}^2\|\D u\|_{L^2}^2\nonumber\\
\leq~&C(K)\left(\bar{\rho}\bar{\theta}^{\frac{2}{3}-\frac{5}{3}\al}+\bar{\rho}^3\bar{\theta}^{2-2\al-\beta}+\bar{\theta}^{2-2\al}+\bar{\rho}^{\frac{3}{2}}\bar{\theta}^{-\al-\frac{\beta}{2}}\right)\nonumber\\
\leq~&C(K)\left(\bar{\rho}\bar{\theta}^{\frac{2}{3}-\frac{5}{3}\al}+\bar{\theta}^{2-2\al}+\bar{\rho}^{\frac{3}{2}}\bar{\theta}^{-\al-\frac{\beta}{2}}\right),
\end{align}
provided that $\bar{\rho}\leq C(K)\bar{\theta}^{\frac{\beta}{3}}$. By \eqref{L6}, \eqref{assume:1}, \eqref{assume:2} and \eqref{lem3.1:g2}, we obtain
\begin{align}\label{estimate:DuL6:5}
\int_0^T\|\D u\|_{L^6}^2\dif t\leq~&C\bar{\rho}\bar{\theta}^{-2\al}\int_0^T\|\sqrt{\rho}\dot{u}\|_{L^2}^2\dif t+C\bar{\rho}^2\bar{\theta}^{-2\al}\int_0^T\|\D\theta\|_{L^2}^2\dif t\nonumber\\
&+C\bar{\theta}^{2-2\alpha}\int_0^T\|\D\rho\|_{L^2}^2\dif t+C\bar{\theta}^{-2}\int_0^T\|\D u\|_{L^2}^2\|\D\theta\|_{L^2}^2\dif t\nonumber\\
\leq~&C\bar{\rho}\bar{\theta}^{-2\al}\left(\int_0^{\sigma(T)}\|\sqrt{\rho}\dot{u}\|_{L^2}^2\dif t+\int_0^T\sigma^3\|\sqrt{\rho}\dot{u}\|_{L^2}^2\dif t\right)+C\bar{\rho}^2\bar{\theta}^{-2\al}\int_0^T\|\D\theta\|_{L^2}^2\dif t\nonumber\\
&+C\bar{\theta}^{2-2\alpha}\int_0^T\|\D\rho\|_{L^2}^2\dif t+C\bar{\theta}^{-2}\sup\limits_{t\in[0,T]}\|\D\theta\|_{L^2}^2\int_0^T\|\D u\|_{L^2}^2\dif t\nonumber\\
\leq~&CK_8\bar{\rho}^{-1}\bar{\theta}^{1-\al}.
\end{align}

Moreover, from \eqref{assume:1}, \eqref{assume:2}, \eqref{estimate:DuL6}, \eqref{estimate:DuL6-1}, \eqref{claim2}, \eqref{estimate:DuL6:4} and \eqref{estimate:DuL6:5}, we have
\begin{align}\label{estimate:D2theta-1}
\sigma^6\|\D^2\theta\|_{L^2}^2\leq~&C\left(\bar{\rho}\bar{\theta}^{-2\beta}\sigma^6\|\sqrt{\rho}\dot{\theta}\|_{L^2}^2+\bar{\rho}^2\bar{\theta}^{2-2\beta}\sigma^3\|\D u\|_{L^2}^2+\bar{\theta}^{2(\al-\beta)}\sigma^6\|\D u\|_{L^2}\|\D u\|_{L^6}^3\right)\nonumber\\
&+C\bar{\theta}^{-2}\|\D\theta\|_{L^2}\|\D^2\theta\|_{L^2}\left(\sigma^6\|\D^2\theta\|_{L^2}^2\right)\nonumber\\
\leq~&C(K)\left(\bar{\rho}^2\bar{\theta}^{2-2\beta}+\bar{\rho}^3\bar{\theta}^{3-\al-2\beta}\right)+C(K)\bar{\theta}^{2(\al-\beta)}\left(\sigma^3\|\D u\|_{L^2}^2\right)^{\frac{1}{2}}\left(\sigma^5\|\D u\|_{L^6}^2\right)^{\frac{9}{10}}\nonumber\\
&+C(K)\bar{\theta}^{-1}\sigma^6\|\D^2\theta\|_{L^2}^2\nonumber\\
\leq~&C(K)\left(\bar{\rho}^2\bar{\theta}^{2-2\beta}+\bar{\rho}^{\frac{7}{5}}\bar{\theta}^{\frac{11}{10}-2\beta}+\bar{\rho}^2\bar{\theta}^{\frac{1}{2}+\frac{3}{5}\al-\frac{49}{20}\beta}+\bar{\rho}^{\frac{1}{2}}\bar{\theta}^{\frac{23}{10}-\frac{3}{10}\alpha-2\beta}\right)+\frac{\sigma^6}{2}\|\D^2\theta\|_{L^2}^2\nonumber\\
\leq~&C(K)\bar{\rho}^2\bar{\theta}^{2-2\beta},
\end{align}
provided that $4\alpha-3\beta\leq 10$, and
\begin{align}\label{estimate:D2theta-2}
\int_0^T\sigma^4\|\D^2\theta\|_{L^2}^2\dif t\leq~&C\int_0^T\left(\bar{\rho}\bar{\theta}^{-2\beta}\sigma^4\|\sqrt{\rho}\dot{\theta}\|_{L^2}^2+\bar{\rho}^2\bar{\theta}^{2-2\beta}\|\D u\|_{L^2}^2+\bar{\theta}^{2(\al-\beta)}\sigma^4\|\D u\|_{L^2}\|\D u\|_{L^6}^3\right)\dif t\nonumber\\
&+C\bar{\theta}^{-2}\sup\limits_{t\in[0,T]}\left(\|\D\theta\|_{L^2}\|\D^2\theta\|_{L^2}\right)\int_0^T\sigma^4\|\D^2\theta\|_{L^2}^2\dif t\nonumber\\
\leq~&C(K)\left(\bar{\rho}^2\bar{\theta}^{2-2\beta}+\bar{\rho}^3\bar{\theta}^{3-\al-2\beta}+\bar{\theta}^{\frac{11}{6}-\frac{\alpha}{3}-2\beta}+\bar{\rho}^{-\frac{1}{2}}\bar{\theta}^{\frac{5}{2}-2\beta}+\bar{\rho}^{\frac{1}{4}}\bar{\theta}^{\frac{3}{2}-\frac{9}{4}\beta}\right) \nonumber\\
&+\frac{1}{2}\int_0^T\sigma^4\|\D^2\theta\|_{L^2}^2\dif t\nonumber\\
\leq~&C(K)\bar{\theta}^{\frac{5}{2}-2\beta}.
\end{align}
Combining \eqref{estimate:D2theta-0} and \eqref{estimate:D2theta-1}, \eqref{claim2} is proved.

\textbf{Step 2}: Estimates of $B_1(T)$.

Multiplying $(\ref{FCNS:2})_2$ by $\sigma^3\dot{u}$ and integrating the resulting equality over $\R^3$ yield
\begin{align}\label{Lem:4:2}
\sigma^3\int\rho|\dot{u}|^2\dif x=~&-\sigma^3\int\dot{u}\cdot\D P\dif x-2\mu\sigma^3\int\theta^\al\D\dot{u}:\frD(u)\dif x-\lambda\sigma^3\int\theta^\al\div u\div\dot{u}\dif x\nonumber\\
:=~&\sum\limits_{i=1}^3P_i.
\end{align}

Noting \eqref{eqn:P}, we get after integration by parts that
\begin{align}\label{Lem:4:4}
P_1=~&R\sigma^3\int(\rho\theta-\bar{\rho}\bar{\theta})\div u_t\dif x+R\sigma^3\int(\rho\theta-\bar{\rho}\bar{\theta})u\cdot\D\div u\dif x+R\sigma^3\int(\rho\theta-\bar{\rho}\bar{\theta})\partial_ju^i\partial_iu^j\dif x\nonumber\\
=~&R\frac{\dif}{\dif t}\left(\sigma^3\int(\rho\theta-\bar{\rho}\bar{\theta})\div u\dif x\right)-3R\sigma^2\sigma'\int(\rho\theta-\bar{\rho}\bar{\theta})\div u\dif x-\sigma^3\int P_t\div u \dif x\nonumber\\
&+R\sigma^3\int(\rho\theta-\bar{\rho}\bar{\theta})u\cdot\D\div u\dif x+R\sigma^3\int(\rho\theta-\bar{\rho}\bar{\theta})\partial_ju^i\partial_iu^j\dif x\nonumber\\
=~&R\frac{\dif}{\dif t}\left(\sigma^3\int(\rho\theta-\bar{\rho}\bar{\theta})\div u\dif x\right)-3R\sigma^2\sigma'\int(\rho\theta-\bar{\rho}\bar{\theta})\div u\dif x+\sigma^3\int\div(Pu)\div u \dif x\nonumber\\
&-R\sigma^3\int\rho\dot{\theta}\div u\dif x+R\sigma^3\int(\rho\theta-\bar{\rho}\bar{\theta})u\cdot\D\div u\dif x+R\sigma^3\int(\rho\theta-\bar{\rho}\bar{\theta})\partial_ju^i\partial_iu^j\dif x\nonumber\\
=~&R\frac{\dif}{\dif t}\left(\sigma^3\int(\rho\theta-\bar{\rho}\bar{\theta})\div u\dif x\right)-3R\sigma^2\sigma'\int(\rho\theta-\bar{\rho}\bar{\theta})\div u\dif x-R\sigma^3\int\rho\dot{\theta}\div u\dif x\nonumber\\
&+R\sigma^3\int\rho\theta\partial_ju^i\partial_iu^j\dif x\nonumber\\
\leq~&R\frac{\dif}{\dif t}\left(\sigma^3\int(\rho\theta-\bar{\rho}\bar{\theta})\div u\dif x\right)+C\sigma^2\sigma'\|\rho\theta-\bar{\rho}\bar{\theta}\|_{L^2}\|\D u\|_{L^2}\nonumber\\
&+C\bar{\rho}^{\frac{1}{2}}\sigma^3\|\sqrt{\rho}\dot{\theta}\|_{L^2}\|\D u\|_{L^2}+C\bar{\rho}\bar{\theta}\sigma^3\|\D u\|_{L^2}^2\nonumber\\
\leq~&R\frac{\dif}{\dif t}\left(\sigma^3\int(\rho\theta-\bar{\rho}\bar{\theta})\div u\dif x\right)+C\bar{\rho}\bar{\theta}\sigma^2\sigma'\|\D u\|_{L^2}+C\bar{\rho}^{\frac{1}{2}}\sigma^3\|\sqrt{\rho}\dot{\theta}\|_{L^2}\|\D u\|_{L^2}+C\bar{\rho}\bar{\theta}\sigma^3\|\D u\|_{L^2}^2,
\end{align}
\begin{align}\label{Lem:4:7}
P_2=~&-2\mu\sigma^3\int\theta^\al\D u_t:\frD(u)\dif x-2\mu\sigma^3\int\theta^\al\D((u\cdot\D) u):\frD(u)\dif x\nonumber\\
=~&-\mu\frac{\dif}{\dif t}\left(\sigma^3\int\theta^\al|\frD(u)|^2\dif x\right)+2\mu\sigma\sigma'\int\theta^\al|\frD(u)|^2\dif x+\al\mu\sigma^3\int\theta^{\al-1}\theta_t|\frD(u)|^2\dif x\nonumber\\
&+\mu\sigma^3\int\div(\theta^\al u)|\frD(u)|^2\dif x-\mu\sigma^3\int\theta^\al\partial_ju^k\partial_ku^i\left(\partial_iu^j+\partial_ju^i\right)\dif x\nonumber\\
\leq~&-\mu\frac{\dif}{\dif t}\left(\sigma^3\int\theta^\al|\frD(u)|^2\dif x\right)+C\bar{\theta}^\al\sigma^2\sigma'\|\D u\|_{L^2}^2\nonumber\\
&+C\bar{\theta}^{\al-1}\sigma^3\|\D u\|_{L^2}^{\frac{3}{2}}\|\D u\|_{L^6}^{\frac{1}{2}}\|\D\dot{\theta}\|_{L^2}+C\bar{\theta}^{\al}\sigma^3\|\D u\|_{L^2}^{\frac{3}{2}}\|\D u\|_{L^6}^{\frac{3}{2}},
\end{align}
and
\begin{align}\label{Lem:4:8}
P_3\leq~& -\frac{\lambda}{2}\frac{\dif}{\dif t}\left(\sigma^3\int\theta^\al(\div u)^2\dif x\right)+C\bar{\theta}^\al\sigma^2\sigma'\|\D u\|_{L^2}^2\nonumber\\
&+C\bar{\theta}^{\al-1}\sigma^3\|\D u\|_{L^2}^{\frac{3}{2}}\|\D u\|_{L^6}^{\frac{1}{2}}\|\D\dot{\theta}\|_{L^2}+C\bar{\theta}^{\al}\sigma^3\|\D u\|_{L^2}^{\frac{3}{2}}\|\D u\|_{L^6}^{\frac{3}{2}}.
\end{align}

Substituting \eqref{Lem:4:4}--\eqref{Lem:4:8} into \eqref{Lem:4:2}, we obtain after integration with respect to $t\in[0,T]$ that
\begin{align}\label{Lem:4:9}
&\bar{\theta}^\al\sup\limits_{t\in[0,T]}\left(\sigma^3\|\D u\|_{L^2}^2\right)+\int_0^T\sigma^3\|\sqrt{\rho}\dot{u}\|_{L^2}^2\dif t\nonumber\\
\leq~&C\sigma^3\int\theta^\al\left(\mu|\frD(u)|^2+\frac{\lambda}{2}(\div u)^2\right)\dif x+\int_0^T\int\sigma^3\rho|\dot{u}|^2\dif x\dif t\nonumber\\
\leq~&C\bar{\rho}\bar{\theta}\sup\limits_{t\in[0,T]}\left(\sigma^3\|\D u\|_{L^2}\right)+C\bar{\rho}\bar{\theta}\int_0^{\sigma(T)}\|\D u\|_{L^2}\dif t+C\bar{\rho}^{\frac{1}{2}}\int_0^T\sigma^3\|\sqrt{\rho}\dot{\theta}\|_{L^2}\|\D u\|_{L^2}\dif t\nonumber\\
&+C\bar{\rho}\bar{\theta}\int_0^T\|\D u\|_{L^2}^2\dif t+C\bar{\theta}^\al\int_0^{\sigma(T)}\|\D u\|_{L^2}^2\dif t+C\bar{\theta}^{\al-1}\int_0^T\sigma^3\|\D u\|_{L^2}^{\frac{3}{2}}\|\D u\|_{L^6}^{\frac{1}{2}}\|\D\dot{\theta}\|_{L^2}\dif t\nonumber\\
&+C\bar{\theta}^\al\int_0^T\sigma^3\|\D u\|_{L^2}^{\frac{3}{2}}\|\D u\|_{L^6}^{\frac{3}{2}}\dif t\nonumber\\
\leq~&C\bar{\rho}\bar{\theta}+C(K)\left(\bar{\rho}^{\frac{3}{2}}\bar{\theta}^{\frac{3}{2}-\frac{\al}{2}}+\bar{\rho}^2\bar{\theta}^{2-\al}+\bar{\rho}^{\frac{3}{4}}\bar{\theta}^{\frac{1}{2}(\al-\beta+1)}\right)\nonumber\\
\leq~&K_5\bar{\rho}\bar{\theta},
\end{align}
provided that $2C\bar{\rho}\bar{\theta}\leq K_5\bar{\rho}\bar{\theta}$, $1<\alpha<\beta+1$, and $\bar{\rho}\leq C(K)\bar{\theta}^{\alpha-1}$, where we have taken $\bar{\theta}$ sufficiently large, such that
$$C(K)\left(\bar{\rho}^{\frac{3}{2}}\bar{\theta}^{\frac{3}{2}-\frac{\al}{2}}+\bar{\rho}^2\bar{\theta}^{2-\al}+\bar{\rho}^{\frac{3}{4}}\bar{\theta}^{\frac{1}{2}(\al-\beta+1)}\right)\leq \frac{1}{2}K_5\bar{\rho}\bar{\theta}.$$
This completes the proof of \eqref{Lem:4:1-1}.

\textbf{Step 3}: Estimates of $B_2(T)$.

Multiplying $(\ref{FCNS:2})_3$ by $\sigma^4\theta_t$ and integrating the resulting equality over $\R^3$ lead to
\begin{align}\label{Lem:5:2}
&\frac{\kappa}{2}\frac{\dif}{\dif t}\left(\sigma^4\int\theta^\beta|\D\theta|^2\dif x\right)+\sigma^4\int\rho|\dot{\theta}|^2\dif x\nonumber\\
=~&2\kappa\sigma^3\sigma'\int\theta^\beta|\D\theta|^2\dif x+\frac{\kappa\beta}{2}\sigma^4\int\theta^{\beta-1}\theta_t|\D\theta|^2\dif x+\sigma^4\int\rho\dot{\theta}u\cdot\D\theta\dif x\nonumber\\
&-R\sigma^4\int\rho\theta\div u\dot{\theta}\dif x+R\sigma^4\int\rho\theta u\cdot\D\theta\div u\dif x+2\mu\sigma^4\int\theta^\al\dot{\theta}|\frD(u)|^2\dif x\nonumber\\
&+\lambda\sigma^4\int\theta^\al\dot{\theta}(\div u)^2\dif x-2\mu\sigma^4\int\theta^\al u\cdot\D\theta|\frD(u)|^2\dif x-\lambda\sigma^4\int\theta^\al u\cdot\D\theta(\div u)^2\dif x.
\end{align}

Similar to \eqref{3.3:11}--\eqref{3.3:15}, we obtain after integration with respect to $t\in[0,T]$ that
\begin{align}\label{Lem:5:4}
&\bar{\theta}^\beta\sup\limits_{t\in[0,T]}\left(\sigma^4\|\D\theta\|_{L^2}^2\right)+\int_0^T\sigma^4\| \sqrt{\rho}\dot{\theta}\|_{L^2}^2\dif t\nonumber\\
\leq~&C\sigma^4\int\theta^\beta|\D\theta|^2\dif x+\int_0^T\sigma^4\|\sqrt{\rho}\dot{\theta}\|_{L^2}^2\dif t\nonumber\\
\leq~&C\bar{\theta}^\beta\int_0^T\|\D\theta\|_{L^2}^2\dif t+C\bar{\rho}\int_0^T\sigma^4\|\D u\|_{L^2}\|\D\theta\|_{L^2}^2\|\D u\|_{L^6}\dif t+C\bar{\rho}\bar{\theta}\int_0^T\|\D u\|_{L^2}^2\dif t\nonumber\\
&+C\bar{\theta}^\al\sup\limits_{t\in[0,T]}\left(\sigma^2\|\D u\|_{L^2}^{\frac{1}{2}}\|\D u\|_{L^6}^{\frac{1}{2}}\right)\int_0^T\sigma^2\|\D u\|_{L^6}\|\sqrt{\rho}\dot{\theta}\|_{L^2}\dif t\nonumber\\
&+C\bar{\rho}\bar{\theta}\int_0^T\sigma^4\|\D u\|_{L^2}^{\frac{3}{2}}\|\D u\|_{L^6}^{\frac{1}{2}}\|\D\theta\|_{L^2}\dif t+C\bar{\theta}^\al\int_0^T\sigma^4\|\D u\|_{L^2}\|\D u\|_{L^6}^2\|\D\theta\|_{L^2}\dif t\nonumber\\
&+C\bar{\rho}^{-\frac{1}{2}}\bar{\theta}^{\beta-1}\sup\limits_{t\in[0,T]}\left(\|\D\theta\|_{L^2}^{\frac{1}{2}}\|\D^2\theta\|_{L^2}^{\frac{1}{2}}\right)\int_0^T\sigma^4\|\sqrt{\rho}\dot{\theta}\|_{L^2}\|\D^2\theta\|_{L^2}\dif t\nonumber\\
&+C\bar{\theta}^{\beta-1}\sup\limits_{t\in[0,T]}\left(\|\D u\|_{L^2}^{\frac{1}{2}}\|\D u\|_{L^6}^{\frac{1}{2}}\|\D\theta\|_{L^2}\right)\int_0^T\sigma^4\|\D\theta\|_{L^2}^{\frac{1}{2}}\|\D^2\theta\|_{L^2}^{\frac{3}{2}}\dif t\nonumber\\
\leq~&C\bar{\rho}\bar{\theta}^2+C(K)\left(\bar{\rho}^2\bar{\theta}^{3-\al-\beta}+\bar{\rho}^2\bar{\theta}^{2-\al}+\bar{\rho}^{\frac{1}{2}}\bar{\theta}^{\frac{7}{4}+\frac{\alpha}{4}-\frac{\beta}{2}}+\bar{\rho}^{\frac{5}{2}}\bar{\theta}^{3-\al-\frac{\beta}{2}}+\bar{\theta}^{\frac{5}{2}-\frac{\al+\beta}{2}}+\bar{\theta}^{\frac{7}{4}}+\bar{\rho}^{\frac{1}{2}}\bar{\theta}^{\frac{11}{8}-\frac{3}{4}\beta}\right)\nonumber\\
\leq~&K_6\bar{\rho}\bar{\theta}^2,
\end{align}
provided that $2C\bar{\rho}\bar{\theta}^2\leq K_6\bar{\rho}\bar{\theta}^2$, $2\leq\alpha\leq 10$, $\alpha<\beta<\alpha+2$, and
$$\bar{\rho}\leq C(K)\min\left\{\bar{\theta}^{\alpha+\beta-1}, \bar{\theta}^\alpha, \bar{\theta}^{\frac{1}{3}(2\alpha+\beta-2)}\right\},$$
where we have taken $\bar{\theta}$ sufficiently large, such that
$$C(K)\left(\bar{\rho}^2\bar{\theta}^{3-\al-\beta}+\bar{\rho}^2\bar{\theta}^{2-\al}+\bar{\rho}^{\frac{1}{2}}\bar{\theta}^{\frac{7}{4}+\frac{\alpha}{4}-\frac{\beta}{2}}+\bar{\rho}^{\frac{5}{2}}\bar{\theta}^{3-\al-\frac{\beta}{2}}+\bar{\theta}^{\frac{5}{2}-\frac{\al+\beta}{2}}+\bar{\theta}^{\frac{7}{4}}+\bar{\rho}^{\frac{1}{2}}\bar{\theta}^{\frac{11}{8}-\frac{3}{4}\beta}\right)\leq\frac{1}{2}K_6\bar{\rho}\bar{\theta}^2.$$
Hence we complete the proof of Lemma \ref{Lem:4}.
\end{proof}

The following Lemma is devoted to proving the uniform higher-order time-weighted estimates of velocity and temperature, as well as the uniform estimate of $\|\theta-\bar{\theta}\|_{L^\infty}$.

\begin{lemma}\label{Lem:5}$($Second-order time-weighted estimates of velocity and temperature$)$ 
Under the conditions of Proposition \ref{p4.1}, there exist positive constants $K_7$ and $\rL_4$ depending on $\mu$, $\lambda$, $\kappa$, and $R$ such that if $(\rho,u,\theta)$ is a smooth solution of \eqref{FCNS:2}--\eqref{data and far} on $\R^3\times(0,T]$, it holds that
\begin{equation}\label{Lem:4:1-3}
B_3(T)=\sup\limits_{t\in[0,T]}\left(\sigma^5\|\sqrt{\rho}\dot{u}\|_{L^2}^2\right)+\bar{\theta}^\al\int_0^T\sigma^5\|\D \dot{u}\|_{L^2}^2\dif t\leq\bar{\theta}^{\frac{\alpha}{3}+\frac{2}{3}},
\end{equation}
\begin{equation}\label{Lem:4:1-4}
B_4(T)=\sup\limits_{t\in[0,T]}\left(\sigma^6\| \sqrt{\rho}\dot{\theta}\|_{L^2}^2\right)+\bar{\theta}^\beta\int_0^T\sigma^6\|\D \dot{\theta}\|_{L^2}^2\dif t\leq K_7\bar{\rho}\bar{\theta}^2,
\end{equation}
and
\begin{align}\label{Lem:4:1-5}
\sup\limits_{t\in[0,T]}\|\theta-\bar{\theta}\|_{L^\infty}^2\leq \frac{\bar{\theta}^2}{9},
\end{align}
provided that $2\leq\alpha<\beta<\alpha+2$, $\alpha\leq 10$, $\bar{\theta}\geq\rL_4$,
\begin{equation}\label{rho-theta-2}
\bar{\rho}\leq C(K)\min\left\{\bar{\theta}^{\frac{1}{3}(\alpha-1)}, \bar{\theta}^{\frac{1}{3}(\beta-\alpha)}\right\},
\end{equation}
and \eqref{rho-theta}, \eqref{rho-theta-1} hold.
\end{lemma}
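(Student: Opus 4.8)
The plan is to mimic the structure of Lemmas 4.4 and 4.5, but now on the full interval $[0,T]$ with the higher time-weights $\sigma^5$ and $\sigma^6$, exploiting that the bad $\bar\theta$-powers carried by the material-derivative estimates are suppressed by the time weights for $t\leq 1$ and by the already-established bounds $B_1(T)\leq 2K_5\bar\rho\bar\theta$, $B_2(T)\leq 2K_6\bar\rho\bar\theta^2$ for $t\geq 1$. First I would revisit the identity \eqref{3.4.1}–\eqref{3.4.6}: multiplying $(\ref{FCNS:2})_2^j$ by $\sigma^5\dot u^j$ after applying $\pt+\mathrm{div}(u\cdot)$, and collecting the extra commutator term $5\sigma^4\sigma'\int\rho|\dot u|^2\,\dif x$, one gets
\begin{align*}
\frac{\dif}{\dif t}\big(\sigma^5\|\sqrt\rho\dot u\|_{L^2}^2\big)+\tfrac{\mu}{4}\big(\tfrac{\bar\theta}{2}\big)^\al\sigma^5\|\D\dot u\|_{L^2}^2
\leq~&C\sigma^4\sigma'\|\sqrt\rho\dot u\|_{L^2}^2+C\sigma^5\big(\bar\rho\bar\theta^{-\al}\|\sqrt\rho\dot\theta\|_{L^2}^2+\bar\rho^2\bar\theta^{2-\al}\|\D u\|_{L^2}^2\\
&+\bar\theta^\al\|\D u\|_{L^2}\|\D u\|_{L^6}^3+\bar\theta^{\al-2}\|\D u\|_{L^2}\|\D u\|_{L^6}\|\D\dot\theta\|_{L^2}^2\big).
\end{align*}
Integrating over $[0,T]$, the term $C\int_0^T\sigma^4\sigma'\|\sqrt\rho\dot u\|_{L^2}^2\,\dif t$ is controlled by $B_1(T)$, and one splits the remaining integrals into $(0,\sigma(T))$, where $A_2(\sigma(T))$, $A_3(\sigma(T))$, $A_4(\sigma(T))$ and \eqref{estimate:DuL6} apply, and $(\sigma(T),T)$, where $B_1(T)$, $B_2(T)$, $B_4(T)$ and \eqref{estimate:DuL6:4}–\eqref{estimate:DuL6:5} apply. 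The crucial point is to track the $\bar\rho$- and $\bar\theta$-powers so that each resulting term is bounded by $\tfrac12\bar\theta^{\al/3+2/3}$, which forces the constraint $\alpha\le 10$ together with \eqref{rho-theta}, \eqref{rho-theta-1}, \eqref{rho-theta-2}; this yields \eqref{Lem:4:1-3}.

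Next, for \eqref{Lem:4:1-4} I would multiply \eqref{3.5:3} by $\sigma^6\dot\theta$, reproduce the decomposition into $N_1,\dots,N_9$ from \eqref{3.5:5}, and pick up the extra commutator term $6\sigma^5\sigma'\int\rho|\dot\theta|^2\,\dif x$, which is absorbed by $B_2(T)$. Each $N_i$ is estimated exactly as in \eqref{O2}–\eqref{O910} but carrying the weight $\sigma^6$; the absorbing terms $\delta\bar\theta^\beta\sigma^6\|\D\dot\theta\|_{L^2}^2$ are moved to the left, and the term involving $\sigma^6\bar\theta^{2\al-\beta}\|\D\dot u\|_{L^2}^2$ is now integrable because $B_3(T)$ (just proved) bounds $\bar\theta^\al\int_0^T\sigma^5\|\D\dot u\|_{L^2}^2\,\dif t$, so $\int_0^T\sigma^6\|\D\dot u\|_{L^2}^2\,\dif t\leq\int_0^T\sigma^5\|\D\dot u\|_{L^2}^2\,\dif t\leq\bar\theta^{2/3-2\al/3}$. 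After integrating over $[0,T]$ and splitting into $(0,\sigma(T))$ (using $A_3,A_4$) and $(\sigma(T),T)$ (using $B_1,B_2,B_3$ and the $L^6$-bounds on $\D u$ and the $L^2$-bound on $\D^2\theta$ from \eqref{estimate:D2theta-1}), every term is shown to be $\le\tfrac12 K_7\bar\rho\bar\theta^2$ provided $\bar\theta$ is large and \eqref{rho-theta}, \eqref{rho-theta-1}, \eqref{rho-theta-2} hold.

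Finally, for \eqref{Lem:4:1-5} I would use the Gagliardo–Nirenberg inequality $\|\theta-\bar\theta\|_{L^\infty}\le C\|\theta-\bar\theta\|_{L^2}^{1/4}\|\D^2\theta\|_{L^2}^{3/4}$ (or the $L^6\cap D^{1,r}$ version), insert $\|\theta-\bar\theta\|_{L^2}^2\le C\bar\theta^2$ from \eqref{lem3.1:g1} and $\|\D^2\theta\|_{L^2}\le 2\bar\theta$ from \eqref{claim2}, and conclude that $\|\theta-\bar\theta\|_{L^\infty}^2\le C\bar\theta^{1/2}\cdot\bar\theta^{3/2}=C\bar\theta^2$, which is not good enough by itself; the point is that for $t\ge 1$ the time-weighted bounds give much better powers of $\bar\theta$ in $\|\D^2\theta\|_{L^2}$, and the genuinely small factor comes from combining \eqref{lem3.1:g2}-type smallness of $\int_0^T\|\D\theta\|_{L^2}^2\,\dif t$ with the $L^\infty$-in-time control, so one has to revisit the elliptic estimate \eqref{estimate:D2theta-0} carefully to extract a factor $\le \tfrac{1}{9}$. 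The main obstacle throughout is the bookkeeping of the coupled $(\bar\rho,\bar\theta)$-powers: one must verify that the constraints $2\le\alpha\le 10$, $\alpha<\beta<\alpha+2$, together with \eqref{rho-theta}, \eqref{rho-theta-1} and the newly imposed \eqref{rho-theta-2}, are exactly what is needed to make \emph{all} of the many error terms subordinate to the target right-hand sides $\bar\theta^{\al/3+2/3}$ and $K_7\bar\rho\bar\theta^2$, with no circularity between \eqref{Lem:4:1-3} and \eqref{Lem:4:1-4}.
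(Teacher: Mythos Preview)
Your plan for $B_3(T)$ and $B_4(T)$ is essentially the paper's: weight the material-derivative identities by $\sigma^5$ and $\sigma^6$, pick up the commutator terms $\sigma^4\sigma'\|\sqrt\rho\dot u\|^2$ and $\sigma^5\sigma'\|\sqrt\rho\dot\theta\|^2$ (absorbed by $B_1$ and $B_2$), and split the time integrals at $\sigma(T)$. One organizational difference: in the $\dot u$-estimate the paper does \emph{not} keep the cross-term in the form $\bar\theta^{\al-2}\|\D u\|_{L^2}\|\D u\|_{L^6}\|\D\dot\theta\|_{L^2}^2$ as in \eqref{3.4.6}; instead it re-estimates that term as $\bar\rho^{-1/2}\bar\theta^{\al-2}\|\D u\|_{L^6}^2\|\sqrt\rho\dot\theta\|_{L^2}\|\D\dot\theta\|_{L^2}$ (see \eqref{e3.308}), which after Cauchy--Schwarz in time pairs $\sigma^4\|\sqrt\rho\dot\theta\|^2$ with $\sigma^6\|\D\dot\theta\|^2$ and avoids an uncontrolled $\int\sigma^5\|\D\dot\theta\|^2$. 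Your version puts the full $\|\D\dot\theta\|^2$ on the right, which on $(0,\sigma(T))$ pulls in $A_4\sim\bar\theta^{2\beta}$ and does not obviously fit under $\bar\theta^{\al/3+2/3}$; the paper's rearrangement is the safe route.

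There is a genuine gap in your Step~3. The inequality $\|\theta-\bar\theta\|_{L^\infty}\le C\|\theta-\bar\theta\|_{L^2}^{1/4}\|\D^2\theta\|_{L^2}^{3/4}$ together with \eqref{lem3.1:g1} and \eqref{claim2} only gives $\|\theta-\bar\theta\|_{L^\infty}^2\le C\bar\theta^2$, and no amount of ``revisiting the elliptic estimate'' will shrink the constant below $1/9$, because both inputs are genuinely of size $\bar\theta$. The paper instead uses the Gagliardo--Nirenberg form
\[
\|\theta-\bar\theta\|_{L^\infty}^2\le C\|\D\theta\|_{L^2}\|\D^2\theta\|_{L^2},
\]
and the key observation is that $\sup_{[0,T]}\|\D\theta\|_{L^2}^2\le C(K)$ is \emph{uniformly bounded in $\bar\theta$}: for $t\le\sigma(T)$ this is $A_2(\sigma(T))\bar\theta^{-\beta}\le K_2$, and for $t\ge\sigma(T)$ it is $B_2(T)\bar\theta^{-\beta}\le K_6\bar\rho\bar\theta^{2-\beta}\le C(K)$ under the standing constraints. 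Combined with $\|\D^2\theta\|_{L^2}\le 2\bar\theta$ from \eqref{claim2}/\eqref{estimate:D2theta-1}, this yields $\|\theta-\bar\theta\|_{L^\infty}^2\le C(K)\bar\theta\le\bar\theta^2/9$ once $\bar\theta\ge L_4$. The missing idea is simply to pass through $\|\D\theta\|_{L^2}$, which carries the smallness, rather than $\|\theta-\bar\theta\|_{L^2}$, which does not.
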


\begin{proof}
\textbf{Step 1}: Estimates of $B_3(T)$.

Operating $\sigma^5\dot{u}^j(\pt+\mathrm{div}(u\cdot))$ to $(\ref{FCNS:2})_2^j$, summing with respect to $j$, and integrating the resulting equation over $\R^3$, we obtain
\begin{align}\label{3.42}
&\frac{1}{2}\frac{\dif}{\dif t}\int\sigma^5\rho|\dot u|^2\dif x-\frac{5}{2}\sigma^4\sigma'\|\sqrt{\rho}\dot u\|_{L^2}^2\nonumber\\
=~& -\int\sigma^5\dot u^j\left(\partial_jP_t+\div (u\partial_j P)\right)\dif x+2\mu\int\sigma^5\dot u^j\left(\pt(\div(\theta^\al\frD(u)))+\div (u\div(\theta^\al\frD(u)))\right)\dif x\nonumber\\
&+\lambda\int\sigma^5\dot u^j\left(\pt(\partial_j(\theta^\al\div u))+\div (u\cdot\partial_j(\theta^\al\div u))\right)\dif x. 
\end{align} 
Similar to \eqref{3.4.3}--\eqref{3.4.5}, we have
\begin{align}\label{e3.308}
&\frac{1}{2}\frac{\dif}{\dif t}\int\sigma^5\rho|\dot u|^2\dif x+\frac{\mu\sigma^5}{4}\left(\frac{\bar{\theta}}{2}\right)^\al\|\D\dot u\|_{L^2}^2\nonumber\\
\leq~&\frac{5}{2}\sigma^4\sigma'\|\sqrt{\rho}\dot u\|_{L^2}^2+C\sigma^5\Big(\bar{\rho}\bar{\theta}^{-\al}\|\sqrt{\rho}\dot\theta\|_{L^2}^2+\bar{\rho}^{-\frac{1}{2}}\bar{\theta}^{\al-2}\|\D u\|_{L^6}^2\|\sqrt{\rho}\dot{\theta}\|_{L^2}\|\D\dot{\theta}\|_{L^2}\nonumber\\
&+\bar{\theta}^\al\|\D u\|_{L^2}\|\D u\|_{L^6}^3+\bar{\rho}^2\bar{\theta}^{2-\al}\|\D u\|_{L^2}^2\Big).
\end{align}
Integrating \eqref{e3.308} over $[0,T]$ and taking $\bar{\theta}$ sufficiently large, we have
\begin{align}\label{e3.3011}
&\sup\limits_{t\in[0,T]}\left(\sigma^5\|\sqrt{\rho}\dot{u}\|_{L^2}^2\right)+\bar{\theta}^\al\int_0^T\sigma^5\|\D \dot{u}\|_{L^2}^2\dif t\nonumber\\
\leq~&C\int_0^{\sigma(T)}\sigma^4\|\sqrt{\rho}\dot{u}\|_{L^2}^2\dif t+C\bar{\rho}\bar{\theta}^{-\al}\int_0^T\sigma^5\|\sqrt{\rho}\dot\theta\|_{L^2}^2\dif t+C\bar{\rho}^2\bar{\theta}^{2-\al}\int_0^T\|\D u\|_{L^2}^2\dif t\nonumber\\
&+C\bar{\rho}^{-\frac{1}{2}}\bar{\theta}^{\al-2}\sup\limits_{t\in[0,T]}\|\D u\|_{L^6}^2\int_0^T\sigma^5\|\sqrt{\rho}\dot{\theta}\|_{L^2}\|\D\dot{\theta}\|_{L^2}\dif t\nonumber\\
&+C\bar{\theta}^\al\sup\limits_{t\in[0,T]}\left(\sigma^5\|\D u\|_{L^2}\|\D u\|_{L^6}\right)\int_0^T\|\D u\|_{L^6}^2\dif t\nonumber\\
\leq~&C(K)\left(\bar{\rho}\bar{\theta}+\bar{\rho}^2\bar{\theta}^{2-\al}+\bar{\rho}^3\bar{\theta}^{3-2\al}+\bar{\rho}^{\frac{3}{2}}\bar{\theta}^{\alpha-\frac{\beta}{2}-1}+\bar{\theta}^{\frac{11}{6}-\frac{4}{3}\alpha}+\bar{\rho}^{-\frac{1}{2}}\bar{\theta}^{\frac{5}{2}-\frac{3}{2}\alpha}+\bar{\rho}^{\frac{1}{4}}\bar{\theta}^{\frac{3}{2}-\alpha-\frac{\beta}{4}}\right)\nonumber\\
\leq~&\bar{\theta}^{\frac{\alpha}{3}+\frac{2}{3}},
\end{align}
provided that $\alpha\geq 2$, $4\alpha-3\beta<10$, and
$$\bar{\rho}\leq C(K)\min\left\{\bar{\theta}^{\frac{1}{3}(\alpha-1)}, \bar{\theta}^{\frac{1}{3}(\beta-\alpha)}, \bar{\theta}^9\right\}.$$
This completes the proof of \eqref{Lem:4:1-3}. 

\bigskip

\textbf{Step 2}: Estimates of $B_4(T)$.

Multiplying $\sigma^6\dot{\theta}$ on both sides of \eqref{3.5:3} and integrating the resulting equality over $\R^3$, we get
\begin{align}\label{e3.3012}
&\sup\limits_{t\in[0,T]}\left(\sigma^6\| \sqrt{\rho}\dot{\theta}\|_{L^2}^2\right)+\bar{\theta}^\beta\int_0^T\sigma^6\|\D \dot{\theta}\|_{L^2}^2\dif t\nonumber\\
\leq~&C\int_0^{\sigma(T)}\sigma^5\|\sqrt{\rho}\dot{\theta}\|_{L^2}^2\dif t+C\int_0^T\sigma^6\int\rho|\dot{\theta}|^2|\D u|\dif x\dif t+C\bar{\theta}\int_0^T\sigma^6\int\rho|\dot{\theta}|\left(|\D\dot{u}|+|\D u|^2\right)\dif x\dif t\nonumber\\
&+C\bar{\theta}^\al\int_0^T\sigma^6\int|\dot{\theta}||\D u|^3\dif x\dif t+C\bar{\theta}^{\al-1}\int_0^T\sigma^6\int|\dot{\theta}|^2|\D u|^2\dif x\dif t+C\bar{\theta}^\al\int_0^T\sigma^6\int|\dot{\theta}||\D\dot{u}||\D u|\dif x\dif t\nonumber\\
&+C\bar{\theta}^{\beta-1}\int_0^T\sigma^6\int|\dot{\theta}||\D\dot{\theta}||\D\theta|\dif x\dif t+C\bar{\theta}^\beta\int_0^T\sigma^6\int|\D\dot{\theta}|\left(|u||\D^2\theta|+|\D u||\D\theta|\right)\dif x\dif t\nonumber\\
&+C\bar{\theta}^{\beta-1}\int_0^T\sigma^6\int|\D\dot{\theta}||u||\D\theta|^2\dif x\dif t\nonumber\\
:=~&C\int_0^{\sigma(T)}\sigma^5\|\sqrt{\rho}\dot{\theta}\|_{L^2}^2\dif t+\sum\limits_{i=1}^8Q_i,
\end{align}
where the terms $Q_i$ can be estimated as follows.
\begin{align}\label{e3.3013}
Q_1= ~&C\int_0^T\sigma^6\int\rho|\dot{\theta}|^2|\D u|\dif x\dif t\leq C\bar{\rho}\int_0^T\sigma^6\|\D u\|_{L^2}\|\dot{\theta}\|_{L^2}^{\frac{1}{2}}\|\D\dot{\theta}\|_{L^2}^{\frac{3}{2}}\dif t\nonumber\\
\leq~&\frac{\bar{\theta}^\beta}{8}\int_0^T\sigma^6\|\D\dot{\theta}\|_{L^2}^2\dif t+C\bar{\rho}^3\bar{\theta}^{-3\beta}\sup\limits_{t\in[0,T]}\left(\sigma^6\|\D u\|_{L^2}^4\right)\int_0^T\|\sqrt{\rho}\dot{\theta}\|_{L^2}^2\dif t\nonumber\\
\leq~&\frac{\bar{\theta}^\beta}{8}\int_0^T\sigma^6\|\D\dot{\theta}\|_{L^2}^2\dif t+C(K)\bar{\rho}^5\bar{\theta}^{2-2\alpha-2\beta},
\end{align}
\begin{align}\label{e3.3014}
Q_2=~&C\bar{\theta}\int_0^T\sigma^6\int\rho|\dot{\theta}|\left(|\D\dot{u}|+|\D u|^2\right)\dif x\dif t\nonumber\\
\leq~&C\bar{\rho}^{\frac{1}{2}}\bar{\theta}\int_0^T\sigma^6\|\sqrt{\rho}\dot{\theta}\|_{L^2}\|\D\dot{u}\|_{L^2}\dif t+C\bar{\rho}^{\frac{1}{2}}\bar{\theta}\sup\limits_{t\in[0,T]}\left(\sigma^4\|\D u\|_{L^2}^{\frac{1}{2}}\|\D u\|_{L^6}^{\frac{1}{2}}\right)\int_0^T\sigma^2\|\sqrt{\rho}\dot{\theta}\|_{L^2}\|\D u\|_{L^6}\dif t\nonumber\\
\leq~&C(K)\bar{\rho}\bar{\theta}^{\frac{7}{3}-\frac{\alpha}{3}}+C(K)\bar{\rho}\bar{\theta}^{\frac{11}{4}-\frac{3}{4}\alpha},
\end{align}
\begin{align}\label{e3.3015}
Q_3=~&C\bar{\theta}^\al\int_0^T\sigma^6\int|\dot{\theta}||\D u|^3\dif x\dif t\leq C\bar{\theta}^\al\int_0^T\sigma^6\|\D u\|_{L^2}\|\D u\|_{L^6}^2\|\D\dot{\theta}\|_{L^2}\dif t\nonumber\\
\leq~&\frac{\bar{\theta}^\beta}{8}\int_0^T\sigma^6\|\D\dot{\theta}\|_{L^2}^2\dif t+C\bar{\theta}^{2\al-\beta}\sup\limits_{t\in[0,T]}\left(\sigma^6\|\D u\|_{L^2}^2\|\D u\|_{L^6}^2\right)\int_0^T\|\D u\|_{L^6}^2\dif t\nonumber\\
\leq~&\frac{\bar{\theta}^\beta}{8}\int_0^T\sigma^6\|\D\dot{\theta}\|_{L^2}^2\dif t+C(K)\bar{\rho}\bar{\theta}^{2-\beta},
\end{align}
\begin{align}\label{e3.3016}
Q_4=~&C\bar{\theta}^{\al-1}\int_0^T\sigma^6\int|\dot{\theta}|^2|\D u|^2\dif x\dif t\nonumber\\
\leq~& C\bar{\theta}^{\al-1}\sup\limits_{t\in[0,T]}\left(\|\D u\|_{L^2}\|\D u\|_{L^6}\right)\int_0^T\sigma^6\|\D\dot{\theta}\|_{L^2}^2\dif t\leq\frac{\bar{\theta}^\beta}{8}\int_0^T\sigma^6\|\D\dot{\theta}\|_{L^2}^2\dif t,
\end{align}
\begin{align}\label{e3.3017}
Q_5=~&C\bar{\theta}^\al\int_0^T\sigma^6\int|\dot{\theta}||\D\dot{u}||\D u|\dif x\dif t\leq C\bar{\theta}^\al\int_0^T\sigma^6\|\D u\|_{L^2}^{\frac{1}{2}}\|\D u\|_{L^6}^{\frac{1}{2}}\|\D\dot{u}\|_{L^2}\|\D\dot{\theta}\|_{L^2}\dif t\nonumber\\
\leq~&\frac{\bar{\theta}^\beta}{8}\int_0^T\sigma^6\|\D\dot{\theta}\|_{L^2}^2\dif t+C\bar{\theta}^{2\al-\beta}\sup\limits_{t\in[0,T]}\left(\sigma\|\D u\|_{L^2}\|\D u\|_{L^6}\right)\int_0^T\sigma^5\|\D\dot{u}\|_{L^2}^2\dif t\nonumber\\
\leq~&\frac{\bar{\theta}^\beta}{8}\int_0^T\sigma^6\|\D\dot{\theta}\|_{L^2}^2\dif t+C(K)\bar{\rho}\bar{\theta}^{\alpha+1-\beta},
\end{align}
\begin{align}\label{e3.3018}
Q_6=~&C\bar{\theta}^{\beta-1}\int_0^T\sigma^6\int|\dot{\theta}||\D\dot{\theta}||\D\theta|\dif x\dif t\nonumber\\
\leq~&C\bar{\theta}^{\beta-1}\sup\limits_{t\in[0,T]}\left(\|\D\theta\|_{L^2}^{\frac{1}{2}}\|\D^2\theta\|_{L^2}^{\frac{1}{2}}\right)\int_0^T\sigma^6\|\D\dot{\theta}\|_{L^2}^2\dif t\leq\frac{\bar{\theta}^\beta}{8}\int_0^T\sigma^6\|\D\dot{\theta}\|_{L^2}^2\dif t,
\end{align}
\begin{align}\label{e3.3019}
Q_7=~&C\bar{\theta}^\beta\int_0^T\sigma^6\int|\D\dot{\theta}|\left(|u||\D^2\theta|+|\D u||\D\theta|\right)\dif x\dif t\leq C\bar{\theta}^\beta\int_0^T\sigma^6\|\D u\|_{L^2}^{\frac{1}{2}}\|\D u\|_{L^6}^{\frac{1}{2}}\|\D^2\theta\|_{L^2}\|\D\dot{\theta}\|_{L^2}\dif t\nonumber\\
\leq~&\frac{\bar{\theta}^\beta}{8}\int_0^T\sigma^6\|\D\dot{\theta}\|_{L^2}^2\dif t+C\bar{\theta}^\beta\sup\limits_{t\in[0,T]}\left(\sigma^2\|\D u\|_{L^2}\|\D u\|_{L^6}\right)\int_0^T\sigma^4\|\D^2\theta\|_{L^2}^2\dif t\nonumber\\
\leq~&\frac{\bar{\theta}^\beta}{8}\int_0^T\sigma^6\|\D\dot{\theta}\|_{L^2}^2\dif t+C(K)\bar{\rho}^3\bar{\theta}^{2-\beta},
\end{align}
\begin{align}\label{e3.3020}
Q_8=~&C\bar{\theta}^{\beta-1}\int_0^T\sigma^6\int|\D\dot{\theta}||u||\D\theta|^2\dif x\dif t\leq C\bar{\theta}^{\beta-1}\int_0^T\sigma^6\|\D u\|_{L^2}\|\D^2\theta\|_{L^2}^2\|\D\dot{\theta}\|_{L^2}\dif t\nonumber\\
\leq~&\frac{\bar{\theta}^\beta}{8}\int_0^T\sigma^6\|\D\dot{\theta}\|_{L^2}^2\dif t+C\bar{\theta}^{\beta-2}\sup\limits_{t\in[0,T]}\left(\sigma^6\|\D^2\theta\|_{L^2}^4\right)\int_0^T\|\D u\|_{L^2}^2\dif t\nonumber\\
\leq~&\frac{\bar{\theta}^\beta}{8}\int_0^T\sigma^6\|\D\dot{\theta}\|_{L^2}^2\dif t+C(K)\bar{\rho}^5\bar{\theta}^{3-\alpha-\beta}.
\end{align}

Inserting \eqref{e3.3013}--\eqref{e3.3020} into \eqref{e3.3012}, then taking $\bar{\theta}\geq\rL_4$ and $K_7\geq CK_6$, we obtain
\begin{equation*}
B_4(T)\leq CK_6\bar{\rho}\bar{\theta}^2\leq K_7\bar{\rho}\bar{\theta}^2,
\end{equation*}
provided that
$$\bar{\rho}\leq C(K)\min\left\{\bar{\theta}^{\frac{\beta}{2}}, \bar{\theta}^{\frac{1}{4}(\alpha+\beta-1)}\right\}.$$

\bigskip

\textbf{Step 3}: Estimates of $\|\theta-\bar{\theta}\|_{L^\infty}$.

At last, from \eqref{estimate:D2theta-0} and \eqref{estimate:D2theta-1}, we have
\begin{equation}\label{Lem4:10}
\sup\limits_{t\in[0,T]}\|\D^2\theta\|_{L^2}^2\leq 2\bar{\theta}^2.
\end{equation}
From \eqref{g:3.3:2} and \eqref{Lem:4:1-2}, we have
\begin{equation}\label{Lem4:11}
\sup\limits_{t\in[0,T]}\|\D\theta\|_{L^2}^2\leq C(K).
\end{equation}
Hence we get
\begin{equation}
\|\theta-\bar{\theta}\|_{L^\infty}^2\leq C\|\D\theta\|_{L^2}\|\D^2\theta\|_{L^2}\leq C(K)\bar{\theta}\leq\frac{\bar{\theta}^2}{9},
\end{equation}
 provided that $\bar{\theta}\geq\rL_4$. Thus we complete the proof.
\end{proof}

\subsubsection{Uniform first-order estimates of density on $[0,T]$}\label{sec3.3}

Next, we will establish the first-order uniform estimates of density, which is achieved by combining the estimates on $[0,\sigma(T)]$ and the time-weighted estimate on $[0,T]$. As a result, the uniform lower and upper bounds of density is obtained by Gagliardo-Nirenberg inequality.

\begin{lemma}\label{Lem39}$($First-order estimates of the density$)$ 
Under the conditions of Proposition \ref{p4.1}, there exist positive constants $\widetilde{\rL}$, $\rL_5$, $K_8$ and $K_9$ depending on $\mu$, $\lambda$, $\kappa$, and $R$, such that if $(\rho,u,\theta)$ is a smooth solution of \eqref{FCNS:2}--\eqref{data and far} on $\R^3\times(0,T]$, it holds that
\begin{equation}\label{3.9:1}
C_1(T)=\sup_{t\in[0,T]}\|\D\rho\|_{L^2}^2+\bar{\rho}\bar\theta^{1-\al}\int_0^T\|\D \rho\|_{L^2}^2\dif t\leq K_8,
\end{equation}
\begin{equation}\label{3.9:2}
C_2(T)=\sup_{t\in[0,T]}\|\D\rho\|_{L^4}^2+\bar{\rho}\bar\theta^{1-\al}\int_0^T\|\D \rho\|_{L^4}^2\dif t\leq K_9,
\end{equation}
and
\begin{equation}\label{3.9:3}
\sup_{t\in[0,T]}\|\rho-\bar{\rho}\|_{L^\infty}\leq\frac{\bar{\rho}}{3},
\end{equation}
provided that $\bar{\rho}\geq\rL$, $\bar{\theta}\geq\rL_5$, $\bar{\rho}\leq C(K)\bar{\theta}^{\frac{1}{10}(7\alpha-1)}$, and \eqref{rho-theta}, \eqref{rho-theta-1}, \eqref{rho-theta-2} hold.
\end{lemma}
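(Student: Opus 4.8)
The strategy is to differentiate the continuity equation $(\ref{FCNS:2})_1$ in space and run $L^2$ and $L^4$ energy estimates for $\D\rho$, extracting a (weak) dissipation from the modified effective viscous flux. Applying $\partial_i$ to $\pt\rho+u\cdot\D\rho+\rho\div u=0$, multiplying by $\partial_i\rho$ (resp. by $|\D\rho|^2\partial_i\rho$), summing in $i$, and integrating the transport term by parts yields
\[
\frac12\frac{\dif}{\dif t}\|\D\rho\|_{L^2}^2+\int\rho\,\D\div u\cdot\D\rho\dif x=-\int\partial_iu^k\,\partial_k\rho\,\partial_i\rho\dif x-\frac12\int|\D\rho|^2\div u\dif x,
\]
together with its $L^4$ counterpart. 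Using $(\ref{notations})$ in the form $\div u=(2\mu+\lambda)^{-1}\big(G+R(\rho\theta^{1-\al}-\bar{\rho}\bar{\theta}^{1-\al})\big)$, so that $\D\div u=(2\mu+\lambda)^{-1}\big(\D G+R\theta^{1-\al}\D\rho+R(1-\al)\rho\theta^{-\al}\D\theta\big)$, the second term on the left becomes
\[
\frac{R}{2\mu+\lambda}\int\rho\theta^{1-\al}|\D\rho|^2\dif x+\frac{R(1-\al)}{2\mu+\lambda}\int\rho^2\theta^{-\al}\D\theta\cdot\D\rho\dif x+\frac{1}{2\mu+\lambda}\int\rho\,\D G\cdot\D\rho\dif x.
\]
Since $\al\ge2$, $\theta^{1-\al}$ is decreasing, so the pointwise bounds (\ref{assume:2}) make the first integral at least $c\,\bar{\rho}\bar{\theta}^{1-\al}\|\D\rho\|_{L^2}^2$ with $c=c(\mu,\lambda,\kappa,R,\al)>0$; this is the dissipation to be propagated (and similarly $c\,\bar{\rho}\bar{\theta}^{1-\al}\|\D\rho\|_{L^4}^4$ in the $L^4$ identity).

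Next I would control every remaining term against this dissipation. Splitting the transport term and $\int|\D\rho|^2\div u$ through the flux into a pressure-type part and a $G$-part, the pressure-type part is bounded via $\|\rho\theta^{1-\al}-\bar{\rho}\bar{\theta}^{1-\al}\|_{L^\infty}\le C\bar{\theta}^{1-\al}\|\rho-\bar{\rho}\|_{L^\infty}+C\bar{\rho}\bar{\theta}^{-\al}\|\theta-\bar{\theta}\|_{L^\infty}$ together with the Gagliardo--Nirenberg inequality, Lemma \ref{Lem3.1}, Lemma \ref{Lem:5} and the a priori bound (\ref{3.9:2}), which give $\|\rho-\bar{\rho}\|_{L^\infty}\le C\bar{\rho}^{1/7}$ and $\|\theta-\bar{\theta}\|_{L^\infty}\le C(K)\bar{\theta}^{1/2}$; hence this contribution is $o(\bar{\rho}\bar{\theta}^{1-\al})$ for $\bar{\rho},\bar{\theta}$ large and is absorbed pointwise into the dissipation. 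The $G$-parts and the term $\int\rho\,\D G\cdot\D\rho$ are handled with the elliptic estimates (\ref{2.3:2})--(\ref{2.3:4}) and Gagliardo--Nirenberg, which reduce them to $\|\rho\dot u\|_{L^p}$, $\|\D\theta\|_{L^p}$ and $\|\D u\cdot\D\theta\|_{L^p}$ for $p\in\{2,4\}$ (all with the favourable prefactor $\bar{\theta}^{-\al}$); these are in turn controlled by $A_3(\sigma(T))$, $A_4(\sigma(T))$, $B_3(T)$, $B_4(T)$ and the $L^6$-estimates of Lemmas \ref{Lem33}--\ref{Lem:5}, after splitting $\int_0^T=\int_0^{\sigma(T)}+\int_{\sigma(T)}^T$ so as to use the $\sigma$-weighted bounds on $[\sigma(T),T]$. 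Finally $\int\rho^2\theta^{-\al}\D\theta\cdot\D\rho$ is absorbed by Cauchy--Schwarz at the cost of $C\bar{\rho}^3\bar{\theta}^{-\al-1}\|\D\theta\|_{L^2}^2$, controlled by Lemma \ref{Lem33} and Lemma \ref{Lem:4} (and analogously for $L^4$).

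Collecting these estimates produces, for $p=2$ and $p=4$, a differential inequality $\frac{\dif}{\dif t}\|\D\rho\|_{L^p}^p+\tfrac{c}{2}\bar{\rho}\bar{\theta}^{1-\al}\|\D\rho\|_{L^p}^p\le\Phi_p(t)$ in which $\int_0^T\Phi_p\dif t$ is a finite sum of monomials $C(K)\bar{\rho}^{a}\bar{\theta}^{b}$; integrating, using the initial bound $\|\D\rho_0\|_{L^2}+\|\D\rho_0\|_{L^4}\le C\|\rho_0-\bar{\rho}\|_{H^2}\le CC_0$ from (\ref{compatibility condition}), fixing $K_8$, $K_9$, and imposing $\bar{\rho}\ge\widetilde{\rL}$, $\bar{\theta}\ge\rL_5$, $C(K)\bar{\rho}\le\bar{\theta}^{(7\al-1)/23}$ and (\ref{rho:theta})--(\ref{rho-theta-2}) so that $\int_0^T\Phi_p\dif t\le\tfrac12\min\{K_8,K_9\}$, gives (\ref{3.9:1})--(\ref{3.9:2}). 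The bound (\ref{3.9:3}) then follows from the Gagliardo--Nirenberg interpolation $\|\rho-\bar{\rho}\|_{L^\infty}\le C\|\rho-\bar{\rho}\|_{L^2}^{1/7}\|\D\rho\|_{L^4}^{6/7}$, Lemma \ref{Lem3.1} and (\ref{3.9:2}): $\|\rho-\bar{\rho}\|_{L^\infty}\le CK_9^{3/7}\bar{\rho}^{1/7}\le\bar{\rho}/3$ once $\widetilde{\rL}$ is large enough.

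The main obstacle — and the reason this departs from the density-dependent case of \cite{2024arXiv240805138L} — is that the dissipation coefficient $\bar{\rho}\bar{\theta}^{1-\al}$ \emph{degenerates} as $\bar{\theta}\to\infty$ because $\al\ge2$, so largeness of $\bar{\theta}$ is by itself useless here: one must additionally take $\bar{\rho}$ large and track very carefully the competing powers of $\bar{\rho}$ and $\bar{\theta}$ in every error term, making sure each carries enough extra smallness (a negative power of $\bar{\rho}$ or of $\bar{\theta}$) to be dominated by this weak dissipation. This is exactly what forces the restriction $2\le\al\le10$ and the balance relation $C(K)\bar{\rho}\le\bar{\theta}^{(7\al-1)/23}$. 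The most delicate single point is the $\D G$ contribution to the $L^4$ estimate, where $\|\rho\dot u\|_{L^4}$ has to be expressed through $A_3$/$B_3$ via the $L^2$--$W^{1,2}$ interpolation of $\dot u$ without losing powers of $\bar{\rho}$, and the transport term, whose dynamic part needs the $L^\infty$ control of $\D u$ delivered by the effective viscous flux together with the time-weighted higher-order bounds.
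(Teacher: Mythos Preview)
Your overall skeleton is right --- differentiate the continuity equation, extract the dissipation $\int\rho\theta^{1-\al}|\D\rho|^r$ from the $\rho\D\div u$ piece via the effective viscous flux, and control $\D G$ and the $\D\theta$ cross-term in time using the material-derivative bounds --- and your closing interpolation for $\|\rho-\bar{\rho}\|_{L^\infty}$ works. But there is a genuine gap in how you treat the \emph{stretching} part of the transport term,
\[
-\int|\D\rho|^{r-2}(\D\rho)^\top\D u\,\D\rho\,\dif x,
\]
which involves the full gradient $\D u$, not $\div u$. You cannot ``split it through the flux'': the decomposition $\div u=(2\mu+\lambda)^{-1}(G+R(\rho\theta^{1-\al}-\bar{\rho}\bar{\theta}^{1-\al}))$ says nothing about the off-diagonal part of $\D u$. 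Nor can this term be absorbed pointwise into the dissipation: on $(0,\sigma(T))$ there is \emph{no} usable pointwise bound on $\|\D u\|_{L^\infty}$, because $\|\D\dot u\|_{L^2}$ (hence $\|\D^2u\|_{L^4}$) is controlled only in $L^2_t$ by $A_3(\sigma(T))$. Consequently your clean inequality $\frac{\dif}{\dif t}\|\D\rho\|_{L^p}^p+\tfrac{c}{2}\bar{\rho}\bar{\theta}^{1-\al}\|\D\rho\|_{L^p}^p\le\Phi_p(t)$ with $\Phi_p$ a finite sum of $(\bar{\rho},\bar{\theta})$-monomials is not attainable as written.

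The paper closes this gap by keeping $C\|\D u\|_{L^\infty}\|\D\rho\|_{L^r}^r$ on the right, interpolating $\|\D u\|_{L^\infty}\le C\|\D u\|_{L^2}^{1/7}\|\D^2u\|_{L^4}^{6/7}$, and applying Gronwall on $(0,\sigma(T))$. The Gronwall exponent is then bounded by $\bigl(\int_0^{\sigma(T)}\|\D u\|_{L^2}^2\bigr)^{1/14}\bigl(\int_0^{\sigma(T)}\|\D^2u\|_{L^4}^2\bigr)^{3/7}$, and a separate estimate of $\int_0^{\sigma(T)}\|\D^2u\|_{L^4}^2\dif t$ --- obtained directly from the momentum equation, not via the flux --- yields $C(K)\bar{\rho}^{3/2}\bar{\theta}^{-\al}$. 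Combined with \eqref{lem3.1:g2} this makes the exponent $\le C(K)\bar{\rho}^{23/14}\bar{\theta}^{(1-7\al)/14}$, and requiring this to be $O(1)$ is \emph{exactly} the origin of the constraint $C(K)\bar{\rho}\le\bar{\theta}^{(7\al-1)/23}$ that you quote but never derive. On $(\sigma(T),T)$ the paper then runs a $\sigma^m$-weighted version where the transport term is estimated through the same interpolation and the now-available time-weighted $B_i$-bounds, without Gronwall. Your proposal does not contain the Gronwall step, the $\int\|\D^2u\|_{L^4}^2$ estimate, or an explanation of where the $23$ comes from; add these and the argument goes through.
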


\begin{proof}
Applying $\nabla$ operator on $(\ref{FCNS:2})_1$ and multiplying $\nabla\rho|\nabla \rho|^{r-2}$ on both sides of the resulting equation, we obtain
\begin{align}
\frac{1}{r}\left(|\D\rho|^r\right)_t&+\frac{1}{r}\div(|\D \rho|^{r}u)+\frac{r-1}{r}|\D \rho|^r\div u+|\D \rho|^{r-2}(\D\rho)^\top\D u\D \rho\nonumber\\
&+\frac{\rho}{2\mu+\lambda}|\D \rho|^{r-2}\D \rho\cdot\left(\D G+R\theta^{1-\al}\D \rho+R\rho\theta^{-\al}\D\theta\right)=0.
\end{align}
Integrating the above equation in $\R^3$, we obtain
\begin{align}\label{lr}
&\frac{\dif}{\dif t}\|\D\rho\|_{L^r}^{r}+\frac{R}{2\mu+\lambda}\int\rho{\theta}^{1-\al}|\D\rho|^{r}\dif x\nonumber\\
\leq~& C\|\D u\|_{L^\infty}\|\D \rho\|_{L^r}^{r}+C\|\D G\|_{L^r}\|\D\rho\|_{L^r}^{r-1}+C\bar\theta^{-\al}\|\D \theta\|_{L^r}\|\D\rho\|_{L^r}^{r-1}.
\end{align}

Multiplying \eqref{lr} by $\sigma^m$ with $m\in\mathbb{N}_+$, we have
\begin{align}\label{lr:sigma}
&\frac{\dif}{\dif t}\left(\sigma^m\|\D\rho\|_{L^r}^{r}\right)+\frac{R\sigma^m}{2\mu+\lambda}\int\rho{\theta}^{1-\al}|\D\rho|^{r}\dif x\nonumber\\
\leq~&m\sigma^{m-1}\sigma'\|\D\rho\|_{L^r}^r+ C\sigma^m\|\D u\|_{L^\infty}\|\D \rho\|_{L^r}^{r}+C\sigma^m\|\D G\|_{L^r}\|\D\rho\|_{L^r}^{r-1}+C\sigma^m\bar\theta^{-\al}\|\D \theta\|_{L^r}\|\D\rho\|_{L^r}^{r-1}.
\end{align}

\textbf{Case 1}: When $r=2$, \eqref{lr} reduces to 
\begin{align}\label{l2}
&\frac{\dif}{\dif t}\|\D\rho\|_{L^2}^2+\frac{c_0}{4}\bar\rho\left(2\bar\theta\right)^{1-\al}\|\D\rho\|_{L^2}^2\nonumber\\
\leq~& C\|\D u\|_{L^\infty}\|\D \rho\|_{L^2}^2+C\|\D G\|_{L^2}\|\D\rho\|_{L^2}+C\bar\theta^{-\al}\|\D \theta\|_{L^2}\|\D\rho\|_{L^2}\nonumber\\
\leq~&\frac{c_0}{8}\bar\rho\left(2\bar\theta\right)^{1-\al}\|\D\rho\|_{L^2}^2+C\|\D u\|_{L^\infty}\|\D\rho\|_{L^2}^2+C\|\D G\|_{L^2}\|\D\rho\|_{L^2}+C\bar\rho^{-1}\bar\theta^{-1-\al}\|\D \theta\|_{L^2}^2\nonumber\\
\leq~&\frac{c_0}{8}\bar\rho\left(2\bar\theta\right)^{1-\al}\|\D\rho\|_{L^2}^2+C\|\D u\|_{L^2}^\frac{1}{7}\|\D^2 u\|_{L^4}^\frac{6}{7}\|\D\rho\|_{L^2}^2+C\|\D G\|_{L^2}\|\D\rho\|_{L^2}+C\bar\rho^{-1}\bar\theta^{-1-\al}\|\D \theta\|_{L^2}^2,
\end{align}
for some positive constant $c_0$ depending on $\lambda$, $\mu$ and $R$.

By \eqref{2.3:2}, we have
\begin{align}\label{f2}
\|\D G\|_{L^2}\leq C\|H\|_{L^2}\leq~&C\bar{\rho}^\frac{1}{2}\bar{\theta}^{-\al}\|\sqrt{\rho}\dot{u}\|_{L^2}+C\bar{\rho}\bar{\theta}^{-\al}\|\D\theta\|_{L^2}+C\bar{\theta}^{-1}\|\D\theta\|_{L^2}\|\D u\|_{L^\infty}
\nonumber\\
\leq~&C\left(\bar{\rho}^\frac{1}{2}\bar{\theta}^{-\al}\|\sqrt{\rho}\dot{u}\|_{L^2}+\bar{\rho}\bar{\theta}^{-\al}\|\D\theta\|_{L^2}+\bar{\theta}^{-1}\|\D\theta\|_{L^2}\|\D u\|_{L^2}^{\frac{1}{7}}\|\D^2u\|_{L^4}^{\frac{6}{7}}\right).
\end{align}

Inserting \eqref{f2} into \eqref{l2}, we have
\begin{align}\label{l2-1}
&\frac{\dif}{\dif t}\|\D\rho\|_{L^2}^2+\frac{c_0}{4}\bar{\rho}\left(2\bar\theta\right)^{1-\al}\|\D\rho\|_{L^2}^2\nonumber\\
\leq~&C\|\D u\|_{L^2}^\frac{1}{7}\|\D^2 u\|_{L^4}^\frac{6}{7}\|\D\rho\|_{L^2}^2+C\bar\rho^{-1}\bar{\theta}^{-1-\al}\|\D \theta\|_{L^2}^2
\nonumber\\
&+C\|\D\rho\|_{L^2}\left(\bar{\rho}^\frac{1}{2}\bar{\theta}^{-\al}\|\sqrt{\rho}\dot{u}\|_{L^2}+\bar{\rho}\bar{\theta}^{-\al}\|\D\theta\|_{L^2}+\bar{\theta}^{-1}\|\D\theta\|_{L^2}\|\D u\|_{L^2}^{\frac{1}{7}}\|\D^2u\|_{L^4}^{\frac{6}{7}}\right)\nonumber\\
\leq~&\frac{c_0}{8}\bar{\rho}\left(2\bar\theta\right)^{1-\al}\|\D\rho\|_{L^2}^2+C\|\D u\|_{L^2}^\frac{1}{7}\|\D^2 u\|_{L^{4}}^\frac{6}{7}\|\D\rho\|_{L^2}^2+C\bar\rho\bar{\theta}^{-1-\al}\|\D\theta\|_{L^{2}}^2\nonumber\\
&+C\bar{\theta}^{-1-\al}\|\sqrt{\rho}\dot{u}\|_{L^{2}}^2+C\bar\rho^{-1}\bar{\theta}^{\alpha-3}\|\D\theta\|_{L^2}^2\|\D u\|_{L^2}^{\frac{2}{7}}\|\D^2u\|_{L^4}^{\frac{12}{7}}.
\end{align}

It remains to estimate $\|\D^2 u\|_{L^4}$  on the right hand side of \eqref{l2-1}. From $(\ref{FCNS:2})_2$, one has
\begin{align}\label{D2u4}
\|\D^2u\|_{L^4}\leq~&C\bar{\theta}^{-\al}
\left(\bar\rho\|\dot{u}\|_{L^4}+\bar{\theta}^{\al-1}\|\D\theta\D u\|_{L^4}+\bar{\theta}\|\D\rho\|_{L^4}+\bar\rho\|\D\theta\|_{L^4}\right)
\nonumber\\
\leq~&C\bar{\theta}^{-\al}
\left(\bar\rho\|\dot{u}\|_{L^4}+\bar{\theta}^{\al-1}\|\D\theta\|_{L^4}\|\D u\|_{L^\infty}+\bar{\theta}\|\D\rho\|_{L^4}+\bar\rho\|\D\theta\|_{L^2}^{\frac{1}{4}}\|\D^2\theta\|_{L^2}^{\frac{3}{4}}\right)
\nonumber\\
\leq~&C\left(\bar\rho\bar{\theta}^{-\al}\|\dot{u}\|_{L^4}+\bar{\theta}^{-1}\|\D\theta\|_{L^4}\|\D u\|_{L^2}^{\frac{1}{7}}
\|\D^2u\|_{L^4}^{\frac{6}{7}}
+\bar{\theta}^{1-\al}\|\D\rho\|_{L^4}+ \bar\rho\bar{\theta}^{-\al}\|\D\theta\|_{L^2}^{\frac{1}{4}}\|\D^2\theta\|_{L^2}^{\frac{3}{4}}\right)
\nonumber\\
\leq~&C\left(\bar\rho\bar{\theta}^{-\al}\|\dot{u}\|_{L^4}+\bar{\theta}^{-7}\|\D\theta\|_{L^4}^7\|\D u\|_{L^2}
+\bar{\theta}^{1-\al}\|\D\rho\|_{L^4}+\bar\rho\bar{\theta}^{-\al}\|\D\theta\|_{L^2}^{\frac{1}{4}}\|\D^2\theta\|_{L^2}^{\frac{3}{4}}\right) \nonumber \\
&+\frac{1}{4}\|\D^2u\|_{L^4},
\end{align}
which implies
\begin{equation}\label{D2l4}
\|\D^2u\|_{L^4}
\leq C\left(\bar\rho\bar{\theta}^{-\al}\|\dot{u}\|_{L^4}+\bar{\theta}^{-7}\|\D\theta\|_{L^4}^7\|\D u\|_{L^2}
+\bar{\theta}^{1-\al}\|\D\rho\|_{L^4}+\bar\rho\bar{\theta}^{-\al}\|\D\theta\|_{L^2}^{\frac{1}{4}}\|\D^2\theta\|_{L^2}^{\frac{3}{4}}\right).
\end{equation}
By using \eqref{D2l4} 
and integrating with respect to $t$ over $[0,\sigma(T)]$,
we arrive at
\begin{align}\label{p0}
\int_0^{\sigma(T)}\|\D^2u\|_{L^4}^2\dif t
\leq~&C\bar\rho^2\bar{\theta}^{-2\al}\int_{0}^{\sigma(T)}\|\dot{u}\|_{L^4}^2\dif t+C\bar{\theta}^{-14}\int_{0}^{\sigma(T)}\|\D\theta \|_{L^4}^{14}
\|\D u\|_{L^2}^{2}\dif t\nonumber\\
&
+C\bar{\theta}^{2-2\al}\int_0^{\sigma(T)}\|\D\rho\|^2_{L^4}\dif t+C\bar\rho^2\bar{\theta}^{-2\al}\int_0^{\sigma(T)}\|\D\theta\|_{L^2}^{\frac{1}{2}}\|\D^2\theta\|_{L^2}^{\frac{3}{2}}\dif t\nonumber\\
:=~&\sum\limits_{i=1}^{4}R_i,
\end{align}
where,  from \eqref{assume:1}, we have
\begin{align}\label{p1}
R_1=~&C\bar\rho^2\bar{\theta}^{-2\al}\int_0^{\sigma(T)}\|\dot{u}\|_{L^4}^2\dif t
\leq C\bar\rho^\frac{3}{2}\bar{\theta}^{-2\al}
\int_{0}^{\sigma(T)}\|\sqrt{\rho}\dot{u}\|_{L^2}^{\frac{1}{2}}\|\D \dot{u}\|_{L^2}^{\frac{3}{2}}
\dif t
\leq~ C(K)\bar\rho^\frac{3}{2}\bar{\theta}^{-\al}.
\end{align}

By \eqref{assume:1} and \eqref{assume:2}, we have
\begin{align}\label{p3}
R_2&=C\bar{\theta}^{-14}\int_{0}^{\sigma(T)}\|\D\theta \|_{L^4}^{14}
\|\D u\|_{L^2}^{2}\dif t\leq C\bar{\theta}^{-14}\sup\limits_{t\in[0,\sigma(T)]}\|\D \theta\|_{L^4}^{14}\int_0^{\sigma(T)}\|\D u\|_{L^2}^2\dif t\nonumber\\ 
&\leq C\bar\rho\bar{\theta}^{-13-\alpha}\sup\limits_{t\in[0,\sigma(T)]}\left(\|\D\theta\|_{L^2}^{\frac{7}{2}}\|\D^2\theta\|_{L^2}^{\frac{21}{2}}\right)\nonumber\\ 
&\leq C(K)\bar{\theta}^{-\frac{5}{2}-\al},
\end{align}
\begin{align}\label{p4}
R_3=~&C\bar{\theta}^{2-2\al}\int_0^{\sigma(T)}\|\D\rho\|^2_{L^4}\dif t
\leq CK_9\bar{\theta}^{2-2\al},
\end{align}
and 
\begin{align}\label{p5}
R_4&=C\bar\rho^2\bar{\theta}^{-2\al}\int_0^{\sigma(T)}\|\D\theta\|_{L^2}^{\frac{1}{2}}\|\D^2\theta\|_{L^2}^{\frac{3}{2}}\dif t
\leq C\bar\rho^2\bar{\theta}^{-2\al}\left(\int_0^{\sigma(T)}\|\D\theta\|_{L^2}^2\dif t\right)^\frac{1}{4}\left(\int_0^{\sigma(T)}\|\D^2\theta\|_{L^2}^2\dif t\right)^\frac{3}{4}\nonumber\\
&\leq C(K)\bar\rho^3\bar{\theta}^{\frac{1}{2}-2\al-\beta}.
\end{align}

Now substituting \eqref{p1}--\eqref{p5} into \eqref{p0}, one can see that
\begin{align}\label{D2u24}
\int_0^{\sigma(T)}\|\D^2u\|_{L^4}^2\dif t\leq   C(K)\bar\rho^\frac{3}{2}\bar{\theta}^{-\al},
\end{align}
provided that $\bar{\rho}\leq C(K)\bar{\theta}^{\frac{2}{3}(\alpha+\beta-\frac{1}{2})}$.

Next, we will estimate $\|\D\rho\|_{L^2}$ in two cases.

\textbf{Case 1.1}: For the case  $t\in(0,\sigma(T))$, integrating \eqref{l2-1} over $(0,\sigma(T))$, we have
\begin{align}\label{l2-2}
&\|\D\rho\|_{L^2}^2+\bar{\rho}\bar\theta^{1-\al}\int_0^{\sigma(T)}\|\D\rho\|_{L^2}^2\dif t\nonumber\\
\leq~& C\|\D\rho_0\|_{L^2}^2+C\int_0^{\sigma(T)}\|\D u\|_{L^2}^\frac{1}{7}\|\D^2 u\|_{L^{4}}^\frac{6}{7}\|\D\rho\|_{L^2}^2\dif t+C\bar{\rho}\bar{\theta}^{-1-\al}\int_0^{\sigma(T)}\|\D\theta\|_{L^{2}}^2\dif t\nonumber\\
&+C\bar{\theta}^{-1-\al}\int_0^{\sigma(T)}\|\sqrt{\rho}\dot{u}\|_{L^{2}}^2\dif t+C\bar{\rho}^{-1}\bar{\theta}^{\alpha-3}\int_0^{\sigma(T)}\|\D\theta\|_{L^2}^2\|\D u\|_{L^2}^{\frac{2}{7}}\|\D^2u\|_{L^4}^{\frac{12}{7}}\dif t
\nonumber\\
\leq~&C\|\D\rho_0\|_{L^2}^2+C\bar{\rho}^2\bar{\theta}^{1-\al-\beta}+CK_1\bar{\theta}^{-1}+C(K)\bar{\rho}^\frac{3}{7}\bar{\theta}^{-\frac{20}{7}}+C\int_0^{\sigma(T)}\|\D u\|_{L^2}^\frac{1}{7}\|\D^2 u\|_{L^{4}}^\frac{6}{7}\|\D\rho\|_{L^2}^2\dif t,
\end{align}
where we have used \eqref{assume:1}.

By Gronwall's inequality, we get
\begin{align}\label{l2-2-1}
&\sup\limits_{t\in[0,\sigma(T)]}\|\D\rho\|_{L^2}^2+\bar{\rho}\bar\theta^{1-\al}\int_0^{\sigma(T)}\|\D\rho\|_{L^2}^2\dif t \nonumber\\
\leq~&C\left(\|\D\rho_0\|_{L^2}^2+\bar{\rho}^2\bar{\theta}^{1-\al-\beta}+K_1\bar{\theta}^{-1}+C(K)\bar{\rho}^\frac{3}{7}\bar{\theta}^{-\frac{20}{7}}\right)\nonumber\\
&\cdot\exp\left\{\left(\int_0^{\sigma(T)}\|\D u\|_{L^2}^2\dif t\right)^\frac{1}{14}\left(\int_0^{\sigma(T)}\|\D^2 u\|_{L^4}^2\dif t\right)^\frac{3}{7}\right\}\nonumber\\
\leq~&C\exp\left\{C(K)\bar{\rho}^\frac{5}{7}\bar\theta^{\frac{1}{14}-\frac{1}{2}\al}\right\}\nonumber\\
\leq~& C,
\end{align}
provided that $\bar{\rho}\leq C(K)\bar{\theta}^{\frac{1}{10}(7\alpha-1)}$, where we have used \eqref{assume:1}, \eqref{D2u24}, and taken $\bar{\theta}$ large enough, such that
$$C(K)\left(\bar{\rho}^2\bar{\theta}^{1-\al-\beta}+\bar{\theta}^{-1}+\bar{\rho}^\frac{3}{7}\bar{\theta}^{-\frac{20}{7}}\right)\leq 1 \qquad \text{and} \qquad C(K)\bar{\rho}^\frac{5}{7}\bar\theta^{\frac{1}{14}-\frac{1}{2}\al}\leq\ln 2.$$


\textbf{Case 1.2}: For the case  $t\in(\sigma(T),T)$, by \eqref{D2l4} and \eqref{assume:2}, we have
\begin{align}\label{D2u24:sigma}
\int_0^T\sigma^5\|\D^2u\|_{L^4}^2\dif t
\leq~&C\bar{\rho}^\frac{7}{4}\bar{\theta}^{-2\al}
\int_{0}^{T}\sigma^5\|\sqrt{\rho}\dot{u}\|_{L^2}^{\frac{1}{2}}\|\D \dot{u}\|_{L^2}^{\frac{3}{2}}
\dif t+C\bar{\theta}^{-14}\sup\limits_{t\in[0,T]}\|\D\theta\|_{L^4}^{14}\int_0^T\|\D u\|_{L^2}^2\dif t\nonumber\\
&+C\bar{\theta}^{2-2\al}\int_0^{T}\|\D\rho\|^2_{L^4}\dif t+C\bar{\rho}^2\bar{\theta}^{-2\al}\int_0^{T}\sigma^4\|\D\theta\|_{L^2}^{\frac{1}{2}}\|\D^2\theta\|_{L^2}^{\frac{3}{2}}\dif t\nonumber\\
\leq~&C\bar{\rho}^\frac{7}{4}\bar{\theta}^{-2\al}\left(\int_0^T\sigma^3\|\sqrt{\rho}\dot{u}\|_{L^2}^2\dif t\right)^{\frac{1}{4}}\left(\int_0^T\sigma^5\|\D\dot{u}\|_{L^2}^2\dif t\right)^{\frac{3}{4}}\nonumber\\
&+C\bar{\theta}^{-14}\left(\sup\limits_{t\in[0,T]}\|\D \theta\|_{L^4}^{14}\right)\int_0^{T}\|\D u\|_{L^2}^2\dif t\nonumber\\
&+C\bar{\theta}^{2-2\al}\int_0^{T}\|\D\rho\|^2_{L^4}\dif t
+C\bar{\rho}^2\bar{\theta}^{-2\al}\left(\int_0^T\|\D\theta\|_{L^2}^2\dif t\right)^{\frac{1}{4}}\left(\int_0^T\sigma^4\|\D^2\theta\|_{L^2}^2\dif t\right)^{\frac{3}{4}}\nonumber\\
\leq~&C\left(K_5^\frac{1}{4}\bar{\rho}^2\bar{\theta}^{\frac{3}{4}-\frac{7}{4}\al}+K_2^\frac{7}{4}\bar{\rho}\bar{\theta}^{-\frac{5}{2}-\al}+K_9\bar{\rho}^{-1}\bar{\theta}^{1-\al}+C(K)\bar{\rho}^{\frac{9}{4}}\bar{\theta}^{\frac{19}{8}-2\al-\frac{7}{4}\beta}\right)\nonumber\\
\leq~&CK_9\bar{\rho}^{-1}\bar{\theta}^{1-\al}.
\end{align}

When $r=2$, \eqref{lr:sigma} reduces to 
\begin{align}\label{Dr2:1}
&\frac{\dif}{\dif t}\left(\sigma^m\|\D\rho\|_{L^2}^2\right)+\frac{c_0}{4}\sigma^m\bar\rho\left(2\bar{\theta}\right)^{1-\al}\|\D\rho\|_{L^2}^2\nonumber\\
\leq~&m\sigma^{m-1}\sigma'\|\D\rho\|_{L^2}^2+C\sigma^m\|\D u\|_{L^\infty}\|\D \rho\|_{L^2}^2+C\sigma^m\|\D G\|_{L^2}\|\D\rho\|_{L^2}+C\bar{\theta}^{-\al}\sigma^m\|\D\theta\|_{L^2}\|\D\rho\|_{L^2}\nonumber\\
\leq~&m\sigma^{m-1}\sigma'\|\D\rho\|_{L^2}^2+C\sigma^m\|\D u\|_{L^2}^{\frac{1}{7}}\|\D^2u\|_{L^4}^{\frac{6}{7}}\|\D\rho\|_{L^2}^2\nonumber\\
&+C\sigma^m\left(\bar\rho^\frac{1}{2}\bar{\theta}^{-\al}\|\sqrt{\rho}\dot{u}\|_{L^2}+\bar\rho\bar{\theta}^{-\al}\|\D\theta\|_{L^2}+\bar{\theta}^{-1}\|\D\theta\|_{L^2}\|\D u\|_{L^\infty}\right)\|\D\rho\|_{L^2}+C\bar{\theta}^{-\al}\sigma^m\|\D\theta\|_{L^2}\|\D\rho\|_{L^2}\nonumber\\
\leq~&m\sigma^{m-1}\sigma'\|\D\rho\|_{L^2}^2+C\sigma^m\|\D u\|_{L^2}^{\frac{1}{7}}\|\D^2u\|_{L^4}^{\frac{6}{7}}\|\D\rho\|_{L^2}^2+C\bar\rho^\frac{1}{2}\bar{\theta}^{-\al}\sigma^m\|\sqrt{\rho}\dot{u}\|_{L^2}\|\D\rho\|_{L^2}\nonumber\\
&+C\bar\rho\bar{\theta}^{-\al}\sigma^m\|\D\theta\|_{L^2}\|\D\rho\|_{L^2}+C\bar{\theta}^{-1}\sigma^m\|\D\theta\|_{L^2}\|\D u\|_{L^2}^{\frac{1}{7}}\|\D^2u\|_{L^4}^{\frac{6}{7}}\|\D\rho\|_{L^2},
\end{align}
where we have used \eqref{f2}.

Taking $m=3$ and integrating \eqref{Dr2:1} over $[0,T]$, by \eqref{l2-2-1}, \eqref{assume:2}, and \eqref{D2u24:sigma}, we obtain
\begin{align}\label{Dr2:2}
&\sigma^3\|\D\rho\|_{L^2}^2+\bar\rho\bar{\theta}^{1-\al}\int_0^T\sigma^3\|\D\rho\|_{L^2}^2\dif t\nonumber\\
\leq~&C\sup\limits_{t\in[0,\sigma(T)]}\|\D\rho\|_{L^2}^2+C\sup\limits_{t\in[0,T]}\|\D\rho\|_{L^2}\left(\int_0^T\|\D u\|_{L^2}^2\dif t\right)^{\frac{1}{14}}\left(\int_0^T\sigma^5\|\D^2u\|_{L^4}^2\dif t\right)^{\frac{3}{7}}\left(\int_0^T\|\D\rho\|_{L^2}^2\dif t\right)^{\frac{1}{2}}\nonumber\\
&+C\bar\rho^\frac{1}{2}\bar{\theta}^{-\al}\left(\int_0^T\sigma^3\|\sqrt{\rho}\dot{u}\|_{L^2}^2\dif t\right)^{\frac{1}{2}}\left(\int_0^T\|\D\rho\|_{L^2}^2\dif t\right)^{\frac{1}{2}}+C\bar\rho\bar{\theta}^{-\al}\left(\int_0^T\|\D\theta\|_{L^2}^2\dif t\right)^{\frac{1}{2}}\left(\int_0^T\|\D\rho\|_{L^2}^2\dif t\right)^{\frac{1}{2}}\nonumber\\
&+C\sup\limits_{t\in[0,T]}\|\D\rho\|_{L^2}\left(\int_0^T\|\D u\|_{L^2}^2\dif t\right)^{\frac{1}{14}}\left(\int_0^T\sigma^5\|\D^2u\|_{L^4}^2\dif t\right)^{\frac{3}{7}}\left(\int_0^T\|\D\theta\|_{L^2}^2\dif t\right)^{\frac{1}{2}}\nonumber\\
\leq~&\frac{1}{2}K_8+C(K)\left(\bar{\rho}^{-\frac{6}{7}}+\bar{\rho}^{\frac{1}{2}}\bar{\theta}^{-\frac{1}{2}\al}+\bar{\rho}\bar{\theta}^{\frac{1}{2}-\frac{1}{2}\al-\frac{1}{2}\beta}+\bar{\rho}^\frac{1}{7}\bar{\theta}^{\frac{3}{2}-\frac{1}{2}\al-\frac{1}{2}\beta}\right)\nonumber\\
\leq~&\frac{3}{4}K_8,
\end{align}
provided that $K_8\geq 2C\sup\limits_{t\in[0,\sigma(T)]}\|\D\rho\|_{L^2}^2$, and
$$\bar{\rho}\leq C(K)\min\left\{\bar{\theta}^\alpha, \bar{\theta}^{\frac{1}{2}(\alpha+\beta-1)}, \bar{\theta}^{\frac{7}{2}(\alpha+\beta-3)}\right\},$$
where we have taken $\bar{\rho}$ and $\bar{\theta}$ sufficiently large, such that
$$C(K)\left(\bar{\rho}^{-\frac{6}{7}}+\bar{\rho}^{\frac{1}{2}}\bar{\theta}^{-\frac{1}{2}\al}+\bar{\rho}\bar{\theta}^{\frac{1}{2}-\frac{1}{2}\al-\frac{1}{2}\beta}+\bar{\rho}^\frac{1}{7}\bar{\theta}^{\frac{3}{2}-\frac{1}{2}\al-\frac{1}{2}\beta}\right)\leq \frac{1}{8}K_8.$$
Combining (\ref{l2-2-1}) and (\ref{Dr2:1}), we complete the proof of \eqref{3.9:1}.

\textbf{Case 2}: When $r=4$, \eqref{lr} reduces to 
\begin{align}\label{l4}
&\frac{\dif}{\dif t}\|\D\rho\|_{L^4}^{4}+\frac{R}{2\mu+\lambda}\int\rho{\theta}^{1-\al}|\D\rho|^{4}\dif x\nonumber\\
\leq~& C\|\D u\|_{L^\infty}\|\D \rho\|_{L^4}^{4}+C\|\D G\|_{L^4}\|\D\rho\|_{L^4}^{3}+C\bar\theta^{-\al}\|\D \theta\|_{L^4}\|\D\rho\|_{L^4}^{3}.
\end{align}


By \eqref{2.3:2}, we have
\begin{align}\label{DG4}
\|\D G\|_{L^4}
\leq~& C\left(\bar\rho\bar{\theta}^{-\al}\|\dot{u}\|_{L^4}+\bar\rho\bar{\theta}^{-\al}\|\D\theta\|_{L^4}+\bar{\theta}^{-1}\|\D\theta\|_{L^4}\|\D u\|_{L^2}^{\frac{1}{7}}\|\D^2u\|_{L^4}^{\frac{6}{7}}\right)\nonumber\\
\leq~& C\left(\bar\rho^\frac{7}{8}\bar{\theta}^{-\al}\|\sqrt{\rho}\dot{u}\|_{L^2}^{\frac{1}{4}}\|\D \dot{u}\|_{L^2}^\frac{3}{4}+\bar\rho\bar{\theta}^{-\al}\|\D\theta\|_{L^4}+\bar{\theta}^{-1}\|\D\theta\|_{L^4}\|\D u\|_{L^2}^{\frac{1}{7}}\|\D^2u\|_{L^4}^{\frac{6}{7}}\right).
\end{align}

Substituting \eqref{DG4} into \eqref{l4}, we obtain
\begin{align}\label{l4:2}
&\frac{\dif}{\dif t}\|\D\rho\|_{L^4}^2+\frac{c_0}{4}\bar\rho\left(2\bar\theta\right)^{1-\al}\int|\D\rho|^{4}\dif x\nonumber\\
\leq~& C\|\D u\|_{L^\infty}\|\D \rho\|_{L^4}^2+C\|\D G\|_{L^4}\|\D\rho\|_{L^4}+C\bar\theta^{-\al}\|\D \theta\|_{L^4}\|\D\rho\|_{L^4}\nonumber\\
\leq~&\frac{c_0}{8}\bar\rho\left(2\bar\theta\right)^{1-\al}\|\D\rho\|_{L^4}^2+C\|\D u\|_{L^\infty}\|\D\rho\|_{L^4}^2+C\|\D G\|_{L^4}\|\D\rho\|_{L^4}+C\bar\rho^{-1}\bar\theta^{-1-\al}\|\D \theta\|_{L^4}^2\nonumber\\
\leq~&\frac{c_0}{8}\bar\rho\left(2\bar\theta\right)^{1-\al}\|\D\rho\|_{L^4}^2+C\|\D u\|_{L^2}^\frac{1}{7}\|\D^2 u\|_{L^4}^\frac{6}{7}\|\D\rho\|_{L^4}^2\nonumber\\
&+C\left(\bar\rho^\frac{7}{8}\bar{\theta}^{-\al}\|\sqrt{\rho}\dot{u}\|_{L^2}^{\frac{1}{4}}\|\D \dot{u}\|_{L^2}^\frac{3}{4}+\bar\rho\bar{\theta}^{-\al}\|\D\theta\|_{L^4}+\bar{\theta}^{-1}\|\D\theta\|_{L^4}\|\D u\|_{L^2}^{\frac{1}{7}}\|\D^2u\|_{L^4}^{\frac{6}{7}}\right)\|\D\rho\|_{L^4}\nonumber\\
&+C\bar\rho^{-1}\bar\theta^{-1-\al}\|\D \theta\|_{L^4}^2.
\end{align}

Then, similar as the estimates of  $\|\D\rho\|_{L^2}$.

\textbf{Case 2.1}: For the case  $t\in(0,\sigma(T))$, integrating \eqref{l4:2} over $(0,\sigma(T))$, we have
\begin{align}\label{l4:3}
&\|\D\rho\|_{L^4}^2+\bar\rho\bar\theta^{1-\al}\int_0^{\sigma(T)}\|\D\rho\|_{L^4}^2\dif t\nonumber\\
\leq~& C\|\D\rho_0\|_{L^4}^2+C\int_0^{\sigma(T)}\|\D u\|_{L^2}^\frac{1}{7}\|\D^2 u\|_{L^4}^\frac{6}{7}\|\D\rho\|_{L^4}^2\dif t\nonumber\\
&+C\int_0^{\sigma(T)}\left(\bar\rho^\frac{7}{8}\bar{\theta}^{-\al}\|\sqrt{\rho}\dot{u}\|_{L^2}^{\frac{1}{4}}\|\D \dot{u}\|_{L^2}^\frac{3}{4}+\bar\rho\bar{\theta}^{-\al}\|\D\theta\|_{L^4}+\bar{\theta}^{-1}\|\D\theta\|_{L^4}\|\D u\|_{L^2}^{\frac{1}{7}}\|\D^2u\|_{L^4}^{\frac{6}{7}}\right)\|\D\rho\|_{L^4}\dif t\nonumber\\
&+C\bar\rho^{-1}\bar\theta^{-1-\al}\int_0^{\sigma(T)}\|\D\theta\|_{L^2}^{\frac{1}{2}}\|\D^2\theta\|_{L^2}^{\frac{3}{2}}\dif t\nonumber\\
\leq~&\frac{1}{2}\bar\rho\bar\theta^{1-\al}\|\D\rho\|_{L^4}^2+C\|\D\rho_0\|_{L^4}^2\nonumber\\
&+C\bar\rho^\frac{7}{8}\bar\theta^{-\al}\sup\limits_{t\in[0,\sigma(T)]}\|\D\rho\|_{L^4}\left(\int_0^{\sigma(T)}\|\sqrt{\rho}\dot{u}\|_{L^2}^2\dif t\right)^{\frac{1}{8}}\left(\int_0^{\sigma(T)}\|\D\dot{u}\|_{L^2}^2\dif t\right)^{\frac{3}{8}}\nonumber\\
&+C\bar\rho\bar\theta^{-1-\al}\left(\int_0^{\sigma(T)}\|\D\theta\|_{L^2}^2\dif t\right)^{\frac{1}{4}}\left(\int_0^{\sigma(T)}\|\D^2\theta\|_{L^2}^2\dif t\right)^{\frac{3}{4}}\nonumber\\
&+C\bar{\theta}^{-1}\int_0^{\sigma(T)}\|\D\theta\|_{L^2}^{\frac{1}{4}}\|\D^2\theta\|_{L^2}^{\frac{3}{4}}\|\D u\|_{L^2}^\frac{1}{7}\|\D^2 u\|_{L^4}^\frac{6}{7}\|\D\rho\|_{L^4}\dif t+C\int_0^{\sigma(T)}\|\D u\|_{L^2}^\frac{1}{7}\|\D^2 u\|_{L^4}^\frac{6}{7}\|\D\rho\|_{L^4}^2\dif t\nonumber\\
\leq~&\frac{1}{2}\bar\rho\bar\theta^{1-\al}\int_0^{\sigma(T)}\|\D\rho\|_{L^4}^2\dif t+\|\D\rho_0\|_{L^4}^2+C(K)\left(\bar\rho^\frac{7}{8}\bar\theta^{-\frac{\al}{2}}+\bar\rho^2\bar\theta^{-\frac{1}{2}-\al-\beta}+\bar\rho^\frac{5}{7}\bar\theta^{-\frac{5}{28}-\frac{\al}{2}}\right)\nonumber\\
&+C\int_0^{\sigma(T)}\|\D u\|_{L^2}^\frac{1}{7}\|\D^2 u\|_{L^4}^\frac{6}{7}\|\D\rho\|_{L^2}^2\dif t,
\end{align}
where we have used \eqref{assume:1}. By Gronwall's inequality, we obtain
\begin{align}\label{l4:4}
&\sup\limits_{t\in[0,\sigma(T)]}\|\D\rho\|_{L^4}^2+\bar{\rho}\bar\theta^{1-\al}\int_0^{\sigma(T)}\|\D\rho\|_{L^4}^2\dif t\nonumber\\
\leq~&C\left(\|\D\rho_0\|_{L^4}^2+C(K)\left(\bar\rho^\frac{7}{8}\bar\theta^{-\frac{\al}{2}}+\bar\rho^2\bar\theta^{-\frac{1}{2}-\al-\beta}+\bar\rho^\frac{5}{7}\bar\theta^{-\frac{5}{28}-\frac{\al}{2}}\right)\right)\cdot\nonumber\\
&\exp\left\{\left(\int_0^{\sigma(T)}\|\D u\|_{L^2}^2\dif t\right)^\frac{1}{14}\left(\int_0^{\sigma(T)}\|\D^2 u\|_{L^4}^2\dif t\right)^\frac{3}{7}\right\}\nonumber\\
\leq~& C\exp\left\{C(K)\bar{\rho}^\frac{5}{7}\bar\theta^{\frac{1}{14}-\frac{1}{2}\al}\right\}\nonumber\\
\leq~& C,
\end{align}
provided that $\bar{\rho}\leq C(K)\bar{\theta}^{\frac{1}{10}(7\alpha-1)}$, where we have taken $\bar{\theta}$ large enough, such that
$$C(K)\left(\bar\rho^\frac{7}{8}\bar\theta^{-\frac{1}{2}\al}+\bar\rho^2\bar\theta^{-\frac{1}{2}-\al-\beta}+\bar\rho^\frac{5}{7}\bar\theta^{-\frac{5}{28}-\frac{1}{2}\al}\right)\leq 1 \qquad \text{and} \qquad C(K)\bar{\rho}^\frac{5}{7}\bar\theta^{\frac{1}{14}-\frac{1}{2}\al}\leq\ln 2.$$


\textbf{Case 2.2}: Multiplying \eqref{l4} by $\sigma^m$ with $m\in\mathbb{N}_+$, we have
\begin{align}\label{l4:sigma}
&\frac{\dif}{\dif t}\left(\sigma^m\|\D\rho\|_{L^4}^2\right)+\frac{c_0}{4}\sigma^m\bar\rho\left(2\bar{\theta}\right)^{1-\al}\|\D\rho\|_{L^4}^2\nonumber\\
\leq~&m\sigma^{m-1}\sigma'\|\D\rho\|_{L^4}^2+C\sigma^m\|\D u\|_{L^\infty}\|\D\rho\|_{L^4}^2+C\sigma^m\|\D G\|_{L^4}\|\D\rho\|_{L^4}+C\bar{\theta}^{-\al}\sigma^m\|\D\theta\|_{L^4}\|\D\rho\|_{L^4}\nonumber\\
\leq~&m\sigma^{m-1}\sigma'\|\D\rho\|_{L^4}^2+C\sigma^m\|\D u\|_{L^2}^{\frac{1}{7}}\|\D^2 u\|_{L^4}^{\frac{6}{7}}\|\D\rho\|_{L^4}^2\nonumber\\
&+C\sigma^m\left(\bar\rho^\frac{7}{8}\bar{\theta}^{-\al}\|\sqrt{\rho}\dot{u}\|_{L^2}^{\frac{1}{4}}\|\D \dot{u}\|_{L^2}^\frac{3}{4}+\bar\rho\bar{\theta}^{-\al}\|\D\theta\|_{L^4}+\bar{\theta}^{-1}\|\D\theta\|_{L^4}\|\D u\|_{L^2}^{\frac{1}{7}}\|\D^2u\|_{L^4}^{\frac{6}{7}}\right)\|\D\rho\|_{L^4}\nonumber\\
&+C\bar{\theta}^{-\al}\sigma^m\|\D\theta\|_{L^2}^{\frac{1}{4}}\|\D^2\theta\|_{L^2}^{\frac{3}{4}}\|\D\rho\|_{L^4}\nonumber\\
\leq~&m\sigma^{m-1}\sigma'\|\D\rho\|_{L^4}^2+C\sigma^m\|\D u\|_{L^2}^{\frac{1}{7}}\|\D^2 u\|_{L^4}^{\frac{6}{7}}\|\D\rho\|_{L^4}^2\nonumber\\
&+C\sigma^m\left(\bar\rho^\frac{7}{8}\bar{\theta}^{-\al}\|\sqrt{\rho}\dot{u}\|_{L^2}^{\frac{1}{4}}\|\D \dot{u}\|_{L^2}^\frac{3}{4}+\bar\rho\bar{\theta}^{-\al}\|\D\theta\|_{L^2}^{\frac{1}{4}}\|\D^2\theta\|_{L^2}^{\frac{3}{4}}+\bar{\theta}^{-1}\|\D\theta\|_{L^4}\|\D u\|_{L^2}^{\frac{1}{7}}\|\D^2u\|_{L^4}^{\frac{6}{7}}\right)\|\D\rho\|_{L^4},
\end{align}
where we have used \eqref{DG4}.

Taking $m=4$ and integrating \eqref{l4:sigma} over $[0,T]$, by \eqref{l4:4}, \eqref{assume:2}, and \eqref{D2u24:sigma}, we obtain
\begin{align}\label{Dr4:2}
&\sigma^4\|\D\rho\|_{L^4}^2+\bar\rho\bar{\theta}^{1-\al}\int_0^T\sigma^4\|\D\rho\|_{L^4}^2\dif t\nonumber\\
\leq~&C\sup\limits_{t\in[0,\sigma(T)]}\|\D\rho\|_{L^4}^2+C\sup\limits_{t\in[0,T]}\|\D\rho\|_{L^4}\left(\int_0^T\|\D u\|_{L^2}^2\dif t\right)^{\frac{1}{14}}\left(\int_0^T\sigma^3\|\D^2u\|_{L^4}^2\dif t\right)^{\frac{3}{7}}\left(\int_0^T\|\D\rho\|_{L^4}^2\dif t\right)^{\frac{1}{2}}\nonumber\\
&+C\bar\rho^\frac{7}{8}\bar{\theta}^{-\al}\left(\int_0^T\sigma^3\|\sqrt{\rho}\dot{u}\|_{L^2}^2\dif t\right)^{\frac{1}{8}}\left(\int_0^T\sigma^5\|\D\dot{u}\|_{L^2}^2\dif t\right)^{\frac{3}{8}}\left(\int_0^T\|\D\rho\|_{L^4}^2\dif t\right)^{\frac{1}{2}}\nonumber\\
&+C\bar\rho\bar{\theta}^{-\al}\left(\int_0^T\|\D\theta\|_{L^2}^2\dif t\right)^{\frac{1}{8}}\left(\int_0^T\sigma^4\|\D^2\theta\|_{L^2}^2\dif t\right)^{\frac{3}{8}}\left(\int_0^T\|\D\rho\|_{L^4}^2\dif t\right)^{\frac{1}{2}}\nonumber\\
&+C\bar{\theta}^{-1}\sup\limits_{t\in[0,T]}\|\D\rho\|_{L^4}\left(\int_0^T\|\D\theta\|_{L^2}^2\dif t\right)^{\frac{1}{8}}\left(\int_0^T\sigma^4\|\D^2\theta\|_{L^2}^2\dif t\right)^{\frac{3}{8}}\left(\int_0^T\|\D u\|_{L^2}^2\dif t\right)^{\frac{1}{14}}\left(\int_0^T\sigma^5\|\D^2u\|_{L^4}^2\dif t\right)^{\frac{3}{7}}\nonumber\\
\leq~&\frac{1}{2}K_9+C(K)\left(\bar{\rho}^{-\frac{6}{7}}+\bar{\rho}^{\frac{1}{2}}\bar{\theta}^{-\frac{3}{4}\al-\frac{1}{4}}+\bar{\rho}^{\frac{5}{8}}\bar{\theta}^{\frac{3}{4}-\frac{\al}{2}-\frac{7}{8}\beta}+\bar{\theta}^{\frac{7}{4}-\frac{\al}{2}-\frac{7}{8}\beta}\right)\nonumber\\
\leq~&\frac{3}{4}K_9,
\end{align}
provided that $K_9\geq 1+2C\sup\limits_{t\in[0,\sigma(T)]}\|\D\rho\|_{L^4}^2$, and
$$\bar{\rho}\leq C(K)\min\left\{\bar{\theta}^{\frac{1}{2}(3\alpha+1)}, \bar{\theta}^{\frac{1}{5}(4\alpha+7\beta-6)}\right\},$$
where we have taken $\bar{\rho}$ and $\bar{\theta}$ sufficiently large, such that
$$C(K)\left(\bar{\rho}^{-\frac{6}{7}}+\bar{\rho}^{\frac{1}{2}}\bar{\theta}^{-\frac{3}{4}\al-\frac{1}{4}}+\bar{\rho}^{\frac{5}{8}}\bar{\theta}^{\frac{3}{4}-\frac{\al}{2}-\frac{7}{8}\beta}+\bar{\theta}^{\frac{7}{4}-\frac{\al}{2}-\frac{7}{8}\beta}\right)\leq\frac{1}{8}K_9.$$
Combining (\ref{l4:4}) and (\ref{Dr4:2}), we have proved \eqref{3.9:2}. From \eqref{3.9:1} and \eqref{3.9:2}, we get
\begin{equation}
\|\rho-\bar{\rho}\|_{L^\infty}^2\leq C\|\nabla\rho\|_{L^2}^{\frac{2}{3}}\|\nabla\rho\|_{L^4}^{\frac{4}{3}}\leq C(K)\leq\frac{1}{9}\bar{\rho}^2.
\end{equation}
This completes the proof.
\end{proof}

Finally, taking
$$\widetilde{\rL}\geq\max\{\rL_i\}_{i=1}^5,$$
we complete the proof of Proposition \ref{p4.1} from Lemma \ref{Lem33}--\ref{Lem39}.

\subsection{Proof of Proposition \ref{Lem:10}}\label{sec3.4}

In this subsection, we will give the uniform estimate of the solution in $H^2$ space.
\begin{proof}
First, the following estimates come from Proposition \ref{p4.1} and Lemma \ref{Lem3.1} directly:
\begin{align}\label{342}
\|\rho-\bar{\rho}\|_{H^1}^2+\|u\|_{H^1}^2+\|\theta-\bar{\theta}\|_{H^2}^2\leq C\bar{\theta}^2.
\end{align}

From $(\ref{FCNS:2})_2$, one has
\begin{align}\label{346-0}
\|\D^2u\|_{L^2}^2\leq~&C\bar{\theta}^{-2\al}\left(\bar{\rho}\|\sqrt{\rho}\dot{u}\|_{L^2}^2+\bar{\theta}^2\|\D\rho\|_{L^2}^2+\bar{\rho}^2\|\D\theta\|_{L^2}^2+\bar{\theta}^{2\al-2}\|\D\theta\|_{L^4}^2\|\D u\|_{L^4}^2\right)
\nonumber\\
\leq~&C\bar{\theta}^{-2\al}\left(\bar{\rho}\bar{\theta}^{2\al}+\bar{\theta}^2+\bar{\rho}^2+\bar{\theta}^{2\al-2}\|\D\theta\|_{L^2}^\frac{1}{2}\|\D^2\theta\|_{L^2}^\frac{3}{2}\|\D u\|_{L^2}^\frac{1}{2}\|\D^2 u\|_{L^2}^\frac{3}{2}\right)\nonumber\\
\leq~&C\bar{\rho}+C\bar{\theta}^{2-2\al}+C\bar{\rho}^2\bar{\theta}^{-2\al}+C\bar{\theta}^{-2}\|\D^2\theta\|_{L^2}^\frac{3}{2}\|\D^2 u\|_{L^2}^\frac{3}{2}\nonumber\\
\leq~&\frac{1}{2}\|\D^2 u\|_{L^2}^2+C\bar{\rho}+C\bar{\theta}^{-2}\nonumber\\
\leq~&\frac{1}{2}\|\D^2 u\|_{L^2}^2+C\bar{\rho},
\end{align}
which implies $$\|\D^2u\|_{L^2}^2\leq C\bar{\rho}.$$

It remains to estimate
$$\|\rho_t\|_{H^1}^2+\|\D^2\rho\|_{L^2}^2+\int_0^T\left(\|u_t\|_{H^1}^2+\|\theta_t\|_{H^1}^2\right)\dif t.$$
We divide the proof into several steps.

\textbf{Step 1}: Estimate of $u_t$ and $\theta_t$ in $L^2\left(0,T;H^1\right)$.

From \eqref{goal:1} and \eqref{goal:2}, one has
\begin{align}
\int_0^T\left(\|u_t+u\cdot\D u\|_{L^2}^2+\|\D u_t+\D(u\cdot\D u)\|_{L^2}^2\right)\dif t\leq C\bar{\theta}^{\al}.
\end{align}

Therefore, by \eqref{goal:1}, \eqref{estimate:DuL6}, and \eqref{goal:2}, we have
\begin{align}\label{347}
\int_0^T\|u_t\|_{H^1}^2\dif t\leq~& C\bar{\theta}^{\al}+C\int_0^T\left(\|u\cdot\D u\|_{L^2}^2+\|\D u\|_{L^4}^4+\|u\D^2u\|_{L^2}^2\right)\dif t\nonumber\\
\leq~&C\bar{\theta}^{\al}+C\int_0^T\left(\|\D u\|_{L^2}^3\|\D u\|_{L^6}+\|\D u\|_{L^2}\|\D u\|_{L^6}^3+\|\D u\|_{L^2}\|\D u\|_{L^6}\|\D^2u\|_{L^2}^2\right)\dif t\nonumber\\
\leq~&C\bar{\theta}^{\al}+C\int_0^T\left(\|\D u\|_{L^2}^2+\|\D u\|_{L^6}^2+\|\D^2u\|_{L^2}^2\right)\dif t.
\end{align}

From \eqref{346-0}, one has
\begin{align}\label{348}
\int_0^T\|\D^2u\|_{L^2}^2\dif t\leq~&C\bar{\theta}^{-2\al}\int_0^T\left(\|\sqrt{\rho}\dot{u}\|_{L^2}^2+\|\D\theta\|_{L^2}^2+\bar{\theta}^2\|\D\rho\|_{L^2}^2\right)\dif t
\nonumber\\
&+C\bar{\theta}^{-2}\int_0^T\|\D\theta\|_{L^2}^\frac{1}{2}\|\D^2\theta\|_{L^2}^\frac{3}{2}\|\D u\|_{L^2}^\frac{1}{2}\|\D u\|_{L^6}^\frac{3}{2}\dif t\nonumber\\
\leq~&C\bar{\theta}^{1-\al},
\end{align}
where we have used \eqref{goal:1}, \eqref{goal:2}, \eqref{estimate:D2theta-2}, \eqref{lem3.1:g2}, and \eqref{estimate:DuL6:5}.

Inserting \eqref{lem3.1:g2}, \eqref{estimate:DuL6:5} and \eqref{348} into \eqref{347}, we obtain
\begin{align}\label{349}
\int_0^T\|u_t\|_{H^1}^2\dif t\leq C\bar{\theta}^{\al}.
\end{align}

From \eqref{goal:1} and \eqref{goal:2}, we have
\begin{align}
\int_0^T\left(\|\theta_t+u\cdot\D \theta\|^2+\|\D \theta_t+\D(u\cdot\D \theta)\|^2\right)\dif t\leq C\bar{\theta}^{\beta}.
\end{align}
Therefore, by \eqref{goal:1}, \eqref{estimate:DuL6}, and \eqref{goal:2}, we have
\begin{align}\label{3410}
&\int_0^T\|\theta_t\|_{H^1}^2\dif t\nonumber\\
\leq~& C\bar{\theta}^{\beta}+C\int_0^T\left(\|u\cdot\D\theta\|_{L^2}^2+\|\D u\cdot\D\theta\|_{L^2}^2+\|u\D^2\theta\|_{L^2}^2\right)\dif t\nonumber\\
\leq~&C\bar{\theta}^{\beta}+C\int_0^T\left(\|\D u\|_{L^2}^2\|\D\theta\|_{L^2}\|\D^2\theta\|_{L^2}+\|\D u\|_{L^6}^2\|\D\theta\|_{L^2}\|\D^2\theta\|_{L^2}+\|\D u\|_{L^2}\|\D u\|_{L^6}\|\D^2\theta\|_{L^2}^2\right)\dif t\nonumber\\
\leq~&C\bar{\theta}^{\beta}+C\bar{\rho}\int_0^T\left(\|\D\theta\|_{L^2}^2+\|\D^2\theta\|_{L^2}^2\right)\dif t.
\end{align}

Inserting \eqref{lem3.1:g2}, \eqref{estimate:D2theta:2-1} and \eqref{estimate:D2theta-2} into \eqref{3410}, we obtain
\begin{align}\label{3411}
\int_0^T\|\theta_t\|_{H^1}^2\dif t\leq C\bar{\theta}^{\beta}.
\end{align}

\bigskip

\textbf{Step 2}: Estimate of $\|\D^2\rho\|_{L^2}^2$ and $\|\rho_t\|_{H^1}^2$.

By \eqref{FCNS:2}$_1$, we have
\begin{align}\label{sec3.4:8}
\|\rho_t\|_{L^2}\leq
C\|{u}\|_{L^\infty}\|\D\rho\|_{L^2}+C\bar{\rho}\|\D u\|_{L^2}\leq C\bar{\rho},
\end{align}
and
\begin{align}\label{sec3.4:9}
\nabla\rho_t+(u\cdot\D)\D\rho+\nabla u\nabla\rho+\nabla\rho\text{div}u+\rho\nabla\text{div}u=0.
\end{align}

Then \eqref{sec3.4:9} implies
\begin{align}\label{sec3.4:10}
\|\nabla\rho_t\|_{L^2}^2
\leq
C\|{u}\|_{L^\infty}^2\|\nabla^2\rho\|_{L^2}^2+C\|\nabla{u}\|_{L^2}^{\frac{1}{2}}\|\nabla{u}\|_{L^6}^{\frac{3}{2}}\|\nabla\rho\|_{L^4}^2+C\bar{\rho}^2\|\nabla^2{u}\|_{L^2}^2.
\end{align}

By direct computation, we derive the equation of $\D^2\rho$ as follows:
\begin{align*}
&(\D^2 \rho)_t+2(\D u\cdot\D)\D\rho+(u\cdot\nabla)\D^2\rho+\D \rho\cdot\D^2 u+\D^2 \rho\text{div}u+2\D\rho\otimes\D\text{div}u+\rho\D^2\text{div}u=0.
\end{align*}  
Multiplying the above equation by $\D^2 \rho$ and integrating the resulting equation over $\R^3$, we obtain
\begin{align}\label{sec3.4:2}
&\frac{1}{2}\frac{\dif}{\dif t}\|\D^2\rho\|_{L^2}^{2}+c_0\bar{\rho}\bar{\theta}^{1-\al}\|\D^2\rho\|_{L^2}^{2}\nonumber\\
\leq~&C\|\D u\|_{L^\infty}\|\D^2\rho\|_{L^2}^{2}+ C\|\D^2u\|_{L^4}\|\D \rho\|_{L^4}\|\D^2 \rho\|_{L^2}+C\bar{\rho}\|\D^2 G\|_{L^2}\|\D^2\rho\|_{L^2}+C\bar{\rho}^2\bar{\theta}^{-\al}\|\D^2\theta\|_{L^2}\|\D^2\rho\|_{L^2}\nonumber\\
&+C\bar{\rho}\bar{\theta}^{-\al}\|\D^2\theta\|_{L^2}\|\D \rho\|_{L^2}^{\frac{1}{3}}\|\D \rho\|_{L^4}^{\frac{2}{3}}\|\D^2 \rho\|_{L^2}+C\bar{\rho}^2\bar{\theta}^{-(\al+1)}\|\D^2\rho\|_{L^2}\|\D\theta\|_{L^2}^{\frac{1}{2}}\|\D^2\theta\|_{L^2}^{\frac{3}{2}}\nonumber\\
\leq~& \frac{c_0}{2}\bar{\rho}\bar{\theta}^{1-\al}\|\D^2\rho\|_{L^2}^{2}+C\bar{\rho}^{-1}\bar{\theta}^{\al-1}\|\D u\|_{L^2}^\frac{2}{7}\|\D^2 u\|_{L^4}^\frac{12}{7}\|\D^2\rho\|_{L^2}^2+ C\bar{\rho}^{-1}\bar{\theta}^{\al-1}\|\D^2 u\|_{L^4}^2\nonumber\\
&+C\bar{\rho}\bar{\theta}^{\al-1}\|\D H\|_{L^2}^2+C\bar{\rho}^3\bar{\theta}^{-(\al+1)}\|\D^2\theta\|_{L^2}^2\nonumber\\
\leq~& \frac{c_0}{2}\bar{\rho}\bar{\theta}^{1-\al}\|\D^2\rho\|_{L^2}^{2}+C\bar{\rho}^{-1}\bar{\theta}^{\al-1}\|\D u\|_{L^2}^\frac{2}{7}\|\D^2 u\|_{L^4}^\frac{12}{7}\|\D^2\rho\|_{L^2}^2+ C\bar{\rho}^{-1}\bar{\theta}^{\al-1}\|\D^2 u\|_{L^4}^2+C\bar{\rho}^3\bar{\theta}^{-(\al+1)}\|\D\dot{u}\|_{L^2}^2\nonumber\\
&+C\bar{\rho}^3\bar{\theta}^{-(\al+1)}\|\D^2\theta\|_{L^2}^2+C\bar{\rho}\bar{\theta}^{\al-1}\|\D u\|_{L^6}^2+C\bar{\rho}^{\frac{3}{4}}\bar{\theta}^{-(\al+1)}\|\sqrt{\rho}\dot{u}\|_{L^2}^{\frac{1}{2}}\|\D\dot{u}\|_{L^2}^{\frac{3}{2}}\nonumber\\
&+C\bar{\rho}\bar{\theta}^{-(\al+1)}\|\D\theta\|_{L^2}^{\frac{1}{2}}\|\D^2\theta\|_{L^2}^{\frac{3}{2}}+C\bar{\rho}\bar{\theta}^{\al-1}\|\D u\|_{L^2}^{\frac{2}{7}}\|\D^2 u\|_{L^4}^{\frac{12}{7}},
\end{align}
where we have used 
\begin{align}\label{sec3.4:3}
&\|\D^2 G\|_{L^2}\leq C\|\nabla H\|_{L^2}\nonumber\\
\leq~& C\bar{\rho}\bar{\theta}^{-(\al+1)}\|\D\theta\|_{L^2}^{\frac{1}{2}}\|\D^2\theta\|_{L^2}^{\frac{1}{2}}\|\D\dot{u}\|_{L^2}
+C\bar{\rho}\bar{\theta}^{-(\al+1)}\|\D\theta\|_{L^2}^{\frac{1}{2}}\|\D^2\theta\|_{L^2}^{\frac{3}{2}}+C\bar{\theta}^{-2}\|\D u\|_{L^6}\|\D^2\theta\|_{L^2}^2\nonumber\\
&+C\bar{\rho}^{-\frac{1}{8}}\bar{\theta}^{-\al}\|\sqrt{\rho}\dot{u}\|_{L^2}^{\frac{1}{4}}\|\D\dot{u}\|_{L^2}^{\frac{3}{4}}\|\D\rho\|_{L^4}+C\bar{\rho}\bar{\theta}^{-\al}\|\D\dot{u}\|_{L^2}+C\bar{\theta}^{-\al}\|\D\rho\|_{L^4}\|\D\theta\|_{L^2}^{\frac{1}{4}}\|\D^2\theta\|_{L^2}^{\frac{3}{4}}\nonumber\\
&+C\bar{\rho}\bar{\theta}^{-\al}\|\D^2\theta\|_{L^2}+C\bar{\theta}^{-1}\|\D\theta\|_{L^2}^{\frac{1}{4}}\|\D^2\theta\|_{L^2}^{\frac{3}{4}}\|\D^2u\|_{L^4}+C\bar{\theta}^{-1}\|\D u\|_{L^2}^{\frac{1}{7}}\|\D^2 u\|_{L^4}^{\frac{6}{7}}\|\D^2\theta\|_{L^2}\nonumber\\
\leq~&C\bar{\rho}\bar{\theta}^{-\al}\|\D\dot{u}\|_{L^2}+C\bar{\rho}\bar{\theta}^{-\al}\|\D^2\theta\|_{L^2}+C\|\D u\|_{L^6}+C\bar{\rho}^{-\frac{1}{8}}\bar{\theta}^{-\al}\|\sqrt{\rho}\dot{u}\|_{L^2}^{\frac{1}{4}}\|\D\dot{u}\|_{L^2}^{\frac{3}{4}}\nonumber\\
&+C\bar{\theta}^{-\al}\|\D\theta\|_{L^2}^{\frac{1}{4}}\|\D^2\theta\|_{L^2}^{\frac{3}{4}}+C\bar{\theta}^{-\frac{1}{4}}\|\D^2u\|_{L^4}+C\|\D u\|_{L^2}^{\frac{1}{7}}\|\D^2 u\|_{L^4}^{\frac{6}{7}}.
\end{align}

Then, integrating \eqref{sec3.4:2} over $(0,T)$, we have
\begin{align}\label{sec3.4:6}
&\|\D^2\rho\|_{L^2}^2+\bar{\rho}\bar{\theta}^{1-\al}\int_0^T\|\D^2\rho\|_{L^2}^2\dif t\nonumber\\
\leq~&\|\D^2\rho_0\|_{L^2}^2+C\bar{\rho}\bar{\theta}^{\al-1}\int_0^{T}\|\D u\|_{L^2}^\frac{2}{7}\|\D^2 u\|_{L^4}^\frac{12}{7}\|\D^2\rho\|_{L^2}^2\dif t+ C\bar{\rho}^{-1}\bar{\theta}^{\al-1}\int_0^T\|\D^2 u\|_{L^4}^2\dif t\nonumber\\
&+C\bar{\rho}^3\bar{\theta}^{-(\al+1)}\int_0^T\|\D\dot{u}\|_{L^2}^2\dif t+C\bar{\rho}^3\bar{\theta}^{-(\al+1)}\int_0^T\|\D^2\theta\|_{L^2}^2\dif t+C\bar{\rho}\bar{\theta}^{\al-1}\int_0^T\|\D u\|_{L^6}^2\dif t\nonumber\\
&+C\bar{\rho}^{\frac{3}{4}}\bar{\theta}^{-(\al+1)}\int_0^T\|\sqrt{\rho}\dot{u}\|_{L^2}^{\frac{1}{2}}\|\D\dot{u}\|_{L^2}^{\frac{3}{2}}\dif t+C\bar{\rho}\bar{\theta}^{-(\al+1)}\int_0^T\|\D\theta\|_{L^2}^{\frac{1}{2}}\|\D^2\theta\|_{L^2}^{\frac{3}{2}}\dif t\nonumber\\
&+C\bar{\rho}\bar{\theta}^{\al-1}\int_0^T\|\D u\|_{L^2}^{\frac{2}{7}}\|\D^2 u\|_{L^4}^{\frac{12}{7}}\dif t\nonumber\\
\leq~&C\bar{\rho}^{\frac{2}{7}}+C\bar{\rho}\bar{\theta}^{\al-1}\int_0^{T}\|\D u\|_{L^2}^\frac{2}{7}\|\D^2 u\|_{L^4}^\frac{12}{7}\|\D^2\rho\|_{L^2}^2\dif t,
\end{align}
then, by using Gronwall's inequality and (\ref{l2-2-1}), we have
\begin{align}\label{sec3.4:7}
\sup\limits_{t\in[0,T]}\|\D^2\rho\|_{L^2}^2+\bar{\rho}\bar{\theta}^{1-\al}\int_0^T\|\D^2\rho\|_{L^2}^2\dif t\leq~& C\bar{\rho}^{\frac{2}{7}}\exp\left\{C\bar{\rho}\bar{\theta}^{\al-1}\int_0^{T}\|\D u\|_{L^2}^\frac{2}{7}\|\D^2 u\|_{L^4}^\frac{12}{7}\dif t\right\}\nonumber\\
\leq~& C\bar{\rho}^{\frac{2}{7}}\exp\left\{C\bar{\rho}^{\frac{2}{7}}\right\},
\end{align}
where we have used (\ref{g:3.4:1}), (\ref{D2u24:sigma}), (\ref{D2u24}), (\ref{lem3.1:g2}), and (\ref{estimate:D2theta-2}).

Combining \eqref{342}, \eqref{349}, \eqref{3411}, \eqref{sec3.4:7}, \eqref{sec3.4:10}, and taking
$$\eta\geq\max\left\{3, \frac{3}{\beta-\alpha}, \frac{1}{2(\alpha-\beta+2)}, \frac{\ln\tilde{L}}{\ln L}\right\}$$
in Theorem \ref{th}, we complete the proof.
\end{proof}

\section{Asymptotic behavior of the solution}\label{sec4}

The aim of this section is to acquire the convergence rate of the global solution $(\rho, u, \theta)$ obtained in Theorem \ref{th} to the equilibrium $(\bar{\rho}, 0, \bar{\theta})$. For simplicity, we use $C(\bar{\rho}, \bar{\theta})$ to denote a generic positive constant that depends on $\bar{\rho}$, $\bar{\theta}$, $\mu$, $\lambda$, $\kappa$, $R$, $K_i$ and the initial data. The following lemma is devoted to giving the large time behavior of the strong solution $(\D\rho, \D u, \D\theta)$ in $H^1$ space.



\begin{lemma}
Let $(\rho, u, \theta)$ be the global strong solution obtained in Theorem \ref{th}. Then, we have
\begin{equation}\label{decay1}
    \lim_{t\rightarrow \infty} \left(\|\D\rho\|_{H^1}+\|\nabla\theta\|_{H^1}+\|\nabla u\|_{H^1}\right)=0.
\end{equation}    
\end{lemma}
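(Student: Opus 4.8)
The goal is to upgrade the uniform-in-time bounds from Theorem~\ref{th} (and Proposition~\ref{p4.1}, Lemma~\ref{Lem:10}) to the decay statement $\|\D\rho\|_{H^1}+\|\D u\|_{H^1}+\|\D\theta\|_{H^1}\to 0$. The natural strategy is a standard ``uniform bound plus integrability implies vanishing'' argument combined with the uniform continuity of the relevant quantities in $t$. First I would collect from Lemma~\ref{Lem3.1}, Proposition~\ref{p4.1} and Lemma~\ref{Lem:10} the time-integrability facts
\[
\int_0^\infty\!\big(\|\D u\|_{L^2}^2+\|\D\theta\|_{L^2}^2+\|\D\rho\|_{L^2}^2+\|\D^2\rho\|_{L^2}^2+\|u_t\|_{H^1}^2+\|\theta_t\|_{H^1}^2\big)\dif t<\infty,
\]
which already forces $\liminf_{t\to\infty}$ of each of these to be $0$; the job is to promote $\liminf$ to $\lim$. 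For the first-order quantities $\|\D u\|_{L^2}^2$ and $\|\D\theta\|_{L^2}^2$, I would show the time-derivative $\frac{\dif}{\dif t}\|\D u\|_{L^2}^2$ (resp.\ for $\D\theta$) is integrable in $t$: differentiating, $\big|\frac{\dif}{\dif t}\|\D u\|_{L^2}^2\big|\le C\|\D u\|_{L^2}\|\D u_t\|_{L^2}$, and both factors are in $L^2(0,\infty)$ by the estimates above, so the product is in $L^1(0,\infty)$; hence $\|\D u\|_{L^2}^2$ has a limit as $t\to\infty$, and that limit must be $0$ by integrability. The same reasoning applies to $\|\D\theta\|_{L^2}^2$ and, using $\int_0^\infty\|\D\rho\|_{L^2}^2\dif t<\infty$ together with $\big|\frac{\dif}{\dif t}\|\D\rho\|_{L^2}^2\big|\le C\|\D\rho\|_{L^2}\|\D\rho_t\|_{L^2}$ and $\|\D\rho_t\|_{L^2}\in L^\infty(0,\infty)$ (from \eqref{sec3.4:1.3}), to $\|\D\rho\|_{L^2}^2$. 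Thus $\|\D\rho\|_{L^2}+\|\D u\|_{L^2}+\|\D\theta\|_{L^2}\to 0$.

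Next I would pass to the second-order norms. Here the cleanest route is interpolation against the \emph{uniform} $H^2$ (resp.\ $H^3$-type) bounds plus the decay of the lower-order norms just obtained. Concretely, Gagliardo--Nirenberg gives $\|\D^2\theta\|_{L^2}\le C\|\D\theta\|_{L^2}^{1/2}\|\D^3\theta\|_{L^2}^{1/2}$; since $\|\D^3\theta\|_{L^2}$ is not directly controlled uniformly, I would instead use $\|\D u\|_{H^1}^2\le C\|\D u\|_{L^2}\|\D u\|_{H^2}$ together with the $L^2(0,\infty;W^{2,4})$ bound on $u$ and the uniform $H^2$ bound, or — more simply — redo the energy-type argument directly: for $\|\D^2 u\|_{L^2}^2$ one has from $(\ref{FCNS:2})_2$ an elliptic bound $\|\D^2u\|_{L^2}\le C(\|\sqrt{\rho}\dot u\|_{L^2}+\|\D\rho\|_{L^2}+\|\D\theta\|_{L^2}+\text{h.o.t.})$ (cf.\ \eqref{346-0}), and each term on the right tends to $0$: $\|\D\rho\|_{L^2}\to0$, $\|\D\theta\|_{L^2}\to0$, and $\|\sqrt{\rho}\dot u\|_{L^2}\to 0$ because $\int_0^\infty\sigma^3\|\sqrt{\rho}\dot u\|_{L^2}^2\dif t<\infty$ (from $B_1(T)$) while $\sigma=1$ for large $t$, combined with integrability of $\frac{\dif}{\dif t}\|\sqrt{\rho}\dot u\|_{L^2}^2$ using $A_3$ / $B_3$. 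Similarly $\|\D^2\theta\|_{L^2}\to 0$ via \eqref{estimate:D2theta-1}-type elliptic bounds together with $\|\sqrt{\rho}\dot\theta\|_{L^2}\to0$ (from $B_2$, $B_4$), and $\|\D^2\rho\|_{L^2}\to0$ via $\int_0^\infty\|\D^2\rho\|_{L^2}^2\dif t<\infty$ (from \eqref{sec3.4:6}) plus integrability of its time-derivative using the $\D^2\rho$ evolution equation and the already-established decay.

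\textbf{Main obstacle.} The delicate point is establishing that the ``companion'' quantities $\|\sqrt{\rho}\dot u\|_{L^2}$, $\|\sqrt{\rho}\dot\theta\|_{L^2}$, and $\|\D^2\rho\|_{L^2}$ themselves genuinely tend to $0$ (not merely along a subsequence), since their uniform bounds are large (powers of $\bar\theta$, $\bar\rho$) and the weighted integrals $\int_0^\infty\sigma^m\|\cdot\|^2$ only give control for large $t$. The way around this is that for $t\ge 1$ one has $\sigma(t)=1$, so the weighted integrals become genuine $\int_1^\infty\|\cdot\|^2\dif t<\infty$, and then one applies the same Barbălat-type lemma (integrable function with integrable derivative tends to $0$) using the higher-order time-weighted bounds $B_3(T)$, $B_4(T)$ and the $\D\dot u$, $\D\dot\theta$ estimates to control the time-derivatives $\frac{\dif}{\dif t}\|\sqrt{\rho}\dot u\|_{L^2}^2$, $\frac{\dif}{\dif t}\|\sqrt{\rho}\dot\theta\|_{L^2}^2$. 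Once all of $\|\D\rho\|_{L^2}$, $\|\sqrt\rho\dot u\|_{L^2}$, $\|\sqrt\rho\dot\theta\|_{L^2}$, $\|\D u\|_{L^2}$, $\|\D\theta\|_{L^2}\to0$ are in hand, the elliptic estimates for $\D^2u$, $\D^2\theta$, $\D G$, $\D^2\rho$ close the argument and yield \eqref{decay1}.
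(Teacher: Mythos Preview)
Your proposal is correct and follows essentially the same route as the paper: both use the Barbalat-type argument ($L^1$-in-time plus controllable time derivative) on $\|\D u\|_{L^2}^2$, $\|\D\theta\|_{L^2}^2$, $\|\D\rho\|_{L^2}^2$, $\|\D^2\rho\|_{L^2}^2$, and $\|\sqrt\rho\dot u\|_{L^2}^2$, $\|\sqrt\rho\dot\theta\|_{L^2}^2$, and then close $\|\D^2 u\|_{L^2}$, $\|\D^2\theta\|_{L^2}$ via the elliptic bounds \eqref{346-0} and \eqref{estimate:D2theta-1}. One small remark: for $\|\D\rho\|_{L^2}^2$ your pairing $\|\D\rho\|_{L^2}\|\D\rho_t\|_{L^2}$ with $\|\D\rho_t\|_{L^2}\in L^\infty$ gives a \emph{bounded} (not integrable) time derivative, so you should invoke the uniform-continuity version of Barbalat there (or, as the paper does, read off $L^1$-integrability of $\frac{\dif}{\dif t}\|\D\rho\|_{L^2}^2$ directly from the differential inequality \eqref{l2-1}).
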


\begin{proof}
    From Lemma \ref{Lem3.1}, we have
\begin{equation}\label{1001}
    \int_0^\infty \left(\|\nabla u\|_{L^2}^2+\|\nabla \theta\|_{L^2}^2\right) \mathrm{d}t\leq C(\bar{\rho},\bar{\theta}).
\end{equation}

It follows from \eqref{1001} and Lemma \ref{Lem:4} that
\begin{equation}\label{1002}
\int_0^\infty \left|\frac{\mathrm{d}}{\mathrm{d}t} \|\nabla u\|_{L^2}^2\right|\mathrm{d}t\leq\int_0^\infty \int|\nabla u\cdot \nabla u_t|\mathrm{d}x\mathrm{d}t\leq\int_0^\infty\|\D u\|_{L^2}\|\D u_t\|_{L^2}\dif t\leq C(\bar{\rho},\bar{\theta}).
\end{equation}
Combining \eqref{1001} and \eqref{1002} gives that $\lim\limits_{t\rightarrow \infty} \|\nabla u\|_{L^2}=0$. Integrating \eqref{Lem:5:2} with respect to $t\in[0,\infty)$, we can obtain that 
\begin{equation}\label{1003}
\int_0^\infty \left|\frac{\mathrm{d}}{\mathrm{d}t}\|\nabla \theta\|_{L^2}^2\right| \mathrm{d}t\leq C(\bar{\rho},\bar{\theta}).
\end{equation}
Combining \eqref{1001} and \eqref{1003} gives that $\lim\limits_{t\rightarrow \infty} \|\nabla \theta\|_{L^2}=0$. Similarly, from \eqref{3.9:1}, \eqref{l2-1}, \eqref{sec3.4:2} and \eqref{sec3.4:7}, one has $\lim\limits_{t\rightarrow \infty} \| \nabla\rho\|_{H^1}=0$.

Now it remains to prove that $\lim\limits_{t\rightarrow \infty} (\|\nabla^2 u\|_{L^2}+\|\nabla^2 \theta\|_{L^2})=0.$ It follows from \eqref{g:3.3:1} and \eqref{Lem:4:1-1} that
\begin{equation}\label{L100}
    \int_0^\infty \|\sqrt{\rho}\dot{u}\|_{L^2}^2 \mathrm{d}t \leq C(\bar{\rho},\bar{\theta}). 
\end{equation}
Besides, from \eqref{3.4.1}, we have
\begin{equation}
    \int_0^\infty \left|\frac{\mathrm{d}}{\mathrm{d} t} \|\sqrt{\rho}\dot{u}\|_{L^2}^2\right| \mathrm{d}t \leq C(\bar{\rho},\bar{\theta}),
\end{equation}
which, along with \eqref{L100}, yields that $\lim\limits_{t\rightarrow \infty} \|\sqrt{\rho}\dot{u}\|_{L^2}^2 =0$. From \eqref{346-0}, we have
\begin{equation}
\|\D^2u\|_{L^2}\leq C(\bar{\rho},\bar{\theta}),
\end{equation}
which implies that $\lim\limits_{t\rightarrow \infty} \|\nabla^2 u\|_{L^2}=0$.

It follows from \eqref{g:3.3:2} and \eqref{Lem:4:1-2} that
\begin{equation}\label{L101}
   \int_0^\infty \| \sqrt{\rho}\dot{\theta} \|_{L^2}^2 \mathrm{d}t \leq C(\bar{\rho},\bar{\theta}).
\end{equation}
From \eqref{goal:2}, we have
\begin{equation}
    \int_0^\infty \left|\frac{\mathrm{d}}{\mathrm{d} t} \|\sqrt{\rho}\dot{\theta}\|_{L^2}^2\right| \mathrm{d}t \leq C(\bar{\rho},\bar{\theta}),
\end{equation}
which, along with \eqref{L101}, yields $\lim\limits_{t\rightarrow \infty} \|\sqrt{\rho}\dot{\theta}\|_{L^2}^2 =0$. Then from \eqref{estimate:D2theta-0} and \eqref{estimate:D2theta-1}, we have $\lim\limits_{t\rightarrow \infty} \|\nabla^2 \theta\|_{L^2}=0$. Thus we complete the proof of the lemma.
\end{proof}

\subsection{Decay rate of $\|(\rho-\bar{\rho}, u, \theta-\bar{\theta})\|_{L^2}$}\label{dec.1}

First, we define the total energy as follows.
\begin{align}\label{Xt}
X(t):=~&\int\left(\frac{1}{2}\rho|u|^2+R\bar{\theta}(\rho\ln\rho-\rho-\rho\ln\bar{\rho}+\bar{\rho})+\rho(\theta-\bar{\theta}\ln\theta-\bar{\theta}+\bar{\theta}\ln\bar{\theta})\right)\dif x\nonumber\\
&+\mu\int\theta^\al|\frD(u)|^2\dif x+\frac{\lambda}{2}\int\theta^\al(\div u)^2\dif x-R\int(\rho\theta-\bar{\rho}\bar{\theta})\div u\dif x+\frac{\kappa}{2}\bar{\theta}^{-\frac{1}{3}}\int\theta^\beta|\D\theta|^2\dif x\nonumber\\
&+\bar{\theta}\|\D\rho\|_{L^2}^2+\bar{\theta}\|\D\rho\|_{L^4}^2+\|\sqrt{\rho}\dot{u}\|_{L^2}^2+\bar{\theta}^{-1}\|\sqrt{\rho}\dot{\theta}\|_{L^2}^2.
\end{align}
The following lemma is devoted to deriving a dissipation inequality of $(\rho, u, \theta)$, which will play a key role in the proof of \eqref{decay-0}.
\begin{lemma}\label{Lem4.1:6-0}
Let $(\rho, u, \theta)$  be the global solution of \eqref{FCNS:2} obtained in Theorem \ref{th}, then we have
\begin{equation}\label{dissi:X}
\frac{\dif}{\dif t}X(t)+c_1(\bar{\rho}, \bar{\theta})\left(\|(\D u, \D\theta, \D\rho)\|_{L^2}^2+\|\D\rho\|_{L^4}^2+\|(\D\dot{u}, \D\dot{\theta})\|_{L^2}^2\right)\leq 0,
\end{equation}
for any $t\geq 1$.
\end{lemma}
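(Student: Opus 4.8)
The plan is to build $X(t)$ as a Lyapunov functional by taking a suitable linear combination of the energy identities already derived in Section~\ref{Section 3}, and to show that along the global solution the total derivative of each block is controlled by the dissipation it produces plus lower-order terms that, because the solution is now known to be \emph{small} for large $t$, can be absorbed. First I would revisit the basic energy balance \eqref{Lem3.1:1}: its right-hand side is exactly $-\int(\bar{\theta}(\lambda\theta^{\al-1}(\div u)^2+2\mu\theta^{\al-1}|\frD(u)|^2)+\kappa\bar{\theta}\theta^{\beta-2}|\D\theta|^2)\dif x$, which by the uniform bounds \eqref{bc} on $\rho,\theta$ is bounded below by $c(\bar{\rho},\bar{\theta})\|(\D u,\D\theta)\|_{L^2}^2$. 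This already gives the $\|(\D u,\D\theta)\|_{L^2}^2$ dissipation. The terms $\mu\int\theta^\al|\frD(u)|^2+\frac{\lambda}{2}\int\theta^\al(\div u)^2-R\int(\rho\theta-\bar{\rho}\bar{\theta})\div u+\frac{\kappa}{2}\bar{\theta}^{-1/3}\int\theta^\beta|\D\theta|^2$ inside $X(t)$ are the "modified" potential pieces whose time derivatives were computed in the proofs of Lemmas~\ref{Lem33} and \ref{Lem:4} (see \eqref{3.3:1}, \eqref{3.3:10}, \eqref{Lem:4:2}, \eqref{Lem:5:2} with $\sigma\equiv1$ since $t\geq1$); taking their $t$-derivatives reproduces, on the left, $\|\sqrt{\rho}\dot{u}\|_{L^2}^2$ and $\|\sqrt{\rho}\dot{\theta}\|_{L^2}^2$ with good sign, at the cost of the nonlinear remainders $I_i$, $J_i$, $P_i$ already estimated there.

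Next I would differentiate the higher-order blocks. For $\|\sqrt{\rho}\dot{u}\|_{L^2}^2$ and $\bar{\theta}^{-1}\|\sqrt{\rho}\dot{\theta}\|_{L^2}^2$ I use \eqref{3.4.1}--\eqref{3.4.6} and \eqref{3.5:3}--\eqref{3.5:5}: after discarding the $\sigma$-weights these give $\frac{\dif}{\dif t}\|\sqrt{\rho}\dot{u}\|_{L^2}^2+c\bar{\theta}^\al\|\D\dot{u}\|_{L^2}^2\leq \text{(nonlinear)}$ and likewise for $\dot{\theta}$ with dissipation $c\bar{\theta}^\beta\|\D\dot{\theta}\|_{L^2}^2$, so these supply the $\|(\D\dot{u},\D\dot{\theta})\|_{L^2}^2$ dissipation in \eqref{dissi:X}. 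For $\bar{\theta}\|\D\rho\|_{L^2}^2+\bar{\theta}\|\D\rho\|_{L^4}^2$ I use the transport-equation estimates \eqref{l2}, \eqref{l2-1}, \eqref{l4}, \eqref{l4:2}: each yields $\frac{\dif}{\dif t}\|\D\rho\|_{L^r}^2+c_0\bar{\rho}(2\bar{\theta})^{1-\al}\|\D\rho\|_{L^r}^2\leq C\|\D u\|_{L^\infty}\|\D\rho\|_{L^r}^2+(\text{terms controlled by }\|\D G\|_{L^r},\|\D\theta\|_{L^r})$, and the crucial observation — the same one used in Lemma~\ref{Lem39} — is the \emph{strictly negative} coefficient $-c_0\bar{\rho}(2\bar{\theta})^{1-\al}$ on the left, which produces the $\|\D\rho\|_{L^2}^2+\|\D\rho\|_{L^4}^2$ dissipation in \eqref{dissi:X}. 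Here $\|\D^2u\|_{L^4}$ appearing in those bounds is controlled by \eqref{D2l4} in terms of $\|\dot{u}\|_{L^4}\lesssim\|\sqrt{\rho}\dot{u}\|_{L^2}^{1/4}\|\D\dot{u}\|_{L^2}^{3/4}$, $\|\D\rho\|_{L^4}$, $\|\D\theta\|_{L^2}^{1/4}\|\D^2\theta\|_{L^2}^{3/4}$, all of which are already part of the dissipation or are small.

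The main obstacle — and the reason the lemma is stated only for $t\geq1$ — is showing that \emph{every} nonlinear remainder can genuinely be absorbed into the left-hand side rather than merely bounded by a constant. This is where the decay-to-zero statement \eqref{decay1} (i.e. $\|(\D\rho,\D u,\D\theta)\|_{H^1}\to0$) is indispensable: terms such as $C\bar{\rho}\|\D u\|_{L^2}^3\|\D u\|_{L^6}$, $C\|\D u\|_{L^\infty}\|\D\rho\|_{L^r}^2$, $C\bar{\theta}^\al\|\D u\|_{L^2}\|\D u\|_{L^6}^3$, the $\theta_t$-cubic terms in \eqref{3.3:6}, \eqref{3.3:14-1}, and the $\|\D\dot\theta\|_{L^2}^2$ pieces in \eqref{3.4.4}--\eqref{3.4.5}, all carry at least one factor that tends to zero, so there exists $T_0\geq1$ beyond which each such term is bounded by, say, $\tfrac{1}{2}c_1(\bar{\rho},\bar{\theta})$ times the corresponding dissipative quantity. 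One subtlety is that the differentiated identities also produce boundary-type total-derivative terms like $2R\frac{\dif}{\dif t}\int(\rho\theta-\bar{\rho}\bar{\theta})\div u\dif x$; these are handled exactly as in \eqref{3.3:8}/\eqref{Lem:4:9} by moving them inside $X(t)$ — indeed $X(t)$ in \eqref{Xt} already contains the $-R\int(\rho\theta-\bar{\rho}\bar{\theta})\div u$ compensator. Once all remainders are absorbed one obtains \eqref{dissi:X} with $c_1(\bar{\rho},\bar{\theta})>0$ for $t\geq T_0$; the statement "for any $t\geq1$" is then recovered by noting that on the compact interval $[1,T_0]$ the differential inequality holds trivially after possibly shrinking $c_1$, since all quantities are uniformly bounded there by Proposition~\ref{p4.1} and Lemma~\ref{Lem:10}. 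It remains only to check that $X(t)$ is nonnegative and comparable to $\|(\rho-\bar{\rho},u,\theta-\bar{\theta})\|_{L^2}^2+\|(\D\rho,\D u,\D\theta)\|_{H^1}^2$-type quantities — the convexity estimates \eqref{Lem3.1:2}--\eqref{Lem3.1:3} and Cauchy--Schwarz on the $\int(\rho\theta-\bar{\rho}\bar{\theta})\div u$ term (absorbing it with a small fraction of $\mu\int\theta^\al|\frD(u)|^2$) give this, completing the proof.
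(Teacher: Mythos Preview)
Your structural plan matches the paper's exactly: the same linear combination of \eqref{Lem3.1:1}, the $\dot u$/$\dot\theta$ identities \eqref{3.4.6}, \eqref{3.5:5}, and the transport estimates \eqref{l2-1}, \eqref{l4:2}, with the cross term $-R\int(\rho\theta-\bar\rho\bar\theta)\div u$ already built into $X(t)$. Where you diverge from the paper is the \emph{absorption mechanism}. The paper does not invoke \eqref{decay1}; instead it tracks the precise powers of $\bar\theta$ attached to every nonlinear remainder (see \eqref{Lem4.1:2}--\eqref{Lem4.1:8}) and shows that, because $\bar\theta\geq L$ is large and $\alpha\geq 2$, each remainder carries a \emph{strictly lower} power of $\bar\theta$ than the corresponding dissipation term. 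This is why the specific weights $\bar\theta^{-1/3}$, $\bar\theta$, $\bar\theta^{-1}$ appear in the definition of $X(t)$: they are tuned so the combined inequality closes for all $t\geq 1$ uniformly, with no time threshold.

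Your alternative --- using \eqref{decay1} to gain smallness of $\|(\D\rho,\D u,\D\theta)\|_{H^1}$ for $t\geq T_0$ --- is legitimate and does yield \eqref{dissi:X} on $[T_0,\infty)$. The gap is in the patching step on $[1,T_0]$: the claim that ``the differential inequality holds trivially after possibly shrinking $c_1$'' is false. Uniform boundedness of all quantities on $[1,T_0]$ tells you nothing about the sign of $\frac{\dif}{\dif t}X(t)$; if $\frac{\dif}{\dif t}X(t)>0$ at some point of that interval, then $\frac{\dif}{\dif t}X(t)+c_1(\cdots)\leq 0$ fails there for \emph{every} $c_1>0$, and making $c_1$ smaller only makes it worse. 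To repair this you have two options: either reproduce the paper's $\bar\theta$-power counting (which closes the inequality for all $t\geq 1$ directly), or observe that in the subsequent application \eqref{decay:key} the inequality is only invoked for $t\geq C_*(\bar\rho,\bar\theta)$, so it suffices to prove \eqref{dissi:X} for $t\geq T_0$ with $T_0$ depending on $\bar\rho,\bar\theta$ --- in which case you should simply drop the patching claim rather than assert a statement you have not proved.
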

\begin{proof}
By \eqref{Lem3.1:1} and \eqref{goal:2}, we have
\begin{align}\label{Lem4.1:1}
&\frac{\dif}{\dif t}\int\left(\frac{1}{2}\rho|u|^2+R\bar{\theta}(\rho\ln\rho-\rho-\rho\ln\bar{\rho}+\bar{\rho})+\rho(\theta-\bar{\theta}\ln\theta-\bar{\theta}+\bar{\theta}\ln\bar{\theta})\right)\dif x\nonumber\\
&+c_2\left(\bar{\theta}^\al\|\D u\|_{L^2}^2+\bar{\theta}^{\beta-1}\|\D\theta\|_{L^2}^2\right)\leq 0.
\end{align}

From \eqref{Lem:4:2}--\eqref{Lem:4:8} and \eqref{estimate:DuL6:4}, we have
\begin{align}\label{Lem4.1:2}
&\frac{\dif}{\dif t}\left(\mu\int\theta^\al|\frD(u)|^2\dif x+\frac{\lambda}{2}\int\theta^\al(\div u)^2\dif x-R\int(\rho\theta-\bar{\rho}\bar{\theta})\div u\dif x\right)\dif x+\|\sqrt{\rho}\dot{u}\|_{L^2}^2\nonumber\\
\leq~&C\left(\bar{\rho}^{\frac{1}{2}}\|\sqrt{\rho}\dot{\theta}\|_{L^2}\|\D u\|_{L^2}+\bar{\rho}\bar{\theta}\|\D u\|_{L^2}^2+\bar{\theta}^{\al-1}\|\D u\|_{L^2}^{\frac{3}{2}}\|\D u\|_{L^6}^{\frac{1}{2}}\|\D\dot{\theta}\|_{L^2}+\bar{\theta}^{\al}\|\D u\|_{L^2}^{\frac{3}{2}}\|\D u\|_{L^6}^{\frac{3}{2}}\right)\nonumber\\
\leq~&C\left(\bar{\rho}^{\frac{1}{2}}\|\sqrt{\rho}\dot{\theta}\|_{L^2}\|\D u\|_{L^2}+\bar{\rho}\bar{\theta}\|\D u\|_{L^2}^2+\bar{\rho}^{\frac{5}{8}}\bar{\theta}^{\frac{3}{8}\al-\frac{2}{3}}\|\D u\|_{L^2}\|\D\dot{\theta}\|_{L^2}+\bar{\rho}^{\frac{3}{4}}\bar{\theta}^{\frac{\al}{4}+\frac{1}{2}}\|\D u\|_{L^2}^{\frac{3}{2}}\|\D u\|_{L^6}^{\frac{1}{2}}\right)\nonumber\\
\leq~&C(\bar{\rho})\bar{\theta}^{\al-1}\|\D u\|_{L^2}^2+C\bar{\theta}^{1-\al}\|\sqrt{\rho}\dot{\theta}\|_{L^2}^2+C\bar{\theta}^{-\frac{\al}{4}-\frac{1}{3}}\|\D\dot{\theta}\|_{L^2}^2+C\bar{\theta}^{5-2\al}\|\D u\|_{L^6}^2\nonumber\\
\leq~&C(\bar{\rho})\bar{\theta}^{5-4\al}\|\sqrt{\rho}\dot{u}\|_{L^2}^2+C(\bar{\rho})\bar{\theta}^{\al-1}\|\D u\|_{L^2}^2+C(\bar{\rho})\bar{\theta}^{5-4\al}\|\D \theta\|_{L^2}^2+C\bar{\theta}^{1-\al}\|\sqrt{\rho}\dot{\theta}\|_{L^2}^2\nonumber\\
&+C\bar{\theta}^{-\frac{\al}{4}-\frac{1}{3}}\|\D\dot{\theta}\|_{L^2}^2+C\bar{\theta}^{7-4\al}\|\nabla\rho\|_{L^2}^2,
\end{align}
where we have used
\begin{equation}\label{Lem4.1:Du6}
\|\D u\|_{L^6}^2\leq C\left(\bar{\rho}\bar{\theta}^{\frac{2}{3}-\frac{5}{3}\al}+\bar{\theta}^{2-2\al}+\bar{\rho}^{\frac{3}{2}}\bar{\theta}^{-\al-\frac{\beta}{2}}\right)\leq C\bar{\rho}^{\frac{3}{2}}\bar{\theta}^{1-\frac{3}{2}\al}.
\end{equation}

From \eqref{Lem:5:2}, \eqref{Lem:4:1-2}, \eqref{estimate:D2theta-1} and \eqref{Lem4.1:Du6}, we have
\begin{align}\label{Lem4.1:3}
&\frac{\dif}{\dif t}\left(\frac{\kappa}{2}\int\theta^\beta|\D\theta|^2\dif x\right)+\|\sqrt{\rho}\dot{\theta}\|_{L^2}^2\nonumber\\
\leq~&C\bar{\rho}^{-\frac{1}{2}}\bar{\theta}^{\beta-1}\|\sqrt{\rho}\dot{\theta}\|_{L^2}\|\D\theta\|_{L^2}^{\frac{1}{2}}\|\D^2\theta\|_{L^2}^{\frac{3}{2}}+C\bar{\theta}^{\beta-1}\|\D u\|_{L^2}^{\frac{1}{2}}\|\D u\|_{L^6}^{\frac{1}{2}}\|\D\theta\|_{L^2}^{\frac{3}{2}}\|\D^2\theta\|_{L^2}^{\frac{3}{2}}\nonumber\\
&+C\bar{\rho}^{\frac{1}{2}}\|\sqrt{\rho}\dot{\theta}\|_{L^2}\|\D u\|_{L^2}^{\frac{1}{2}}\|\D u\|_{L^6}^{\frac{1}{2}}\|\D\theta\|_{L^2}+C\bar{\rho}^{\frac{1}{2}}\bar{\theta}\|\sqrt{\rho}\dot{\theta}\|_{L^2}\|\D u\|_{L^2}+C\bar{\rho}\bar{\theta}\|\D u\|_{L^2}^{\frac{3}{2}}\|\D u\|_{L^6}^{\frac{1}{2}}\|\D\theta\|_{L^2}\nonumber\\
&+C\bar{\rho}^{-\frac{1}{2}}\bar{\theta}^\al\|\sqrt{\rho}\dot{\theta}\|_{L^2}\|\D u\|_{L^2}^{\frac{1}{2}}\|\D u\|_{L^6}^{\frac{3}{2}}+C\bar{\theta}^\al\|\D u\|_{L^2}\|\D u\|_{L^6}^2\|\D\theta\|_{L^2}\nonumber\\
\leq~&C\bar{\rho}^{\frac{1}{4}}\bar{\theta}^{\frac{\beta}{4}}\|\sqrt{\rho}\dot{\theta}\|_{L^2}\|\D^2\theta\|_{L^2}+C\bar{\rho}^{\frac{9}{8}}\bar{\theta}^{\frac{1}{2}-\frac{5}{8}\al-\frac{\beta}{2}}\|\D\theta\|_{L^2}^{\frac{1}{2}}\|\D^2\theta\|_{L^2}^{\frac{3}{2}}+C\bar{\rho}^{\frac{9}{8}}\bar{\theta}^{\frac{1}{2}-\frac{5}{8}\al}\|\sqrt{\rho}\dot{\theta}\|_{L^2}\|\D\theta\|_{L^2}\nonumber\\
&+C\bar{\rho}^{\frac{1}{2}}\bar{\theta}\|\sqrt{\rho}\dot{\theta}\|_{L^2}\|\D u\|_{L^2}+C\bar{\rho}^{\frac{13}{8}}\bar{\theta}^{\frac{3}{2}-\frac{5}{8}\al}\|\D u\|_{L^2}\|\D\theta\|_{L^2}\nonumber\\
&+C\bar{\rho}^{\frac{1}{8}}\bar{\theta}^{\frac{1}{2}+\frac{3}{8}\al}\|\sqrt{\rho}\dot{\theta}\|_{L^2}\|\D u\|_{L^6}+C\bar{\rho}^{\frac{3}{2}}\bar{\theta}^{1-\frac{\al}{2}}\|\D u\|_{L^2}\|\D\theta\|_{L^2}\nonumber\\
\leq~&\frac{1}{2}\|\sqrt{\rho}\dot{\theta}\|_{L^2}^2+C(\bar{\rho})\bar{\theta}^{\frac{\beta}{2}}\|\D^2\theta\|_{L^2}^2+C(\bar{\rho})\bar{\theta}^{\frac{1}{2}-\frac{5}{8}\al-\frac{\beta}{2}}\|\D\theta\|_{L^2}^2+C(\bar{\rho})\bar{\theta}^{\frac{1}{2}-\frac{5}{8}\al-\frac{\beta}{2}}\|\D^2\theta\|_{L^2}^2+C(\bar{\rho})\bar{\theta}^{1-\frac{5}{4}\al}\|\D\theta\|_{L^2}^2\nonumber\\
&+C(\bar{\rho})\bar{\theta}^2\|\D u\|_{L^2}^2+C(\bar{\rho})\bar{\theta}^{\frac{3}{2}-\frac{5}{8}\al}\|\D u\|_{L^2}^2+C(\bar{\rho})\bar{\theta}^{\frac{3}{2}-\frac{5}{8}\al}\|\D\theta\|_{L^2}^2\nonumber\\
&+C(\bar{\rho})\bar{\theta}^{1+\frac{3}{4}\al}\|\D u\|_{L^6}^2+C(\bar{\rho})\bar{\theta}^{1-\frac{\al}{2}}\|\D u\|_{L^2}^2+C(\bar{\rho})\bar{\theta}^{1-\frac{\al}{2}}\|\D\theta\|_{L^2}^2\nonumber\\
\leq~&\left(\frac{1}{2}+C(\bar{\rho})\bar{\theta}^{-\frac{3}{2}\beta}\right)\|\sqrt{\rho}\dot{\theta}\|_{L^2}^2+C(\bar{\rho})\left(\bar{\theta}^{\frac{1}{2}-\frac{5}{8}\al-\frac{\beta}{2}}+\bar{\theta}^{1-\frac{5}{4}\al}+\bar{\theta}^{\frac{3}{2}-\frac{5}{8}\al}+\bar{\theta}^{1-\frac{\al}{2}}\right)\|\D\theta\|_{L^2}^2\nonumber\\
&+C(\bar{\rho})\bar{\theta}^2\|\D u\|_{L^2}^2+C(\bar{\rho})\bar{\theta}^{1-\frac{5}{4}\al}\|\sqrt{\rho}\dot{u}\|_{L^2}^2+C(\bar{\rho})\bar{\theta}^{3-\frac{5}{4}\al}\|\nabla\rho\|_{L^2}^2\nonumber\\
\leq~&\frac{2}{3}\|\sqrt{\rho}\dot{\theta}\|_{L^2}^2+C(\bar{\rho})\bar{\theta}^{\frac{4}{3}-\frac{\al}{4}}\|\D\theta\|_{L^2}^2+C(\bar{\rho})\bar{\theta}^2\|\D u\|_{L^2}^2+C(\bar{\rho})\bar{\theta}^{1-\frac{5}{4}\al}\|\sqrt{\rho}\dot{u}\|_{L^2}^2+C(\bar{\rho})\bar{\theta}^{3-\frac{5}{4}\al}\|\nabla\rho\|_{L^2}^2.
\end{align}

From \eqref{D2l4}, \eqref{Lem:4:1-2} and \eqref{estimate:D2theta-1}, we have
\begin{equation}\label{Lem4.1:D2l4}
\|\D^2u\|_{L^4}
\leq C\left(\bar{\rho}^{\frac{7}{8}}\bar{\theta}^{-\al}\|\sqrt{\rho}\dot{u}\|_{L^2}^{\frac{1}{4}}\|\D\dot{u}\|_{L^2}^{\frac{3}{4}}+\bar{\rho}^7\bar{\theta}^{-6\beta}\|\D u\|_{L^2}
+\bar{\theta}^{1-\al}\|\D\rho\|_{L^4}+\bar\rho\bar{\theta}^{-\al}\|\D\theta\|_{L^2}^{\frac{1}{4}}\|\D^2\theta\|_{L^2}^{\frac{3}{4}}\right).
\end{equation}
From \eqref{Dr2:1}, \eqref{l4:sigma}, \eqref{Lem4.1:D2l4}, \eqref{Lem:4:1-2}, \eqref{estimate:DuL6:4} and \eqref{Lem4.1:Du6}, we have
\begin{align}\label{Lem4.1:4}
&\frac{\dif}{\dif t}\left(\|\D\rho\|_{L^2}^2\right)+c_3\bar\rho\bar{\theta}^{1-\al}\|\D\rho\|_{L^2}^2\nonumber\\
\leq~&C\|\D u\|_{L^2}^{\frac{1}{7}}\|\D^2u\|_{L^4}^{\frac{6}{7}}\|\D\rho\|_{L^2}+C\bar\rho^\frac{1}{2}\bar{\theta}^{-\al}\|\sqrt{\rho}\dot{u}\|_{L^2}\|\D\rho\|_{L^2}+C\bar\rho\bar{\theta}^{-\al}\|\D\theta\|_{L^2}\|\D\rho\|_{L^2}\nonumber\\
\leq~&C\bar{\rho}^{\frac{3}{4}}\bar{\theta}^{-\frac{6}{7}\al}\|\D u\|_{L^2}^{\frac{1}{7}}\|\sqrt{\rho}\dot{u}\|_{L^2}^{\frac{3}{14}}\|\D\dot{u}\|_{L^2}^{\frac{9}{14}}\|\D\rho\|_{L^2}+C\bar{\rho}^6\bar{\theta}^{-5\beta}\|\D u\|_{L^2}\|\D\rho\|_{L^2}+C\bar{\theta}^{\frac{6}{7}(1-\al)}\|\D u\|_{L^2}^{\frac{1}{7}}\|\D\rho\|_{L^2}\|\D\rho\|_{L^4}^{\frac{6}{7}}\nonumber\\
&+C\bar{\rho}^{\frac{3}{4}}\bar{\theta}^{-\frac{6}{7}\al}\|\D u\|_{L^2}^{\frac{1}{7}}\|\D\theta\|_{L^2}^{\frac{3}{14}}\|\D^2\theta\|_{L^2}^{\frac{9}{14}}\|\D\rho\|_{L^2}+C\bar\rho^\frac{1}{2}\bar{\theta}^{-\al}\|\sqrt{\rho}\dot{u}\|_{L^2}\|\D\rho\|_{L^2}+C\bar\rho\bar{\theta}^{-\al}\|\D\theta\|_{L^2}\|\D\rho\|_{L^2}\nonumber\\
\leq~&C\bar{\theta}^{1-\alpha}\left(\|\D\rho\|_{L^2}^2+\|\D\rho\|_{L^4}^2\right)+\frac{c_2}{10}\left(\bar{\theta}^{\al-1}\|\D u\|_{L^2}^2+\bar{\theta}^{\beta-2}\|\D\theta\|_{L^2}^2\right)+\frac{1}{10}\bar{\theta}^{-1}\|\sqrt{\rho}\dot{u}\|_{L^2}^2\nonumber\\
&+C\bar{\rho}^{\frac{7}{3}}\bar{\theta}^{1-\frac{4}{3}\al}\|\D\dot{u}\|_{L^2}^2+C\bar{\rho}^{\frac{7}{3}}\bar{\theta}^{-\frac{4}{3}\al-\frac{\beta}{3}-\frac{2}{3}}\|\D^2\theta\|_{L^2}^2\nonumber\\
\leq~&C\bar{\theta}^{1-\alpha}\left(\|\D\rho\|_{L^2}^2+\|\D\rho\|_{L^4}^2\right)+\frac{c_2}{10}\left(\bar{\theta}^{\al-1}\|\D u\|_{L^2}^2+\bar{\theta}^{\beta-2}\|\D\theta\|_{L^2}^2\right)+\frac{1}{10}\left(\bar{\theta}^{-1}\|\sqrt{\rho}\dot{u}\|_{L^2}^2+\bar{\theta}^{-\frac{4}{3}}\|\sqrt{\rho}\dot{\theta}\|_{L^2}^2\right)\nonumber\\
&+C\bar{\rho}^{\frac{7}{3}}\bar{\theta}^{1-\frac{4}{3}\al}\|\D\dot{u}\|_{L^2}^2+C\bar{\rho}^4\bar{\theta}^{\frac{1}{3}-\frac{7}{12}\al-\frac{7}{3}\beta}\|\D u\|_{L^6}^2\nonumber\\
\leq~&C\bar{\theta}^{1-\alpha}\left(\|\D\rho\|_{L^2}^2+\|\D\rho\|_{L^4}^2\right)+\frac{c_2}{10}\left(\bar{\theta}^{\al-1}\|\D u\|_{L^2}^2+\bar{\theta}^{\beta-2}\|\D\theta\|_{L^2}^2\right)\nonumber\\
&+\frac{1}{10}\left(\bar{\theta}^{-1}\|\sqrt{\rho}\dot{u}\|_{L^2}^2+\bar{\theta}^{-\frac{4}{3}}\|\sqrt{\rho}\dot{\theta}\|_{L^2}^2\right)+C\bar{\rho}^{\frac{7}{3}}\bar{\theta}^{1-\frac{4}{3}\al}\|\D\dot{u}\|_{L^2}^2,
\end{align}
and
\begin{align}\label{Lem4.1:5}
&\frac{\dif}{\dif t}\left(\|\D\rho\|_{L^4}^2\right)+c_3\bar\rho\bar{\theta}^{1-\al}\|\D\rho\|_{L^4}^2\nonumber\\
\leq~&C\|\D u\|_{L^2}^{\frac{1}{7}}\|\D^2 u\|_{L^4}^{\frac{6}{7}}\|\D\rho\|_{L^4}+C\bar\rho^\frac{7}{8}\bar{\theta}^{-\al}\|\sqrt{\rho}\dot{u}\|_{L^2}^{\frac{1}{4}}\|\D \dot{u}\|_{L^2}^\frac{3}{4}\|\D\rho\|_{L^4}+C\bar\rho\bar{\theta}^{-\al}\|\D\theta\|_{L^2}^{\frac{1}{4}}\|\D^2\theta\|_{L^2}^{\frac{3}{4}}\|\D\rho\|_{L^4}\nonumber\\
\leq~&C\bar{\rho}^{\frac{3}{4}}\bar{\theta}^{-\frac{6}{7}\al}\|\D u\|_{L^2}^{\frac{1}{7}}\|\sqrt{\rho}\dot{u}\|_{L^2}^{\frac{3}{14}}\|\D\dot{u}\|_{L^2}^{\frac{9}{14}}\|\D\rho\|_{L^4}+C\bar{\rho}^6\bar{\theta}^{-5\beta}\|\D u\|_{L^2}\|\D\rho\|_{L^4}+C\bar{\theta}^{\frac{6}{7}(1-\al)}\|\D u\|_{L^2}^{\frac{1}{7}}\|\D\rho\|_{L^4}^{\frac{13}{7}}\nonumber\\
&+C\bar{\rho}^{\frac{3}{4}}\bar{\theta}^{-\frac{6}{7}\al}\|\D u\|_{L^2}^{\frac{1}{7}}\|\D\theta\|_{L^2}^{\frac{3}{14}}\|\D^2\theta\|_{L^2}^{\frac{9}{14}}\|\D\rho\|_{L^4}+C\bar\rho^\frac{7}{8}\bar{\theta}^{-\al}\|\sqrt{\rho}\dot{u}\|_{L^2}^{\frac{1}{4}}\|\D \dot{u}\|_{L^2}^\frac{3}{4}\|\D\rho\|_{L^4}\nonumber\\
&+C\bar\rho\bar{\theta}^{-\al}\|\D\theta\|_{L^2}^{\frac{1}{4}}\|\D^2\theta\|_{L^2}^{\frac{3}{4}}\|\D\rho\|_{L^4}\nonumber\\
\leq~&C\bar{\theta}^{1-\alpha}\|\D\rho\|_{L^4}^2+\frac{c_2}{10}\left(\bar{\theta}^{\al-1}\|\D u\|_{L^2}^2+\bar{\theta}^{\beta-2}\|\D\theta\|_{L^2}^2\right)+\frac{1}{10}\bar{\theta}^{-1}\|\sqrt{\rho}\dot{u}\|_{L^2}^2\nonumber\\
&+C\bar{\rho}^{\frac{7}{3}}\bar{\theta}^{1-\frac{4}{3}\al}\|\D\dot{u}\|_{L^2}^2+C(\bar{\rho})\bar{\theta}^{-\frac{4}{3}\al-\frac{\beta}{3}-\frac{2}{3}}\|\D^2\theta\|_{L^2}^2\nonumber\\
\leq~&C\bar{\theta}^{1-\alpha}\|\D\rho\|_{L^4}^2+\frac{c_2}{10}\left(\bar{\theta}^{\al-1}\|\D u\|_{L^2}^2+\bar{\theta}^{\beta-2}\|\D\theta\|_{L^2}^2\right)+\frac{1}{10}\left(\bar{\theta}^{-1}\|\sqrt{\rho}\dot{u}\|_{L^2}^2+\bar{\theta}^{-\frac{4}{3}}\|\sqrt{\rho}\dot{\theta}\|_{L^2}^2\right)\nonumber\\
&+C\bar{\rho}^{\frac{7}{3}}\bar{\theta}^{1-\frac{4}{3}\al}\|\D\dot{u}\|_{L^2}^2+C(\bar{\rho})\bar{\theta}^{\frac{1}{3}-\frac{7}{12}\al-\frac{7}{3}\beta}\|\D u\|_{L^6}^2\nonumber\\
\leq~&C\bar{\theta}^{1-\alpha}\left(\|\D\rho\|_{L^2}^2+\|\D\rho\|_{L^4}^2\right)+\frac{c_2}{10}\left(\bar{\theta}^{\al-1}\|\D u\|_{L^2}^2+\bar{\theta}^{\beta-2}\|\D\theta\|_{L^2}^2\right)\nonumber\\
&+\frac{1}{10}\left(\bar{\theta}^{-1}\|\sqrt{\rho}\dot{u}\|_{L^2}^2+\bar{\theta}^{-\frac{4}{3}}\|\sqrt{\rho}\dot{\theta}\|_{L^2}^2\right)+C\bar{\rho}^{\frac{7}{3}}\bar{\theta}^{1-\frac{4}{3}\al}\|\D\dot{u}\|_{L^2}^2.
\end{align}
After \eqref{Lem4.1:1} $+$ \eqref{Lem4.1:2} $+ ~ \bar{\theta}^{-\frac{1}{3}}\times$\eqref{Lem4.1:3} $+ ~ \bar{\theta}\times$\eqref{Lem4.1:4} $+ ~ \bar{\theta}\times$\eqref{Lem4.1:5}, we obtain
\begin{align}\label{Lem4.1:6}
&\frac{\dif}{\dif t}\int\left(\frac{1}{2}\rho|u|^2+R\bar{\theta}(\rho\ln\rho-\rho-\rho\ln\bar{\rho}+\bar{\rho})+\rho(\theta-\bar{\theta}\ln\theta-\bar{\theta}+\bar{\theta}\ln\bar{\theta})\right)\dif x\nonumber\\
&+\frac{\dif}{\dif t}\left(\mu\int\theta^\al|\frD(u)|^2\dif x+\frac{\lambda}{2}\int\theta^\al(\div u)^2\dif x-R\int(\rho\theta-\bar{\rho}\bar{\theta})\div u\dif x\right)\nonumber\\
&+\frac{\dif}{\dif t}\left(\frac{\kappa}{2}\bar{\theta}^{-\frac{1}{3}}\int\theta^\beta|\D\theta|^2\dif x\right)+\frac{\dif}{\dif t}\left(\bar{\theta}\|\D\rho\|_{L^2}^2+\bar{\theta}\|\D\rho\|_{L^4}^2\right)\nonumber\\
&+c_4\left(\bar{\theta}^\al\|\D u\|_{L^2}^2+\bar{\theta}^{\beta-1}\|\D\theta\|_{L^2}^2+\|\sqrt{\rho}\dot{u}\|_{L^2}^2+\bar{\theta}^{-\frac{1}{3}}\|\sqrt{\rho}\dot{\theta}\|_{L^2}^2+\bar\rho\bar{\theta}^{2-\al}\|\D\rho\|_{L^2}^2+\bar\rho\bar{\theta}^{2-\al}\|\D\rho\|_{L^4}^2\right)\nonumber\\
\leq~&C(\bar{\rho})\bar{\theta}^{2-\frac{4}{3}\al}\|\D\dot{u}\|_{L^2}^2+C\bar{\theta}^{-\frac{\al}{4}-\frac{1}{3}}\|\D\dot{\theta}\|_{L^2}^2.
\end{align}

Next, we will derive the estimate of $(\dot{u}, \dot{\theta})$. From \eqref{e3.308}, \eqref{estimate:DuL6:4} and \eqref{Lem4.1:Du6}, we have
\begin{align}\label{Lem4.1:7}
&\frac{1}{2}\frac{\dif}{\dif t}\|\sqrt{\rho}\dot{u}\|_{L^2}^2+\frac{\mu}{4}\left(\frac{\bar{\theta}}{2}\right)^\al\|\D\dot u\|_{L^2}^2\nonumber\\
\leq~&C\left(\bar{\rho}\bar{\theta}^{-\al}\|\sqrt{\rho}\dot\theta\|_{L^2}^2+\bar{\rho}^{-\frac{1}{2}}\bar{\theta}^{\al-2}\|\D u\|_{L^6}^2\|\sqrt{\rho}\dot{\theta}\|_{L^2}\|\D\dot{\theta}\|_{L^2}+\bar{\theta}^\al\|\D u\|_{L^2}\|\D u\|_{L^6}^3+\bar{\rho}^2\bar{\theta}^{2-\al}\|\D u\|_{L^2}^2\right)\nonumber\\
\leq~&C\left(\bar{\rho}\bar{\theta}^{-\al}\|\sqrt{\rho}\dot\theta\|_{L^2}^2+\bar{\rho}\bar{\theta}^{-\frac{7}{3}-\frac{\al}{2}}\|\sqrt{\rho}\dot{\theta}\|_{L^2}\|\D\dot{\theta}\|_{L^2}+\bar{\rho}^{\frac{5}{4}}\bar{\theta}^{\frac{2}{3}-\frac{\al}{4}}\|\D u\|_{L^6}^2+\bar{\rho}^2\bar{\theta}^{2-\al}\|\D u\|_{L^2}^2\right)\nonumber\\
\leq~&C(\bar{\rho})\left(\bar{\theta}^{-\al}\|\sqrt{\rho}\dot\theta\|_{L^2}^2+\bar{\theta}^{-\frac{14}{3}}\|\D\dot{\theta}\|_{L^2}^2+\bar{\theta}^{-\al}\|\sqrt{\rho}\dot{u}\|_{L^2}^2+\bar{\theta}^{1-\frac{5}{4}\al}\|\nabla\rho\|_{L^2}^2+\|\D\theta\|_{L^2}^2+\|\D u\|_{L^2}^2\right).
\end{align}
From \eqref{3.5:5}--\eqref{O910}, \eqref{estimate:DuL6:4}, \eqref{Lem4.1:Du6} and \eqref{estimate:D2theta-1}, we have
\begin{align}\label{Lem4.1:8}
&\frac{1}{2}\frac{\dif}{\dif t}\|\sqrt{\rho}\dot{\theta}\|_{L^2}^2+\frac{\kappa}{2}\left(\frac{\bar{\theta}}{2}\right)^\beta\|\D\dot\theta\|_{L^2}^2\nonumber\\
\leq~&C(\bar{\rho})\bar{\theta}^{-\beta}\|\D u\|_{L^2}\|\D u\|_{L^6}\|\sqrt{\rho}\dot{\theta}\|_{L^2}^2+C(\bar{\rho})\bar{\theta}\|\sqrt{\rho}\dot{\theta}\|_{L^2}\|\D\dot{u}\|_{L^2}+C(\bar{\rho})\bar{\theta}\|\D u\|_{L^2}^{\frac{1}{2}}\|\D u\|_{L^6}^{\frac{3}{2}}\|\sqrt{\rho}\dot{\theta}\|_{L^2}\nonumber\\
&+C\bar{\theta}^{2\al-\beta}\|\D u\|_{L^2}^2\|\D u\|_{L^6}^4+C\bar{\theta}^{2\al-\beta}\|\D u\|_{L^2}\|\D u\|_{L^6}\|\D\dot{u}\|_{L^2}^2\nonumber\\
&+C\bar{\theta}^{\beta-2}\|\D u\|_{L^2}\|\D u\|_{L^6}\|\D\theta\|_{L^2}\|\D^2\theta\|_{L^2}^3+C\bar{\theta}^{\beta}\|\D u\|_{L^2}\|\D u\|_{L^6}\|\D^2\theta\|_{L^2}^2\nonumber\\
\leq~&C(\bar{\rho})\left(\bar{\theta}^{\frac{2}{3}-\frac{5}{4}\al-\beta}+1\right)\|\sqrt{\rho}\dot{\theta}\|_{L^2}^2+C(\bar{\rho})\left(\bar{\theta}^2+\bar{\theta}^{\frac{2}{3}+\frac{3}{4}\al-\beta}\right)\|\D\dot{u}\|_{L^2}^2+C(\bar{\rho})\bar{\theta}^{\frac{7}{3}-\frac{3}{2}\al}\|\D u\|_{L^2}\|\D u\|_{L^6}\nonumber\\
&+C(\bar{\rho})\bar{\theta}^{\frac{2}{3}-\al-\beta}\|\D u\|_{L^2}^2+C(\bar{\rho})\bar{\theta}^{2-\beta}\|\D u\|_{L^2}\|\D u\|_{L^6}\nonumber\\
\leq~&C(\bar{\rho})\left(\|\sqrt{\rho}\dot{\theta}\|_{L^2}^2+\bar{\theta}^2\|\D\dot{u}\|_{L^2}^2+\bar{\theta}^2\|\D u\|_{L^2}^2+\bar{\theta}^{3-2\al}\|\D u\|_{L^6}^2\right)\nonumber\\
\leq~&C(\bar{\rho})\left(\|\sqrt{\rho}\dot{\theta}\|_{L^2}^2+\bar{\theta}^2\|\D\dot{u}\|_{L^2}^2+\bar{\theta}^2\|\D u\|_{L^2}^2+\bar{\theta}^{3-4\al}\|\sqrt{\rho}\dot{u}\|_{L^2}^2+\bar{\theta}^{5-4\al}\|\D\rho\|_{L^2}^2+\bar{\theta}^{3-4\al}\|\D\theta\|_{L^2}^2\right).
\end{align}
Adding \eqref{Lem4.1:6} and \eqref{Lem4.1:7} to $\bar{\theta}^{-1}\times$\eqref{Lem4.1:8}, we obtain the desired estimate \eqref{dissi:X}.
\end{proof}

The next lemma is concerned with the low frequency part of the solution.
\begin{lemma}\label{Lem:4.2}
Suppose the conditions of Theorem \ref{th} and Theorem \ref{th2} hold. Let $(\rho, u, \theta)$ be the global strong solution obtained in Theorem \ref{th}. Then we have
\begin{align}\label{fl2-0}
&\int_{S(t)} \left(R\bar{\theta}|\widehat{\rho-\bar{\rho}}|^2+|\widehat{\rho u}|^2+ |\reallywidehat{\rho(\theta-\bar{\theta})}|^2\right) \,\dif\xi\nonumber\\
&+\mu\bar{\rho}\bar{\theta}^\al\int_0^t \int_{S(t)} |\xi|^2 |\hat{u}|^2\dif\xi\dif s+(\mu+\lambda)\bar{\rho}\bar{\theta}^\al\int_0^t \int_{S(t)} |\xi\cdot\hat{u}|^2\dif\xi \dif s+\kappa\bar{\rho}\bar{\theta}^\beta\int_0^t \int_{S(t)}|\xi|^2\big|\widehat{\theta-\bar{\theta}}\big|^2\dif\xi \dif s\nonumber\\
\leq~& C\bar{\theta}\big(\|\rho_0-\vr\|_{L^{p_0}}^2+ \|\rho_0 u_0\|_{L^{p_0}}^2+ \|\rho_0(\theta_0-1)\|_{L^{p_0}}^2\big)(1+t)^{-2\beta({p_0})}\nonumber\\
&+C(\bar{\rho}, \bar{\theta})(1+t)^{-\frac 12}\int_0^t \left(\|\rho-\bar{\rho}\|_{L^2}^2+\|\theta-\bar{\theta}\|_{L^2}^2+\|\nabla u\|_{L^2}^2\right)\|\nabla u\|_{L^2}^2\,\dif s\nonumber\\
&+C(\bar{\rho}, \bar{\theta})(1+t)^{-\frac 32}\int_0^t \left(\|\rho-\bar{\rho}\|_{L^2}^4+\|u\|_{L^2}^4+\|\theta-\bar{\theta}\|_{L^2}^4+\|\nabla \rho\|_{L^2}^4+\|\nabla u\|_{L^2}^4\right)\dif s,
\end{align}
with $\displaystyle S(t)\overset{\text{def}}{=}\left\{\xi: |\xi|^2\leq \frac{C_*(\bar{\rho}, \bar{\theta})}{1+t}\right\}$, $\displaystyle\beta({p_0})=\frac{3}{4}\left(\frac{2}{p_0}-1\right)$, where $C_*(\bar{\rho}, \bar{\theta})$ is a constant to be determined later.
\end{lemma}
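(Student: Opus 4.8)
The plan is to linearize the system around the equilibrium $(\bar\rho,0,\bar\theta)$, take the Fourier transform in $x$, and derive a frequency-localized energy identity that is then restricted to the low-frequency ball $S(t)$. First I would write $(\rho-\bar\rho,\rho u,\rho(\theta-\bar\theta))$ (or a convenient set of perturbation variables) in the form of a linear hyperbolic-parabolic system with the principal operator having symbol controlled by $\mu\bar\rho\bar\theta^\al|\xi|^2$ for the velocity, $(\mu+\lambda)\bar\rho\bar\theta^\al|\xi\cdot\hat u|^2$ for the divergence part, and $\kappa\bar\rho\bar\theta^\beta|\xi|^2$ for the temperature, plus nonlinear remainders; the nonlinear remainders are exactly the quadratic (and higher) terms coming from $\div(\rho u\otimes u)$, the temperature-dependence of the transport coefficients, the $P\div u$ heating term, and the viscous dissipation $\theta^\al|\mathfrak D(u)|^2$ etc. The key structural input is the same combination of multipliers used in Lemma \ref{Lem3.1} / \ref{Lem4.1:6-0}: multiplying the momentum equation by $\hat u$, the density equation by $R\bar\theta\,\overline{\widehat{\rho-\bar\rho}}$ and the temperature equation by $\overline{\reallywidehat{\rho(\theta-\bar\theta)}}$, summing and taking real parts, yields $\tfrac{\dif}{\dif t}$ of the first bracket in \eqref{fl2-0} plus the three dissipation integrands, modulo terms that must be moved to the right-hand side.

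Second, I would estimate those right-hand-side terms. The linear cross terms (pressure coupling) cancel by the choice of multipliers exactly as in \eqref{Lem3.1:1}; what survives are the Fourier transforms of the nonlinearities, e.g. $\reallywidehat{\div\T}$, $\reallywidehat{\div(\rho u\otimes u)}$, $\reallywidehat{\div(\theta^\beta\D\theta)}$ and the source terms in the energy equation. On the ball $S(t)$ one has $|\xi|\le C_*^{1/2}(1+t)^{-1/2}$, so each spatial derivative falling on a nonlinearity produces a factor $(1+t)^{-1/2}$; using $\|\widehat f\|_{L^\infty}\le\|f\|_{L^1}$ together with Hölder, a quadratic nonlinearity $fg$ contributes $\int_{S(t)}|\widehat{fg}|^2\,\dif\xi \le |S(t)|\,\|fg\|_{L^1}^2 \le C(1+t)^{-3/2}\|f\|_{L^2}^2\|g\|_{L^2}^2$, and one extra derivative gives the extra $(1+t)^{-1/2}$ seen in the second term of \eqref{fl2-0}. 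Cubic nonlinearities (the $|\D u|^3$-type terms, $u\,|\D\theta|^2$, etc.) are handled the same way but land in the last integral of \eqref{fl2-0} after using the uniform $H^2$ bounds from Lemma \ref{Lem:10} to absorb one factor in $L^\infty$ and keep two factors squared in $L^2$; the powers of $\bar\rho,\bar\theta$ get swept into $C(\bar\rho,\bar\theta)$.

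Third, I would handle the forcing coming from the \emph{linear evolution of the low frequencies themselves} — this is where the $L^{p_0}$ hypothesis enters. Writing the low-frequency part via Duhamel against the linear semigroup and using $|\widehat{U_0}(\xi)|\le\|U_0\|_{L^{p_0}}$ for $U_0=(\rho_0-\bar\rho,\rho_0 u_0,\rho_0(\theta_0-\bar\theta))$ together with the pointwise decay $e^{-c(\bar\rho,\bar\theta)|\xi|^2 t}$ of the symbol, one gets $\int_{S(t)}|\widehat U|^2\,\dif\xi\lesssim \|U_0\|_{L^{p_0}}^2\int_{S(t)}e^{-c|\xi|^2 t}\dif\xi$, and a change of variables in the ball $S(t)$ (whose radius scales like $(1+t)^{-1/2}$) gives the $(1+t)^{-\frac32(\frac{2}{p_0}-1)}=(1+t)^{-2\beta(p_0)}$ rate with the prefactor $C\bar\theta$; alternatively this term can be produced by closing the differential inequality for $\int_{S(t)}|\widehat U|^2$ directly, using that on $S(t)$ the dissipation $c(\bar\rho,\bar\theta)|\xi|^2$ is comparable to $\frac{C_*(\bar\rho,\bar\theta)}{1+t}$ only up to the boundary, so one cannot simply absorb it — instead one keeps the dissipation nonnegative on the left, integrates the ODE-type inequality in $t$, and feeds in the $L^{p_0}\hookrightarrow \widehat{L^\infty}$ bound for the initial datum at time $0$. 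The main obstacle I expect is precisely the bookkeeping of the ball $S(t)$: since $S(t)$ shrinks in time, the "dissipation" terms on the left of \eqref{fl2-0} are not available to absorb error terms (a frequency in $S(s)$ for small $s$ may leave $S(t)$ later), so every error must be controlled in absolute terms by integrable-in-time quantities, which forces one to spend the uniform bounds and decay already proven for $\|(\rho-\bar\rho,u,\theta-\bar\theta)\|_{L^2}$, $\|\D\rho\|_{L^2}$, $\|\D u\|_{L^2}$ from Theorem \ref{th} and Lemma \ref{Lem3.1}–\ref{Lem:10}; getting the powers of $(1+t)$ to line up (one $(1+t)^{-1/2}$ from $|S(t)|^{1/2}$ inside $\|\widehat{\cdot}\|_{L^2}$, extra $(1+t)^{-1/2}$'s from each derivative on a nonlinearity) so that the right-hand side is genuinely of the claimed form is the delicate point, and it is what makes the subsequent Fourier-splitting iteration (for the optimal $L^2$ rate) work.
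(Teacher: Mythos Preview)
Your overall strategy---take the Fourier transform, use the multipliers $R\bar\theta\,\overline{\widehat{\rho-\bar\rho}}$, $\overline{\widehat{\rho u}}$, $\overline{\reallywidehat{\rho(\theta-\bar\theta)}}$, integrate over $(0,t)\times S(t)$, and bound each nonlinear remainder via $\|\widehat{fg}\|_{L^\infty}\le\|fg\|_{L^1}\le\|f\|_{L^2}\|g\|_{L^2}$ together with $|S(t)|\sim(1+t)^{-3/2}$---matches the paper's proof exactly, and your accounting of the $(1+t)^{-1/2}$ and $(1+t)^{-3/2}$ factors is the right heuristic.

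Two points need correction. First, your ``Third'' paragraph overcomplicates the initial-data term. No Duhamel or semigroup decay is used; once the energy identity is integrated on $(0,t)\times S(t)$ the initial contribution is simply $\int_{S(t)}|\widehat{U_0}|^2\,\dif\xi$, and this is bounded by H\"older on the ball plus Hausdorff--Young:
\[
\int_{S(t)}|\widehat{U_0}|^2\,\dif\xi \le \|\widehat{U_0}\|_{L^{p_0'}}^2\,|S(t)|^{1-\frac{2}{p_0'}} \le C\|U_0\|_{L^{p_0}}^2(1+t)^{-2\beta(p_0)}.
\]
Your pointwise bound $|\widehat{U_0}(\xi)|\le\|U_0\|_{L^{p_0}}$ and the embedding $L^{p_0}\hookrightarrow\widehat{L^\infty}$ are only valid for $p_0=1$; for $p_0\in(1,2)$ you genuinely need the $L^{p_0'}$ norm of $\widehat{U_0}$, and the computation you sketch would not produce the correct exponent $-2\beta(p_0)$.

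Second, your worry about ``a frequency in $S(s)$ for small $s$ may leave $S(t)$ later'' is unnecessary. The paper fixes $t$ and integrates the differential identity over the \emph{fixed} ball $S(t)$ in $\xi$ and over $(0,t)$ in time. The dissipation integrands are pointwise nonnegative and are simply kept on the left (they are never used to absorb anything); every cross term is bounded in absolute value and sent to the right. So there is no leakage issue to track.
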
 
\begin{proof}
Taking the Fourier transform of  (\ref{FCNS}), and
multiplying $R\bar{\theta}\overline{\widehat{\rho-\bar{\rho}}}$ to the first equation, $\overline{\widehat{\rho u}}$ to the second equation, and $\overline{\reallywidehat{\rho(\theta-\bar{\theta})}}$ to the third equation, respectively, we obtain
\begin{equation}\label{FT:1}
\begin{cases}
\displaystyle\frac{R\bar{\theta}}{2} \left(|\widehat{\rho-\bar{\rho}}|^2\right)_t + Re\left(iR\bar{\theta}\xi\cdot \widehat{\rho u} \overline{\widehat{\rho-\bar{\rho}}}\right)=0,\\[6pt]
\displaystyle\frac{1}{2}\left(|\widehat{\rho u}|^2\right)_t+Re\left(\reallywidehat{\div(\rho u\otimes u)}+\reallywidehat{\D P}-\reallywidehat{\div\T} \right) \cdot \overline{\widehat{\rho u}}=0,\\[6pt]
\displaystyle\frac{1}{2}\left(|\reallywidehat{\rho(\theta-\bar{\theta})}|^2\right)_t+Re\left(\reallywidehat{\div(\rho u(\theta-\bar{\theta}))}+\reallywidehat{P\div u}-\reallywidehat{2\mu\theta^\al|\frD(u)|^2}-\reallywidehat{\lambda\theta^\al(\div u)^2}-\reallywidehat{\kappa\div(\theta^\beta\D\theta)}\right)\overline{\reallywidehat{\rho(\theta-\bar{\theta})}}=0.
\end{cases}
\end{equation}
Integrating \eqref{FT:1} on $(0,t)\times S(t)$ and adding them together, we get
\begin{align}\label{fl2}
&\frac{1}{2}\int_{S(t)}\left(R\bar{\theta}|\widehat{\rho-\bar{\rho}}(t)|^2+|\widehat{\rho u}(t)|^2+ |\reallywidehat{\rho(\theta-\bar{\theta})}(t)|^2\right)\dif\xi\nonumber\\
=~&\frac{1}{2}\int_{S(t)} \left(R\bar{\theta}|\widehat{\rho-\bar{\rho}}(0)|^2+|\widehat{\rho u}(0)|^2+ |\reallywidehat{\rho(\theta-\bar{\theta})}(0)|^2\right)\dif\xi\nonumber\\
&+Re\int_0^t \int_{S(t)}\Bigg(-iR\bar{\theta}\xi \cdot \widehat{\rho u}\overline{\widehat{\rho-\bar{\rho}}}
-\left(\reallywidehat{\div(\rho u\otimes u)}+\reallywidehat{\D P}-\reallywidehat{\div\T} \right) \cdot \overline{\widehat{\rho u}} \nonumber\\
&-\bigg(\reallywidehat{\div(\rho u)(\theta-\bar{\theta})}+\reallywidehat{\rho u\cdot\nabla\theta}+\reallywidehat{P\div u}-\reallywidehat{2\mu\theta^\al |\frD(u)|^2}-\reallywidehat{\lambda\theta^\al(\div u)^2}-\reallywidehat{\kappa\div(\theta^\beta\D\theta)}\bigg)\overline{\reallywidehat{\rho(\theta-\bar{\theta})}}\Bigg) \,\dif\xi \dif s\nonumber\\
=~&\frac{1}{2}\int_{S(t)} \left(R\bar{\theta}|\widehat{\rho-\bar{\rho}}(0)|^2+|\widehat{\rho u}(0)|^2+ |\reallywidehat{\rho(\theta-\bar{\theta})}(0)|^2\right)\dif\xi+\sum\limits_{i=1}^{10}S_i.
\end{align}
Now we give estimates to the terms $S_i ~ (i=1,2,\cdots, 10)$. Noting that the following inequality
\begin{align*}
\big\|\widehat{f}(\xi)\big\|_{L^\infty}\leq\|f(x)\|_{L^1},
\end{align*}
holds for any $f\in L^1$, we get
\begin{align}\label{q1}
S_1+S_3=~&-Re\int_0^t \int_{S(t)}iR\bar{\theta}\xi\cdot \widehat{\rho u}\overline{\widehat{\rho-\bar{\rho}}}\dif\xi\dif s-Re\int_0^t \int_{S(t)}\reallywidehat{\D P}\cdot \overline{\widehat{\rho u}}\dif\xi\dif s\nonumber\\
=~&-Re\int_0^t \int_{S(t)}iR\bar{\theta}\xi\cdot \widehat{\rho u} \overline{\widehat{\rho-\bar{\rho}}}\dif\xi\dif s-Re\int_0^t \int_{S(t)}iR\xi(\reallywidehat{\rho\theta-\rho\bar{\theta}}+\reallywidehat{\rho\bar{\theta}-\bar{\rho}\bar{\theta}})\cdot \overline{\widehat{\rho u}}\,\dif\xi \dif s\nonumber\\
=~&-Re\int_0^t \int_{S(t)}iR\bar{\theta}\xi\reallywidehat{(\rho-\bar{\rho}+\bar{\rho})(\theta-\bar{\theta})}\cdot \overline{\reallywidehat{(\rho-\bar{\rho}+\bar{\rho}) u}}\dif\xi\dif s\nonumber\\
\leq~& \bar{\rho}^2\bar{\theta}\int_0^t \int_{S(t)}|\xi|^2|\widehat{u}|^2\dif\xi\dif s+\bar{\rho}^2\bar{\theta}\int_0^t \int_{S(t)}|\xi|^2|\widehat{\theta-\bar{\theta}}|^2\dif\xi\dif s+C\bar{\theta}\int_0^t \int_{S(t)}\big|\reallywidehat{(\rho-\bar{\rho})(\theta-\bar{\theta})}\big|^2\dif\xi\dif s\nonumber\\
&+C\bar{\theta}\int_0^t \int_{S(t)}(|\xi|^2+1)\big|{\reallywidehat{(\rho-\bar{\rho}) u}}\big|^2\dif\xi\dif s-Re\int_0^t \int_{S(t)}i\xi R\bar{\rho}^2\bar{\theta}\widehat{(\theta-\bar{\theta})}\cdot\overline{\widehat{u}}\dif\xi \dif s\nonumber\\
\leq~& \bar{\rho}^2\bar{\theta}\int_0^t \int_{S(t)}|\xi|^2|\widehat{u}|^2\dif\xi\dif s+\bar{\rho}^2\bar{\theta}\int_0^t \int_{S(t)}|\xi|^2|\widehat{\theta-\bar{\theta}}|^2\dif\xi\dif s+C\bar{\theta}\int_0^t \|\reallywidehat{(\rho-\bar{\rho})(\theta-\bar{\theta})}\|_{L^\infty}^2\int_{S(t)}\,\dif\xi \dif s\nonumber\\
&+C\bar{\theta}\int_0^t \|{\reallywidehat{(\rho-\bar{\rho}) u}}\|_{L^\infty}^2\int_{S(t)}(|\xi|^2+1)\,\dif\xi \dif s-Re\int_0^t \int_{S(t)}i\xi R\bar{\rho}^2\bar{\theta}\widehat{(\theta-\bar{\theta})}\cdot\overline{\widehat{u}}\dif\xi \dif s\nonumber\\
\leq~& \bar{\rho}^2\bar{\theta}\int_0^t \int_{S(t)}|\xi|^2|\widehat{u}|^2\dif\xi\dif s+\bar{\rho}^2\bar{\theta}\int_0^t \int_{S(t)}|\xi|^2|\widehat{\theta-\bar{\theta}}|^2\dif\xi\dif s\\
&+C(\bar{\rho}, \bar{\theta})(1+t)^{-\frac{3}{2}}\int_0^t\left(\|\rho-\bar{\rho}\|_{L^2}^4+\| u\|_{L^2}^4+\|\theta-\bar{\theta}\|_{L^2}^4\right)\dif s-Re\int_0^t \int_{S(t)}i\xi R\bar{\rho}^2\bar{\theta}\widehat{(\theta-\bar{\theta})}\cdot\overline{\widehat{u}}\dif\xi \dif s,\nonumber
\end{align}
\begin{align}\label{q2}
S_2=~&-Re\int_0^t \int_{S(t)}  \reallywidehat{\div(\rho u\otimes u)}\cdot \Big(\overline{\widehat{\bar{\rho} u}}+\overline{\reallywidehat{(\rho-\bar{\rho})u}}\Big)\,\dif\xi \dif s\nonumber\\
\leq~& \bar{\rho}^2\int_0^t \int_{S(t)} |\xi|^2 |\hat{u}|^2 \,\dif\xi \dif s  +C\int_0^t \int_{S(t)} \big| \reallywidehat{\rho u\otimes u} \big|^2 \,\dif\xi \dif s +\int_0^t \int_{S(t)} |\xi| \big| \reallywidehat{\rho u\otimes u} \big| \big|\reallywidehat{(\rho-\bar{\rho})u}\big| \,\dif\xi \dif s\nonumber\\
\leq~& \bar{\rho}^2\int_0^t \int_{S(t)} |\xi|^2 |\hat{u}|^2 \,\dif\xi \dif s +C \int_0^t \|\reallywidehat{\rho u\otimes u}\|_{L^\infty}^2 \int_{S(t)}\,\dif\xi \dif s\nonumber\\
&+ C(\bar{\rho}, \bar{\theta})(1+t)^{-\frac 12}\int_0^t \|\reallywidehat{\rho u\otimes u}\|_{L^\infty} \|\reallywidehat{(\rho-\bar{\rho}) u}\|_{L^\infty}\int_{S(t)}\,\dif\xi \dif s \nonumber\\
\leq~& \bar{\rho}^2\int_0^t \int_{S(t)}|\xi|^2 |\hat{u}|^2 \,\dif\xi \dif s+C(\bar{\rho}, \bar{\theta})(1+t)^{-\frac 32}\int_0^t \|u\|_{L^2}^4 \dif s\nonumber\\
&+ C(\bar{\rho}, \bar{\theta})(1+t)^{-2}\int_0^t\left( \|u\|_{L^2}^4+ \|\rho-\bar{\rho}\|_{L^2}^4\right)\dif s.
\end{align}
\begin{align}\label{q4}
S_4=~&Re\int_0^t \int_{S(t)}  \reallywidehat{\div\T} \cdot \overline{\reallywidehat{\rho u}}\,\dif\xi \dif s=Re\int_0^t \int_{S(t)}  i\xi\cdot\left(\reallywidehat{2\tilde{\mu}(\theta)\frD(u)}+\reallywidehat{\tilde{\lambda}(\theta)\div u\mathbb{I}_3}\right) \cdot \overline{\widehat{\rho u}}\dif\xi \dif s\nonumber\\
\leq~& -Re\int_0^t\int_{S(t)}\mu|\xi|^2 {\widehat{\theta^\alpha u}}\cdot \overline{\widehat{\rho u}}\dif\xi\dif s- Re\int_0^t\int_{S(t)}(\mu+\lambda) i\left(\xi\cdot{\widehat{\theta^\alpha u}}\right)\left(\xi\cdot\overline{\widehat{\rho u}}\right)\dif\xi\dif s\nonumber\\
&+C\left|\int_0^t\int_{S(t)}|\xi| {\reallywidehat{\theta^{\alpha-1}u\cdot\D \theta}}\cdot\overline{\widehat{\rho u}}\dif\xi \dif s\right|\nonumber\\
\leq~& - Re\int_0^t \int_{S(t)} \mu |\xi|^2 \left({\reallywidehat{(\theta^\alpha-\bar{\theta}^\alpha) u}}+{\widehat{\bar{\theta}^\alpha u}}\right)\cdot \left(\overline{\reallywidehat{(\rho-\bar{\rho}) u}}+\overline{\widehat{\bar{\rho}u}}\right)\dif\xi\dif s\nonumber\\
&-Re\int_0^t\int_{S(t)}(\mu+\lambda)\left(\xi \cdot\left({\reallywidehat{(\theta^\alpha-\bar{\theta}^\alpha) u}}+{\widehat{\bar{\theta}^\alpha u}}\right)\right)\left(\xi\cdot \left(\overline{\reallywidehat{(\rho-\bar{\rho}) u}}+\overline{\widehat{\bar{\rho}u}}\right)\right)\dif\xi\dif s\nonumber\\
&+C\left|\int_0^t \int_{S(t)}  |\xi| {\reallywidehat{\theta^{\alpha-1}u \cdot\D \theta}}\cdot \left({\reallywidehat{(\rho-\bar{\rho}) u}}+{\widehat{\bar{\rho}u}}\right)\dif\xi \dif s\right|\nonumber\\
\leq~& -\mu\bar{\rho}\bar{\theta}^\al\int_0^t \int_{S(t)} |\xi|^2 |\hat{u}|^2 \,\dif\xi \dif s-(\mu+\lambda)\bar{\rho}\bar{\theta}^\al\int_0^t \int_{S(t)} |\xi\cdot\hat{u}|^2 \,\dif\xi \dif s   \nonumber\\
&+C\int_0^t \int_{S(t)} |\xi|^2\big| {\reallywidehat{(\theta^\alpha-\bar{\theta}^\alpha) u}}\big|\big|{\reallywidehat{(\rho-\bar{\rho}) u}}\big|\dif\xi\dif s+C\int_0^t \int_{S(t)} |\xi|^2\big|{\reallywidehat{(\theta^\alpha-\bar{\theta}^\alpha) u}}\big|\big|{\widehat{\bar{\rho} u}}\big|\dif\xi\dif s\nonumber\\
&+C\int_0^t \int_{S(t)} |\xi|^2\big| {\widehat{\bar{\theta}^\alpha u}}\big|\big|{\reallywidehat{(\rho-\bar{\rho}) u}}\big| \,\dif\xi \dif s+C\int_0^t \int_{S(t)} |\xi|\big|{\reallywidehat{\theta^{\alpha-1}u \cdot\D \theta}}\big|\big|{\reallywidehat{(\rho-\bar{\rho}) u}}\big| \,\dif\xi \dif s\nonumber\\
&+C\int_0^t \int_{S(t)} |\xi|\big|{\reallywidehat{\theta^{\alpha-1}u\cdot \D \theta}}\big|\big|{\widehat{\bar{\rho} u}}\big| \,\dif\xi \dif s\nonumber\\
\leq~& -\mu\bar{\rho}\bar{\theta}^\al\int_0^t \int_{S(t)} |\xi|^2 |\hat{u}|^2 \,\dif\xi \dif s-(\mu+\lambda)\bar{\rho}\bar{\theta}^\al\int_0^t \int_{S(t)} |\xi\cdot\hat{u}|^2 \,\dif\xi \dif s   \nonumber\\
&+ C(\bar{\rho}, \bar{\theta})(1+t)^{-1}\int_0^t \|{\reallywidehat{(\theta^\alpha-\bar{\theta}^\alpha) u}}\|_{L^\infty} \|\reallywidehat{(\rho-\bar{\rho}) u}\|_{L^\infty}\int_{S(t)}\dif\xi \dif s+\bar{\rho}^2\int_0^t \int_{S(t)} |\xi|^2 |\hat{u}|^2\dif\xi \dif s \nonumber\\
&+ C(\bar{\rho}, \bar{\theta})(1+t)^{-1}\int_0^t \|{\reallywidehat{(\theta^\alpha-\bar{\theta}^\alpha) u}}\|_{L^\infty}^2\int_{S(t)}\dif\xi \dif s+ C(\bar{\rho}, \bar{\theta})(1+t)^{-1}\int_0^t \|\reallywidehat{(\rho-\bar{\rho}) u}\|_{L^\infty}^2\int_{S(t)}\dif\xi \dif s\nonumber\\
&+C(\bar{\rho}, \bar{\theta})(1+t)^{-\frac{1}{2}}\int_0^t \|{\reallywidehat{\rho^{\alpha-1}u \cdot\D \theta}}\|_{L^\infty}\|\reallywidehat{(\rho-\bar{\rho}) u}\|_{L^\infty}\int_{S(t)}\dif\xi \dif s\nonumber\\
&+C(\bar{\rho}, \bar{\theta})\int_0^t \|{\reallywidehat{\rho^{\alpha-1}u\cdot \D \theta}}\|_{L^\infty}^2\int_{S(t)}\dif\xi \dif s\nonumber\\
\leq~& -\left(\mu\bar{\rho}\bar{\theta}^\al-C\bar{\rho}^2\right)\int_0^t \int_{S(t)} |\xi|^2 |\hat{u}|^2 \,\dif\xi \dif s-(\mu+\lambda)\bar{\rho}\bar{\theta}^\al\int_0^t \int_{S(t)} |\xi\cdot\hat{u}|^2 \,\dif\xi \dif s   \nonumber\\
&+ C(\bar{\rho}, \bar{\theta})(1+t)^{-\frac{5}{2}}\int_0^t \|u\|_{L^2}^2 \left(\|\rho-\bar{\rho}\|_{L^2}^2+\|\theta-\bar{\theta}\|_{L^2}^2\right)\dif s\nonumber\\
&+C(\bar{\rho}, \bar{\theta})(1+t)^{-2}\int_0^t \|u\|_{L^2}^2 \|\nabla \theta\|_{L^2}\| \rho-\bar{\rho}\|_{L^2}\,\dif s+C(\bar{\rho}, \bar{\theta})(1+t)^{-\frac{3}{2}}\int_0^t \|u\|_{L^2}^2 \|\nabla \theta\|_{L^2}^2\,\dif s \nonumber\\
\leq~& -\frac{9\mu}{10}\bar{\rho}\bar{\theta}^\al\int_0^t \int_{S(t)} |\xi|^2 |\hat{u}|^2 \,\dif\xi \dif s-(\mu+\lambda)\bar{\rho}\bar{\theta}^\al\int_0^t \int_{S(t)} |\xi\cdot\hat{u}|^2 \,\dif\xi \dif s  \\
&+ C(\bar{\rho}, \bar{\theta})(1+t)^{-\frac{5}{2}}\int_0^t \|u\|_{L^2}^2 \left(\|\rho-\bar{\rho}\|_{L^2}^2+\|\theta-\bar{\theta}\|_{L^2}^2\right)\dif s+C(\bar{\rho}, \bar{\theta})(1+t)^{-\frac{3}{2}}\int_0^t \|u\|_{L^2}^2 \|\nabla \theta\|_{L^2}^2\,\dif s. \nonumber
\end{align}
\begin{align}\label{q5}
S_5=~&-Re\int_0^t \int_{S(t)}\reallywidehat{\div(\rho u(\theta-\bar{\theta}))}\overline{\reallywidehat{\rho(\theta-\bar{\theta})}}\,\dif\xi \dif s\nonumber\\
=~&-Re\int_0^t \int_{S(t)}i\xi\cdot\reallywidehat{\rho u(\theta-\bar{\theta})}\left(\overline{\reallywidehat{(\rho-\bar{\rho})(\theta-\bar{\theta})}}+\overline{\reallywidehat{\bar{\rho}(\theta-\bar{\theta})}}\right)\,\dif\xi \dif s\nonumber\\
\leq~& \bar{\rho}^2\int_0^t \int_{S(t)} |\xi|^2 |\widehat{\theta-\bar{\theta}}|^2\dif\xi\dif s  +C\int_0^t \int_{S(t)} \big|\reallywidehat{\rho u(\theta-\bar{\theta})}\big|^2\dif\xi\dif s \nonumber\\
&+\int_0^t \int_{S(t)} |\xi| \big|\reallywidehat{\rho u(\theta-\bar{\theta})}\big| \big|\overline{\reallywidehat{(\rho-\bar{\rho})(\theta-\bar{\theta})}}\big|\dif\xi \dif s\nonumber\\
\leq~& \bar{\rho}^2\int_0^t \int_{S(t)} |\xi|^2 |\reallywidehat{\theta-\bar{\theta}}|^2 \,\dif\xi \dif s +C\int_0^t \|\reallywidehat{\rho u(\theta-\bar{\theta})}\|_{L^\infty}^2 \int_{S(t)}\,\dif\xi \dif s\nonumber\\
&+ C(\bar{\rho}, \bar{\theta})(1+t)^{-\frac 12}\int_0^t \|\reallywidehat{\rho u(\theta-\bar{\theta})}\|_{L^\infty} \|\overline{\reallywidehat{(\rho-\bar{\rho})(\theta-\bar{\theta})}}\|_{L^\infty}\int_{S(t)}\,\dif\xi \dif s \nonumber\\
\leq~& \bar{\rho}^2\int_0^t \int_{S(t)}|\xi|^2 |\hat{u}|^2 \,\dif\xi \dif s+C(\bar{\rho}, \bar{\theta})(1+t)^{-\frac 32}\int_0^t \|u\|_{L^2}^2\|\theta-\bar{\theta}\|_{L^2}^2 \dif s \nonumber\\
&+ C(\bar{\rho}, \bar{\theta})(1+t)^{-2}\int_0^t \|u\|_{L^2} \|\rho-\bar{\rho}\|_{L^2} \|\theta-\bar{\theta}\|_{L^2}^2\,\dif s.
\end{align} 
\begin{align}\label{q6}
S_6=~&-Re\int_0^t \int_{S(t)}\reallywidehat{\rho u\cdot\nabla\theta}\overline{\reallywidehat{\rho(\theta-\bar{\theta})}}\,\dif\xi \dif s\nonumber\\
=~&-Re\int_0^t \int_{S(t)}\left(\reallywidehat{(\rho-\bar{\rho})u\cdot\nabla\theta}+\reallywidehat{\bar{\rho}u\cdot\nabla\theta}\right)\left(\overline{\reallywidehat{(\rho-\bar{\rho})(\theta-\bar{\theta})}}+\overline{\reallywidehat{\bar{\rho}(\theta-\bar{\theta})}}\right)\,\dif\xi \dif s\nonumber\\
\leq~& \bar{\rho}^2\int_0^t \int_{S(t)} |\xi|^2 |\widehat{\theta-\bar{\theta}}|^2\dif\xi \dif s+C\bar{\rho}^2\int_0^t \int_{S(t)} \big|\reallywidehat{u\cdot\D\theta}\big|^2 \dif\xi\dif s  +C\int_0^t \int_{S(t)} \big|\reallywidehat{(\rho-\bar{\rho})(\theta-\bar{\theta})}\big|^2\dif\xi \dif s \nonumber\\
&+C\bar{\rho}^2\int_0^t \int_{S(t)} |\xi|^{-2}\big|\reallywidehat{u\cdot\D\theta}\big|^2\dif\xi \dif s+C\int_0^t \int_{S(t)} \big|\reallywidehat{(\rho-\bar{\rho})u\cdot\D\theta}\big|^2\dif\xi \dif s\nonumber\\
&+C\int_0^t \int_{S(t)} |\xi|^{-2}\big|\reallywidehat{(\rho-\bar{\rho})u\cdot\D\theta}\big|^2\dif\xi \dif s\nonumber\\
\leq~& \bar{\rho}^2\int_0^t \int_{S(t)} |\xi|^2 |\widehat{\theta-\bar{\theta}}|^2\dif\xi\dif s+C\bar{\rho}^2\int_0^t \big\|\reallywidehat{u\cdot\D\theta}\big\|_{L^\infty}^2\int_{S(t)}\dif\xi\dif s+C\int_0^t \big\|\reallywidehat{(\rho-\bar{\rho})(\theta-\bar{\theta})}\big\|_{L^\infty}^2\int_{S(t)}  \dif\xi\dif s \nonumber\\
&+C\bar{\rho}^2\int_0^t\big\|\reallywidehat{u\cdot\D\theta}\big\|_{L^\infty}^2 \int_{S(t)} |\xi|^{-2}\,\dif\xi \dif s+C\int_0^t\big\|\reallywidehat{(\rho-\bar{\rho})u\cdot\D\theta}\big\|_{L^\infty}^2 \int_{S(t)} \,\dif\xi \dif s\nonumber\\
&+C\int_0^t \big\|\reallywidehat{(\rho-\bar{\rho})u\cdot\D\theta}\big\|_{L^\infty}^2\int_{S(t)} |\xi|^{-2}\dif\xi \dif s\nonumber\\
\leq~& \bar{\rho}^2\int_0^t \int_{S(t)} |\xi|^2 |\widehat{\theta-\bar{\theta}}|^2\dif\xi \dif s  +C(\bar{\rho}, \bar{\theta})(1+t)^{-\frac{3}{2}}\int_0^t\|u\|_{L^2}^2\|\D\theta\|_{L^2}^2\dif s \nonumber\\
&+C(\bar{\rho}, \bar{\theta})(1+t)^{-\frac{3}{2}}\int_0^t\|\rho-\bar{\rho}\|_{L^2}^2\|\theta-\bar{\theta}\|_{L^2}^2\dif s+C(\bar{\rho}, \bar{\theta})(1+t)^{-\frac{1}{2}}\int_0^t \|u\|_{L^2}^2\|\D\theta\|_{L^2}^2\dif s.
\end{align}
\begin{align}\label{q7}
S_7=~&-Re\int_0^t \int_{S(t)}\reallywidehat{P\div u}\overline{\reallywidehat{\rho(\theta-\bar{\theta})}}\,\dif\xi \dif s\nonumber\\
=~&-Re\int_0^t \int_{S(t)}\left(\reallywidehat{R\rho(\theta-\bar{\theta})\div u}+\reallywidehat{R\bar{\theta}(\rho-\bar{\rho})\div u}+R\bar{\rho}\bar{\theta} i\xi\cdot\reallywidehat{u}\right)\left(\overline{\reallywidehat{(\rho-\bar{\rho})(\theta-\bar{\theta})}}+\overline{\reallywidehat{\bar{\rho}(\theta-\bar{\theta})}}\right)\dif\xi \dif s\nonumber\\
\leq~& \bar{\rho}^2\int_0^t \int_{S(t)} |\xi|^2 |\reallywidehat{u}|^2 \dif\xi \dif s  +C\bar{\theta}^2\int_0^t \int_{S(t)} \big|\reallywidehat{(\rho-\bar{\rho})(\theta-\bar{\theta})}\big|^2 \dif\xi \dif s  -Re\int_0^t \int_{S(t)}R\bar{\rho}^2\bar{\theta} i\xi\cdot\reallywidehat{u}\overline{\reallywidehat{(\theta-\bar{\theta})}}\,\dif\xi \dif s \nonumber\\
&+\int_0^t \int_{S(t)} \left(\big|\reallywidehat{R\rho(\theta-\bar{\theta})\div u}\big|+\big|\reallywidehat{R\bar{\theta}(\rho-\bar{\rho})\div u}\big|\right)\left(\big|{\reallywidehat{(\rho-\bar{\rho})(\theta-\bar{\theta})}}\big|+\big|{\bar{\rho}\widehat{(\theta-\bar{\theta})}}\big|\right)\,\dif\xi \dif s\nonumber\\
\leq~& \bar{\rho}^2\int_0^t \int_{S(t)} |\xi|^2 |\widehat{u}|^2 \,\dif\xi \dif s+\bar{\rho}^2\int_0^t \int_{S(t)} |\xi|^2 |\widehat{\theta-\bar{\theta}}|^2 \,\dif\xi \dif s +C\bar{\theta}^2\int_0^t \|\reallywidehat{(\rho-\bar{\rho})(\theta-\bar{\theta})}\|_{L^\infty}^2 \int_{S(t)}\,\dif\xi \dif s\nonumber\\
&-Re\int_0^t \int_{S(t)}R\bar{\rho}^2\bar{\theta} i\xi\cdot\widehat{u}\overline{\widehat{(\theta-\bar{\theta})}}\,\dif\xi \dif s \nonumber\\
&+ C\int_0^t \left(\|\reallywidehat{\rho(\theta-\bar{\theta})\div u}\|_{L^\infty}+\|\reallywidehat{(\rho-\bar{\rho})\div u}\|_{L^\infty}\right)\|{\reallywidehat{(\rho-\bar{\rho})(\theta-\bar{\theta})}}\|_{L^\infty}\int_{S(t)}\,\dif\xi \dif s \nonumber\\
&+ C\int_0^t \left(\|\reallywidehat{\rho(\theta-\bar{\theta})\div u}\|_{L^\infty}^2+\|\reallywidehat{(\rho-\bar{\rho})\div u}\|_{L^\infty}^2\right)\int_{S(t)}|\xi|^{-2}\,\dif\xi \dif s \nonumber\\
\leq~& \bar{\rho}^2\int_0^t \int_{S(t)}|\xi|^2 |\hat{u}|^2 \,\dif\xi \dif s+\bar{\rho}^2\int_0^t \int_{S(t)} |\xi|^2 |\reallywidehat{\theta-\bar{\theta}}|^2 \,\dif\xi \dif s+C(\bar{\rho}, \bar{\theta})(1+t)^{-\frac 32}\int_0^t \|\rho-\bar{\rho}\|_{L^2}^2\|\theta-\bar{\theta}\|_{L^2}^2 \dif s\nonumber\\
&+ C(\bar{\rho}, \bar{\theta})(1+t)^{-\frac 32}\int_0^t \left(\|\theta-\bar{\theta}\|_{L^2}\|\nabla u\|_{L^2} +\|\rho-\bar{\rho}\|_{L^2} \|\nabla u\|_{L^2}\right)\|\rho-\bar{\rho}\|_{L^2} \|\theta-\bar{\theta}\|_{L^2}\dif s\nonumber\\
&+ C(\bar{\rho}, \bar{\theta})(1+t)^{-\frac 12}\int_0^t \left(\|\theta-\bar{\theta}\|_{L^2}^2+\|\rho-\bar{\rho}\|_{L^2}^2\right)\|\nabla u\|_{L^2}^2\dif s-Re\int_0^t \int_{S(t)}R\bar{\rho}^2\bar{\theta} i\xi\cdot\widehat{u}\overline{\widehat{(\theta-\bar{\theta})}}\dif\xi \dif s \nonumber\\
\leq~& \bar{\rho}^2\int_0^t \int_{S(t)}|\xi|^2 |\hat{u}|^2 \,\dif\xi \dif s+\bar{\rho}^2\int_0^t \int_{S(t)} |\xi|^2 |\widehat{\theta-\bar{\theta}}|^2 \,\dif\xi \dif s+C(\bar{\rho}, \bar{\theta})(1+t)^{-\frac 32}\int_0^t \|\rho-\bar{\rho}\|_{L^2}^2\|\theta-\bar{\theta}\|_{L^2}^2 \dif s\nonumber\\
&+ C(\bar{\rho}, \bar{\theta})(1+t)^{-\frac 12}\int_0^t \left(\|\theta-\bar{\theta}\|_{L^2}^2+\|\rho-\bar{\rho}\|_{L^2}^2\right)\|\nabla u\|_{L^2}^2\dif s-Re\int_0^t \int_{S(t)}R\bar{\rho}^2\bar{\theta} i\xi\cdot\reallywidehat{u}\overline{\reallywidehat{(\theta-\bar{\theta})}}\dif\xi \dif s.
\end{align}
\begin{align}\label{q8}
S_8+S_9=~&Re\int_0^t \int_{S(t)}\left(\reallywidehat{2\mu\theta^\al |\frD(u)|^2}
+\reallywidehat{\lambda\theta^\al(\div u)^2}\right)\overline{\reallywidehat{\rho(\theta-\bar{\theta})}}\,\dif\xi \dif s\nonumber\\
=~&Re\int_0^t \int_{S(t)}\left(\reallywidehat{2\mu\theta^\al |\frD(u)|^2}
+\reallywidehat{\lambda\theta^\al(\div u)^2}\right)\overline{\reallywidehat{(\rho-\bar{\rho}+\bar{\rho})(\theta-\bar{\theta})}}\,\dif\xi \dif s\nonumber\\
\leq~& \bar{\rho}^2\int_0^t \int_{S(t)}|\xi|^2\big|\widehat{\theta-\bar{\theta}}\big|^2\dif\xi \dif s +C\int_0^t \left(\|\reallywidehat{\theta^\al |\frD(u)|^2}\|_{L^\infty}^2+\|\reallywidehat{\theta^\al(\div u)^2}\|_{L^\infty}^2\right)\int_{S(t)}|\xi|^{-2}\dif\xi \dif s \nonumber\\
&+C\int_0^t \left(\|\reallywidehat{\theta^\al |\frD(u)|^2}\|_{L^\infty}+\|\reallywidehat{\theta^\al(\div u)^2}\|_{L^\infty}\right)\|\reallywidehat{(\rho-\bar{\rho})(\theta-\bar{\theta})}\|_{L^\infty}\int_{S(t)}\,\dif\xi \dif s\nonumber\\
\leq~& \bar{\rho}^2\int_0^t \int_{S(t)}|\xi|^2\big|\reallywidehat{\theta-\bar{\theta}}\big|^2\,\dif\xi \dif s+C(\bar{\rho}, \bar{\theta})(1+t)^{-\frac 12}\int_0^t \|\nabla u\|_{L^2}^4 \dif s\nonumber\\
&+C(\bar{\rho}, \bar{\theta})(1+t)^{-\frac 32}\int_0^t \|\nabla u\|_{L^2}^2 \|\theta-\bar{\theta}\|_{L^2}\|\rho-\bar{\rho}\|_{L^2} \dif s.
\end{align}
\begin{align}\label{q10}
S_{10}=~& Re\int_0^t \int_{S(t)}\kappa\reallywidehat{\div(\theta^\beta\D\theta)}\overline{\reallywidehat{\rho(\theta-\bar{\theta})}}\,\dif\xi \dif s\nonumber\\
=~&Re\int_0^t \int_{S(t)}\kappa\left(\reallywidehat{\div((\theta^\beta-\bar{\theta^\beta})\D\theta)}+\bar{\theta}^\beta\widehat{\Delta\theta}\right)\left(\overline{\reallywidehat{(\rho-\bar{\rho})(\theta-\bar{\theta})}}+\overline{\reallywidehat{\bar{\rho}(\theta-\bar{\theta})}}\right)\,\dif\xi \dif s\nonumber\\
=~&- Re\int_0^t \int_{S(t)}\kappa\bar{\theta}^\beta|\xi|^2\reallywidehat{\theta-\bar{\theta}}\left(\overline{\reallywidehat{(\rho-\bar{\rho})(\theta-\bar{\theta})}}+\overline{\reallywidehat{\bar{\rho}(\theta-\bar{\theta})}}\right)\,\dif\xi \dif s\nonumber\\
&+ Re\int_0^t \int_{S(t)}\kappa\reallywidehat{\div((\theta^\beta-\bar{\theta^\beta})\D\theta)}\left(\overline{\reallywidehat{(\rho-\bar{\rho})(\theta-\bar{\theta})}}+\overline{\reallywidehat{\bar{\rho}(\theta-\bar{\theta})}}\right)\,\dif\xi \dif s\nonumber\\
=~&-Re\int_0^t \int_{S(t)}\kappa\bar{\theta}^\beta|\xi|^2\reallywidehat{\theta-\bar{\theta}}\overline{\reallywidehat{(\rho-\bar{\rho})(\theta-\bar{\theta})}}\,\dif\xi \dif s-\kappa\bar{\rho}\bar{\theta}^\beta\int_0^t \int_{S(t)}|\xi|^2\big|\reallywidehat{\theta-\bar{\theta}}\big|^2\,\dif\xi \dif s\nonumber\\
&+ Re\int_0^t \int_{S(t)}i\kappa\xi\cdot\reallywidehat{(\theta^\beta-\bar{\theta}^\beta)\D\theta}\left(\overline{\reallywidehat{(\rho-\bar{\rho})(\theta-\bar{\theta})}}+\overline{\reallywidehat{\bar{\rho}(\theta-\bar{\theta})}}\right)\,\dif\xi \dif s\nonumber\\
\leq~&-\kappa\bar{\rho}\bar{\theta}^\beta\int_0^t \int_{S(t)}|\xi|^2\big|\reallywidehat{\theta-\bar{\theta}}\big|^2\dif\xi \dif s+C\bar{\theta}^\beta\int_0^t \int_{S(t)}|\xi|^2\big|\reallywidehat{\theta-\bar{\theta}}\big|^2\,\dif\xi \dif s\nonumber\\
&+C(\bar{\rho}, \bar{\theta})(1+t)^{-1}\int_0^t\|\reallywidehat{(\rho-\bar{\rho})(\theta-\bar{\theta})}\|_{L^\infty}^2\int_{S(t)}\dif\xi \dif s\nonumber\\
&+C(\bar{\rho}, \bar{\theta})(1+t)^{-\frac{1}{2}}\int_0^t\|\reallywidehat{(\theta^\beta-\bar{\theta^\beta})\D\theta}\|_{L^\infty}\|\reallywidehat{(\rho-\bar{\rho})(\theta-\bar{\theta})}\|_{L^\infty}\int_{S(t)}\dif\xi \dif s\nonumber\\
&+C(\bar{\rho}, \bar{\theta})\int_0^t\|\reallywidehat{(\theta^\beta-\bar{\theta^\beta})\D\theta}\|_{L^\infty}^2\int_{S(t)}\dif\xi \dif s\nonumber\\
\leq~&-\frac{9\kappa}{10}\bar{\rho}\bar{\theta}^\beta\int_0^t \int_{S(t)}|\xi|^2\big|\reallywidehat{\theta-\bar{\theta}}\big|^2\dif\xi \dif s+C(\bar{\rho}, \bar{\theta})(1+t)^{-\frac{5}{2}}\int_0^t\|\rho-\bar{\rho}\|_{L^2}^2\|\theta-\bar{\theta}\|_{L^2}^2\dif s\nonumber\\
&+C(\bar{\rho}, \bar{\theta})(1+t)^{-\frac{3}{2}}\int_0^t\|\theta-\bar{\theta}\|_{L^2}^2\|\D\theta\|_{L^2}\left(\|\rho-\bar{\rho}\|_{L^2}+\|\D\theta\|_{L^2}\right)\dif s.
\end{align}

Substituting \eqref{q1}--\eqref{q10} into \eqref{fl2}, we obtain
\begin{align}\label{fl2-1}
&\int_{S(t)} \left(R\bar{\theta}|\widehat{\rho-\bar{\rho}}(t)|^2+|\widehat{\rho u}(t)|^2+ |\reallywidehat{\rho(\theta-\bar{\theta})}(t)|^2\right)\dif\xi\nonumber\\
&+\mu\bar{\rho}\bar{\theta}^\al\int_0^t \int_{S(t)} |\xi|^2 |\hat{u}|^2\dif\xi\dif s+(\mu+\lambda)\bar{\rho}\bar{\theta}^\al\int_0^t \int_{S(t)} |\xi\cdot\hat{u}|^2\dif\xi \dif s+\kappa\bar{\rho}\bar{\theta}^\beta\int_0^t \int_{S(t)}|\xi|^2\big|\widehat{\theta-\bar{\theta}}\big|^2\dif\xi \dif s\nonumber\\
\leq~& C\bar{\theta}\big(\|\rho_0-\bar{\rho}\|_{L^{p_0}}^2+ \|\rho_0 u_0\|_{L^{p_0}}^2+ \|\rho_0(\theta_0-\bar{\theta})\|_{L^{p_0}}^2\big)(1+t)^{-2\beta({p_0})}\nonumber\\
&+ C(\bar{\rho}, \bar{\theta})(1+t)^{-\frac 12}\int_0^t \left(\left(\|\rho-\bar{\rho}\|_{L^2}^2+\|\theta-\bar{\theta}\|_{L^2}^2+\|\nabla u\|_{L^2}^2\right)\|\nabla u\|_{L^2}^2+\|u\|_{L^2}^2\|\D\theta\|_{L^2}^2\right)\dif s\nonumber\\
&+ C(\bar{\rho}, \bar{\theta})(1+t)^{-\frac 32}\int_0^t \left(\|\rho-\bar{\rho}\|_{L^2}^4+\|u\|_{L^2}^4+\|\theta-\bar{\theta}\|_{L^2}^4+\|\nabla \rho\|_{L^2}^4+\|\nabla u\|_{L^2}^4+\|\D\theta\|_{L^2}^4\right)\dif s,
\end{align}
where we have used 
\begin{align}
&\int_{S(t)} \left( \bar{\theta}\big|\widehat{\rho-\bar{\rho}}(0)\big|^2+\big|\widehat{\rho u}(0)\big|^2 +\big|\reallywidehat{\rho(\theta-\bar{\theta})}(0)\big|^2 \right) \,\dif\xi\nonumber\\
\leq~&  \left(\bar{\theta}\|\widehat{\rho_0-\bar{\rho}}\|_{L^{p_0'}}^2+ \|\widehat{\rho_0 u_0}\|_{L^{p_0'}}^2+ \|\reallywidehat{\rho_0(\theta_0-\bar{\theta})}\|_{L^{p_0'}}^2\right) \left( \int_{S(t)}  \,\dif\xi\right)^{1-\frac 2{p_0'}}\nonumber\\
\leq~& C\bar{\theta}\left(\|\rho_0-\bar{\rho}\|_{L^{p_0}}^2+\|\rho_0 u_0\|_{L^{p_0}}^2+\|\rho_0(\theta_0-\bar{\theta})\|_{L^{p_0}}^2\right)(1+t)^{-2\beta({p_0})},
\end{align}
due to $\rho_0-\bar{\rho}$, $\rho_0 u_0$, $\rho_0 (\theta_0-\bar{\theta})$ belong to $L^{p_0}$ for $\displaystyle1\leq {p_0}< 2$ and 
 $\displaystyle \frac 1{p_0}+\frac 1{p_0'}=1$.
\end{proof}

Now we are in a position to prove \eqref{decay-0}. In fact, we will prove the following decay estimate.
\begin{lemma}\label{l2d}
Suppose the conditions of Theorem \ref{th} and Theorem \ref{th2} hold. Let $(\rho, u, \theta)$ be the global strong solution obtained in Theorem \ref{th}. Then we have
\begin{equation}\label{decay-0:U}
\|\rho-\bar{\rho}\|_{H^1}+\|u\|_{H^1}+ \|\theta-\bar{\theta}\|_{H^1}+\|\nabla \rho\|_{L^4}+\|(\dot{u}, \dot{\theta})\|_{L^2}\leq C(\bar{\rho}, \bar{\theta})(1+t)^{-\frac{3}{4}(\frac{2}{p_0}-1)}.
\end{equation}
\end{lemma}
\begin{proof}
We separate the proof into several steps.
	
\textbf{Step 1}: Due to fact $(\rho-\bar{\rho}, u,\theta-\bar{\theta})\in L^\infty(0,\infty;H^1)$ and $(\nabla u, \nabla\theta)\in L^2(0,\infty;L^2)$, we conclude from \eqref{fl2-0} that
\begin{align}
&\int_{S(t)} \left(R\bar{\theta}|\widehat{\rho-\bar{\rho}}(t)|^2+|\widehat{\rho u}(t)|^2+ |\reallywidehat{\rho(\theta-\bar{\theta})}(t)|^2\right)\dif\xi\nonumber\\
\leq~& C\bar{\theta}\big(\|\rho_0-\bar{\rho}\|_{L^{p_0}}^2+ \|\rho_0 u_0\|_{L^{p_0}}^2+ \|\rho_0(\theta_0-\bar{\theta})\|_{L^{p_0}}^2\big)(1+t)^{-2\beta({p_0})}+ C(\bar{\rho}, \bar{\theta})(1+t)^{-\frac 12}\nonumber\\
\leq~& C(\bar{\rho}, \bar{\theta})(1+t)^{-r_1},
\end{align}
where $\displaystyle  r_1=\min\left\{2\beta({p_0}),\frac 12\right\}$. Then, we have
\begin{align}
\bar{\rho}^2\int_{S(t)}  |\widehat{  u}(t)|^2  \dif\xi \leq~& \int_{S(t)}   |\widehat{\rho u}(t)|^2  \dif\xi+\int_{S(t)} |\reallywidehat{(\rho-\bar{\rho}) u}(t)|^2\dif\xi\nonumber\\
\leq~& C(\bar{\rho}, \bar{\theta})(1+t)^{-r_1}+C(\bar{\rho}, \bar{\theta})(1+t)^{-\frac 32}\|\reallywidehat{(\rho-\bar{\rho}) u}(t)\|_{L^\infty}^2\nonumber\\
\leq~& C(\bar{\rho}, \bar{\theta})(1+t)^{-r_1},
\end{align}
and 
\begin{align}
\bar{\rho}^2\int_{S(t)}  |\widehat{\theta-\bar{\theta}}(t)|^2  \dif\xi 
\leq~& \int_{S(t)}   |\reallywidehat{\rho(\theta-\bar{\theta})}(t)|^2\dif\xi+\int_{S(t)} |\reallywidehat{(\rho-\bar{\rho})(\theta-\bar{\theta})}(t)|^2 \dif\xi\nonumber\\
\leq~& C(\bar{\rho}, \bar{\theta})(1+t)^{-r_1}+C(\bar{\rho}, \bar{\theta})(1+t)^{-\frac 32}\|\reallywidehat{(\rho-\bar{\rho})(\theta-\bar{\theta})}(t)\|_{L^\infty}^2\nonumber\\
\leq~& C(\bar{\rho}, \bar{\theta})(1+t)^{-r_1}.
\end{align}

Noting that $\rho \dot u=-\D P+2\mu\div(\theta^\al\frD(u)) +\lambda\D(\theta^\al\div u)$,  we obtain
\begin{align}
\int_{S(t)}|\widehat{\rho\dot{u}}(t)|^2\dif\xi 
\leq~&  \int_{S(t)}  \left|\reallywidehat{-\D P}+2\mu\reallywidehat{\div(\theta^\al\frD(u))}+  \lambda\reallywidehat{\D(\theta^\al\div u)}\right|^2  \dif\xi \nonumber\\
\leq~&  C\int_{S(t)} \left( |\xi|^2|\reallywidehat{\rho\theta-\bar{\rho}\bar{\theta}}|^2+|\xi|^2|\reallywidehat{\theta^\al\frD(u)}|^2+  |\xi|^2|\reallywidehat{\theta^\al\div u}|^2\right) \dif\xi \nonumber\\
\leq~&  C(\bar{\rho}, \bar{\theta})(1+t)^{-1}\int_{S(t)} \left( |\reallywidehat{\rho(\theta-\bar{\theta})}|^2+|\widehat{\rho-\bar{\rho}}|^2+|\reallywidehat{\theta^\al\frD(u)}|^2+|\reallywidehat{\theta^\al\div u}|^2\right)\dif\xi \nonumber\\
\leq~&  C(\bar{\rho}, \bar{\theta})(1+t)^{-1-r_1},
\end{align}
which implies that
$$
\int_{S(t)}|\widehat{\dot{u}}(t)|^2\dif\xi\leq C(\bar{\rho}, \bar{\theta})(1+t)^{-1-r_1}.
$$

At last, noting \eqref{FCNS:2}$_3$ and using the same argument as above, we get
\begin{align}
&\int_{S(t)}|\widehat{\rho\dot\theta}(t)|^2\dif\xi  \nonumber\\
\leq~&  \int_{S(t)}  \left|\reallywidehat{-P\div u}+2\mu\reallywidehat{\theta^\al |\frD(u)|^2}+\lambda\reallywidehat{\theta^\al(\div u)^2}+\kappa\reallywidehat{\div(\theta^\beta\D\theta)}\right|^2  \,\dif\xi \nonumber\\
\leq~&  C(\bar{\rho}, \bar{\theta})\int_{S(t)} \Big(\left|\reallywidehat{\rho(\theta-\bar{\theta})\div u}\right|^2+\left|\reallywidehat{(\rho-\bar{\rho})\div u}\right|^2+\left|\widehat{\div u}\right|^2+\left|\reallywidehat{\theta^\al|\frD(u)|^2}\right|^2+ \left|\reallywidehat{\theta^\al|\div u|^2}\right|+|\xi|^2|\reallywidehat{\theta^\beta\D\theta}|^2\Big)  \,\dif\xi \nonumber\\
\leq~& C(\bar{\rho}, \bar{\theta})(1+t)^{-\frac{3}{2}}+C(\bar{\rho}, \bar{\theta})\int_{S(t)}|\xi|^2|\hat{u}|^2\dif\xi,
\end{align}
which implies
$$
\int_{S(t)}|\widehat{\dot{\theta}}(t)|^2\,\dif\xi\leq C(\bar{\rho}, \bar{\theta})(1+t)^{-1-r_1}.
$$

Recalling the definition of $X(t)$ in \eqref{Xt}, from Young's inequality and the fact that $\bar{\theta}$ is large enough, we have
\begin{align}\label{X:sim}
\overline{C}_1^{-1}(\bar{\rho}, \bar{\theta})\left(\|(\rho-\bar{\rho}, u, \theta-\bar{\theta})\|_{H^1}^2+\|\D\rho\|_{L^4}^2+\|(\dot{u}, \dot{\theta})\|_{L^2}^2\right)\leq X(t)\nonumber\\
\leq\overline{C}_1(\bar{\rho}, \bar{\theta})\left(\|(\rho-\bar{\rho}, u, \theta-\bar{\theta})\|_{H^1}^2+\|\D\rho\|_{L^4}^2+\|(\dot{u}, \dot{\theta})\|_{L^2}^2\right),
\end{align}
for some constant $\overline{C}_1(\bar{\rho}, \bar{\theta})>0$. Recalling that $S(t)$ is the ball in $\mathbb{R}^3$ centered at the origin with radius $\displaystyle r(t)=\left(\frac{C_*(\bar{\rho}, \bar{\theta})}{1+t}\right)^{\frac{1}{2}}$, a decomposition of the frequency domain into two time-dependent subdomains by $S(t)^{c}$ and $S(t)$ yields that  
\begin{align}\label{decayeq9}
&\frac{\dif}{\dif t} X(t)+ \frac{C_*(\bar{\rho}, \bar{\theta})c_1(\bar{\rho}, \bar{\theta})}{1+t} \left(\|u\|_{L^2}^2+\|\theta-\bar{\theta}\|_{L^2}^2+\|\rho-\bar{\rho}\|_{L^2}^2+\|\dot{u}\|_{L^2}^2+\|\dot{\theta}\|_{L^2}^2\right)\nonumber\\
\leq~& \frac{C(\bar{\rho}, \bar{\theta})}{1+t}\int_{S(t)}\left( |\widehat{\rho-\bar{\rho}}(t)|^2+|\widehat{u}(t)|^2+|\widehat{\theta-\bar{\theta}}(t)|^2+|\widehat{\dot{u}}(t)|^2+|\widehat{ \dot{\theta}}(t)|^2 \right) \,\dif\xi.
\end{align}
Taking $C_*(\bar{\rho}, \bar{\theta})=4\overline{C}_1(\bar{\rho}, \bar{\theta})c_1^{-1}(\bar{\rho}, \bar{\theta})$ and adding \eqref{dissi:X} to \eqref{decayeq9}, from \eqref{X:sim} we get
\begin{align}\label{decay:key}
\frac{\dif}{\dif t} X(t)+ \frac{2}{1+t}X(t)\leq~& \frac{C(\bar{\rho}, \bar{\theta})}{1+t}\int_{S(t)}\left( |\widehat{\rho-\bar{\rho}}|^2+|\widehat{u}|^2+|\widehat{\theta-\bar{\theta}}|^2+|\widehat{ \dot{u}}|^2+|\widehat{ \dot{\theta}}|^2 \right)\dif\xi\nonumber\\
\leq~& C(\bar{\rho}, \bar{\theta})(1+t)^{-1-r_1}, \qquad\text{for}\quad t\geq C_*(\bar{\rho}, \bar{\theta}).
\end{align}
Multiplying by the integrating factor $(1+t)^2$ gives
\begin{align}
\frac{\dif}{\dif t}\left[(1+t)^2X(t)\right]\leq C(\bar{\rho}, \bar{\theta})(1+t)^{1-r_1},\qquad\text{for}\quad t\geq C_*(\bar{\rho}, \bar{\theta}),
\end{align}
which leads to
\begin{equation}\label{informdecay}
X(t)\leq C(\bar{\rho}, \bar{\theta})(1+t)^{-r_1},
\end{equation}
and 
\begin{equation}\label{decayeq10}
\|\rho-\bar{\rho}\|_{H^1}+\|u\|_{H^1}+ \|\theta-\bar{\theta}\|_{H^1}+\|\nabla \rho\|_{L^4}+\|(\dot{u}, \dot{\theta})\|_{L^2}\leq C(\bar{\rho}, \bar{\theta})(1+t)^{-r_1/2}.
\end{equation}

\textbf{Step 2}: We want to improve the decay estimate if $\beta(p_0)>\frac 14$. By definition, $r_1=\frac 12$. Thanks to \eqref{fl2-0} and \eqref{decayeq10}, we improve the estimate for the low frequency part as follows
\begin{align}
&\int_{S(t)}\left(|\widehat{\rho-\bar{\rho}}|^2+|\widehat{\rho u}|^2+ |\reallywidehat{\rho(\theta-\bar{\theta})}|^2\right) \dif\xi\nonumber\\
\leq~&  C(\bar{\rho}, \bar{\theta})\left((1+t)^{-2\beta({p_0})} + (1+t)^{-1}+ (1+t)^{-\frac32}\ln(1+t)\right).
\end{align}
Now following the similar argument used in the previous step, we conclude that
\begin{align}
&\int_{S(t)} \left(|\widehat{\rho-\bar{\rho}}|^2+|\widehat{u}|^2+|\widehat{\theta-\bar{\theta}}|^2+|\widehat{\dot{u}}|^2+|\widehat{\dot{\theta}}|^2 \right) \,\dif\xi \nonumber\\
\leq~&  C(\bar{\rho}, \bar{\theta})\left((1+t)^{-2\beta({p_0})} + (1+t)^{-1}+ (1+t)^{-\frac32}\ln(1+t)\right), 
\end{align}
which implies that
\begin{equation}
\frac{\dif}{\dif t} X(t)+ \frac{2}{1+t} X(t)\leq C(\bar{\rho}, \bar{\theta})(1+t)^{-1}\left((1+t)^{-2\beta({p_0})} + (1+t)^{-1}+ (1+t)^{-\frac32}\ln(1+t)\right).
\end{equation}
We obtain that 
\begin{equation}\label{decayeq11}
X(t)\leq C(\bar{\rho}, \bar{\theta})\max\left\{(1+t)^{-2\beta({p_0})},  (1+t)^{-1}\right\}=C(\bar{\rho}, \bar{\theta})(1+t)^{-r_2},
\end{equation}
with $r_2=\min\{2\beta(p_0), 1\}$, and 
$$\|\rho-\bar{\rho}\|_{H^1}+\|u\|_{H^1}+ \|\theta-\bar{\theta}\|_{H^1}+\|\nabla \rho\|_{L^4}+\|(\dot{u}, \dot{\theta})\|_{L^2} \leq C(\bar{\rho}, \bar{\theta})(1+t)^{-r_2/2}.$$

\textbf{Step 3}: Finally we deal with the case that $\beta(p_0)>\frac 12$. By \eqref{decayeq11}, we have 
\begin{equation}\label{decayeq10-3}
\|\rho-\bar{\rho}\|_{H^1}+\|u\|_{H^1}+ \|\theta-\bar{\theta}\|_{H^1}+\|\nabla \rho\|_{L^4}+\|(\dot{u}, \dot{\theta})\|_{L^2}  \leq C(\bar{\rho}, \bar{\theta})(1+t)^{-\frac 12}.
\end{equation}
We may repeat the same process in the above to get that
\begin{equation}
\int_{S(t)} \left(|\widehat{\rho-\bar{\rho}}|^2+|\widehat{u}|^2+|\widehat{\theta-\bar{\theta}}|^2+|\widehat{\dot{u}}|^2+|\widehat{\dot{\theta}}|^2 \right)\dif\xi\leq C(\bar{\rho}, \bar{\theta})\left((1+t)^{-2\beta({p_0})} +(1+t)^{-\frac 32}\right), 
\end{equation}
which implies that
\begin{align}
\frac{\dif}{\dif t}X(t)+\frac{2}{1+t} X(t)\leq C(\bar{\rho}, \bar{\theta})(1+t)^{-1}(1+t)^{-2\beta({p_0})}.
\end{align}
As a result, we can derive \eqref{decay-0:U} easily from the above inequality.
\end{proof}

\subsection{Decay rate of $\|(\D\rho, \D u, \D\theta)\|_{L^2}$}\label{dec.2}

First, we rewrite \eqref{FCNS:2} as
\begin{equation}\label{FCNS2}
\begin{cases}
\rho_t+\bar{\rho}\div u=-\M_1,\\
u_t+R\bar{\rho}^{-1}\bar{\theta}\D\rho+R\D\theta-\mu\bar{\rho}^{-1}\bar{\theta}^\al\Delta u-(\mu+\lambda)\bar{\rho}^{-1}\bar{\theta}^\al\D\div u=-\M_2,\\
\theta_t+R\bar{\theta}\div u-\kappa\bar{\rho}^{-1}\bar{\theta}^\beta\Delta\theta=-\M_3,
\end{cases}
\end{equation}
where
\begin{align}\label{M123}
\M_1=~&u\cdot\D\rho+(\rho-\bar{\rho})\div u, \nonumber\\
\M_2=~&(u\cdot\D)u+R\left(\rho^{-1}\theta-\bar{\rho}^{-1}\bar{\theta}\right)\D\rho-\mu\left(2\rho^{-1}\div(\theta^\al\frD(u))-\bar{\rho}^{-1}\bar{\theta}^\al(\Delta u+\D\div u)\right)\nonumber\\
&-\lambda\left(\rho^{-1}\D(\theta^\al\div u)-\bar{\rho}^{-1}\bar{\theta}^\al\D\div u\right), \nonumber\\
\M_3=~&u\cdot\D\theta+R(\theta-\bar{\theta})\div u+2\mu\rho^{-1}\theta^\al|\frD(u)|^2+\lambda\rho^{-1}\theta^\al(\div u)^2-\kappa\left(\rho^{-1}\div(\theta^\beta\D\theta)-\bar{\rho}^{-1}\bar{\theta}^\beta\Delta\theta\right).
\end{align}

Then we establish the estimate for the first order spatial derivative of the solution.
\begin{lemma}\label{Lem.1}
The following estimate holds for the global solution $(\rho, u, \theta)$ of \eqref{FCNS2} when $t\geq 1$.
\begin{align}\label{1.0}
&\frac{1}{2}\frac{\dif}{\dif t}\left(R\bar{\rho}^{-2}\bar{\theta}\|\D\rho\|_{L^2}^2+\|\D u\|_{L^2}^2+\bar{\theta}^{-1}\|\D\theta\|_{L^2}^2\right)+\mu\bar{\rho}^{-1}\bar{\theta}^\al\|\D^2 u\|_{L^2}^2+\kappa\bar{\rho}^{-1}\bar{\theta}^{\beta-1}\|\D^2\theta\|_{L^2}^2\nonumber\\
\leq~&C(\bar{\rho},\bar{\theta})\left(\|(\D\rho, \D u, \D\theta)\|_{L^2}^{\frac{1}{2}}+\|\D\rho\|_{L^2}^{\frac{1}{3}}\|\D\rho\|_{L^4}^{\frac{2}{3}}\right)\left(\|\D^2 u\|_{L^2}^2+\|\D^2\rho\|_{L^2}^2+\|\D^2\theta\|_{L^2}^2\right).
\end{align}
\end{lemma}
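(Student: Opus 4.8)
The plan is to multiply the reformulated system \eqref{FCNS2} by appropriate multipliers, integrate over $\R^3$, and carefully track the powers of $\bar\rho$ and $\bar\theta$ so that the nonlinear remainder terms $\M_1,\M_2,\M_3$ are all controlled by the top-order quantities $\|\D^2 u\|_{L^2}^2$, $\|\D^2\theta\|_{L^2}^2$, $\|\D^2\rho\|_{L^2}^2$ with a small prefactor built from lower-order norms. Concretely, first I would apply $\D$ to $\eqref{FCNS2}_2$, multiply by $\D u$, integrate by parts to produce the dissipation $\mu\bar\rho^{-1}\bar\theta^\al\|\D^2 u\|_{L^2}^2+(\mu+\lambda)\bar\rho^{-1}\bar\theta^\al\|\D\div u\|_{L^2}^2$, and discover that the pressure term $R\bar\rho^{-1}\bar\theta\int\D\D\rho\cdot\D u\,\dif x$ together with $R\int\D\D\theta\cdot\D u\,\dif x$ needs to be cancelled by the corresponding cross terms from the $\rho$- and $\theta$-equations. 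Thus I would simultaneously apply $\D$ to $\eqref{FCNS2}_1$, multiply by $R\bar\rho^{-2}\bar\theta\,\D\rho$; and apply $\D$ to $\eqref{FCNS2}_3$, multiply by $\bar\theta^{-1}\D\theta$; then add the three identities. The linear cross terms are arranged to cancel exactly, leaving $\tfrac12\tfrac{\dif}{\dif t}(R\bar\rho^{-2}\bar\theta\|\D\rho\|_{L^2}^2+\|\D u\|_{L^2}^2+\bar\theta^{-1}\|\D\theta\|_{L^2}^2)$ plus the two stated dissipation terms (dropping the nonnegative $\|\D\div u\|_{L^2}^2$ term) on the left, and the images of $\D\M_1,\D\M_2,\D\M_3$ paired against $R\bar\rho^{-2}\bar\theta\D\rho$, $\D u$, $\bar\theta^{-1}\D\theta$ on the right.

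Next I would estimate each nonlinear contribution. Every term in $\M_1,\M_2,\M_3$ is quadratic (or higher) in the perturbation, so after one spatial derivative a typical term looks like $(\D^2 v)\,w$ or $(\D v)(\D w)$ with $v,w$ among $\rho-\bar\rho$, $u$, $\theta-\bar\theta$. I would use Hölder together with the Gagliardo–Nirenberg inequalities of the preliminary lemma to move all factors except one second-order derivative into $L^2\cap L^6$, bounding e.g. $\|\D v\|_{L^3}\le C\|\D v\|_{L^2}^{1/2}\|\D^2 v\|_{L^2}^{1/2}$ and $\|w\|_{L^\infty}\le C\|\D w\|_{L^2}^{1/2}\|\D^2 w\|_{L^2}^{1/2}$; the density terms that cannot be differentiated twice cheaply (because only $\|\D^2\rho\|_{L^2}$, not $\|\D^2\rho\|_{L^6}$, is available) are instead handled with $\|\D\rho\|_{L^3}\le C\|\D\rho\|_{L^2}^{1/3}\|\D\rho\|_{L^4}^{2/3}$, which is exactly why the factor $\|\D\rho\|_{L^2}^{1/3}\|\D\rho\|_{L^4}^{2/3}$ appears in \eqref{1.0}. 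The commutator-type terms arising from the differences $\rho^{-1}\div(\theta^\al\frD(u))-\bar\rho^{-1}\bar\theta^\al(\Delta u+\D\div u)$ and $\rho^{-1}\div(\theta^\beta\D\theta)-\bar\rho^{-1}\bar\theta^\beta\Delta\theta$ expand into a sum of products involving $\D\rho$, $\D\theta$, and one top derivative of $u$ or $\theta$; since $\rho,\theta$ are bounded above and below by multiples of $\bar\rho,\bar\theta$ (Proposition \ref{p4.1}), all the coefficient powers are harmless $\bar\rho,\bar\theta$-dependent constants and get absorbed into $C(\bar\rho,\bar\theta)$. Collecting everything, each right-hand term is bounded by $C(\bar\rho,\bar\theta)\big(\|(\D\rho,\D u,\D\theta)\|_{L^2}^{1/2}+\|\D\rho\|_{L^2}^{1/3}\|\D\rho\|_{L^4}^{2/3}\big)\big(\|\D^2 u\|_{L^2}^2+\|\D^2\rho\|_{L^2}^2+\|\D^2\theta\|_{L^2}^2\big)$, which is the claimed bound.

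I would also note where $t\ge 1$ enters: on $[1,\infty)$ the weight $\sigma(t)=1$, so Proposition \ref{p4.1} and Lemma \ref{Lem:10} give uniform-in-time control of $\|\D^2 u\|_{L^2}$, $\|\D^2\theta\|_{L^2}$, $\|\D^2\rho\|_{L^2}$ and of the lower-order norms, which justifies freely interpolating and guarantees all constants are finite; one does not need smallness here, only the structural form \eqref{1.0}, since the genuine smallness of the lower-order factor will later come from the decay estimate \eqref{decay-0:U}. The main obstacle I anticipate is the bookkeeping of $\bar\rho,\bar\theta$ powers in the commutator terms coming from the temperature-dependent viscosity and conductivity: one must check that after using $\tfrac12\bar\theta\le\theta\le\tfrac32\bar\theta$ and $\tfrac12\bar\rho\le\rho\le\tfrac32\bar\rho$ every coefficient such as $\theta^\al-\bar\theta^\al$, $\rho^{-1}-\bar\rho^{-1}$, $\theta^{\al-1}\D\theta$ produces either a bounded factor times $C(\bar\rho,\bar\theta)$ or an extra copy of $\D\theta$ (hence an extra lower-order factor), so that no term escapes the stated structure — in particular the term $R(\rho^{-1}\theta-\bar\rho^{-1}\bar\theta)\D\rho$ in $\M_2$, whose derivative contains $(\rho^{-1}\theta-\bar\rho^{-1}\bar\theta)\D^2\rho$, is precisely the source of the $\|\D\rho\|_{L^2}^{1/3}\|\D\rho\|_{L^4}^{2/3}$ factor and must be estimated by $\|\rho^{-1}\theta-\bar\rho^{-1}\bar\theta\|_{L^6}\|\D^2\rho\|_{L^2}\|\D u\|_{L^3}$ with $\|\rho^{-1}\theta-\bar\rho^{-1}\bar\theta\|_{L^6}\le C(\bar\rho,\bar\theta)\|(\D\rho,\D\theta)\|_{L^2}^{1/2}\|(\D^2\rho,\D^2\theta)\|_{L^2}^{1/2}$ — care is needed to route this one so that only one second-order factor is quadratic and the rest collapse into the common prefactor.
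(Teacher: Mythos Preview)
Your plan is essentially the paper's: the same linear combination (weights $R\bar\rho^{-2}\bar\theta$, $1$, $\bar\theta^{-1}$) produces the stated energy, the cross terms cancel, and the nonlinearities are bounded by Gagliardo--Nirenberg interpolation. The paper's execution is slightly cleaner than what you describe: rather than pairing $\D\M_i$ against the gradients, it tests each equation of \eqref{FCNS2} against $-\Delta\rho$, $-\Delta u$, $-\Delta\theta$ and simply bounds $\|\M_i\|_{L^2}\|\D^2(\cdot)\|_{L^2}$, so only $\|\M_i\|_{L^2}$ needs to be estimated --- this avoids the commutator bookkeeping you anticipate.

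One correction on the source of the factor $\|\D\rho\|_{L^2}^{1/3}\|\D\rho\|_{L^4}^{2/3}$: in the paper it does \emph{not} come from the term $(\rho^{-1}\theta-\bar\rho^{-1}\bar\theta)\D\rho$, and your proposed bound $\|\rho^{-1}\theta-\bar\rho^{-1}\bar\theta\|_{L^6}\le C\|(\D\rho,\D\theta)\|_{L^2}^{1/2}\|(\D^2\rho,\D^2\theta)\|_{L^2}^{1/2}$ is not the right inequality (Sobolev already gives $\|\cdot\|_{L^6}\le C\|\D(\cdot)\|_{L^2}$). The factor actually arises from the pieces $(\rho^{-1}-\bar\rho^{-1})\theta^\al\D^2 u$ in $\M_2$ and $(\rho^{-1}-\bar\rho^{-1})\theta^\beta\D^2\theta$ in $\M_3$, estimated via $\|\rho-\bar\rho\|_{L^\infty}\le C\|\D\rho\|_{L^2}^{1/3}\|\D\rho\|_{L^4}^{2/3}$; the term you flagged is handled instead by $\|\rho-\bar\rho\|_{L^3}\|\D\rho\|_{L^6}\le C\|\D\rho\|_{L^2}^{1/2}\|\D^2\rho\|_{L^2}$ and falls under the $\|(\D\rho,\D u,\D\theta)\|_{L^2}^{1/2}$ prefactor.
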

\begin{proof}
Taking the $L^2$-inner product of \eqref{FCNS2}$_1$ with $-\Delta\rho$, we have
\begin{equation}\label{1.1}
\frac{1}{2}\frac{\dif}{\dif t}\|\D\rho\|_{L^2}^2-\bar{\rho}\int\D^2\rho:\D u\dif x=\int\M_1\Delta\rho\dif x\leq\|\D^2\rho\|_{L^2}\|\M_1\|_{L^2}.
\end{equation}
Direct computation gives
\begin{equation}\label{1.2}
\|\M_1\|_{L^2}\leq\|u\|_{L^3}\|\D\rho\|_{L^6}+\|\rho-\bar{\rho}\|_{L^3}\|\div u\|_{L^6}\leq C\|(\D\rho, \D u)\|_{L^2}^{\frac{1}{2}}\left(\|\D^2 u\|_{L^2}+\|\D^2\rho\|_{L^2}\right).
\end{equation}
Substituting \eqref{1.2} into \eqref{1.1}, we get
\begin{equation}\label{1.3}
\frac{1}{2}\frac{\dif}{\dif t}\|\D\rho\|_{L^2}^2-\bar{\rho}\int\D^2\rho:\D u\dif x\leq C\|(\D\rho, \D u)\|_{L^2}^{\frac{1}{2}}\left(\|\D^2 u\|_{L^2}^2+\|\D^2\rho\|_{L^2}^2\right).
\end{equation}

Taking the $L^2$-inner product of \eqref{FCNS2}$_2$ with $-\Delta u$, we have
\begin{align}\label{1.4}
&\frac{1}{2}\frac{\dif}{\dif t}\|\D u\|_{L^2}^2+\mu\bar{\rho}^{-1}\bar{\theta}^\al\|\D^2u\|_{L^2}^2+(\mu+\lambda)\bar{\rho}^{-1}\bar{\theta}^\al\|\D\div u\|_{L^2}^2+R\bar{\rho}^{-1}\bar{\theta}\int\D^2\rho:\D u\dif x+R\int\D^2\theta:\D u\dif x\nonumber\\
\leq~&\|\D^2 u\|_{L^2}\|\M_2\|_{L^2}.
\end{align}
Direct calculation gives
\begin{align}\label{1.5}
\|\M_2\|_{L^2}\leq~&\|u\|_{L^3}\|\D u\|_{L^6}+C\bar{\rho}^{-1}\|\theta-\bar{\theta}\|_{L^3}\|\D\rho\|_{L^6}+C\bar{\rho}^{-2}\bar{\theta}\|\rho-\bar{\rho}\|_{L^3}\|\D\rho\|_{L^6}\nonumber\\
&+C\bar{\rho}^{-1}\bar{\theta}^{\al-1}\|\theta-\bar{\theta}\|_{L^\infty}\|\D^2u\|_{L^2}+C\bar{\rho}^{-2}\bar{\theta}^\al\|\rho-\bar{\rho}\|_{L^\infty}\|\D^2u\|_{L^2}+C\bar{\rho}^{-1}\bar{\theta}^{\al-1}\|\D\theta\|_{L^3}\|\D u\|_{L^6}\nonumber\\
\leq~&C\|\D u\|_{L^2}^{\frac{1}{2}}\|\D^2u\|_{L^2}+C\bar{\theta}\|(\D\rho, \D\theta)\|_{L^2}^{\frac{1}{2}}\|\D^2\rho\|_{L^2}+C\bar{\theta}^\al\|\D\rho\|_{L^2}^{\frac{1}{3}}\|\D\rho\|_{L^4}^{\frac{2}{3}}\|\D^2u\|_{L^2}\nonumber\\
&+C\bar{\theta}^{\al-1}\|\D\theta\|_{L^2}^{\frac{1}{2}}\|\D^2\theta\|_{L^2}^{\frac{1}{2}}\|\D^2u\|_{L^2}.
\end{align}
Substituting \eqref{1.5} into \eqref{1.4} and using \eqref{estimate:D2theta-1}, we get
\begin{align}\label{1.6}
&\frac{1}{2}\frac{\dif}{\dif t}\|\D u\|_{L^2}^2+\mu\bar{\rho}^{-1}\bar{\theta}^\al\|\D^2u\|_{L^2}^2+(\mu+\lambda)\bar{\rho}^{-1}\bar{\theta}^\al\|\D\div u\|_{L^2}^2+R\bar{\rho}^{-1}\bar{\theta}\int\D^2\rho:\D u\dif x+R\int\D^2\theta:\D u\dif x\nonumber\\
\leq~&C(\bar{\rho}, \bar{\theta})\|(\D\rho, \D u, \D\theta)\|_{L^2}^{\frac{1}{2}}\left(\|\D^2\rho\|_{L^2}^2+\|\D^2u\|_{L^2}^2\right)+C(\bar{\rho}, \bar{\theta})\left(\|\D\rho\|_{L^2}^{\frac{1}{3}}\|\D\rho\|_{L^4}^{\frac{2}{3}}+\|\D\theta\|_{L^2}^{\frac{1}{2}}\right)\|\D^2u\|_{L^2}^2.
\end{align}

Taking the $L^2$-inner product of \eqref{FCNS2}$_3$ with $-\Delta\theta$, we have
\begin{equation}\label{1.7}
\frac{1}{2}\frac{\dif}{\dif t}\|\D\theta\|_{L^2}^2+\kappa\bar{\rho}^{-1}\bar{\theta}^\beta\|\D^2\theta\|_{L^2}^2-R\bar{\theta}\int\D^2\theta:\D u\dif x\leq\|\D^2\theta\|_{L^2}\|\M_3\|_{L^2}.
\end{equation}
Direct calculation gives
\begin{align}\label{1.8}
\|\M_3\|_{L^2}\leq~&\|u\|_{L^3}\|\D\theta\|_{L^6}+R\|\theta-\bar{\theta}\|_{L^3}\|\div u\|_{L^6}+C\bar{\rho}^{-1}\bar{\theta}^\al\|\D u\|_{L^3}\|\D u\|_{L^6}\nonumber\\
&+C\bar{\rho}^{-1}\bar{\theta}^{\beta-1}\|\theta-\bar{\theta}\|_{L^\infty}\|\D^2\theta\|_{L^2}+C\bar{\rho}^{-2}\bar{\theta}^\beta\|\rho-\bar{\rho}\|_{L^\infty}\|\D^2\theta\|_{L^2}+C\bar{\rho}^{-1}\bar{\theta}^{\beta-1}\|\D\theta\|_{L^4}^2\nonumber\\
\leq~& C\|\D u\|_{L^2}^{\frac{1}{2}}\|\D^2\theta\|_{L^2}+C\|\D\theta\|_{L^2}^{\frac{1}{2}}\|\D^2u\|_{L^2}+C\bar{\theta}^\al\|\D u\|_{L^2}^{\frac{1}{2}}\|\D u\|_{L^6}^{\frac{1}{2}}\|\D^2u\|_{L^2}\nonumber\\
&+C\bar{\theta}^{\beta-1}\|\D\theta\|_{L^2}^{\frac{1}{2}}\|\D^2\theta\|_{L^2}^{\frac{1}{2}}\|\D^2\theta\|_{L^2}+C\bar{\theta}^\beta\|\D\rho\|_{L^2}^{\frac{1}{3}}\|\D\rho\|_{L^4}^{\frac{2}{3}}\|\D^2\theta\|_{L^2}\nonumber\\
\leq~& C(\bar{\rho},\bar{\theta})\left(\|\D u\|_{L^2}^{\frac{1}{2}}+\|\D\theta\|_{L^2}^{\frac{1}{2}}+\|\D\rho\|_{L^2}^{\frac{1}{3}}\|\D\rho\|_{L^4}^{\frac{2}{3}}\right)\left(\|\D^2\theta\|_{L^2}^2+\|\D^2u\|_{L^2}^2\right).
\end{align}
Substituting \eqref{1.8} into \eqref{1.7}, we get
\begin{align}\label{1.9}
&\frac{1}{2}\frac{\dif}{\dif t}\|\D\theta\|_{L^2}^2+\kappa\bar{\rho}^{-1}\bar{\theta}^\beta\|\D^2\theta\|_{L^2}^2-R\bar{\theta}\int\D^2\theta:\D u\dif x\nonumber\\
\leq~&C(\bar{\rho},\bar{\theta})\left(\|\D\rho\|_{L^2}^{\frac{1}{2}}+\|\D u\|_{L^2}^{\frac{1}{2}}+\|\D\theta\|_{L^2}^{\frac{1}{2}}+\|\D\rho\|_{L^2}^{\frac{1}{3}}\|\D\rho\|_{L^4}^{\frac{2}{3}}\right)\left(\|\D^2\theta\|_{L^2}^2+\|\D^2u\|_{L^2}^2\right).
\end{align}
Making the linear combination \eqref{1.3}$\times R\bar{\rho}^{-2}\bar{\theta}+$\eqref{1.6}$+$\eqref{1.9}$\times\bar{\theta}^{-1}$, we complete the proof.
\end{proof}

Now we establish the energy estimate for the second order spatial derivative of the solution.
\begin{lemma}\label{Lem.2}
The following estimate holds for the global solution $(\rho, u, \theta)$ of \eqref{FCNS2} when $t\geq 1$.
\begin{align}\label{2.0}
&\frac{1}{2}\frac{\dif}{\dif t}\left(R\bar{\rho}^{-2}\bar{\theta}\|\D^2\rho\|_{L^2}^2+\|\D^2 u\|_{L^2}^2+\bar{\theta}^{-1}\|\D^2\theta\|_{L^2}^2\right)+\mu\bar{\rho}^{-1}\bar{\theta}^\al\|\D^3 u\|_{L^2}^2+\kappa\bar{\rho}^{-1}\bar{\theta}^{\beta-1}\|\D^3\theta\|_{L^2}^2\nonumber\\
\leq~&C(\bar{\rho}, \bar{\theta})\left(\|\D u\|_{L^2}^{\frac{1}{4}}+\|\D\theta\|_{L^2}^{\frac{1}{2}}+\|\D\rho\|_{L^2}^{\frac{1}{3}}\|\D\rho\|_{L^4}^{\frac{2}{3}}\right)\left(\|\D^2 u\|_{H^1}^2+\|\D^2\rho\|_{L^2}^2+\|\D^2\theta\|_{H^1}^2\right).
\end{align}
\end{lemma}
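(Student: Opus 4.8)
The plan is to carry the argument of Lemma~\ref{Lem.1} one order of derivatives higher. First I would apply a generic second-order spatial derivative $\partial_i\partial_j$ to each equation of \eqref{FCNS2}, multiply the resulting $\rho$-, $u$- and $\theta$-equations by $\partial_i\partial_j\rho$, $\partial_i\partial_j u$ and $\partial_i\partial_j\theta$ respectively, integrate over $\R^3$ and sum in $i,j$ (working with $t\ge 1$, so that $\sigma(t)\equiv 1$ and all the weighted estimates of Proposition~\ref{p4.1} and Lemma~\ref{Lem:10} become plain uniform bounds). Integration by parts turns the diffusion terms into $\mu\bar{\rho}^{-1}\bar{\theta}^\al\|\D^3 u\|_{L^2}^2+(\mu+\lambda)\bar{\rho}^{-1}\bar{\theta}^\al\|\D^2\div u\|_{L^2}^2$ and $\kappa\bar{\rho}^{-1}\bar{\theta}^{\beta}\|\D^3\theta\|_{L^2}^2$, and (after moving the spare derivative off $\D\rho$, $\D\theta$ onto $u$) the first-order coupling becomes the three cross terms $\bar{\rho}\int\D^2\rho:\D^2\div u\,\dif x$ from the density equation, $-R\bar{\rho}^{-1}\bar{\theta}\int\D^2\rho:\D^2\div u\,\dif x-R\int\D^2\theta:\D^2\div u\,\dif x$ from the momentum equation, and $R\bar{\theta}\int\D^2\theta:\D^2\div u\,\dif x$ from the temperature equation. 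Exactly as in Lemma~\ref{Lem.1}, forming the combination $R\bar{\rho}^{-2}\bar{\theta}\times(\text{the density identity})+(\text{the momentum identity})+\bar{\theta}^{-1}\times(\text{the temperature identity})$ cancels all cross terms and produces the left-hand side of \eqref{2.0}; what is left to control on the right are the three nonlinear remainders $-\int\D^2\M_1:\D^2\rho\,\dif x$, $-\int\D^2\M_2\cdot\D^2 u\,\dif x$ and $-\int\D^2\M_3:\D^2\theta\,\dif x$, with $\M_1,\M_2,\M_3$ as in \eqref{M123}.

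The bulk of the proof is the estimation of these remainders. For the transport pieces $u\cdot\D\rho$, $(u\cdot\D)u$ and $u\cdot\D\theta$ I would not let $\D^2$ fall on the factor carrying the top derivative of $\rho$ (there is no dissipation for $\D^3\rho$): instead I use the commutator decomposition $\D^2(u\cdot\D f)=u\cdot\D(\D^2 f)+[\D^2,u\cdot\D]f$ and integrate the principal part by parts, which replaces $\int\big(u\cdot\D(\D^2 f)\big):\D^2 f\,\dif x$ by $-\tfrac12\int(\div u)\,|\D^2 f|^2\,\dif x$, so that every surviving term contains at most $\D^2\rho$ and at most $\D^3 u$ or $\D^3\theta$. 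Each remaining term is a product of derivatives of the solution, which I would bound by Hölder's inequality together with the Gagliardo--Nirenberg inequality of \S\ref{prelim} --- in particular $\|\D^2 u\|_{L^6}\le C\|\D^3 u\|_{L^2}$, $\|\D^2\theta\|_{L^6}\le C\|\D^3\theta\|_{L^2}$, $\|\D u\|_{L^\infty}\le C\|\D u\|_{L^2}^{1/4}\|\D^3 u\|_{L^2}^{3/4}$ and the interpolation $\|\D\rho\|_{L^3}\le C\|\D\rho\|_{L^2}^{1/3}\|\D\rho\|_{L^4}^{2/3}$ --- and the uniform bounds already at hand: $\tfrac23\bar{\rho}\le\rho\le\tfrac43\bar{\rho}$, $\tfrac23\bar{\theta}\le\theta\le\tfrac43\bar{\theta}$, $\|\rho-\bar{\rho}\|_{L^\infty}\le\bar{\rho}/3$ (\eqref{3.9:3}), $\|\theta-\bar{\theta}\|_{L^\infty}^2\le\bar{\theta}^2/9$ (\eqref{Lem:4:1-5}), $\|\D^2\theta\|_{L^2}\le 2\bar{\theta}$ (\eqref{estimate:D2theta-1}, \eqref{Lem4:10}), $\|\D^2 u\|_{L^2}^2\le C\bar{\rho}$ (\eqref{346-0}) and $\|\D\rho\|_{L^4}\le C$ (Proposition~\ref{p4.1}). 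After a use of Young's inequality, every term acquires the shape $C(\bar{\rho},\bar{\theta})\,\big(\|\D u\|_{L^2}^{1/4}+\|\D\theta\|_{L^2}^{1/2}+\|\D\rho\|_{L^2}^{1/3}\|\D\rho\|_{L^4}^{2/3}\big)\big(\|\D^2 u\|_{H^1}^2+\|\D^2\rho\|_{L^2}^2+\|\D^2\theta\|_{H^1}^2\big)$; for instance the commutator leftover $\tfrac12\int(\div u)\,|\D^2 f|^2\,\dif x$ contributes the factor $\|\div u\|_{L^\infty}\le C\|\D u\|_{L^2}^{1/4}\|\D^3 u\|_{L^2}^{3/4}$, and a term such as $\int|\D^2 u|\,|\D\rho|\,|\D^2\rho|\,\dif x\le C\|\D^3 u\|_{L^2}\|\D\rho\|_{L^2}^{1/3}\|\D\rho\|_{L^4}^{2/3}\|\D^2\rho\|_{L^2}$ contributes the factor $\|\D\rho\|_{L^2}^{1/3}\|\D\rho\|_{L^4}^{2/3}$. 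The diffusion remainders (the differences $\theta^\al-\bar{\theta}^\al$, $\theta^\beta-\bar{\theta}^\beta$ and the viscous heating $\theta^\al|\frD(u)|^2$, $\theta^\al(\div u)^2$) are treated exactly as in \eqref{1.5}, \eqref{1.8}, using once more $\|\D^2\theta\|_{L^2}\le 2\bar{\theta}$ to turn genuinely second-order $\theta$-factors into constants; adding the three identities then yields \eqref{2.0}.

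The main obstacle will be the density terms, precisely because $\|\D^3\rho\|_{L^2}$ is not dissipated: every appearance of a top-order $\rho$-derivative has to be re-routed, either through the transport commutator above (which converts $\D^3\rho$ into $\D^2\rho$ tested against $\div u$, hence into the factor $\|\D u\|_{L^\infty}$) or by pairing a factor of $\D^2\rho$ with a genuinely dissipated $\D^3 u$ or $\D^3\theta$ through the Hölder split $L^6\times L^3\times L^2$, so that the price is only the small prefactor $\|\D\rho\|_{L^2}^{1/3}\|\D\rho\|_{L^4}^{2/3}$; carrying this out for every quadratic interaction inside $\M_1,\M_2,\M_3$ is the delicate part. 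A secondary, purely bookkeeping issue is to check that all powers of $\bar{\rho}$ and $\bar{\theta}$ generated along the way are absorbed into the single constant $C(\bar{\rho},\bar{\theta})$; since \eqref{2.0} does not demand this constant to be small, no restriction on $\al$, $\beta$ beyond those already in Proposition~\ref{p4.1} is needed, and the eventual smallness of the prefactor $\|\D u\|_{L^2}^{1/4}+\|\D\theta\|_{L^2}^{1/2}+\|\D\rho\|_{L^2}^{1/3}\|\D\rho\|_{L^4}^{2/3}$ for large $t$ is supplied afterwards by the $L^2$-decay already established in \eqref{decay-0:U}.
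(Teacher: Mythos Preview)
Your proposal is correct and follows essentially the same route as the paper: apply $\D^2$ to each equation of \eqref{FCNS2}, test against $(\D^2\rho,\D^2 u,\D^2\theta)$, cancel the cross terms via the combination $R\bar{\rho}^{-2}\bar{\theta}\times(\rho)+(\text{$u$})+\bar{\theta}^{-1}\times(\theta)$, and use the transport commutator in the $\rho$-equation to avoid $\D^3\rho$. The one presentational difference is that for the $u$- and $\theta$-remainders the paper integrates by parts once at the outset to write $\int\D^2\M_2\cdot\D^2 u=-\int\D\M_2\cdot\D\Delta u\le\|\D^3 u\|_{L^2}\|\D\M_2\|_{L^2}$ (and similarly for $\M_3$), so that only $\|\D\M_2\|_{L^2}$, $\|\D\M_3\|_{L^2}$ need to be estimated (see \eqref{2.3}, \eqref{2.6}); this automatically keeps $\D^3\rho$ out of those two terms and makes the commutator argument unnecessary for $(u\cdot\D)u$ and $u\cdot\D\theta$.
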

\begin{proof}
Applying $\D^2$ operator to \eqref{FCNS2}$_1$, and taking the $L^2$-inner product with $\D^2\rho$, we get
\begin{align}\label{2.1}
&\frac{1}{2}\frac{\dif}{\dif t}\|\D^2\rho\|_{L^2}^2+\bar{\rho}\int\D^2\rho:\D^2\div u\dif x\nonumber\\
=~&-\int\D^2\rho:\D^2(u\cdot\D\rho)\dif x-\int\D^2\rho:\D^2((\rho-\bar{\rho})\div u)\dif x\nonumber\\
\leq~&-\frac{1}{2}\int u\cdot\D(|\D^2\rho|^2)\dif x+C\|\D u\|_{L^\infty}\|\D^2\rho\|_{L^2}^2+C\|\D^2u\|_{L^6}\|\D\rho\|_{L^3}\|\D^2\rho\|_{L^2}\nonumber\\
&+C\|\rho-\bar{\rho}\|_{L^\infty}\|\D^3u\|_{L^2}\|\D^2\rho\|_{L^2}\nonumber\\
\leq~&C\left(\|\rho-\bar{\rho}\|_{L^\infty}+\|\D\rho\|_{L^3}+\|\D u\|_{L^\infty}\right)\|\D^2\rho\|_{L^2}^2+C\left(\|\rho-\bar{\rho}\|_{L^\infty}+\|\D\rho\|_{L^3}\right)\|\D^3u\|_{L^2}^2\nonumber\\
\leq~&C\|\D\rho\|_{L^2}^{\frac{1}{3}}\|\D\rho\|_{L^4}^{\frac{2}{3}}\left(\|\D^2u\|_{H^1}^2+\|\D^2\rho\|_{L^2}^2\right)+C\|\D u\|_{L^2}^{\frac{1}{4}}\|\D^3u\|_{L^2}^{\frac{3}{4}}\|\D^2\rho\|_{L^2}^2\nonumber\\
\leq~&C(\bar{\rho}, \bar{\theta})\left(\|\D\rho\|_{L^2}^{\frac{1}{3}}\|\D\rho\|_{L^4}^{\frac{2}{3}}+\|\D u\|_{L^2}^{\frac{1}{4}}\right)\left(\|\D^2u\|_{H^1}^2+\|\D^2\rho\|_{L^2}^2\right),
\end{align}
where we have used \eqref{sec3.4:1.3}. Applying $\D^2$ operator to \eqref{FCNS2}$_2$, and taking the $L^2$-inner product with $\D^2u$, we get
\begin{align}\label{2.2}
&\frac{1}{2}\frac{\dif}{\dif t}\|\D^2u\|_{L^2}^2+\mu\bar{\rho}^{-1}\bar{\theta}^\al\|\D^3u\|_{L^2}^2+(\mu+\lambda)\bar{\rho}^{-1}\bar{\theta}^\al\|\D^2\div u\|_{L^2}^2\nonumber\\
&-R\bar{\rho}^{-1}\bar{\theta}\int\D^2\rho:\D^2\div u\dif x-R\int\D^2\theta:\D^2\div u\dif x\nonumber\\
\leq~&\|\D^3 u\|_{L^2}\|\D\M_2\|_{L^2}.
\end{align}
Direct calculation gives
\begin{align}\label{2.3}
\|\D\M_2\|_{L^2}\leq~&\|\D u\|_{L^3}\|\D u\|_{L^6}+\|u\|_{L^\infty}\|\D^2u\|_{L^2}+C\bar{\rho}^{-1}\|\D\rho\|_{L^3}\|\D\theta\|_{L^6}+C\bar{\rho}^{-1}\|\theta-\bar{\theta}\|_{L^\infty}\|\D^2\rho\|_{L^2}\nonumber\\
&+C\bar{\rho}^{-2}\bar{\theta}\|\D\rho\|_{L^3}\|\D\rho\|_{L^6}+C\bar{\rho}^{-2}\bar{\theta}\|\rho-\bar{\rho}\|_{L^\infty}\|\D^2\rho\|_{L^2}+C\bar{\theta}^\al\|\D\rho\|_{L^3}\|\D^2u\|_{L^6}\nonumber\\
&+C\bar{\theta}^{\al-1}\|\D\theta\|_{L^3}\|\D^2u\|_{L^6}+C\bar{\theta}^{\al-1}\|\theta-\bar{\theta}\|_{L^\infty}\|\D^3u\|_{L^2}+C\bar{\theta}^\al\|\rho-\bar{\rho}\|_{L^\infty}\|\D^3u\|_{L^2}\nonumber\\
&+C\bar{\theta}^{\al-1}\|\D\rho\|_{L^6}\|\D\theta\|_{L^3}\|\D u\|_{L^\infty}+C\bar{\theta}^{\al-2}\|\D\theta\|_{L^6}\|\D\theta\|_{L^3}\|\D u\|_{L^\infty}+C\bar{\theta}^{\al-1}\|\D^2\theta\|_{L^2}\|\D u\|_{L^\infty}\nonumber\\
\leq~&C(\bar{\rho}, \bar{\theta})\left(\|\D u\|_{L^3}+\|u\|_{L^\infty}+\|\D\theta\|_{L^3}+\|\theta-\bar{\theta}\|_{L^\infty}+\|\D\rho\|_{L^3}+\|\rho-\bar{\rho}\|_{L^\infty}\right)\times\nonumber\\
&\left(\|\D^2u\|_{H^1}+\|\D^2\theta\|_{L^2}+\|\D^2\rho\|_{L^2}\right)+C(\bar{\theta})\|\D u\|_{L^2}^{\frac{1}{4}}\|\D^2\theta\|_{L^2}\|\D^3u\|_{L^2}^{\frac{3}{4}}\nonumber\\
\leq~&C(\bar{\rho}, \bar{\theta})\left(\|\D u\|_{L^2}^{\frac{1}{4}}+\|\D\theta\|_{L^2}^{\frac{1}{2}}+\|\D\rho\|_{L^2}^{\frac{1}{3}}\|\D\rho\|_{L^4}^{\frac{2}{3}}\right)\left(\|\D^2u\|_{H^1}+\|\D^2\theta\|_{L^2}+\|\D^2\rho\|_{L^2}\right).
\end{align}
Inserting \eqref{2.3} into \eqref{2.2}, we get
\begin{align}\label{2.4}
&\frac{1}{2}\frac{\dif}{\dif t}\|\D^2u\|_{L^2}^2+\mu\bar{\rho}^{-1}\bar{\theta}^\al\|\D^3u\|_{L^2}^2+(\mu+\lambda)\bar{\rho}^{-1}\bar{\theta}^\al\|\D^2\div u\|_{L^2}^2\nonumber\\
&-R\bar{\rho}^{-1}\bar{\theta}\int\D^2\rho:\D^2\div u\dif x-R\int\D^2\theta:\D^2\div u\dif x\nonumber\\
\leq~&C(\bar{\rho}, \bar{\theta})\left(\|\D u\|_{L^2}^{\frac{1}{4}}+\|\D\theta\|_{L^2}^{\frac{1}{2}}+\|\D\rho\|_{L^2}^{\frac{1}{3}}\|\D\rho\|_{L^4}^{\frac{2}{3}}\right)\left(\|\D^2u\|_{H^1}^2+\|\D^2\theta\|_{L^2}^2+\|\D^2\rho\|_{L^2}^2\right).
\end{align}

Applying $\D^2$ operator to \eqref{FCNS2}$_3$, and taking the $L^2$-inner product with $\D^2\theta$, we get
\begin{equation}\label{2.5}
\frac{1}{2}\frac{\dif}{\dif t}\|\D^2\theta\|_{L^2}^2+\kappa\bar{\rho}^{-1}\bar{\theta}^\beta\|\D^3\theta\|_{L^2}^2+R\bar{\theta}\int\D^2\theta:\D^2\div u\dif x\leq\|\D^3\theta\|_{L^2}\|\D\M_3\|_{L^2}.
\end{equation}
Direct calculation gives
\begin{align}\label{2.6}
\|\D\M_3\|_{L^2}\leq~&\|\D u\|_{L^3}\|\D\theta\|_{L^6}+\|u\|_{L^\infty}\|\D^2\theta\|_{L^2}+C\|\theta-\bar{\theta}\|_{L^\infty}\|\D^2u\|_{L^2}+C\bar{\theta}^\al\|\D u\|_{L^3}\|\D^2u\|_{L^6}\nonumber\\
&+C\bar{\theta}^\al\|\D u\|_{L^\infty}\|\D u\|_{L^6}\|\D\rho\|_{L^3}+C\bar{\theta}^{\al-1}\|\D u\|_{L^\infty}\|\D u\|_{L^6}\|\D\theta\|_{L^3}+C\bar{\theta}^\beta\|\D\rho\|_{L^3}\|\D^2\theta\|_{L^6}\nonumber\\
&+C\bar{\theta}^{\beta-1}\|\D\theta\|_{L^3}\|\D^2\theta\|_{L^6}+C\bar{\theta}^{\beta-1}\|\theta-\bar{\theta}\|_{L^\infty}\|\D^3\theta\|_{L^2}+C\bar{\theta}^\beta\|\rho-\bar{\rho}\|_{L^\infty}\|\D^3\theta\|_{L^2}\nonumber\\
&+C\bar{\theta}^{\beta-1}\|\D\rho\|_{L^3}\|\D\theta\|_{L^6}\|\D\theta\|_{L^\infty}+C\bar{\theta}^{\beta-2}\|\D\theta\|_{L^3}\|\D\theta\|_{L^6}\|\D\theta\|_{L^\infty}\nonumber\\
\leq~&C(\bar{\rho}, \bar{\theta})\left(\|\D u\|_{L^2}^{\frac{1}{2}}+\|\D\theta\|_{L^2}^{\frac{1}{2}}+\|\D\rho\|_{L^2}^{\frac{1}{3}}\|\D\rho\|_{L^4}^{\frac{2}{3}}\right)\left(\|\D^2\theta\|_{H^1}+\|\D^2u\|_{H^1}\right).
\end{align}
Substituting \eqref{2.6} into \eqref{2.5}, we obtain
\begin{align}\label{2.7}
&\frac{1}{2}\frac{\dif}{\dif t}\|\D^2\theta\|_{L^2}^2+\kappa\bar{\rho}^{-1}\bar{\theta}^\beta\|\D^3\theta\|_{L^2}^2+R\bar{\theta}\int\D^2\theta:\D^2\div u\dif x\nonumber\\
\leq~& C(\bar{\rho}, \bar{\theta})\left(\|\D u\|_{L^2}^{\frac{1}{2}}+\|\D\theta\|_{L^2}^{\frac{1}{2}}+\|\D\rho\|_{L^2}^{\frac{1}{3}}\|\D\rho\|_{L^4}^{\frac{2}{3}}\right)\left(\|\D^2u\|_{H^1}^2+\|\D^2\theta\|_{H^1}^2+\|\D^2\rho\|_{L^2}^2\right).
\end{align}
Making the linear combination \eqref{2.1}$\times R\bar{\rho}^{-2}\bar{\theta}+$\eqref{2.4}$+$\eqref{2.7}$\times\bar{\theta}^{-1}$, we complete the proof.
\end{proof}

In order to close the estimates, we need to establish the following dissipation estimate for
$\D^2\rho$.
\begin{lemma}\label{Lem.3}
The following estimate holds for the global solution $(\rho, u, \theta)$ of \eqref{FCNS2} when $t\geq 1$.
\begin{align}\label{3.0}
&\frac{\dif}{\dif t}\int\D u:\D^2\rho\dif x+\frac{R\bar{\theta}}{2\bar{\rho}}\|\D^2\rho\|_{L^2}^2\nonumber\\
\leq~&\overline{C}_2(\bar{\rho}, \bar{\theta})\left(\|\D^2 u\|_{H^1}^2+\|\D^2\theta\|_{L^2}^2\right)+C(\bar{\rho}, \bar{\theta})\left(\|\D u\|_{L^2}^{\frac{1}{2}}+\|\D\theta\|_{L^2}+\|\D\rho\|_{L^2}^{\frac{2}{3}}\right)\|\D^2\rho\|_{L^2}^2.
\end{align}
\end{lemma}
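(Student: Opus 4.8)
The plan is to test the momentum equation \eqref{FCNS2}$_2$ against $-\D^2\rho$ (equivalently, to take the $L^2$-inner product of $\D(\text{\eqref{FCNS2}}_2)$ with $\D^2\rho$), so that the leading linear term $R\bar{\rho}^{-1}\bar{\theta}\D\rho$ produces the dissipative quantity $R\bar{\rho}^{-1}\bar{\theta}\|\D^2\rho\|_{L^2}^2$ after an integration by parts. Concretely, I would start from
\begin{equation*}
\int\D u_t:\D^2\rho\dif x+R\bar{\rho}^{-1}\bar{\theta}\|\D^2\rho\|_{L^2}^2+R\int\D^2\theta:\D^2\rho\dif x-\mu\bar{\rho}^{-1}\bar{\theta}^\al\int\Delta\D u:\D^2\rho\dif x-(\mu+\lambda)\bar{\rho}^{-1}\bar{\theta}^\al\int\D\D\div u:\D^2\rho\dif x=-\int\D\M_2:\D^2\rho\dif x,
\end{equation*}
and then rewrite the first term as a total time derivative: $\int\D u_t:\D^2\rho\dif x=\frac{\dif}{\dif t}\int\D u:\D^2\rho\dif x-\int\D u:\D^2\rho_t\dif x$. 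The term $-\int\D u:\D^2\rho_t\dif x$ is handled by substituting $\rho_t=-\bar{\rho}\div u-\M_1$ from \eqref{FCNS2}$_1$; the resulting $\bar{\rho}\int\D u:\D^2\div u\dif x$ integrates by parts to $-\bar{\rho}\|\D^2\div u\|_{L^2}^2\le 0$ (or is simply bounded by $C(\bar{\rho})\|\D^3u\|_{L^2}^2$), which contributes to the $\|\D^2u\|_{H^1}^2$ term on the right with constant $\overline{C}_2(\bar{\rho},\bar{\theta})$, while the $\D^2\M_1$-type piece is lower order.

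\textbf{Key steps in order.} First, derive the identity above and collect all terms that are genuinely bilinear in $(\D^2\rho,\D^3u)$ or $(\D^2\rho,\D^2\theta)$ or $(\D^2\rho,\D^2\div u)$; by Young's inequality each of these is bounded by $\tfrac{R\bar\theta}{4\bar\rho}\|\D^2\rho\|_{L^2}^2$ plus $C(\bar{\rho},\bar{\theta})(\|\D^3u\|_{L^2}^2+\|\D^2\theta\|_{L^2}^2)$, which is exactly the $\overline{C}_2(\bar{\rho},\bar{\theta})(\|\D^2u\|_{H^1}^2+\|\D^2\theta\|_{L^2}^2)$ term (note $\|\D^3u\|_{L^2}\le\|\D^2u\|_{H^1}$). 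Here it is important that the absorbed half $\tfrac{R\bar\theta}{4\bar\rho}\|\D^2\rho\|_{L^2}^2$ leaves $\tfrac{R\bar\theta}{2\bar\rho}\|\D^2\rho\|_{L^2}^2$ on the left, matching the statement. Second, estimate $\|\D\M_2\|_{L^2}$: this is essentially the bound \eqref{2.3} already obtained in Lemma \ref{Lem.2}, giving
$$\|\D\M_2\|_{L^2}\le C(\bar{\rho},\bar{\theta})\big(\|\D u\|_{L^2}^{\frac14}+\|\D\theta\|_{L^2}^{\frac12}+\|\D\rho\|_{L^2}^{\frac13}\|\D\rho\|_{L^4}^{\frac23}\big)\big(\|\D^2u\|_{H^1}+\|\D^2\theta\|_{L^2}+\|\D^2\rho\|_{L^2}\big),$$
so that $|\int\D\M_2:\D^2\rho\dif x|$ is controlled by $C(\bar{\rho},\bar{\theta})(\text{small factor})\cdot(\|\D^2u\|_{H^1}^2+\|\D^2\theta\|_{L^2}^2+\|\D^2\rho\|_{L^2}^2)$; the $\|\D^2u\|_{H^1}^2+\|\D^2\theta\|_{L^2}^2$ piece, being already multiplied by a decaying small factor, can be folded into either the $\overline{C}_2$ term or the $C(\bar{\rho},\bar{\theta})(\cdots)\|\D^2\rho\|_{L^2}^2$ term (using $\|\D\rho\|_{L^2}^{1/3}\|\D\rho\|_{L^4}^{2/3}$ bounded), while the genuinely new $\|\D^2\rho\|_{L^2}^2$ contribution carries precisely the prefactor $\|\D u\|_{L^2}^{1/2}+\|\D\theta\|_{L^2}+\|\D\rho\|_{L^2}^{2/3}$ advertised in \eqref{3.0} (taking $\|\D\rho\|_{L^4}\le C(K)$ and $\|\D u\|_{L^2}^{1/4}\cdot\|\D^2\rho\|_{L^2}\le\|\D u\|_{L^2}^{1/2}\|\D^2\rho\|_{L^2}^{?}$ after splitting, and similarly for the cross terms). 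Third, assemble everything and invoke the uniform bounds $\|\rho-\bar{\rho}\|_{L^\infty}\le\bar{\rho}/3$, $\|\theta-\bar{\theta}\|_{L^\infty}\le\bar{\theta}/3$, $\|\D\rho\|_{L^4}\le C(K)$, $\|\D^2\theta\|_{L^2}\le C(K)$ (via \eqref{estimate:D2theta-1}) and \eqref{sec3.4:1.3} wherever needed to convert $L^3$-norms of $\D u,\D\theta,\D\rho$ into the Gagliardo–Nirenberg forms with the stated exponents.

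\textbf{Main obstacle.} The delicate point is bookkeeping the exponents so that every term ends up either (i) multiplied by the constant $\overline{C}_2(\bar{\rho},\bar{\theta})$ times $\|\D^2u\|_{H^1}^2+\|\D^2\theta\|_{L^2}^2$ (these are \emph{not} allowed to carry a small decaying factor, because they will later be absorbed by the strictly positive dissipation in Lemmas \ref{Lem.1}–\ref{Lem.2} after multiplying \eqref{3.0} by a suitably small constant), or (ii) multiplied by $C(\bar{\rho},\bar{\theta})$ times a genuinely decaying prefactor $\|\D u\|_{L^2}^{1/2}+\|\D\theta\|_{L^2}+\|\D\rho\|_{L^2}^{2/3}$ times $\|\D^2\rho\|_{L^2}^2$. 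In particular one must be careful that the term $\int\D u:\D^2\rho_t\dif x$, after using $\rho_t=-\bar{\rho}\div u-\M_1$, produces only $\|\D^2u\|_{H^1}^2$-type contributions with an $(\bar{\rho},\bar{\theta})$-constant (this is why $\overline{C}_2$, rather than a universally small constant, appears), together with lower-order pieces from $\D^2\M_1$ controlled by Gagliardo–Nirenberg and the $L^\infty$ bounds on $\rho-\bar{\rho}$ and $\D\rho$. Once the two categories are cleanly separated, making the linear combination with the correct power of $\bar{\theta}$ (as in Lemmas \ref{Lem.1}–\ref{Lem.2}, the factor $R\bar{\rho}^{-2}\bar{\theta}$ on the density line) is routine, and \eqref{3.0} follows.
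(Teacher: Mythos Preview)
Your approach is essentially identical to the paper's: apply $\D$ to \eqref{FCNS2}$_2$, pair with $\D^2\rho$, convert $\int\D u_t:\D^2\rho$ into $\tfrac{\dif}{\dif t}\int\D u:\D^2\rho$ plus a remainder handled via \eqref{FCNS2}$_1$, then bound the linear cross terms by Young's inequality (absorbing half of $R\bar\rho^{-1}\bar\theta\|\D^2\rho\|_{L^2}^2$) and control $\|\M_1\|_{L^2}^2+\|\D\M_2\|_{L^2}^2$ via \eqref{1.2} and \eqref{2.3}; the paper does exactly this in \eqref{3.1}--\eqref{3.3}. Two small slips to fix: the term coming from $\rho_t=-\bar\rho\div u$ integrates by parts to $-\bar\rho\|\D\div u\|_{L^2}^2$ (not $\|\D^2\div u\|_{L^2}^2$), and after moving it to the right-hand side its sign is $+\bar\rho\|\D\div u\|_{L^2}^2$, so it is not $\le 0$ but is absorbed into $\overline{C}_2(\bar\rho,\bar\theta)\|\D^2u\|_{H^1}^2$ (as you also say); and the exponent bookkeeping you flag is automatic once you square $\|\D\M_2\|_{L^2}$ and use the uniform bound $\|\D\rho\|_{L^4}\le C$ from \eqref{goal:2}.
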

\begin{proof}
Applying $\D$ operator to \eqref{FCNS2}$_2$, and taking the $L^2$-inner product with $\D^2\rho$, we get
\begin{align}\label{3.1}
&\int\D u_t:\D^2\rho\dif x+R\bar{\rho}^{-1}\bar{\theta}\|\D^2\rho\|_{L^2}^2+R\int\D^2\theta:\D^2\rho\dif x-\mu\bar{\rho}^{-1}\bar{\theta}^\al\int\D\Delta u:\D^2\rho\dif x\nonumber\\
&-(\mu+\lambda)\bar{\rho}^{-1}\bar{\theta}^\al\int\D^2\div u:\D^2\rho\dif x=-\int\D\M_2:\D^2\rho\dif x.
\end{align}
Using \eqref{FCNS2}$_1$, we have
\begin{align}\label{3.2}
\int\D u_t:\D^2\rho\dif x=~&\frac{\dif}{\dif t}\int\D u:\D^2\rho\dif x+\int\D\div u\cdot\D\rho_t\dif x\nonumber\\
=~&\frac{\dif}{\dif t}\int\D u:\D^2\rho\dif x-\bar{\rho}\|\D\div u\|_{L^2}^2-\int\D\div u\cdot\D\M_1\dif x.
\end{align}
Hence we get
\begin{align}\label{3.3}
&\frac{\dif}{\dif t}\int\D u:\D^2\rho\dif x+R\bar{\rho}^{-1}\bar{\theta}\|\D^2\rho\|_{L^2}^2\nonumber\\
=~&-R\int\D^2\theta:\D^2\rho\dif x+\mu\bar{\rho}^{-1}\bar{\theta}^\al\int\D\Delta u:\D^2\rho\dif x+(\mu+\lambda)\bar{\rho}^{-1}\bar{\theta}^\al\int\D^2\div u:\D^2\rho\dif x\nonumber\\
&+\bar{\rho}\|\D\div u\|_{L^2}^2+\int\D\div u\cdot\D\M_1\dif x-\int\D\M_2:\D^2\rho\dif x\nonumber\\
\leq~&\frac{R\bar{\theta}}{2\bar{\rho}}\|\D^2\rho\|_{L^2}^2+C(\bar{\rho}, \bar{\theta})\left(\|\D^2u\|_{H^1}^2+\|\D^2\theta\|_{L^2}^2\right)+C(\bar{\rho}, \bar{\theta})\left(\|\M_1\|_{L^2}^2+\|\D\M_2\|_{L^2}^2\right).
\end{align}
Substituting \eqref{1.2} and \eqref{2.3} into \eqref{3.3}, we complete the proof.
\end{proof}

Combining all the estimates obtained in Lemmas \ref{Lem.1}--\ref{Lem.3}, we derive the following energy estimate.
\begin{lemma}\label{Lem.4}
For the global solution $(\rho, u, \theta)$, there exist energy functional $\calE(t)\sim\|(\D\rho, \D u, \D\theta)\|_{H^1}^2$, time $T_1(\bar{\rho}, \bar{\theta})>0$ and a positive constant $K_0(\bar{\rho}, \bar{\theta})$, such that for any $t\geq T_1(\bar{\rho}, \bar{\theta})$, it holds
\begin{equation}\label{4.0}
\frac{\dif}{\dif t}\calE(t)+K_0(\bar{\rho},\bar{\theta})\left(\|\D^2 u\|_{H^1}^2+\|\D^2\theta\|_{H^1}^2+\|\D^2\rho\|_{L^2}^2\right)\leq 0.
\end{equation}
\end{lemma}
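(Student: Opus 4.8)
The plan is to combine the first- and second-order energy estimates from Lemmas \ref{Lem.1} and \ref{Lem.2} with the dissipation estimate for $\D^2\rho$ from Lemma \ref{Lem.3} in a suitable linear combination. More precisely, I would set
\begin{equation*}
\calE(t):=R\bar{\rho}^{-2}\bar{\theta}\|(\D\rho,\D^2\rho)\|_{L^2}^2+\|(\D u,\D^2 u)\|_{L^2}^2+\bar{\theta}^{-1}\|(\D\theta,\D^2\theta)\|_{L^2}^2+2\eta(\bar{\rho},\bar{\theta})\int\D u:\D^2\rho\,\dif x,
\end{equation*}
with a small parameter $\eta(\bar{\rho},\bar{\theta})>0$ chosen so that $\calE(t)\sim\|(\D\rho,\D u,\D\theta)\|_{H^1}^2$; the cross term $\int\D u:\D^2\rho\,\dif x$ is absorbed by the $\D^2$-norms via Cauchy--Schwarz once $\eta$ is small enough, which gives the claimed equivalence. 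Adding \eqref{1.0}$+$\eqref{2.0}$+\eta\cdot$\eqref{3.0}, the key observation is that \eqref{3.0} supplies the missing good term $\tfrac{R\bar{\theta}}{2\bar{\rho}}\|\D^2\rho\|_{L^2}^2$ on the left, while its bad terms $\overline{C}_2(\bar{\rho},\bar{\theta})(\|\D^2u\|_{H^1}^2+\|\D^2\theta\|_{L^2}^2)$ are controlled by fixing $\eta$ small relative to $\mu\bar{\rho}^{-1}\bar{\theta}^\al$ and $\kappa\bar{\rho}^{-1}\bar{\theta}^{\beta-1}$, so that $\tfrac{\eta}{2}\overline{C}_2<\tfrac12\min\{\mu\bar{\rho}^{-1}\bar{\theta}^\al,\kappa\bar{\rho}^{-1}\bar{\theta}^{\beta-1}\}$. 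With such a choice, the left-hand side dominates $K_1(\bar{\rho},\bar{\theta})(\|\D^2u\|_{H^1}^2+\|\D^2\theta\|_{H^1}^2+\|\D^2\rho\|_{L^2}^2)$ for some $K_1(\bar{\rho},\bar{\theta})>0$.

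Next I would handle the nonlinear error terms on the right-hand side, which by Lemmas \ref{Lem.1}--\ref{Lem.3} are all of the form
\begin{equation*}
C(\bar{\rho},\bar{\theta})\,\mathcal{N}(t)\left(\|\D^2 u\|_{H^1}^2+\|\D^2\theta\|_{H^1}^2+\|\D^2\rho\|_{L^2}^2\right),\qquad \mathcal{N}(t):=\|\D u\|_{L^2}^{1/4}+\|\D\theta\|_{L^2}^{1/2}+\|\D\rho\|_{L^2}^{1/3}\|\D\rho\|_{L^4}^{2/3}+\|\D u\|_{L^2}^{1/2}+\|\D\rho\|_{L^2}^{2/3}.
\end{equation*}
Here is where the decay estimate \eqref{decay-0:U} proved in the previous subsection is essential: it gives $\|\D u\|_{L^2}+\|\D\theta\|_{L^2}+\|\D\rho\|_{L^2}+\|\D\rho\|_{L^4}\to 0$ as $t\to\infty$, hence $\mathcal{N}(t)\to 0$. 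Therefore there exists $T_1(\bar{\rho},\bar{\theta})>0$ such that $C(\bar{\rho},\bar{\theta})\mathcal{N}(t)\leq\tfrac12 K_1(\bar{\rho},\bar{\theta})$ for all $t\geq T_1(\bar{\rho},\bar{\theta})$, and the nonlinear terms get absorbed into the left-hand side. Setting $K_0(\bar{\rho},\bar{\theta}):=\tfrac12 K_1(\bar{\rho},\bar{\theta})$ then yields \eqref{4.0}.

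The main obstacle is the bookkeeping of the $\bar{\rho},\bar{\theta}$-powers: the cross-term coefficient $\eta$ must be simultaneously small enough to (i) keep $\calE(t)$ equivalent to $\|(\D\rho,\D u,\D\theta)\|_{H^1}^2$ and (ii) kill the $\overline{C}_2(\bar{\rho},\bar{\theta})$-terms from \eqref{3.0}, yet it appears multiplying the good term $\|\D^2\rho\|_{L^2}^2$, so one has to check that the resulting coefficient stays strictly positive. Since $\bar{\rho},\bar{\theta}$ are fixed constants throughout this section (we only track dependence, not smallness), this is a finite algebraic check and causes no real difficulty; the only genuinely substantive input is the already-established large-time smallness of the first-order norms, \eqref{decay-0:U}, without which the nonlinear terms could not be absorbed. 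I would therefore present the proof by first recording the linear combination and the equivalence of $\calE(t)$, then invoking \eqref{decay-0:U} to choose $T_1(\bar{\rho},\bar{\theta})$, and finally collecting terms to obtain \eqref{4.0}.
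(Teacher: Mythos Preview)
Your proposal is correct and follows essentially the same approach as the paper: define $\calE(t)$ as the sum of the first- and second-order energies plus a small multiple of the cross term $\int\D u:\D^2\rho\,\dif x$, choose the small parameter so that both the equivalence $\calE(t)\sim\|(\D\rho,\D u,\D\theta)\|_{H^1}^2$ holds and the bad terms $\overline{C}_2(\|\D^2u\|_{H^1}^2+\|\D^2\theta\|_{L^2}^2)$ from \eqref{3.0} are absorbed, and then use large-time smallness of the first-order norms to swallow the nonlinear remainder.

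One small correction: you invoke \eqref{decay-0:U} to get $\mathcal{N}(t)\to 0$, but \eqref{decay-0:U} gives a decay rate $(1+t)^{-\frac34(\frac{2}{p_0}-1)}$, which is vacuous when $p_0=2$. The paper instead appeals to \eqref{decay1}, the qualitative statement $\lim_{t\to\infty}\|(\D\rho,\D u,\D\theta)\|_{H^1}=0$, which is proved at the start of Section~\ref{sec4} independently of any $L^{p_0}$ assumption. Since Lemma~\ref{Lem.4} must hold for all $p_0\in[1,2]$ (it is used to derive \eqref{5.0}), you should cite \eqref{decay1} rather than \eqref{decay-0:U}.
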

\begin{proof}
For fixed $\delta(\bar{\rho}, \bar{\theta})>0$, adding \eqref{1.0}, \eqref{2.0} and $\delta(\bar{\rho}, \bar{\theta})\times$\eqref{3.0} together, we obtain
\begin{align}\label{4.1}
&\frac{1}{2}\frac{\dif}{\dif t}\left(R\bar{\rho}^{-2}\bar{\theta}\|\D\rho\|_{H^1}^2+\|\D u\|_{H^1}^2+\bar{\theta}^{-1}\|\D\theta\|_{H^1}^2+2\delta(\bar{\rho}, \bar{\theta})\int\D u:\D^2\rho\dif x\right)\nonumber\\
&+\mu\bar{\rho}^{-1}\bar{\theta}^\al\|\D^2 u\|_{H^1}^2+\kappa\bar{\rho}^{-1}\bar{\theta}^{\beta-1}\|\D^2\theta\|_{H^1}^2+\frac{R\bar{\theta}}{2\bar{\rho}}\delta(\bar{\rho},\bar{\theta})\|\D^2\rho\|_{L^2}^2\nonumber\\
\leq~&C(\bar{\rho},\bar{\theta})\left(\|\D u\|_{L^2}^{\frac{1}{4}}+\|\D\theta\|_{L^2}^{\frac{1}{2}}+\|\D\rho\|_{L^2}^{\frac{1}{3}}\right)\left(\|\D^2 u\|_{H^1}^2+\|\D^2\rho\|_{L^2}^2+\|\D^2\theta\|_{H^1}^2\right)\nonumber\\
&+\overline{C}_2(\bar{\rho}, \bar{\theta})\delta(\bar{\rho}, \bar{\theta})\left(\|\D^2 u\|_{H^1}^2+\|\D^2\theta\|_{L^2}^2\right).
\end{align}
Taking $\delta(\bar{\rho}, \bar{\theta})$ sufficiently small, such that
\begin{equation*}
\overline{C}_2(\bar{\rho}, \bar{\theta})\delta(\bar{\rho}, \bar{\theta})\leq\frac{1}{2}\min\left\{\mu\bar{\rho}^{-1}\bar{\theta}^\al, ~ \kappa\bar{\rho}^{-1}\bar{\theta}^{\beta-1}\right\},
\end{equation*}
and
\begin{equation*}
2\delta(\bar{\rho},\bar{\theta})\left|\int\D u:\D^2\rho\dif x\right|\leq \frac{1}{2}\left(R\bar{\rho}^{-2}\bar{\theta}\|\D\rho\|_{H^1}^2+\|\D u\|_{H^1}^2\right),
\end{equation*}
we obtain the following inequality
\begin{align}\label{4.2}
&\frac{\dif}{\dif t}\calE(t)+2K_0(\bar{\rho},\bar{\theta})\left(\|\D^2 u\|_{H^1}^2+\|\D^2\theta\|_{H^1}^2+\|\D^2\rho\|_{L^2}^2\right)\\
\leq~&C(\bar{\rho},\bar{\theta})\left(\|\D u\|_{L^2}^{\frac{1}{4}}+\|\D\theta\|_{L^2}^{\frac{1}{2}}+\|\D\rho\|_{L^2}^{\frac{1}{3}}\right)\left(\|\D^2 u\|_{H^1}^2+\|\D^2\rho\|_{L^2}^2+\|\D^2\theta\|_{H^1}^2\right),\nonumber
\end{align}
where
\begin{equation}\label{4.25}
\calE(t):=\frac{1}{2}\left(R\bar{\rho}^{-2}\bar{\theta}\|\D\rho\|_{H^1}^2+\|\D u\|_{H^1}^2+\bar{\theta}^{-1}\|\D\theta\|_{H^1}^2\right)+\delta(\bar{\rho},\bar{\theta})\int\D u:\D^2\rho\dif x\sim\|(\D\rho, \D u, \D\theta)\|_{H^1}^2.
\end{equation}

Noticing \eqref{decay1}, we can see that there exists a constant $T_1(\bar{\rho},\bar{\theta})>0$, such that for any $t\geq T_1(\bar{\rho},\bar{\theta})$,
\begin{equation}\label{4.3}
C(\bar{\rho},\bar{\theta})\left(\|\D u\|_{L^2}^{\frac{1}{4}}+\|\D\theta\|_{L^2}^{\frac{1}{2}}+\|\D\rho\|_{L^2}^{\frac{1}{3}}\right)\leq K_0(\bar{\rho},\bar{\theta}).
\end{equation}
Combining \eqref{4.2} and \eqref{4.3}, we complete the proof of Lemma \ref{Lem.4}.
\end{proof}

At last, we will use the same manner as \cite{Gao2021De} to prove \eqref{decay-1}. More precisely, we shall prove the following decay estimate.
\begin{lemma}\label{dec:2}
Suppose the conditions of Theorem \ref{th} and Theorem \ref{th2} hold. Let $(\rho, u, \theta)$ be the global strong solution obtained in Theorem \ref{th}. Then we have
\begin{equation}\label{5.0}
\|\rho_t\|_{L^2}+\|u_t\|_{L^2}+\|\theta_t\|_{L^2}+\|\D\rho\|_{H^1}+\|\D u\|_{H^1} + \|\D\theta\|_{H^1}\leq C(\bar{\rho},\bar{\theta})(1+t)^{-\frac 34\left(\frac 2{p_0}-1\right)-\frac{1}{2}}.
\end{equation}
\end{lemma}
\begin{proof}
First, we rewrite \eqref{4.0} as follows,
\begin{equation}\label{5.1}
\frac{\dif}{\dif t}\calE(t)+\frac{K_0(\bar{\rho},\bar{\theta})}{2}\left(\|\D^2 u\|_{H^1}^2+\|\D^2\theta\|_{H^1}^2+\|\D^2\rho\|_{L^2}^2\right)+\frac{K_0(\bar{\rho},\bar{\theta})}{2}\|\D^2\rho\|_{L^2}^2\leq 0.
\end{equation}

Next, we denote the time-dependent domain
\begin{equation*}
\tilde{S}(t):=\left\{\xi\in\R^3: ~ |\xi|^2\leq\frac{c_*(\bar{\rho},\bar{\theta})}{1+t}\right\},
\end{equation*}
with a constant $c_*(\bar{\rho},\bar{\theta})$ to be determined later, and split the whole space $\R^3$ into two time-dependent regions $\tilde{S}(t)$ and $\R^3\setminus\tilde{S}(t)$. Now we can estimate the second term in \eqref{5.1} as follows.
\begin{align}\label{5.2}
&\|\D^2 u\|_{H^1}^2+\|\D^2\theta\|_{H^1}^2+\|\D^2\rho\|_{L^2}^2\nonumber\\
=~&\int_{\tilde{S}(t)}\left[(|\xi|^6+|\xi|^4)\left(|\widehat{u}|^2+|\widehat{\theta-\bar{\theta}}|^2\right)+|\xi|^4|\widehat{\rho-\bar{\rho}}|^2\right]\dif\xi\nonumber\\
&+\int_{\R^3\setminus\tilde{S}(t)}\left[(|\xi|^6+|\xi|^4)\left(|\widehat{u}|^2+|\widehat{\theta-\bar{\theta}}|^2\right)+|\xi|^4|\widehat{\rho-\bar{\rho}}|^2\right]\dif\xi\nonumber\\
\geq~&\frac{c_*(\bar{\rho},\bar{\theta})}{1+t}\int_{\R^3\setminus\tilde{S}(t)}\left[(|\xi|^4+|\xi|^2)\left(|\widehat{u}|^2+|\widehat{\theta-\bar{\theta}}|^2\right)+|\xi|^2|\widehat{\rho-\bar{\rho}}|^2\right]\dif\xi\nonumber\\
=~&\frac{c_*(\bar{\rho},\bar{\theta})}{1+t}\int_{\R^3}\left[(|\xi|^4+|\xi|^2)\left(|\widehat{u}|^2+|\widehat{\theta-\bar{\theta}}|^2\right)+|\xi|^2|\widehat{\rho-\bar{\rho}}|^2\right]\dif\xi\nonumber\\
&-\frac{c_*(\bar{\rho},\bar{\theta})}{1+t}\int_{\tilde{S}(t)}\left[(|\xi|^4+|\xi|^2)\left(|\widehat{u}|^2+|\widehat{\theta-\bar{\theta}}|^2\right)+|\xi|^2|\widehat{\rho-\bar{\rho}}|^2\right]\dif\xi \nonumber\\
\geq~&\frac{c_*(\bar{\rho},\bar{\theta})}{1+t}\left(\|\D u\|_{H^1}^2+\|\D\theta\|_{H^1}^2+\|\D\rho\|_{L^2}^2\right)\nonumber\\
&-\frac{c_*^2(\bar{\rho},\bar{\theta})}{(1+t)^2}\int_{\tilde{S}(t)}\left[(|\xi|^2+1)\left(|\widehat{u}|^2+|\widehat{\theta-\bar{\theta}}|^2\right)+|\widehat{\rho-\bar{\rho}}|^2\right]\dif\xi\nonumber\\
\geq~&\frac{c_*(\bar{\rho},\bar{\theta})}{1+t}\left(\|\D u\|_{H^1}^2+\|\D\theta\|_{H^1}^2+\|\D\rho\|_{L^2}^2\right)-\frac{c_*^2(\bar{\rho},\bar{\theta})}{(1+t)^2}\left(\|u\|_{H^1}^2+\|\theta-\bar{\theta}\|_{H^1}^2+\|\rho-\bar{\rho}\|_{L^2}^2\right).
\end{align}
Substituting \eqref{5.2} into \eqref{5.1}, we obtain
\begin{align}\label{5.3}
&\frac{\dif}{\dif t}\calE(t)+\frac{K_0(\bar{\rho},\bar{\theta})}{2}\|\D^2\rho\|_{L^2}^2+\frac{K_0(\bar{\rho},\bar{\theta})c_*(\bar{\rho},\bar{\theta})}{2(1+t)}\left(\|\D u\|_{H^1}^2+\|\D\theta\|_{H^1}^2+\|\D\rho\|_{L^2}^2\right)\nonumber\\
\leq~&\frac{K_0(\bar{\rho},\bar{\theta})c_*^2(\bar{\rho},\bar{\theta})}{2(1+t)^2}\left(\|u\|_{H^1}^2+\|\theta-\bar{\theta}\|_{H^1}^2+\|\rho-\bar{\rho}\|_{L^2}^2\right).
\end{align}
Denote $T_2(\bar{\rho},\bar{\theta})=c_*(\bar{\rho},\bar{\theta})-1$. Then for any $t\geq\max\{1, T_1(\bar{\rho},\bar{\theta}), T_2(\bar{\rho},\bar{\theta})\}$, from \eqref{decay1} we have
\begin{align}\label{5.4}
\frac{\dif}{\dif t}\calE(t)+\frac{K_0(\bar{\rho},\bar{\theta})c_*(\bar{\rho},\bar{\theta})}{2(1+t)}\calE(t)\leq~&\frac{C(\bar{\rho},\bar{\theta})}{(1+t)^2}\left(\|u\|_{H^1}^2+\|\theta-\bar{\theta}\|_{H^1}^2+\|\rho-\bar{\rho}\|_{L^2}^2\right)\nonumber\\
\leq~& C(\bar{\rho},\bar{\theta})(1+t)^{-\frac{3}{2}\left(\frac{2}{p_0}-1\right)-2}.
\end{align}
Taking $c_*(\bar{\rho},\bar{\theta})=\frac{6}{K_0(\bar{\rho},\bar{\theta})}$, we get
\begin{equation}\label{5.5}
\frac{\dif}{\dif t}\calE(t)+\frac{3\calE(t)}{1+t}\leq C(\bar{\rho},\bar{\theta})(1+t)^{-\frac{3}{2}\left(\frac{2}{p_0}-1\right)-2},
\end{equation}
from which we can easily obtain
\begin{equation}\label{decay-0.5}
\|\D\rho\|_{H^1}+\|\D u\|_{H^1} + \|\D\theta\|_{H^1}\leq C(\bar{\rho},\bar{\theta})(1+t)^{-\frac 34\left(\frac 2{p_0}-1\right)-\frac{1}{2}}.
\end{equation}

From \eqref{FCNS2}, we have
\begin{equation}\label{5.6}
\|\rho_t\|_{L^2}+\|u_t\|_{L^2}+\|\theta_t\|_{L^2}\leq C(\bar{\rho}, \bar{\theta})\left(\|\D\rho\|_{L^2}+\|\D u\|_{H^1}+\|\D\theta\|_{H^1}+\|\M_1\|_{L^2}+\|\M_2\|_{L^2}+\|\M_3\|_{L^2}\right).
\end{equation}
Therefore, \eqref{5.0} follows from \eqref{5.6}, \eqref{decay-0.5}, \eqref{1.2}, \eqref{1.5} and \eqref{1.8}.
\end{proof}

\subsection{Decay rate of $\|(\D^2\rho, \D^2u, \D^2\theta)\|_{L^2}$}

To estimate the second-order derivatives of the solution, we choose a radial function $\phi(\xi)\in C_c^\infty(\R^3)$, such that
\begin{equation}\label{cut-off}
\phi(\xi)\equiv 0, \quad \text{for} \quad |\xi|\geq 2, \quad \text{and} \quad \phi(\xi)\equiv 1, \quad \text{for} \quad |\xi|\leq 1. 
\end{equation}
We define the low-frequency part and the high-frequency part of a given function $f(x)$ as follows,
\begin{equation}\label{HL}
f_L:=\mathcal{F}^{-1}\left(\phi(\xi)\widehat{f}(\xi)\right), 
 \quad \text{and} \quad f_H:=\mathcal{F}^{-1}\left((1-\phi(\xi))\widehat{f}(\xi)\right)=f-f_L.
\end{equation}

Next, we shall establish the $L^2$-decay rate of $(\D^2\rho, \D^2u, \D^2\theta)$ when $(\rho_0-\bar{\rho}, u_0, \theta_0-\bar{\theta})\in L^{p_0}$ for some $p_0\in[1,2]$. The following lemma is devoted to establishing energy estimates of the second-order derivatives of the solution.
\begin{lemma}\label{lem.10}
For the global solution $(\rho, u, \theta)$, we have
\begin{align}\label{10.0}
&\frac{\dif}{\dif t}\left(R\bar{\rho}^{-2}\bar{\theta}\|(\D^2\rho)_L\|_{L^2}^2+\|(\D^2 u)_L\|_{L^2}^2+\bar{\theta}^{-1}\|(\D^2\theta)_L\|_{L^2}^2\right)\nonumber\\
&+\mu\bar{\rho}^{-1}\bar{\theta}^\al\|(\D^3 u)_L\|_{L^2}^2+\kappa\bar{\rho}^{-1}\bar{\theta}^{\beta-1}\|(\D^3\theta)_L\|_{L^2}^2\nonumber\\
\leq~&C(\bar{\rho}, \bar{\theta})\varepsilon_0^{-1}(\bar{\rho},\bar{\theta})\|(\D\rho, \D u, \D\theta)\|_{H^1}\left(\|(\D^2\rho)_H\|_{L^2}^2+\|(\D^3\rho)_L\|_{L^2}^2+\|\D^3u\|_{L^2}^2+\|\D^3\theta\|_{L^2}^2\right)\nonumber\\
&+\varepsilon_0(\bar{\rho},\bar{\theta})\|(\D^3\rho)_L\|_{L^2}^2,
\end{align}
and
\begin{align}\label{10.1}
&\frac{\dif}{\dif t}\left(R\bar{\rho}^{-2}\bar{\theta}\|(\D^2\rho)_H\|_{L^2}^2+\|(\D^2 u)_H\|_{L^2}^2+\bar{\theta}^{-1}\|(\D^2\theta)_H\|_{L^2}^2\right)\nonumber\\
&+\mu\bar{\rho}^{-1}\bar{\theta}^\al\|(\D^3 u)_H\|_{L^2}^2+\kappa\bar{\rho}^{-1}\bar{\theta}^{\beta-1}\|(\D^3\theta)_H\|_{L^2}^2\nonumber\\
\leq~&C(\bar{\rho}, \bar{\theta})\varepsilon_0^{-1}(\bar{\rho},\bar{\theta})\|(\D\rho, \D u, \D\theta)\|_{H^1}\left(\|(\D^2\rho)_H\|_{L^2}^2+\|(\D^3\rho)_L\|_{L^2}^2+\|\D^3u\|_{L^2}^2+\|\D^3\theta\|_{L^2}^2\right)\nonumber\\
&+\varepsilon_0(\bar{\rho},\bar{\theta})\|(\D^2\rho)_H\|_{L^2}^2,
\end{align}
with some positive constant $\varepsilon_0$ depending on $\bar{\rho}$, $\bar{\theta}$ to be determined later.
\end{lemma}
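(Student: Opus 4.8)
The plan is to derive \eqref{10.0} and \eqref{10.1} in parallel by applying $\D^2$ to the perturbation system \eqref{FCNS2}, projecting onto the low-frequency (resp.\ high-frequency) part via the cut-off $\phi$, and taking the $L^2$-inner product with $(R\bar{\rho}^{-2}\bar{\theta}\D^2\rho,\,\D^2 u,\,\bar{\theta}^{-1}\D^2\theta)_L$ (resp.\ $(\cdot)_H$). This is exactly the structure already carried out in Lemma~\ref{Lem.7} for the high-frequency case with $p_0<2$, so the skeleton of the computation is in place: the linear terms $R\bar{\rho}^{-1}\bar{\theta}\D^2\rho$ and $R\D^2\theta$ in the $u$-equation produce, after integration by parts, the cross terms that cancel against those coming from the $\rho$- and $\theta$-equations (the factors $R\bar{\rho}^{-2}\bar{\theta}$ and $\bar{\theta}^{-1}$ are chosen precisely for this symmetrization), while the viscosity and heat-conduction terms yield the good dissipative quantities $\mu\bar{\rho}^{-1}\bar{\theta}^\al\|(\D^3u)_{L/H}\|_{L^2}^2$ and $\kappa\bar{\rho}^{-1}\bar{\theta}^{\beta-1}\|(\D^3\theta)_{L/H}\|_{L^2}^2$ on the left-hand side. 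What remains is to control the eleven nonlinear commutator-type terms $H_i$ arising from $(\D^2\M_1)_{L/H}$, $(\D\M_2)_{L/H}$, $(\D\M_3)_{L/H}$.

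For the nonlinear terms I would follow the treatment in Lemma~\ref{Lem.7} almost verbatim, but with two modifications reflecting the $p_0=2$ setting. First, since we no longer have the extra decay budget $(1+t)^{-3(\frac{2}{p_0}-1)/2}=(1+t)^0$ to spend, every nonlinear term must be bounded purely by the structural factor $\|(\D\rho,\D u,\D\theta)\|_{H^1}$ (which tends to $0$ by the previous subsection's decay estimate \eqref{decay-0.5}) times the full third-order dissipation $\|\D^3u\|_{L^2}^2+\|\D^3\theta\|_{L^2}^2+\|(\D^2\rho)_H\|_{L^2}^2+\|(\D^3\rho)_L\|_{L^2}^2$, using Gagliardo--Nirenberg, Hölder, Sobolev embedding $H^1\hookrightarrow L^6$, and the uniform bounds \eqref{sec3.4:1.3}. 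Second, the transport-type term $H_1$ (from $u\cdot\D\rho$) is the delicate one: after integrating by parts in $\int (u\cdot\D)(\D^2\rho)_{L/H}:(\D^2\rho)_{L/H}\,\dif x$ one picks up $\frac12\int\div u\,|(\D^2\rho)_{L/H}|^2$, and the remaining pieces involve $\D^2u\cdot(\D\rho)_{L/H}$ and $\D u\cdot(\D^2\rho)_{L/H}$. Here one must use the Bernstein-type inequality $\|(\D^3\rho)_L\|_{L^2}\le C\|(\D^2\rho)_L\|_{L^2}$ (the low-frequency part costs one fewer derivative) and split $\D\rho$ into $(\D\rho)_L$ and $(\D\rho)_H$ to route the loss of derivative onto $u$ or onto the low-frequency part of $\rho$ wherever possible; the leftover genuinely high-frequency $\rho$-contributions are absorbed into the small parameter $\varepsilon_0(\bar{\rho},\bar{\theta})\|(\D^2\rho)_H\|_{L^2}^2$ in \eqref{10.1} and $\varepsilon_0(\bar{\rho},\bar{\theta})\|(\D^3\rho)_L\|_{L^2}^2$ in \eqref{10.0}, with the coefficient $C(\bar{\rho},\bar{\theta})\varepsilon_0^{-1}(\bar{\rho},\bar{\theta})$ produced by Young's inequality and collected into the first term on the right.

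I expect the main obstacle to be the bookkeeping of the density-related terms: unlike $u$ and $\theta$, the density satisfies only a transport equation, so $\|\D^3\rho\|_{L^2}$ is never available as a dissipative quantity — only $\|(\D^3\rho)_L\|_{L^2}$ (through the Bernstein gain) and $\|(\D^2\rho)_H\|_{L^2}$ (to be supplied later by the separate dissipation estimate for $(\D^2\rho)_H$, in the spirit of Lemma~\ref{Lem.8}). Consequently every occurrence of a third density derivative in the $H_i$ must be manipulated until it is either of the form $(\D^3\rho)_L$, or carries a genuinely small prefactor $\|(\D\rho,\D u,\D\theta)\|_{H^1}$ so that it can be hidden on the right-hand side after multiplication by $\bar{\theta}$-dependent constants, or can be paid for by $\varepsilon_0$. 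Threading the cut-off through the product rule (so that, e.g., $\D^2(u\cdot\D\rho)_L$ is rewritten as $u\cdot(\D^3\rho)_L$ plus lower-order commutators) and keeping track of which commutator lands in which of the two allowed reservoirs is the bulk of the work; once that accounting is done, \eqref{10.0}--\eqref{10.1} follow by collecting terms, exactly as in the $p_0<2$ case but with the explicit decay weights replaced by the uniformly small factor $\|(\D\rho,\D u,\D\theta)\|_{H^1}$ and the auxiliary parameter $\varepsilon_0(\bar{\rho},\bar{\theta})$.
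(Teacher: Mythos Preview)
Your proposal is correct and follows essentially the same strategy as the paper: project the $\D^2$-level energy identity onto low/high frequencies, exploit the symmetrization to cancel the linear cross terms, and estimate $\D\M_i$ by splitting every occurrence of a $\rho$-derivative into its $L$/$H$ parts so that all third-order density factors are routed into $\|(\D^3\rho)_L\|_{L^2}$ or $\|(\D^2\rho)_H\|_{L^2}$, with the $\varepsilon_0$-term produced by Young's inequality absorbing the unmatched piece. The only noteworthy simplification in the paper is that for the low-frequency half the $\M_1$ contribution is handled by a single integration by parts $\int(\D^2\M_1)_L:(\D^2\rho)_L=-\int(\D\M_1)_L\cdot(\D\Delta\rho)_L$ followed directly by Young (so $\varepsilon_0\|(\D^3\rho)_L\|_{L^2}^2$ appears immediately and only $\|\D\M_1\|_{L^2}^2$ needs estimating), whereas the full $H_1$-style transport decomposition you describe is reserved for the high-frequency half, where $(\D^3\rho)_H$ cannot be traded away.
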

\begin{proof}
Applying $\D^2$ operator to \eqref{FCNS2}, then taking the low-frequency part, then taking the $L^2$-inner product with $(R\bar{\rho}^{-2}\bar{\theta}\D^2\rho, \D^2u, \bar{\theta}^{-1}\D^2\theta)_L$, we obtain
\begin{align}\label{10.2}
&\frac{1}{2}\frac{\dif}{\dif t}\left(R\bar{\rho}^{-2}\bar{\theta}\|(\D^2\rho)_L\|_{L^2}^2+\|(\D^2 u)_L\|_{L^2}^2+\bar{\theta}^{-1}\|(\D^2\theta)_L\|_{L^2}^2\right)\nonumber\\
&+\mu\bar{\rho}^{-1}\bar{\theta}^\al\|(\D^3u)_L\|_{L^2}^2+(\mu+\lambda)\bar{\rho}^{-1}\bar{\theta}^\al\|(\D^2\div u)_L\|_{L^2}^2+\kappa\bar{\rho}^{-1}\bar{\theta}^{\beta-1}\|(\D^3\theta)_L\|_{L^2}^2\nonumber\\
=~&R\bar{\rho}^{-2}\bar{\theta}\int(\D\M_1)_L\cdot(\D\Delta\rho)_L\dif x+\int(\D\M_2)_L:(\D\Delta u)_L\dif x+\bar{\theta}^{-1}\int(\D\M_3)_L\cdot(\D\Delta \theta)_L\dif x\nonumber\\
\leq~&\varepsilon_0(\bar{\rho},\bar{\theta})\|(\D^3\rho)_L\|_{L^2}^2+\frac{\mu\bar{\theta}^\al}{2\bar{\rho}}\|(\D^3u)_L\|_{L^2}^2+\frac{\kappa\bar{\theta}^\beta}{2\bar{\rho}\bar{\theta}}\|(\D^3\theta)_L\|_{L^2}^2\nonumber\\
&+C(\bar{\rho},\bar{\theta})\left(\varepsilon_0^{-1}(\bar{\rho},\bar{\theta})\|(\nabla\M_1)_L\|_{L^2}^2+\|(\nabla\M_2)_L\|_{L^2}^2+\|(\nabla\M_3)_L\|_{L^2}^2\right).
\end{align}
Recalling the definition of $\M_i ~ (i=1,2,3)$ in \eqref{M123}, by Gagliardo-Nirenberg inequality and Sobolev embedding inequalities, we have
\begin{align}\label{10.3}
\|\nabla\M_1\|_{L^2}^2\leq~&C(\bar{\rho},\bar{\theta})\left(\|\D u\cdot\D\rho\|_{L^2}^2+\|u\D^2\rho\|_{L^2}^2+\|(\rho-\bar{\rho})\D^2u\|_{L^2}^2\right) \nonumber\\
\leq~&C(\bar{\rho},\bar{\theta})\Big(\|\D u\cdot(\D\rho)_H\|_{L^2}^2+\|\D u\cdot(\D\rho)_L\|_{L^2}^2+\|u(\D^2\rho)_H\|_{L^2}^2\nonumber\\
&+\|u(\D^2\rho)_L\|_{L^2}^2+\|(\rho-\bar{\rho})\D^2u\|_{L^2}^2\Big) \nonumber\\
\leq~&C(\bar{\rho},\bar{\theta})\Big(\|\D u\|_{L^3}^2\|(\D\rho)_H\|_{L^6}^2+\|\D u\|_{L^3}^2\|(\D\rho)_L\|_{L^6}^2+\|u\|_{L^\infty}^2\|(\D^2\rho)_H\|_{L^2}^2 \nonumber\\
&+\|u\|_{L^3}^2\|(\D^2\rho)_L\|_{L^6}^2+\|\rho-\bar{\rho}\|_{L^3}^2\|\D^2u\|_{L^6}^2\Big) \nonumber\\
\leq~&C(\bar{\rho},\bar{\theta})\Big(\|\D u\|_{H^1}^2\|(\D^2\rho)_H\|_{L^2}^2+\|u\|_{L^2}\|\D^3u\|_{L^2}\|(\D\rho)_L\|_{L^2}\|(\D^3\rho)_L\|_{L^2} \nonumber\\
&+\|u\|_{L^2}\|\D u\|_{L^2}\|(\D^3\rho)_L\|_{L^2}^2+\|\rho-\bar{\rho}\|_{L^2}\|\D\rho\|_{L^2}\|\D^3u\|_{L^2}^2\Big) \nonumber\\
\leq~&C(\bar{\rho},\bar{\theta})\|(\D\rho, \D u)\|_{H^1}\left(\|(\D^2\rho)_H\|_{L^2}^2+\|(\D^3\rho)_L\|_{L^2}^2+\|\D^3u\|_{L^2}^2\right),
\end{align}
and
\begin{align}\label{10.4}
\|\nabla\M_2\|_{L^2}^2\leq~&C(\bar{\rho},\bar{\theta})\Big(\|\D u\cdot\D u\|_{L^2}^2+\|u\D^2u\|_{L^2}^2+\|\D\rho\cdot\D\theta\|_{L^2}^2+\|\D\rho\cdot\D\rho\|_{L^2}^2+\|(\rho-\bar{\rho})\D\rho\cdot\D\rho\|_{L^2}^2 \nonumber\\
&+\|(\theta-\bar{\theta})\D\rho\cdot\D\rho\|_{L^2}^2+\|\D\rho\D^2u\|_{L^2}^2+\|\D\theta\D^2u\|_{L^2}^2+\|(\rho-\bar{\rho})\D^3u\|_{L^2}^2+\|(\theta-\bar{\theta})\D^3u\|_{L^2}^2 \nonumber\\
&+\|\D\rho\cdot\D u\cdot\D\theta\|_{L^2}^2+\|\D\theta\cdot\D u\cdot\D\theta\|_{L^2}^2+\|(\rho-\bar{\rho})\D^2u\cdot\D\theta\|_{L^2}^2+\|(\rho-\bar{\rho})\D u\cdot\D^2\theta\|_{L^2}^2 \nonumber\\
&+\|(\theta-\bar{\theta})\D^2u\cdot\D\theta\|_{L^2}^2+\|(\theta-\bar{\theta})\D u\cdot\D^2\theta\|_{L^2}^2\Big) :=\sum\limits_{i=1}^{16}H_i,
\end{align}
where the norms on the right-hand side can be estimated as follows.
\begin{align}\label{10.5}
H_1+H_2\leq C(\bar{\rho},\bar{\theta})\left(\|\D u\|_{L^4}^4+\|u\|_{L^\infty}^2\|\D^2u\|_{L^2}^2\right) \leq~&C(\bar{\rho},\bar{\theta})\|\D u\|_{L^2}\|\D^2u\|_{L^2}^3 \nonumber\\
\leq~&C(\bar{\rho},\bar{\theta})\|u\|_{L^2}\|\D u\|_{L^2}\|\D^3u\|_{L^2}^2 \nonumber\\
\leq~&C(\bar{\rho},\bar{\theta})\|\D u\|_{L^2}\|\D^3u\|_{L^2}^2,
\end{align}
\begin{align}\label{10.6}
H_3\leq~& C(\bar{\rho},\bar{\theta})\left(\|(\D\rho)_L\cdot\D\theta\|_{L^2}^2+\|(\D\rho)_H\cdot\D\theta\|_{L^2}^2\right) \nonumber\\
\leq~&C(\bar{\rho},\bar{\theta})\left(\|\D\theta\|_{L^3}^2\|(\D\rho)_L\|_{L^6}^2+\|\D\theta\|_{L^3}^2\|(\D\rho)_H\|_{L^6}^2\right) \nonumber\\
\leq~&C(\bar{\rho},\bar{\theta})\left(\|\D\theta\|_{L^2}\|\D^2\theta\|_{L^2}\|(\D^2\rho)_L\|_{L^2}^2+\|\D\theta\|_{H^1}^2\|(\D^2\rho)_H\|_{L^2}^2\right) \nonumber\\
\leq~&C(\bar{\rho},\bar{\theta})\left(\|\theta-\bar{\theta}\|_{L^2}^{\frac{1}{3}}\|\D\theta\|_{L^2}\|\D^3\theta\|_{L^2}^{\frac{2}{3}}\|(\rho-\bar{\rho})_L\|_{L^2}^{\frac{2}{3}}\|(\D^3\rho)_L\|_{L^2}^{\frac{4}{3}}+\|\D\theta\|_{H^1}^2\|(\D^2\rho)_H\|_{L^2}^2\right) \nonumber\\
\leq~&C(\bar{\rho},\bar{\theta})\|\D\theta\|_{H^1}\left(\|(\D^3\rho)_L\|_{L^2}^2+\|(\D^2\rho)_H\|_{L^2}^2+\|\D^3\theta\|_{L^2}^2\right),
\end{align}
\begin{align}\label{10.7}
H_4+H_5+H_6\leq C(\bar{\rho}, \bar{\theta})\|\D\rho\cdot\D\rho\|_{L^2}^2\leq~& C(\bar{\rho},\bar{\theta})\left(\|(\D\rho)_H\cdot\D\rho\|_{L^2}^2+\|(\D\rho)_L\cdot(\D\rho)_L\|_{L^2}^2\right) \nonumber\\
\leq~&C(\bar{\rho},\bar{\theta})\left(\|\D\rho\|_{L^3}^2\|(\D\rho)_H\|_{L^6}^2+\|(\D\rho)_L\|_{L^4}^4\right) \nonumber\\
\leq~&C(\bar{\rho},\bar{\theta})\left(\|\D\rho\|_{H^1}^2\|(\D^2\rho)_H\|_{L^2}^2+\|(\D\rho)_L\|_{L^2}\|(\D^2\rho)_L\|_{L^2}^3\right) \nonumber\\
\leq~&C(\bar{\rho},\bar{\theta})\|\D\rho\|_{H^1}\left(\|(\D^3\rho)_L\|_{L^2}^2+\|(\D^2\rho)_H\|_{L^2}^2\right),
\end{align}
\begin{align}\label{10.8}
H_7+H_8+H_9+H_{10}\leq~&C(\bar{\rho}, \bar{\theta})\left(\|(\D\rho, \D\theta)\|_{L^3}^2\|\D^2u\|_{L^6}^2+\|(\rho-\bar{\rho}, \theta-\bar{\theta})\|_{L^\infty}^2\|\D^3u\|_{L^2}^2\right) \nonumber\\
\leq~&C(\bar{\rho}, \bar{\theta})\|(\D\rho, \D\theta)\|_{H^1}^2\|\D^3u\|_{L^2}^2,
\end{align}
\begin{align}\label{10.9}
H_{11}+H_{12}\leq~&C(\bar{\rho}, \bar{\theta})\|(\D\rho, \D\theta)\|_{L^6}^2\|\D u\|_{L^6}^2\|\D\theta\|_{L^6}^2 \nonumber\\
\leq~&C(\bar{\rho}, \bar{\theta})\|(\D\rho, \D\theta)\|_{H^1}^2\|\D^2u\|_{L^2}^2\|\D^2\theta\|_{L^2}^2 \nonumber\\
\leq~&C(\bar{\rho}, \bar{\theta})\|(\D\rho, \D\theta)\|_{H^1}^2\|\D u\|_{L^2}\|\D\theta\|_{L^2}\|\D^3u\|_{L^2}\|\D^3\theta\|_{L^2} \nonumber\\
\leq~&C(\bar{\rho}, \bar{\theta})\|(\D\rho, \D\theta)\|_{H^1}^2\left(\|\D^3u\|_{L^2}^2+\|\D^3\theta\|_{L^2}^2\right),
\end{align}
\begin{align}\label{10.10}
H_{13}+H_{14}+H_{15}+H_{16}\leq~&C(\bar{\rho}, \bar{\theta})\|(\rho-\bar{\rho}, \theta-\bar{\theta})\|_{L^6}^2\left(\|\D^2u\|_{L^6}^2\|\D\theta\|_{L^6}^2+\|\D u\|_{L^6}^2\|\D^2\theta\|_{L^6}^2\right) \nonumber\\
\leq~&C(\bar{\rho}, \bar{\theta})\|(\D\rho, \D\theta)\|_{L^2}^2\left(\|\D^3u\|_{L^2}^2+\|\D^3\theta\|_{L^2}^2\right).
\end{align}

Similar to \eqref{10.4}--\eqref{10.10}, we have
\begin{align}\label{10.11}
\|\nabla\M_3\|_{L^2}^2\leq~&C(\bar{\rho},\bar{\theta})\Big(\|\D u\cdot\D\theta\|_{L^2}^2+\|u\D^2\theta\|_{L^2}^2+\|(\theta-\bar{\theta})\D^2u\|_{L^2}^2+\|\D\rho\cdot\D u\cdot\D u\|_{L^2}^2 \nonumber\\
&+\|\D u\cdot\D\theta\cdot\D u\|_{L^2}^2+\|\D u\cdot\D^2u\|_{L^2}^2+\|(\rho-\bar{\rho})\D^3\theta\|_{L^2}^2+\|(\theta-\bar{\theta})\D^3\theta\|_{L^2}^2+\|\D\rho\cdot\D^2\theta\|_{L^2}^2 \nonumber\\
&+\|\D\theta\cdot\D^2\theta\|_{L^2}^2+\|\D\rho\cdot\D\theta\cdot\D\theta\|_{L^2}^2+\|\D\theta\cdot\D\theta\cdot\D\theta\|_{L^2}^2\Big) \nonumber\\
\leq~&C(\bar{\rho}, \bar{\theta})\|(\D\rho, \D u, \D\theta)\|_{H^1}\left(\|(\D^2\rho)_H\|_{L^2}^2+\|(\D^3\rho)_L\|_{L^2}^2+\|\D^3u\|_{L^2}^2+\|\D^3\theta\|_{L^2}^2\right).
\end{align}
Inserting \eqref{10.3}--\eqref{10.11} into \eqref{10.2}, we have proved \eqref{10.0}.

Applying $\D^2$ operator to \eqref{FCNS2}, then taking the high-frequency part, then taking the $L^2$-inner product with $(R\bar{\rho}^{-2}\bar{\theta}\D^2\rho, \D^2u, \bar{\theta}^{-1}\D^2\theta)_H$, we obtain
\begin{align}\label{10.12}
&\frac{1}{2}\frac{\dif}{\dif t}\left(R\bar{\rho}^{-2}\bar{\theta}\|(\D^2\rho)_H\|_{L^2}^2+\|(\D^2 u)_H\|_{L^2}^2+\bar{\theta}^{-1}\|(\D^2\theta)_H\|_{L^2}^2\right)\nonumber\\
&+\mu\bar{\rho}^{-1}\bar{\theta}^\al\|(\D^3u)_H\|_{L^2}^2+(\mu+\lambda)\bar{\rho}^{-1}\bar{\theta}^\al\|(\D^2\div u)_H\|_{L^2}^2+\kappa\bar{\rho}^{-1}\bar{\theta}^{\beta-1}\|(\D^3\theta)_H\|_{L^2}^2\nonumber\\
=~&-R\bar{\rho}^{-2}\bar{\theta}\int(\D^2\M_1)_H:(\D^2\rho)_H\dif x+\int(\D\M_2)_H:(\D\Delta u)_H\dif x+\bar{\theta}^{-1}\int(\D\M_3)_H\cdot(\D\Delta \theta)_H\dif x\nonumber\\
\leq~&\frac{\mu\bar{\theta}^\al}{2\bar{\rho}}\|(\D^3u)_L\|_{L^2}^2+\frac{\kappa\bar{\theta}^\beta}{2\bar{\rho}\bar{\theta}}\|(\D^3\theta)_L\|_{L^2}^2+C(\bar{\rho},\bar{\theta})\left(\|(\nabla\M_2)_H\|_{L^2}^2+\|(\nabla\M_3)_H\|_{L^2}^2\right) \nonumber\\
&-R\bar{\rho}^{-2}\bar{\theta}\int(\D^2\M_1)_H:(\D^2\rho)_H\dif x,
\end{align}
where we only need to estimate the last term on the right-hand side. Recalling the definition of $\M_1$ in \eqref{M123}, we have
\begin{align}\label{10.12.1}
&-R\bar{\rho}^{-2}\bar{\theta}\int(\D^2\M_1)_H:(\D^2\rho)_H\dif x \nonumber\\
=~&-R\bar{\rho}^{-2}\bar{\theta}\int\D^2(u\cdot\D\rho)_H:(\D^2\rho)_H\dif x-R\bar{\rho}^{-2}\bar{\theta}\int\D^2((\rho-\bar{\rho})\div u)_H:(\D^2\rho)_H\dif x \nonumber\\
=~&-R\bar{\rho}^{-2}\bar{\theta}\int\D^2(u\cdot\D\rho):(\D^2\rho)_H\dif x+R\bar{\rho}^{-2}\bar{\theta}\int\D^2(u\cdot\D\rho)_L:(\D^2\rho)_H\dif x \nonumber\\
&-R\bar{\rho}^{-2}\bar{\theta}\int\D^2((\rho-\bar{\rho})\div u)_H:(\D^2\rho)_H\dif x \nonumber\\
=~&-R\bar{\rho}^{-2}\bar{\theta}\int\D^2(u\cdot(\D\rho)_L):(\D^2\rho)_H\dif x-R\bar{\rho}^{-2}\bar{\theta}\int\D^2(u\cdot(\D\rho)_H):(\D^2\rho)_H\dif x\nonumber\\
&+R\bar{\rho}^{-2}\bar{\theta}\int\D^2(u\cdot\D\rho)_L:(\D^2\rho)_H\dif x-R\bar{\rho}^{-2}\bar{\theta}\int\D^2((\rho-\bar{\rho})\div u)_H:(\D^2\rho)_H\dif x,
\end{align}
where the second term on the right-hand side can be rewritten as follows,
\begin{align}\label{10.12.2}
&-R\bar{\rho}^{-2}\bar{\theta}\int\D^2(u\cdot(\D\rho)_H):(\D^2\rho)_H\dif x\nonumber\\
=~&-R\bar{\rho}^{-2}\bar{\theta}\int\D^2u\cdot(\D\rho)_H\cdot(\D^2\rho)_H\dif x-2R\bar{\rho}^{-2}\bar{\theta}\int\D u\cdot(\D^2\rho)_H:(\D^2\rho)_H\dif x\nonumber\\
&-R\bar{\rho}^{-2}\bar{\theta}\int(u\cdot\D)(\D^2\rho)_H:(\D^2\rho)_H\dif x\nonumber\\
=~&-R\bar{\rho}^{-2}\bar{\theta}\int\D^2u\cdot(\D\rho)_H\cdot(\D^2\rho)_H\dif x-2R\bar{\rho}^{-2}\bar{\theta}\int\D u\cdot(\D^2\rho)_H:(\D^2\rho)_H\dif x\nonumber\\
&+\frac{R\bar{\rho}^{-2}\bar{\theta}}{2}\int\div u|(\D^2\rho)_H|^2\dif x.
\end{align}

Inserting \eqref{10.12.1} into \eqref{10.12.2}, by Gagliardo-Nirenberg inequality and Sobolev embedding inequalities, we have
\begin{align}\label{10.13}
&-R\bar{\rho}^{-2}\bar{\theta}\int(\D^2\M_1)_H:(\D^2\rho)_H\dif x \nonumber\\
=~&-R\bar{\rho}^{-2}\bar{\theta}\int\D^2(u\cdot(\D\rho)_L):(\D^2\rho)_H\dif x-R\bar{\rho}^{-2}\bar{\theta}\int\D^2u\cdot(\D\rho)_H\cdot(\D^2\rho)_H\dif x\nonumber\\
&-2R\bar{\rho}^{-2}\bar{\theta}\int\D u\cdot(\D^2\rho)_H:(\D^2\rho)_H\dif x+\frac{R\bar{\rho}^{-2}\bar{\theta}}{2}\int\div u|(\D^2\rho)_H|^2\dif x\nonumber\\
&+R\bar{\rho}^{-2}\bar{\theta}\int\D^2(u\cdot\D\rho)_L:(\D^2\rho)_H\dif x-R\bar{\rho}^{-2}\bar{\theta}\int\D^2((\rho-\bar{\rho})\div u)_H:(\D^2\rho)_H\dif x \nonumber\\
\leq~&\varepsilon_0(\bar{\rho},\bar{\theta})\|(\D^2\rho)_H\|_{L^2}^2+C(\bar{\rho}, \bar{\theta})\|\D u\|_{L^2}\|(\D^2\rho)_H\|_{L^2}^2+C(\bar{\rho}, \bar{\theta})\varepsilon_0^{-1}(\bar{\rho},\bar{\theta})\Big(\|\D^2(u\cdot(\D\rho)_L)\|_{L^2}^2 \nonumber\\
&+\|(\D\rho)_H\cdot\D^2u\|_{L^2}^2+\|\D^2(u\cdot\D\rho)_L\|_{L^2}^2+\|\D^2((\rho-\bar{\rho})\div u))\|_{L^2}^2\Big) \nonumber \\
\leq~&\varepsilon_0(\bar{\rho},\bar{\theta})\|(\D^2\rho)_H\|_{L^2}^2+C(\bar{\rho}, \bar{\theta})\|\D u\|_{L^2}\|(\D^2\rho)_H\|_{L^2}^2+C(\bar{\rho}, \bar{\theta})\varepsilon_0^{-1}(\bar{\rho},\bar{\theta})\Big(\|u(\D^3\rho)_L\|_{L^2}^2 \nonumber\\
&+\|\D\rho\cdot\D^2 u\|_{L^2}^2+\|\D u\cdot(\D^2\rho)_L\|_{L^2}^2+\|\D(u\cdot\D\rho)\|_{L^2}^2+\|\D^2\rho\cdot\D u\|_{L^2}^2+\|(\rho-\bar{\rho})\D^3u\|_{L^2}^2\Big) \nonumber \\
\leq~&\varepsilon_0(\bar{\rho},\bar{\theta})\|(\D^2\rho)_H\|_{L^2}^2+C(\bar{\rho}, \bar{\theta})\|\D u\|_{L^2}\|(\D^2\rho)_H\|_{L^2}^2+C(\bar{\rho}, \bar{\theta})\varepsilon_0^{-1}(\bar{\rho},\bar{\theta})\Big(\|u\|_{L^\infty}^2\|(\D^3\rho)_L\|_{L^2}^2 \nonumber\\
&+\|\D\rho\|_{L^3}^2\|\D^2u\|_{L^6}^2+\|\D u\|_{L^3}^2\|(\D^2\rho)_L\|_{L^6}^2+\|\D\rho\cdot\D u\|_{L^2}^2+\|u\D^2\rho\|_{L^2}^2 \nonumber\\
&+\|(\D^2\rho)_H\cdot\D u\|_{L^2}^2+\|\rho-\bar{\rho}\|_{L^\infty}^2\|\D^3u\|_{L^2}^2\Big) \nonumber \\
\leq~&\varepsilon_0(\bar{\rho},\bar{\theta})\|(\D^2\rho)_H\|_{L^2}^2+C(\bar{\rho}, \bar{\theta})\varepsilon_0^{-1}(\bar{\rho},\bar{\theta})\|(\D\rho, \D u)\|_{H^1}\left(\|(\D^3\rho)_L\|_{L^2}^2+\|(\D^2\rho)_H\|_{L^2}^2+\|\D^3u\|_{L^2}^2\right) \nonumber\\
&+C(\bar{\rho}, \bar{\theta})\varepsilon_0^{-1}(\bar{\rho},\bar{\theta})\Big(\|\D\rho\cdot\D u\|_{L^2}^2+\|u(\D^2\rho)_L\|_{L^2}^2+\|u(\D^2\rho)_H\|_{L^2}^2+\|(\D^2\rho)_H\|_{L^2}^2\|\D u\|_{L^\infty}^2\Big) \nonumber \\
\leq~&\varepsilon_0(\bar{\rho},\bar{\theta})\|(\D^2\rho)_H\|_{L^2}^2+C(\bar{\rho}, \bar{\theta})\varepsilon_0^{-1}(\bar{\rho},\bar{\theta})\|(\D\rho, \D u)\|_{H^1}\left(\|(\D^3\rho)_L\|_{L^2}^2+\|(\D^2\rho)_H\|_{L^2}^2+\|\D^3u\|_{L^2}^2\right) \nonumber\\
&+C(\bar{\rho}, \bar{\theta})\varepsilon_0^{-1}(\bar{\rho},\bar{\theta})\Big(\|(\D\rho)_L\cdot\D u\|_{L^2}^2+\|(\D\rho)_H\cdot\D u\|_{L^2}^2+\|u\|_{L^3}^2\|(\D^2\rho)_L\|_{L^6}^2 \nonumber\\
&+\|u\|_{L^\infty}^2\|(\D^2\rho)_H\|_{L^2}^2+\|(\D^2\rho)_H\|_{L^2}^2\|\D^2u\|_{L^2}\|\D^3u\|_{L^2}\Big) \nonumber \\
\leq~&\varepsilon_0(\bar{\rho},\bar{\theta})\|(\D^2\rho)_H\|_{L^2}^2+C(\bar{\rho}, \bar{\theta})\varepsilon_0^{-1}(\bar{\rho},\bar{\theta})\|(\D\rho, \D u)\|_{H^1}\left(\|(\D^3\rho)_L\|_{L^2}^2+\|(\D^2\rho)_H\|_{L^2}^2+\|\D^3u\|_{L^2}^2\right).
\end{align}
Combining \eqref{10.12} and \eqref{10.13}, we have proved \eqref{10.1}.
\end{proof}

Next, we will establish dissipative estimates of $(\D^3\rho)_L$ and $(\D^2\rho)_H$.
\begin{lemma}\label{lem.11}
For the global solution $(\rho, u, \theta)$, we have
\begin{align}\label{11.0}
&\frac{\dif}{\dif t}\int(\D^2u)_L\cdot(\D^3\rho)_L\dif x+\frac{R\bar{\theta}}{2\bar{\rho}}\|(\D^3\rho)_L\|_{L^2}^2 \nonumber\\
\leq~&C(\bar{\rho}, \bar{\theta})\|(\D\rho, \D u, \D\theta)\|_{H^1}\left(\|(\D^2\rho)_H\|_{L^2}^2+\|(\D^3\rho)_L\|_{L^2}^2+\|\D^3u\|_{L^2}^2+\|\D^3\theta\|_{L^2}^2\right) \nonumber\\
&+\overline{C}_3(\bar{\rho}, \bar{\theta})\left(\|(\D^3u)_L\|_{L^2}^2+\|(\D^3\theta)_L\|_{L^2}^2\right),
\end{align}
and
\begin{align}\label{11.1}
&\frac{\dif}{\dif t}\int(\D u)_H:(\D^2\rho)_H\dif x+\frac{R\bar{\theta}}{2\bar{\rho}}\|(\D^2\rho)_H\|_{L^2}^2 \nonumber\\
\leq~&C(\bar{\rho}, \bar{\theta})\|(\D\rho, \D u, \D\theta)\|_{H^1}\left(\|(\D^2\rho)_H\|_{L^2}^2+\|(\D^3\rho)_L\|_{L^2}^2+\|\D^3u\|_{L^2}^2+\|\D^3\theta\|_{L^2}^2\right) \nonumber\\
&+\overline{C}_3(\bar{\rho}, \bar{\theta})\left(\|(\D^3u)_H\|_{L^2}^2+\|(\D^3\theta)_H\|_{L^2}^2\right),
\end{align}
for some positive constant $\overline{C}_3$ depending on $\bar{\rho}$ and $\bar{\theta}$.
\end{lemma}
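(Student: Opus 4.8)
\textbf{Proof proposal for Lemma \ref{lem.11}.}

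The plan is to mimic the structure of Lemma \ref{Lem.3} and Lemma \ref{Lem.8}, but now applied separately to the low-frequency block (one extra derivative) and the high-frequency block. For \eqref{11.0}, I would apply $\D^2$ to $\eqref{FCNS2}_2$, take the low-frequency projection, and take the $L^2$-inner product with $(\D^3\rho)_L$. The pressure term $R\bar{\rho}^{-1}\bar{\theta}\D(\D^2\rho)$ produces the good term $R\bar{\rho}^{-1}\bar{\theta}\|(\D^3\rho)_L\|_{L^2}^2$ on the left. To turn the $\int(\D^2u_t)_L\cdot(\D^3\rho)_L\dif x$ term into a time derivative, I integrate by parts in $x$ and use $\eqref{FCNS2}_1$ (exactly as in \eqref{3.2}): this yields $\frac{\dif}{\dif t}\int(\D^2u)_L\cdot(\D^3\rho)_L\dif x$, plus a term $-\bar{\rho}\|(\D^2\div u)_L\|_{L^2}^2 \le 0$ and a nonlinear remainder $\int(\D^2\div u)_L\cdot(\D^2\M_1)_L\dif x$. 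The remaining linear terms $\int(\D^3\Delta u)_L\cdot(\D^3\rho)_L$, $\int(\D^2\Delta\div u)_L\cdot(\D^3\rho)_L$, $\int(\D^3\theta)_L\cdot(\D^3\rho)_L$ are all absorbed via Young's inequality, with the $\|(\D^3\rho)_L\|_{L^2}^2$ part split off to the left (leaving $\tfrac{R\bar{\theta}}{2\bar{\rho}}\|(\D^3\rho)_L\|_{L^2}^2$) and the velocity/temperature pieces charged to $\overline{C}_3(\bar{\rho},\bar{\theta})(\|(\D^3u)_L\|_{L^2}^2+\|(\D^3\theta)_L\|_{L^2}^2)$. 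The genuinely nonlinear contributions come from $\int(\D^2\M_2)_L\cdot(\D^3\rho)_L\dif x$ and $\int(\D^2\div u)_L\cdot(\D^2\M_1)_L\dif x$; these I estimate exactly by the bounds $\|\D^2\M_1\|_{L^2}$ and $\|\D^2\M_2\|_{L^2}$ that are (up to writing one more derivative) already controlled in the style of \eqref{10.3}--\eqref{10.4} and \eqref{10.11}, giving the factor $\|(\D\rho,\D u,\D\theta)\|_{H^1}$ times $(\|(\D^2\rho)_H\|_{L^2}^2+\|(\D^3\rho)_L\|_{L^2}^2+\|\D^3u\|_{L^2}^2+\|\D^3\theta\|_{L^2}^2)$, which is precisely the first line on the right of \eqref{11.0}. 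Here I use Lemma \ref{Lem:10}, specifically \eqref{sec3.4:1.3}, to bound $\|\D^2\rho\|_{L^2}$ and $\|\D^2u\|_{L^2}$ wherever a critical $L^\infty$ or $L^3$ norm of a second derivative appears, and the Gagliardo--Nirenberg inequality from \S\ref{prelim} to interpolate $(\D^2\rho)_L, (\D^2u)_L$ between $L^2$ and $\D^3$-level.

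For \eqref{11.1}, the argument is the high-frequency analogue of Lemma \ref{Lem.8}: apply $\D$ to $\eqref{FCNS2}_2$, take the high-frequency projection, pair with $(\D^2\rho)_H$. Using $\eqref{FCNS2}_1$ as in \eqref{3.2}--\eqref{3.3} converts $\int(\D u_t)_H:(\D^2\rho)_H$ into $\frac{\dif}{\dif t}\int(\D u)_H:(\D^2\rho)_H$ plus $\bar{\rho}\|(\D\div u)_H\|_{L^2}^2$ (bounded by $\|(\D^2 u)_H\|_{L^2}^2\le\|(\D^3u)_H\|_{L^2}^2$ after noting $(\D u)_H$ has no low modes, so Poincaré-type frequency localization applies, or simply $\|(\D\div u)_H\|_{L^2}\le C\|(\D^2 u)_H\|_{L^2}$ and then interpolation) plus the nonlinear remainder $-\int(\M_1)_H(\Delta\div u)_H\dif x$. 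The pressure term yields $R\bar{\rho}^{-1}\bar{\theta}\|(\D^2\rho)_H\|_{L^2}^2$; absorbing $\tfrac12$ of it on the left leaves $\tfrac{R\bar{\theta}}{2\bar{\rho}}\|(\D^2\rho)_H\|_{L^2}^2$. The linear terms $\int(\D\Delta u)_H:(\D^2\rho)_H$, $\int(\D^2\div u)_H:(\D^2\rho)_H$, $\int(\D^2\theta)_H:(\D^2\rho)_H$ are Young-split into $\tfrac14 \tfrac{R\bar\theta}{2\bar\rho}\|(\D^2\rho)_H\|_{L^2}^2$ (absorbed) plus $\overline{C}_3(\bar{\rho},\bar{\theta})(\|(\D^3u)_H\|_{L^2}^2+\|(\D^3\theta)_H\|_{L^2}^2)$ — using $\|(\D^2 u)_H\|_{L^2}\le C\|(\D^3u)_H\|_{L^2}$, $\|(\D^2\theta)_H\|_{L^2}\le C\|(\D^3\theta)_H\|_{L^2}$ by high-frequency cutoff. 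The nonlinear remainders $\int(\M_1)_H(\Delta\div u)_H$ and $\int(\D\M_2)_H:(\D^2\rho)_H$ are controlled by $\|\M_1\|_{L^2}\|\D^3u\|_{L^2}$ and $\|\D\M_2\|_{L^2}\|\D^2\rho\|_{L^2}$; plugging in the estimates $\|\M_1\|_{L^2}\le C\|\D u\|_{H^1}\|\D\rho\|_{H^1}$ from \eqref{8.2} and $\|\D\M_2\|_{L^2}$ of the form \eqref{8.3}/\eqref{2.3} (refined to split $\|\D^2\rho\|_{L^2}$ into $\|(\D^2\rho)_H\|_{L^2}$ and $\|(\D^3\rho)_L\|_{L^2}^{1/3}$-type pieces via Gagliardo--Nirenberg on $(\D^2\rho)_L$), one recovers the first line on the right of \eqref{11.1}.

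The step I expect to be the main obstacle is the careful bookkeeping of the nonlinear terms in \emph{low-frequency} estimate \eqref{11.0}, i.e. controlling $\|\D^2\M_2\|_{L^2}$ and $\|\D^2\M_1\|_{L^2}$ at the third-derivative level while only admitting the multiplicative factor $\|(\D\rho,\D u,\D\theta)\|_{H^1}$ and the six quantities $\|(\D^2\rho)_H\|_{L^2}^2$, $\|(\D^3\rho)_L\|_{L^2}^2$, $\|\D^3u\|_{L^2}^2$, $\|\D^3\theta\|_{L^2}^2$. The difficulty is that terms such as $\D^2((\rho-\bar{\rho})\div u)$ and $\D^2(\rho^{-1}\theta\D\rho)$, when expanded by Leibniz, produce worst-case contributions like $\D^2\rho\,\D u$ and $\D\rho\,\D^2u$ where the pure-density factor must be interpolated between $\|\D\rho\|_{H^1}$ and a third-order quantity; to keep the third-order factor of the form $\|(\D^3\rho)_L\|_{L^2}$ or $\|(\D^2\rho)_H\|_{L^2}$ (never $\|\D^3\rho\|_{L^2}$, which is not controlled a priori), one must use the frequency decomposition $\D^2\rho=(\D^2\rho)_L+(\D^2\rho)_H$ and exploit that $\D(\D^2\rho)_L=(\D^3\rho)_L$ stays at the controlled level while $(\D^2\rho)_H$ is already an admissible quantity — this is exactly why the statement separates $(\D^3\rho)_L$ from $(\D^2\rho)_H$. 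Once this dichotomy is applied systematically (as in \eqref{10.3}--\eqref{10.13}, and in particular using \eqref{sec3.4:1.3} to absorb $\|\D^2\rho\|_{L^2}$-factors into constants depending only on $\bar\rho,\bar\theta$), every term falls into one of the two allowed categories and the linear combination $\eqref{11.0}$ and $\eqref{11.1}$ closes.
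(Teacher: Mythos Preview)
Your overall scaffolding is correct and matches the paper: for \eqref{11.0} you apply $\D^2$ to $\eqref{FCNS2}_2$, project to low frequencies, and pair with $(\D^3\rho)_L$; for \eqref{11.1} you apply $\D$, project to high frequencies, and pair with $(\D^2\rho)_H$. The handling of the linear pieces and the time-derivative conversion via $\eqref{FCNS2}_1$ are exactly as in the paper.

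There is, however, a genuine gap in your treatment of the nonlinear terms for \eqref{11.0}. You propose to bound $\int(\D^2\M_2)_L\cdot(\D^3\rho)_L\dif x$ via the full norm $\|\D^2\M_2\|_{L^2}$ ``up to writing one more derivative'' in the style of \eqref{10.4}. This does not close: $\M_2$ contains terms such as $(\rho^{-1}\theta^\al-\bar{\rho}^{-1}\bar{\theta}^\al)\Delta u$, so $\D^2\M_2$ produces $(\rho^{-1}\theta^\al-\bar{\rho}^{-1}\bar{\theta}^\al)\D^4u$, and fourth derivatives of $u$ (and $\theta$) are not available in the solution space of Theorem~\ref{th}. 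Similarly, $\D^2\M_1$ contains $u\cdot\D^3\rho$, and your proposed resolution of splitting $\D^2\rho=(\D^2\rho)_L+(\D^2\rho)_H$ inside the product does not help here, since the projection acts on the full product, not on the individual factor.

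The paper's remedy is the same low-frequency reduction you already used (implicitly) for the linear terms: since $\phi(\xi)$ is supported in $|\xi|\le 2$, one has $\|(\D^2\M_i)_L\|_{L^2}\le C\|(\D\M_i)_L\|_{L^2}\le C\|\D\M_i\|_{L^2}$. This immediately reduces the nonlinear contributions in \eqref{11.0} to $\|\D\M_1\|_{L^2}^2+\|\D\M_2\|_{L^2}^2$, which are \emph{already} controlled by \eqref{10.3}--\eqref{10.10} with exactly the right structure. In other words, the ``main obstacle'' you anticipate simply disappears once you apply the frequency reduction uniformly to the nonlinear terms as well as the linear ones; no new third-derivative computations on $\M_1,\M_2$ are needed. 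Your argument for \eqref{11.1} is fine and coincides with the paper's.
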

\begin{proof}
Applying $\D^2$ operator to \eqref{FCNS2}$_2$, then taking the low-frequency part, then taking the $L^2$-inner product with $(\D^3\rho)_L$, we obtain
\begin{align}\label{11.3}
&\frac{\dif}{\dif t}\int(\D^2u)_L\cdot(\D^3\rho)_L\dif x+R\bar{\rho}^{-1}\bar{\theta}\|(\D^3\rho)_L\|_{L^2}^2\nonumber\\
=~&-R\int(\D^3\theta)_L:(\D^3\rho)_L\dif x+\mu\bar{\rho}^{-1}\bar{\theta}^\al\int(\D^2\Delta u)_L\cdot(\D^3\rho)_L\dif x+(\mu+\lambda)\bar{\rho}^{-1}\bar{\theta}^\al\int(\D^3\div u)_L\cdot(\D^3\rho)_L\dif x\nonumber\\
&+\bar{\rho}\|(\D^2\div u)_L\|_{L^2}^2+\int(\D^2\M_1)_L\cdot(\D\Delta u)_L\dif x-\int(\D^2\M_2)_L\cdot(\D^3\rho)_L\dif x\nonumber\\
\leq~&\frac{R\bar{\theta}}{2\bar{\rho}}\|(\D^3\rho)_L\|_{L^2}^2+C(\bar{\rho}, \bar{\theta})\left(\|(\D^3u)_L\|_{L^2}^2+\|(\D^3\theta)_L\|_{L^2}^2\right)+C(\bar{\rho}, \bar{\theta})\left(\|\D\M_1\|_{L^2}^2+\|\D\M_2\|_{L^2}^2\right).
\end{align}
Substituting \eqref{10.3}--\eqref{10.10} into \eqref{11.3}, we complete the proof of \eqref{11.0}.

Applying $\D$ operator to \eqref{FCNS2}$_2$, then taking the high-frequency part, then taking the $L^2$-inner product with $(\D^2\rho)_H$, we obtain
\begin{align}\label{11.2}
&\frac{\dif}{\dif t}\int(\D u)_H:(\D^2\rho)_H\dif x+R\bar{\rho}^{-1}\bar{\theta}\|(\D^2\rho)_H\|_{L^2}^2\nonumber\\
=~&-R\int(\D^2\theta)_H:(\D^2\rho)_H\dif x+\mu\bar{\rho}^{-1}\bar{\theta}^\al\int(\D\Delta u)_H:(\D^2\rho)_H\dif x+(\mu+\lambda)\bar{\rho}^{-1}\bar{\theta}^\al\int(\D^2\div u)_H:(\D^2\rho)_H\dif x\nonumber\\
&+\bar{\rho}\|(\D\div u)_H\|_{L^2}^2+\int(\D\M_1)_H\cdot(\Delta u)_H\dif x-\int(\D\M_2)_H:(\D^2\rho)_H\dif x\nonumber\\
\leq~&\frac{R\bar{\theta}}{2\bar{\rho}}\|(\D^2\rho)_H\|_{L^2}^2+C(\bar{\rho}, \bar{\theta})\left(\|(\D^2u)_H\|_{H^1}^2+\|(\D^2\theta)_H\|_{L^2}^2\right)+C(\bar{\rho}, \bar{\theta})\left(\|\D\M_1\|_{L^2}^2+\|\D\M_2\|_{L^2}^2\right)\nonumber\\
\leq~&\frac{R\bar{\theta}}{2\bar{\rho}}\|(\D^2\rho)_H\|_{L^2}^2+C(\bar{\rho}, \bar{\theta})\left(\|(\D^3u)_H\|_{L^2}^2+\|(\D^3\theta)_H\|_{L^2}^2\right)+C(\bar{\rho}, \bar{\theta})\left(\|\D\M_1\|_{L^2}^2+\|\D\M_2\|_{L^2}^2\right).
\end{align}
Substituting \eqref{10.3}--\eqref{10.10} into \eqref{11.2}, we complete the proof of \eqref{11.1}.
\end{proof}

Now we are in a position to prove \eqref{decay-2}.
\begin{proof}
For fixed $\delta_0(\bar{\rho}, \bar{\theta})>0$, adding $\delta_0(\bar{\rho}, \bar{\theta})\times$ \eqref{11.0} to \eqref{10.0}, we obtain
\begin{align}\label{12.1}
&\frac{\dif}{\dif t}\left(R\bar{\rho}^{-2}\bar{\theta}\|(\D^2\rho)_L\|_{L^2}^2+\|(\D^2 u)_L\|_{L^2}^2+\bar{\theta}^{-1}\|(\D^2\theta)_L\|_{L^2}^2+\delta_0(\bar{\rho}, \bar{\theta})\int(\D^2u)_L\cdot(\D^3\rho)_L\dif x\right)\nonumber\\
&+\mu\bar{\rho}^{-1}\bar{\theta}^\al\|(\D^3 u)_L\|_{L^2}^2+\kappa\bar{\rho}^{-1}\bar{\theta}^{\beta-1}\|(\D^3\theta)_L\|_{L^2}^2+\delta_0(\bar{\rho}, \bar{\theta})\frac{R\bar{\theta}}{2\bar{\rho}}\|(\D^3\rho)_L\|_{L^2}^2\nonumber\\
\leq~&C(\bar{\rho}, \bar{\theta})\varepsilon_0^{-1}(\bar{\rho},\bar{\theta})\|(\D\rho, \D u, \D\theta)\|_{H^1}\left(\|(\D^2\rho)_H\|_{L^2}^2+\|(\D^3\rho)_L\|_{L^2}^2+\|\D^3u\|_{L^2}^2+\|\D^3\theta\|_{L^2}^2\right)\nonumber\\
&+\varepsilon_0(\bar{\rho},\bar{\theta})\|(\D^3\rho)_L\|_{L^2}^2+\delta_0(\bar{\rho}, \bar{\theta})\overline{C}_3(\bar{\rho}, \bar{\theta})\left(\|(\D^3u)_L\|_{L^2}^2+\|(\D^3\theta)_L\|_{L^2}^2\right).
\end{align}

Taking $\delta_0(\bar{\rho}, \bar{\theta})$ sufficiently small, such that
$$\delta_0(\bar{\rho}, \bar{\theta})\overline{C}_3(\bar{\rho}, \bar{\theta})\leq\frac{1}{2}\min\left\{\mu\bar{\rho}^{-1}\bar{\theta}^\al, \kappa\bar{\rho}^{-1}\bar{\theta}^{\beta-1}\right\},$$
and
$$\delta_0(\bar{\rho}, \bar{\theta})\left|\int(\D^2u)_L\cdot(\D^3\rho)_L\dif x\right|\leq\frac{1}{4}\left(R\bar{\rho}^{-2}\bar{\theta}\|(\D^2\rho)_L\|_{L^2}^2+\|(\D^2 u)_L\|_{L^2}^2\right),$$
then taking $4\bar{\rho}\varepsilon_0(\bar{\rho},\bar{\theta})=\delta_0(\bar{\rho}, \bar{\theta})R\bar{\theta}$, we obtain
\begin{align}\label{12.2}
&\frac{\dif}{\dif t}\left(R\bar{\rho}^{-2}\bar{\theta}\|(\D^2\rho)_L\|_{L^2}^2+\|(\D^2 u)_L\|_{L^2}^2+\bar{\theta}^{-1}\|(\D^2\theta)_L\|_{L^2}^2+\delta_0(\bar{\rho}, \bar{\theta})\int(\D^2u)_L\cdot(\D^3\rho)_L\dif x\right)\nonumber\\
&+2M_0(\bar{\rho}, \bar{\theta})\left(\|(\D^3 u)_L\|_{L^2}^2+\|(\D^3\theta)_L\|_{L^2}^2+\|(\D^3\rho)_L\|_{L^2}^2\right)\nonumber\\
\leq~&C(\bar{\rho}, \bar{\theta})\|(\D\rho, \D u, \D\theta)\|_{H^1}\left(\|(\D^2\rho)_H\|_{L^2}^2+\|(\D^3\rho)_L\|_{L^2}^2+\|\D^3u\|_{L^2}^2+\|\D^3\theta\|_{L^2}^2\right),
\end{align}
for some positive constant $M_0$ depending on $\bar{\rho}$ and $\bar{\theta}$. On the other hand, adding $\delta_0(\bar{\rho}, \bar{\theta})\times$ \eqref{11.1} to \eqref{10.1}, we obtain
\begin{align}\label{12.3}
&\frac{\dif}{\dif t}\left(R\bar{\rho}^{-2}\bar{\theta}\|(\D^2\rho)_H\|_{L^2}^2+\|(\D^2 u)_H\|_{L^2}^2+\bar{\theta}^{-1}\|(\D^2\theta)_H\|_{L^2}^2+\delta_0(\bar{\rho}, \bar{\theta})\int(\D u)_H\cdot(\D^2\rho)_H\dif x\right)\nonumber\\
&+2M_0(\bar{\rho}, \bar{\theta})\left(\|(\D^3 u)_H\|_{L^2}^2+\|(\D^3\theta)_H\|_{L^2}^2+\|(\D^2\rho)_H\|_{L^2}^2\right)\nonumber\\
\leq~&C(\bar{\rho}, \bar{\theta})\|(\D\rho, \D u, \D\theta)\|_{H^1}\left(\|(\D^2\rho)_H\|_{L^2}^2+\|(\D^3\rho)_L\|_{L^2}^2+\|\D^3u\|_{L^2}^2+\|\D^3\theta\|_{L^2}^2\right).
\end{align}

Adding \eqref{12.2} to \eqref{12.3} and noticing \eqref{5.0}, one finds that there exist energy functional $\calE_2(t)$ equivalent to $\|(\D^2\rho, \D^2u, \D^2\theta)\|_{L^2}^2$ and a large time $T_3(\bar{\rho},\bar{\theta})>0$, such that for any $t\geq T_3(\bar{\rho},\bar{\theta})$, it holds
\begin{equation}\label{12.4}
\frac{\dif}{\dif t}\calE_2(t)+M_0(\bar{\rho},\bar{\theta})\left(\|(\D^2\rho)_H\|_{L^2}^2+\|(\D^3\rho)_L\|_{L^2}^2+\|\D^3u\|_{L^2}^2+\|\D^3\theta\|_{L^2}^2\right)\leq 0.
\end{equation}

Recalling the definition of $\phi(\xi)$ in \eqref{cut-off} and \eqref{HL}, similar to \eqref{5.2}, we have
\begin{align}\label{12.41}
&\|(\D^3\rho)_L\|_{L^2}^2+\|\D^3u\|_{L^2}^2+\|\D^3\theta\|_{L^2}^2\nonumber\\
=~&\int_{\tilde{S}(t)}|\xi|^6\left(\phi(\xi)|\widehat{\rho-\bar{\rho}}|^2+|\widehat{u}|^2+|\widehat{\theta-\bar{\theta}}|^2\right)\dif\xi+\int_{\R^3\setminus\tilde{S}(t)}|\xi|^6\left(\phi(\xi)|\widehat{\rho-\bar{\rho}}|^2+|\widehat{u}|^2+|\widehat{\theta-\bar{\theta}}|^2\right)\dif\xi\nonumber\\
\geq~&\frac{c_*(\bar{\rho},\bar{\theta})}{1+t}\int_{\R^3\setminus\tilde{S}(t)}|\xi|^4\left(\phi(\xi)|\widehat{\rho-\bar{\rho}}|^2+|\widehat{u}|^2+|\widehat{\theta-\bar{\theta}}|^2\right)\dif\xi\nonumber\\
=~&\frac{c_*(\bar{\rho},\bar{\theta})}{1+t}\int_{\R^3}|\xi|^4\left(\phi(\xi)|\widehat{\rho-\bar{\rho}}|^2+|\widehat{u}|^2+|\widehat{\theta-\bar{\theta}}|^2\right)\dif\xi\nonumber\\
&-\frac{c_*(\bar{\rho},\bar{\theta})}{1+t}\int_{\tilde{S}(t)}|\xi|^4\left(\phi(\xi)|\widehat{\rho-\bar{\rho}}|^2+|\widehat{u}|^2+|\widehat{\theta-\bar{\theta}}|^2\right)\dif\xi \nonumber\\
\geq~&\frac{c_*(\bar{\rho},\bar{\theta})}{1+t}\left(\|\D^2 u\|_{L^2}^2+\|\D^2\theta\|_{L^2}^2+\|(\D^2\rho)_L\|_{L^2}^2\right)-\frac{c_*^3(\bar{\rho},\bar{\theta})}{(1+t)^3}\int_{\tilde{S}(t)}\left(\phi(\xi)|\widehat{\rho-\bar{\rho}}|^2+|\widehat{u}|^2+|\widehat{\theta-\bar{\theta}}|^2\right)\dif\xi\nonumber\\
\geq~&\frac{c_*(\bar{\rho},\bar{\theta})}{1+t}\left(\|\D^2 u\|_{L^2}^2+\|\D^2\theta\|_{L^2}^2+\|(\D^2\rho)_L\|_{L^2}^2\right)-\frac{c_*^3(\bar{\rho},\bar{\theta})}{(1+t)^3}\left(\|u\|_{L^2}^2+\|\theta-\bar{\theta}\|_{L^2}^2+\|\rho-\bar{\rho}\|_{L^2}^2\right).
\end{align}

Substituting \eqref{12.41} into \eqref{12.4}, we obtain
\begin{align}\label{12.42}
&\frac{\dif}{\dif t}\calE_2(t)+M_0(\bar{\rho},\bar{\theta})\|(\D^2\rho)_H\|_{L^2}^2+\frac{M_0(\bar{\rho},\bar{\theta})c_*(\bar{\rho},\bar{\theta})}{1+t}\left(\|\D u\|_{H^1}^2+\|\D\theta\|_{H^1}^2+\|\D\rho\|_{L^2}^2\right)\nonumber\\
\leq~&\frac{M_0(\bar{\rho},\bar{\theta})c_*^3(\bar{\rho},\bar{\theta})}{(1+t)^3}\left(\|u\|_{L^2}^2+\|\theta-\bar{\theta}\|_{L^2}^2+\|\rho-\bar{\rho}\|_{L^2}^2\right).
\end{align}
Denote $T_4(\bar{\rho},\bar{\theta})=c_*(\bar{\rho},\bar{\theta})-1$. Then for any $t\geq 1+\max\big\{ T_i(\bar{\rho},\bar{\theta})\big\}_{i=1}^4$, from \eqref{decay1} we have
\begin{equation}\label{12.43}
\frac{\dif}{\dif t}\calE(t)+\frac{M_0(\bar{\rho},\bar{\theta})c_*(\bar{\rho},\bar{\theta})}{1+t}\calE(t)\leq C(\bar{\rho},\bar{\theta})(1+t)^{-\frac{3}{2}\left(\frac{2}{p_0}-1\right)-3}.
\end{equation}
Taking $c_*(\bar{\rho},\bar{\theta})=\frac{4}{M_0(\bar{\rho},\bar{\theta})}$, we get
\begin{equation}\label{12.44}
\frac{\dif}{\dif t}\calE(t)+\frac{4\calE(t)}{1+t}\leq C(\bar{\rho},\bar{\theta})(1+t)^{-\frac{3}{2}\left(\frac{2}{p_0}-1\right)-3},
\end{equation}
based on which we can derive the desired decay estimate \eqref{decay-2}.
\end{proof}



\bigskip

\noindent {\bf Acknowledgments}\\
The research of this work was supported in part by the National Natural Science Foundation of China under grants 12371221, 12301277, 11831011, and 12161141004, by Science Foundation of Zhejiang Sci-Tech University (ZSTU) under grant 25062122-Y, and by Natural Science Foundation of Zhejiang Province under grant ZCLQN26A0102. This work was also partially supported by the Fundamental Research Funds for the Central Universities and Shanghai Frontiers Science Center of Modern Analysis.

\bigskip 
 
\noindent{\bf Data Availability Statements}\\ 
Data sharing not applicable to this article as no datasets were generated or analyzed during the current study.

\bigskip

\noindent{\bf Conflict of interests}\\
The authors declare that they have no competing interests.

\bigskip

\noindent{\bf Authors' contributions}\\
The authors have made the same contribution. All authors read and approved the final manuscript.

\bigskip

\bibliographystyle{plain}

\end{document}